\numberwithin{equation}{section}
\theoremstyle{plain}
	\newtheorem{theorem}{Theorem}[section]
	\newtheorem{lemma}[theorem]{Lemma}
	\newtheorem{proposition}[theorem]{Proposition}
	\newtheorem{corollary}[theorem]{Corollary}
	\newtheorem{lem}[theorem]{Lemma}
	\newtheorem{prop}[theorem]{Proposition}
\theoremstyle{definition}
	\newtheorem{definition}[theorem]{Definition}
	\newtheorem{remark}[theorem]{Remark}
	\newtheorem*{acknowledgements}{Acknowledgements}
	\newtheorem{exam}{Example}
\theoremstyle{remark}
\newcommand{\C}{\mathbb{C}}
\newcommand{\N}{\mathbb{N}}
\newcommand{\R}{\mathbb{R}}
\newcommand{\boB}{\mathcal{B}}
\newcommand{\boD}{\mathcal{D}}
\newcommand{\boG}{\mathcal{G}}
\newcommand{\boK}{\mathcal{K}}
\newcommand{\boN}{\mathcal{N}}
\newcommand{\boR}{\mathcal{R}}
\newcommand{\boT}{\mathcal{T}}
\newcommand{\la}{\ensuremath{\lambda}}
\newcommand{\La}{\ensuremath{\Lambda}}
\newcommand{\al}{\ensuremath{\alpha}}
\newcommand{\si}{\ensuremath{\sigma}}
\newcommand{\om}{\ensuremath{\omega}}
\newcommand{\sech}{{\rm sech}}
\DeclareMathOperator{\supp}{{\rm supp}}
\DeclareMathOperator{\odd}{{\rm odd}}
\DeclareMathOperator{\even}{{\rm even}}
\newcommand{\Dgg}{{\Del^\gam_\boG}}
\newcommand{\Dgr}{{\Del_\R^\gam}}
\newcommand{\bvph}{{\bm{\vph}}}
\newcommand{\nor}[2]{\left\| {#1} \right\|_{#2}}
\newcommand{\jb}[1]{\langle {#1} \rangle{} }
\newcommand{\Del}{{\Delta}}
\newcommand{\del}{{\delta}}
\newcommand{\rd}{{\partial}}
\newcommand{\inp}[2]{\left\langle {#1} , {#2} \right\rangle }
\newcommand{\gam}{{\gamma}}
\newcommand{\vph}{{\varphi}}
\newcommand{\bcp}{\mathbbm{1}_{>0}}
\newcommand{\bof}{\bm{f}}
\newcommand{\bou}{\bm{u}}
\newcommand{\bow}{\bm{w}}
\newcommand{\bA}{\bm{A}}
\newcommand{\bB}{\bm{B}}
\newcommand{\bE}{\bm{E}}
\newcommand{\bF}{\bm{F}}
\newcommand{\bG}{\bm{G}}
\newcommand{\bU}{\bm{U}}
\newcommand{\bV}{\bm{V}}
\newcommand{\bW}{\bm{W}}
\newcommand{\li}{{\mathrm{line}}}
\begin{document}

\title{Global dynamics below a threshold for the nonlinear Schr\"{o}dinger equations with the Kirchhoff boundary and the repulsive Dirac delta boundary on a star graph}
\author{Masaru Hamano, Masahiro Ikeda, Takahisa Inui, Ikkei Shimizu}
\date{}

\maketitle

\begin{abstract}
We consider the nonlinear Schr\"{o}dinger equations on the star graph with the Kirchhoff boundary and the repulsive Dirac delta boundary at the origin. In the present paper, we show the scattering-blowup dichotomy result below the mass-energy of the ground state on the real line. The proof of the scattering part is based on a concentration compactness and rigidity argument. Our main contribution is to give a linear profile decomposition on the star graph by using a symmetrical decomposition. 
\end{abstract}

\tableofcontents


\section{Introduction}\label{Sec:Int}


\subsection{Background and Motivation}

The purpose of this paper is to analyze the long-time behavior of nonlinear Schr\"odinger equation on a star graph. 
A star graph with $N$ edges, denoted by $\boG$, 
is a metric graph made of $N$ half-lines whose origins are joined at one vertex. 
We identify $\boG$ with a union of $N$ copies of half-line $[0,\infty)$, 
and formulate functions on $\boG$ by vector-valued function on $[0,\infty)$. 
Then the nonlinear Schr\"odinger equation on $\mathcal{G}$ is 
\begin{align*}
	i\partial_t \bm{u} + \Delta_\mathcal{G}^\gamma \bm{u}
		=\mathcal{N}_{\mu}(\bm{u}),\quad(t,x) \in \mathbb{R}\times(0,\infty), 
\tag{NLS$_\mathcal{G}$}
\label{NLS}
\end{align*}
where $p > 1$, $\gamma \geq 0$, $N \geq 3$, $\bm{u}(t,x) = (u_k(t,x))_{k=1}^N:\mathbb{R}\times (0,\infty) \to \mathbb{C}^N$, $\mathcal{N}_{\mu}(\bm{u}) := (\mu |u_k|^{p-1}u_k)_{k=1}^N$, $\mu \in \{\pm1\}$, and $\Delta_\mathcal{G}^\gamma$ is the Laplace  operator on $\mathcal{G}$, which is precisely defined below. 
For \eqref{NLS}, the nonlinearity is said to be defocusing if $\mu =1$ and focusing if $\mu=-1$. 
In physics, the metric graphs, including star graphs, 
were introduced as a simplified model of 
quasi-one-dimensional region, referred as \textit{quantum graphs} in the quantum theory. 
There are extensive studies from experimental and theoretical aspects \cite{BerKuc13, BlaExnHav08}. 
The nonlinear Schr\"odinger equation on metric graphs is the corresponding nonlinear dynamical model, aimed to understand the corresponding nonlinear behavior on graphs. For more details of theoretical background in physics, see \cite{GnuSmiDer11, 
SMSSN10} for instance. 
\par
To state our main results for \eqref{NLS}, 
we give definitions of function spaces on $\mathcal{G}$ and the Laplacian $\Delta_{\mathcal{G}}^\gamma$. We define Lebesgue spaces and Sobolev spaces on the star graph $\mathcal{G}$ as follows:
\begin{align*}
	L^q(\mathcal{G})
		:= \bigoplus_{k=1}^N L^q(0,\infty),\quad
	W^{s,q}(\mathcal{G})
		:= \bigoplus_{k=1}^N W^{s,q}(0,\infty),
\end{align*}
whose norms are defined by
\begin{align*}
	&\|\bm{\bof}\|_{L^q(\mathcal{G})}^{q}
		:= \sum_{k=1}^N \|f_k\|_{L^q(0,\infty)}^{q},\quad 
	\|\bm{\bof}\|_{L^\infty(\mathcal{G})}
		:= \sum_{k=1}^N \|f_k\|_{L^\infty(0,\infty)},
	\\
	&\|\bm{\bof}\|_{W^{s,q}(\mathcal{G})}^{q}
		:= \sum_{k=1}^N \|f_k\|_{W^{s,q}(0,\infty)}^{q},
\end{align*}
for $1 \leq q < \infty$ and $s=1,2$. 
We use the usual notation $H^s(\mathcal{G}) := W^{s,2}(\mathcal{G})$, which has the inner product
\begin{align*}
	(\bm{f},\bm{g})_{H^{s}(\mathcal{G})} := \sum_{k=1}^{N} (f_{k},g_{k})_{H^{s}(0,\infty)}.
\end{align*}
We use $H_c^s(\mathcal{G})$ ($s=1,2$) to denote the set of the functions in $H^s(\mathcal{G})$ which are continuous at the origin. 
The Laplacian $\Delta_\mathcal{G}^\gamma$ is defined by 
\begin{align*}
	\mathcal{D}(\Delta_\mathcal{G}^\gamma) &:= 
	\left\{ \bm{f}=(f_k)_{k=1}^{N} \in H_{c}^{2}(\mathcal{G}) : 
	\sum_{k=1}^{N} f'_{k}(0+)=N\gamma f_{1}(0) \right\},
	\\
	\Delta_\mathcal{G}^\gamma \bm{f}&:=(\partial_{xx} f_k)_{1\leq k \leq N}.
\end{align*}
When $\gamma=0$, the condition at the origin is called the Kirchhoff condition. 
When $\gamma>0$ (resp. $\gam<0$), it is called repulsive (resp. attractive) delta condition.
%
It is known that $\Dgg$ is a self-adjoint operator on $L^2(\boG)$ (see \cite{KosSch06}) and thus the Schr\"{o}dinger propagator $e^{it\Dgg}$ is well defined by the Stone theorem. 
When $\gam\ge 0$, the spectrum $\Dgg$ only consists of continuous spectrum, while 
when $\gam<0$ there is an eigenvalue $-\gam^2$ with the explicit eigenfunction 
$(e^{\gam x},\cdots ,e^{\gam x})$. 
\par
%
%
It is known that \eqref{NLS} is locally well-posed in $H_c^1(\mathcal{G})$, 
and the mass $M$ and energy $E_{\gamma}$ defined by
\begin{align}
\tag{Mass}
	&M(\bm{u}(t)):= \|\bm{u}(t)\|_{L^2(\mathcal{G})}^2, 
\\
\tag{Energy}
	&E_\gamma(\bm{u}(t))
		:= \frac{1}{2}\|\partial_x\bm{u}(t)\|_{L^2(\mathcal{G})}^2 + \frac{N\gamma}{2}|u_1(0)|^2 + \frac{\mu}{p+1}\|\bm{u}(t)\|_{L^{p+1}(\mathcal{G})}^{p+1}
\end{align}
are conserved by the flow. 
More precisely, we have the following result.
\begin{theorem}[Local well-posedness of \eqref{NLS} \cite{AdaCacFinNoj14, AngGol18, CacFinNoj17}]
Let $p > 1$ and $\gamma \geq 0$.
For any $\bm{u_0} \in H_c^1(\mathcal{G})$, there exist $T_\text{max} \in (0,\infty]$ and $T_\text{min} \in [-\infty,0)$ such that \eqref{NLS} has a unique solution $\bm{u} \in C((T_\text{min},T_\text{max});H_c^1(\mathcal{G}))$, 
where $(T_\text{min},T_\text{max})$ is called maximal lifespan of $\bm{u}$.
For each compact interval $I \subset (T_\text{min},T_\text{max})$, the mapping $H_c^1(\mathcal{G}) \ni \bm{u_0} \mapsto \bm{u} \in C(I;H_c^1(\mathcal{G}))$ is continuous.
Moreover, the solution $\bm{u}$ has the following blow-up alternative: If $T_\text{max} < \infty$ (\text{resp.} $T_\text{min} > -\infty$), then
\begin{align*}
	\lim_{t \nearrow T_\text{max}}\|\bm{u}(t)\|_{H^1(\mathcal{G})}
		= \infty\ \ \ 
	\left(\text{resp.} \lim_{t \searrow T_\text{min}}\|\bm{u}(t)\|_{H^1(\mathcal{G})}
		= \infty \right).
\end{align*}
Furthermore, the solution $\bm{u}$ to \eqref{NLS} preserves the mass $M$ and energy $E_{\gamma}$ with respect to time $t$. 
\end{theorem}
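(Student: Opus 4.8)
The plan is to run the standard Kato--Cazenave energy-space fixed-point scheme, adapted to the metric graph, with the self-adjointness of $\Dgg$ and Strichartz estimates for $e^{it\Dgg}$ as the analytic backbone. First I would record the Strichartz estimates on $\boG$: since $\Dgg$ is self-adjoint for $\gam\ge 0$ with purely continuous spectrum \cite{KosSch06}, $e^{it\Dgg}$ is a unitary group on $L^2(\boG)$, and one has the one-dimensional dispersive decay $\nor{e^{it\Dgg}\bm{f}}{L^\infty(\boG)}\lesssim |t|^{-1/2}\nor{\bm{f}}{L^1(\boG)}$ (available from the cited works \cite{AdaCacFinNoj14, AngGol18, CacFinNoj17}, e.g. via the explicit integral kernel on the star graph). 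Interpolating with $L^2$ conservation and invoking the $TT^*$/Keel--Tao machinery then yields the full family of Strichartz estimates for the admissible pairs of the line, both for the homogeneous propagator and for the Duhamel operator $\bm{f}\mapsto \int_0^t e^{i(t-s)\Dgg}\bm{f}(s)\,ds$.

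Second, I would set up the contraction. Writing the integral (Duhamel) formulation of \eqref{NLS},
\[
\bm{u}(t)=e^{it\Dgg}\bm{u_0}-i\int_0^t e^{i(t-s)\Dgg}\mathcal{N}_\mu(\bm{u}(s))\,ds,
\]
I would seek a fixed point of the right-hand side in a ball of a space of the form $C(I;H_c^1(\boG))\cap L^q(I;W^{1,r}(\boG))$ for a suitable Strichartz pair $(q,r)$ depending on $p$, on a short interval $I=[-T,T]$. The nonlinear estimate rests on two facts: the one-dimensional Sobolev embedding $H^1(0,\infty)\hookrightarrow L^\infty(0,\infty)$, hence $H^1(\boG)\hookrightarrow L^\infty(\boG)$, and the pointwise (Nemytskii) structure of $\mathcal{N}_\mu$, which since $p>1$ gives componentwise $\nor{\mathcal{N}_\mu(\bm{u})}{W^{1,r'}}\lesssim \nor{\bm{u}}{L^\infty}^{p-1}\nor{\bm{u}}{W^{1,r}}$. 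Choosing $T$ small depending on $\nor{\bm{u_0}}{H^1}$ makes the map a contraction and produces the unique local solution; its Lipschitz dependence on the data gives continuous dependence, and the usual extension/iteration argument yields the maximal lifespan together with the blow-up alternative.

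A point needing genuine care on $\boG$, rather than on $\R$, is that the solution remains in the correct space $H_c^1(\boG)$, i.e. that the continuity-at-the-origin constraint is propagated. Here the Nemytskii structure is decisive: since $\mathcal{N}_\mu$ acts componentwise and pointwise, if the components of $\bm{u}(s)$ share a common value at $x=0$, then so do $\mu|u_k(s,0)|^{p-1}u_k(s,0)$; combined with the closedness of $H_c^1(\boG)$ in $H^1(\boG)$, the Duhamel map preserves $H_c^1(\boG)$ and the fixed point stays there. The domain condition $\sum_k f'_k(0+)=N\gam f_1(0)$ of $\Dgg$ enters only through the functional calculus defining $e^{it\Dgg}$, so at the $H^1$ level it is automatically respected by the Strichartz-based scheme.

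Finally, for the conservation laws I would argue by regularization. For data in $\boD(\Dgg)\subset H_c^2(\boG)$ one differentiates $M(\bm{u}(t))$ and $E_\gam(\bm{u}(t))$ in time; after integrating by parts on each half-line, the boundary contributions collect into exactly the term $\tfrac{N\gam}{2}|u_1(0)|^2$ dictated by $\sum_k f'_k(0+)=N\gam f_1(0)$, giving $\tfrac{d}{dt}M=\tfrac{d}{dt}E_\gam=0$. Passing to general $H_c^1(\boG)$ data through approximation and the continuous dependence established above extends conservation to all solutions. I expect the main obstacle to be the Strichartz step---obtaining the sharp dispersive and Strichartz estimates for $e^{it\Dgg}$ on the star graph, the rest being the standard energy-space template---though for $\gam\ge 0$ these are precisely what the cited references supply.
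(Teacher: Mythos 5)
The paper does not prove this theorem at all: it is imported verbatim from the cited references \cite{AdaCacFinNoj14, AngGol18, CacFinNoj17}, so there is no internal argument to compare against. Your outline is a correct and standard route to the result, and it is essentially what those references do, with one remark worth making: in one dimension with $p>1$ the problem is energy-subcritical in a strong sense, since $H^1(0,\infty)\hookrightarrow L^\infty(0,\infty)$ makes $\mathcal{N}_\mu$ locally Lipschitz from $H^1_c(\mathcal{G})$ to itself; hence the fixed point can be run directly in $C([-T,T];H_c^1(\mathcal{G}))$ using only that $e^{it\Delta_{\mathcal{G}}^{\gamma}}$ is a bounded group on the form domain $H^1_c(\mathcal{G})$ (equivalently normed by $\|\cdot\|_{\dot H^1_\gamma}+\|\cdot\|_{L^2}$ for $\gamma\ge 0$). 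The dispersive/Strichartz machinery you propose is not needed for local well-posedness and is also not supplied by the LWP references you would be citing (in this paper it comes from \cite{GreIgn19} and is reserved for the scattering analysis), so leaning on it makes the proof heavier and the attribution slightly off, though not wrong. Your two genuinely graph-specific points are the right ones: $\mathcal{N}_\mu$ preserves the continuity constraint at the vertex because it acts pointwise and componentwise, and in the energy identity the boundary terms produced by integration by parts on each edge recombine via $\sum_k u_k'(0+)=N\gamma u_1(0)$ to cancel the time derivative of $\tfrac{N\gamma}{2}|u_1(0)|^2$. The only step that deserves more care than your sketch gives it is the final density argument for conservation: passing from $\mathcal{D}(\Delta_{\mathcal{G}}^{\gamma})$ data to $H^1_c(\mathcal{G})$ data requires knowing that the regular approximating solutions exist on a common time interval and converge in $C(I;H^1_c)$, i.e.\ persistence of regularity or an equivalent approximation scheme as in Cazenave's treatment; "continuous dependence" alone, as stated in the theorem, is continuity of the $H^1$ flow and does not by itself guarantee this, though the standard argument closes the gap.
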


%

%
%
%
%
In the present paper, we are interested in the global behavior of the solutions to \eqref{NLS} in the $L^{2}$-supercritical case, i.e., $p>5$. 
For the corresponding equation on the real line, this has been extensively studied 
for the free case (without potentials) by \cite{Nak99, AkaNaw13, FanXieCaz11, Gue, Guo}, and for the case with the potential by \cite{BanVis16, IkeInu17}. 
Let us overview these studies. 
In the defocusing case ($\mu=1$), 
all the $H^1$-solutions are global in time and scatters both direction in time. 
On the other hand, in the focusing case ($\mu=-1$), various behaviors are possible to occur. 
One typical example is the \textit{standing wave} solution $e^{i\om t} \vph$ with $\om>0$ and $\vph\in H^1(\R)$, which is an example of non-scattering solution. 
Then the function $\vph$ is a critical point of the action functional 
\begin{align*}
	S_{\omega,\gamma}^{\mathrm{line}}(f)&:= 
\frac 12 \nor{\rd_x f}{L^2(\R)}^2 +\gamma|f(0)|^2
-\frac{1}{p+1} \nor{f}{L^{p+1}(\R)}^{p+1} + \frac \om 2 \nor{f}{L^2(\R)}^2.
\end{align*}
The potential well theory has revealed that the following minimization problem plays an important role in describing the global-in-time behavior:
\begin{equation}\label{min:line}
	\mathfrak{n}_{\omega,\gamma}^{\mathrm{line}}
		 := \inf\{S_{\omega,\gamma}^\mathrm{line}(f) : f \in H^1(\mathbb{R}) \setminus \{0\},\,K_{\gamma}^{\mathrm{line}}(f) = 0\},
\end{equation}
where 
\begin{align*}
	K_{\gamma}^{\mathrm{line}}(f)&:=2\|\partial_xf\|_{L^2(\mathbb{R})}^2 + 2\gamma|f(0)|^2 - \frac{p-1}{p+1}\|f\|_{L^{p+1}(\mathbb{R})}^{p+1},
\end{align*}
called virial functional. 
In detail, the energy space with action below $\frak{n}_{\om,\gam}^\li$ 
is separated into two invariant sets under the flow. One of which exhibits stable dynamics, scatters on both directions in time, while the other behaves unstably, where the $H^1(\R)$-norms diverges in finite or at infinite time. 
The aim of the present paper is to show the corresponding result for \eqref{NLS}.\par
Let us observe the properties on minimization problem in our case: 
\begin{align*}
	\mathfrak{n}_{\omega,\gamma}
		& := \inf\{S_{\omega,\gamma}(\bm{f}) : \bm{f} \in H_c^1(\mathcal{G})\setminus\{\bm{0}\},\ K_{\gamma}(\bm{f}) = 0\},
\end{align*}
where the action and virial functionals are defined by
\begin{align*}
	S_{\omega,\gamma}(\bm{f})&:= E_{\gamma}(\bm{f}) + \frac{\omega}{2} M(\bm{f}),
	\\
	K_{\gamma}(\bm{f})&:=2\|\partial_x\bm{f}\|_{L^2(\mathcal{G})}^2 + N\gamma|f_1(0)|^2 - \frac{p-1}{p+1}\|\bm{f}\|_{L^{p+1}(\mathcal{G})}^{p+1}.
\end{align*}
The similar minimization problem is investigated in \cite{AdaCacFinNoj14}, 
and the following holds.

\begin{proposition}[\cite{AdaCacFinNoj14}]
\label{Th:Minimization problem}
Let $p > 1$, $\gamma \geq 0$, and $\omega > 0$.
Then $\mathfrak{n}_{\omega,\gamma} =\mathfrak{n}_{\omega,0}^\mathrm{line}$ holds and $\mathfrak{n}_{\omega,\gamma}$ is not attained. 
\end{proposition}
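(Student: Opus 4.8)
The plan is to reduce the star-graph problem to the free problem on $\R$ by a symmetric rearrangement, while controlling the constraint through the $L^2$-invariant dilation $\bm{f}\mapsto\bm{f}^\lambda:=\lambda^{1/2}\bm{f}(\lambda\,\cdot)$. The key preliminary is the auxiliary functional $P_{\om,\gam}:=S_{\om,\gam}-\tfrac{1}{p-1}K_\gam$, which for $p>5$ has only nonnegative coefficients: the $L^{p+1}$ terms cancel and one computes $P_{\om,\gam}(\bm{f})=\bigl(\tfrac12-\tfrac{2}{p-1}\bigr)\|\partial_x\bm{f}\|_{L^2(\mathcal{G})}^2+N\gam\bigl(\tfrac12-\tfrac{1}{p-1}\bigr)|f_1(0)|^2+\tfrac{\om}{2}\|\bm{f}\|_{L^2(\mathcal{G})}^2$, with both bracketed constants positive. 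On the constraint $\{K_\gam=0\}$ one has $S_{\om,\gam}=P_{\om,\gam}$. Since $K_\gam$ is (up to a constant) the $\lambda$-derivative of $S_{\om,\gam}(\bm{f}^\lambda)$ at $\lambda=1$, and since $(p-1)/2>2$ makes $\lambda\mapsto K_\gam(\bm{f}^\lambda)$ positive for small $\lambda$ and negative for large $\lambda$ with a single sign change, I would first record the inequality: \emph{if $K_0^{\li}(g)\le0$ and $g\ne0$, then $P_{\om,0}^{\li}(g)\ge\mathfrak{n}_{\om,0}^{\li}$.} Indeed, dilating $g$ down to the unique $\lambda^\ast\le1$ with $K_0^{\li}(g^{\lambda^\ast})=0$ only decreases $P_{\om,0}^{\li}$, while $P_{\om,0}^{\li}(g^{\lambda^\ast})=S_{\om,0}^{\li}(g^{\lambda^\ast})\ge\mathfrak{n}_{\om,0}^{\li}$.

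For the lower bound $\mathfrak{n}_{\om,\gam}\ge\mathfrak{n}_{\om,0}^{\li}$, take $\bm{f}\in H_c^1(\mathcal{G})\setminus\{\bm{0}\}$ with $K_\gam(\bm{f})=0$ and let $f^\ast$ be the symmetric decreasing rearrangement of $|\bm{f}|$ onto $\R$. Equimeasurability preserves $\|\cdot\|_{L^2}$ and $\|\cdot\|_{L^{p+1}}$, and the Po\'lya--Szeg\H{o} inequality for the star graph gives $\|\partial_x f^\ast\|_{L^2(\R)}\le\|\partial_x\bm{f}\|_{L^2(\mathcal{G})}$. Since $\gam\ge0$ the boundary term is nonnegative, whence $K_0^{\li}(f^\ast)\le K_\gam(\bm{f})-N\gam|f_1(0)|^2\le0$ and, by the positivity of the coefficients, $P_{\om,\gam}(\bm{f})\ge P_{\om,0}^{\li}(f^\ast)$. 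Combining $S_{\om,\gam}(\bm{f})=P_{\om,\gam}(\bm{f})$ with the inequality of the previous paragraph yields $S_{\om,\gam}(\bm{f})\ge\mathfrak{n}_{\om,0}^{\li}$, and taking the infimum proves the bound.

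For the reverse inequality I would transplant the even line soliton $\phi_\om$ (the minimizer for $\mathfrak{n}_{\om,0}^{\li}$, so $K_0^{\li}(\phi_\om)=0$) onto a single edge far from the vertex: put $f_{R,1}(x)=\phi_\om(x-R)$ and, to respect continuity in $H_c^1(\mathcal{G})$, $f_{R,k}(x)=\phi_\om(R)e^{-\sqrt{\om}\,x}$ for $k\ge2$. As $R\to\infty$ the edge-$1$ contribution converges to the full-line functionals of $\phi_\om$, while the other edges and the boundary term $\tfrac{N\gam}{2}\phi_\om(R)^2$ vanish, so $S_{\om,\gam}(\bm{f}_R)\to\mathfrak{n}_{\om,0}^{\li}$ and $K_\gam(\bm{f}_R)\to0$. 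Dilating $\bm{f}_R\mapsto\bm{f}_R^{\lambda_R}$ with $\lambda_R\to1$ to enforce $K_\gam=0$ produces admissible competitors whose action tends to $\mathfrak{n}_{\om,0}^{\li}$, giving $\mathfrak{n}_{\om,\gam}\le\mathfrak{n}_{\om,0}^{\li}$ and hence equality.

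Finally, for non-attainment: if some $\bm{f}$ realized $\mathfrak{n}_{\om,\gam}=\mathfrak{n}_{\om,0}^{\li}$, every inequality in the lower-bound chain would be saturated. Then $f^\ast$ would attain $\mathfrak{n}_{\om,0}^{\li}$ with $K_0^{\li}(f^\ast)=0$, forcing $f^\ast=\phi_\om$ up to phase, and simultaneously forcing both equality in the graph Po\'lya--Szeg\H{o} inequality and $\gam|f_1(0)|^2=0$. The decisive point is that for $N\ge3$ the equality case of the star-graph Po\'lya--Szeg\H{o} inequality admits no nonzero $\bm{f}\in H_c^1(\mathcal{G})$: continuity at the vertex forces all $N$ edges to carry the top level set, which is incompatible with the two-interval level sets of the even profile $\phi_\om$. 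This contradiction excludes a minimizer. I expect this equality analysis---establishing the sharp rearrangement inequality on $\mathcal{G}$ and showing its equality case is empty for $N\ge3$---to be the main obstacle, the remaining steps being soft scaling and limiting arguments.
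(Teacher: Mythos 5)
Your overall architecture (rearrangement onto $\R$ for the lower bound, one-edge runaway translates for the upper bound) is sound and is close in spirit to what the paper imports from \cite{AdaCacFinNoj14} via the Gagliardo--Nirenberg lemmas of Section \ref{secGN}; the device $P_{\omega,\gamma}=S_{\omega,\gamma}-\tfrac{1}{p-1}K_\gamma$ is a clean way to package the lower bound. One secondary issue: the whole argument is confined to $p>5$, since the gradient coefficient $\tfrac12-\tfrac{2}{p-1}$ in $P_{\omega,\gamma}$, the monotonicity of $\lambda\mapsto P^{\li}_{\omega,0}(g^\lambda)$, and the existence of the rescaling parameter $\lambda^\ast\le 1$ all fail for $1<p\le 5$, whereas the proposition is stated for every $p>1$.

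The genuine gap is in the non-attainment step. Your ``decisive point'' --- that the equality case of the star-graph P\'olya--Szeg\H{o} inequality is empty for $N\ge 3$ because continuity at the vertex forces all $N$ edges to carry a level set --- is false as stated. Take $\bm{f}=(g,0,0)$ with $g$ a symmetric bump compactly supported in $(0,\infty)$: this lies in $H^1_c(\mathcal{G})$, its level sets never reach the vertex, each level has exactly two boundary points, and equality holds in $\|\partial_x f^\ast\|_{L^2(\R)}\le\|\partial_x\bm{f}\|_{L^2(\mathcal{G})}$. Your vertex argument only excludes putative minimizers with $f_1(0)\neq 0$; but when $\gamma>0$ your own chain of equalities forces $\gamma|f_1(0)|^2=0$, i.e.\ $f_1(0)=0$, and when $\gamma=0$ the case $f_1(0)=0$ remains possible, so the sub-case your reasoning does not cover is exactly the relevant one (it is the regime where the almost-minimizers live: bumps escaping along a single edge). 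The exclusion there must come from a different source: saturation also forces $f^\ast=Q_\omega$, which is strictly positive on all of $\R$, and forces $\bm{f}$ to be supported on a single edge and to be (up to phase) a translate of $Q_\omega$ on that edge; such a translate cannot vanish at the vertex, nor can a function on a single half-line be equimeasurable with $Q_\omega$ on $\R$ while matching its level-set geometry. With that repair the argument closes, but the equality analysis you yourself flag as the main obstacle is not the statement you wrote down.
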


In \cite{AdaCacFinNoj14}, the constraint is not in terms of the virial functional but the Nehari functional. However the assertion follows essentially in the similar way to \cite{AdaCacFinNoj14} by using the property on best constant of the Gagliardo--Nirenberg inequality shown in Section \ref{secGN} below. 
As shown in \cite{AdaCacFinNoj14}, the infimum is not attained. 
%
The mechanism of the breakdown of attainability is the same as that of the Schr\"odinger equation with a repulsive delta potential on the real line (see \cite{FukJea08}), but 
it should be noted that on the graph case, this occurs even in the Kirchhoff case $\gam=0$.

\subsection{Main Results}
We first introduce several notions and notations.
\begin{definition}[Scattering, blow-up, and grow-up]
Let $\bm{u} \in C( I;H_c^1(\mathcal{G}))$ be a solution to \eqref{NLS}, where $I=(T_{\min},T_{\max})$.
\begin{itemize}
\item (Scattering)
We say that $\bm{u}$ scatters in positive time (resp. negative time) when $T_\text{max} = \infty$ (resp. $T_\text{min} =-\infty$) and there exists $\bm{\psi_+} \in H_c^1(\mathcal{G})$ (resp. $\bm{\psi_-} \in H_c^1(\mathcal{G})$) such that
\begin{align*}
	\lim_{t \rightarrow +\infty}\|\bm{u}(t) - e^{it\Delta_\mathcal{G}^\gamma}\bm{\psi_+}\|_{H^1(\mathcal{G})}
		= 0\ \ \ 
	\left( \text{resp. }\lim_{t \rightarrow -\infty}\|\bm{u}(t) - e^{it\Delta_\mathcal{G}^\gamma}\bm{\psi_-}\|_{H^1(\mathcal{G})}
		= 0 \right).
\end{align*}
\item (Blow-up)
We say that $\bm{u}$ blows up in positive time (resp. negative time) when $T_\text{max} < \infty$ (resp. $T_\text{min} > -\infty$).
\item (Grow-up)
We say that $\bm{u}$ grows up in positive time (resp. negative time) when $T_\text{max} = \infty$ (resp. $T_\text{min} = -\infty$) and
\begin{align*}
	\limsup_{t \rightarrow +\infty}\|\bm{u}(t)\|_{H^1(\mathcal{G})}
		= \infty\ \ \ 
	\left( \text{resp. }\limsup_{t \rightarrow -\infty}\|\bm{u}(t)\|_{H^1(\mathcal{G})}
		= \infty \right).
\end{align*}
\end{itemize}
\end{definition}
For $\om>0$ we define $Q_{\omega}(x)\in H^1(\R)$ by the unique positive solution to
$$
-\rd_{xx} Q_{\omega} + \om Q_{\omega} = Q_{\omega}^{p}, \quad x \in \mathbb{R},
$$
which is explicitly written as 
\begin{align*}
	Q_{\omega}(x) :=
	\left[\frac{(p+1)\omega}{2}\sech^2\left\{\frac{(p-1)\sqrt{\omega}}{2}|x|\right\}\right]^\frac{1}{p-1}.
\end{align*}
Note that this is a minimizer of \eqref{min:line} with $\gam=0$. We set $Q:=Q_{1}$ for simplicity. 

Now we state our main results. 
We first state the result of the focusing case. 
\begin{theorem}[Focusing with frequency $\omega$]
\label{Th: Non-radial case}
Let $p > 5$, $\gamma \geq 0$, $\mu = -1$, and $\omega > 0$.
Let $\bm{u}$ be a solution to \eqref{NLS} with initial data $\bm{u_0} \in H_c^1(\mathcal{G})$. 
Assume that $S_{\omega,\gamma}(\bm{u_0}) < \frak{n}_{\om,0}^\li$ 
for some $\omega > 0$. Then the following are true.
\begin{itemize}
\item (Scattering)
If $\bm{u_0}$ satisfies $K_\gamma(\bm{u_0}) \geq 0$, then the solution $\bm{u}$ to \eqref{NLS} scatters in both time directions.
\item (Blow-up or grow-up)
If $\bm{u_0}$ satisfies $K_\gamma(\bm{u_0}) < 0$, then the solution $\bm{u}$ to \eqref{NLS} blows up or grows up in both time directions. Additionally, if $x\bm{u_0} \in L^2(\mathcal{G})$, the solution $\bm{u}$ blows up in both time directions. 
\end{itemize}
\end{theorem}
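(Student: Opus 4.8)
The plan is to treat the two bullets as the two halves of a Kenig--Merle dichotomy: first trap the flow into invariant sets, then obtain scattering on the ``good'' set by concentration--compactness/rigidity and blow-up/grow-up on the ``bad'' set by a virial argument. First I would introduce
\[
\boA^{+}_{\om,\gam} := \{ \bm{f} \in H_c^1(\boG)\setminus\{\bm 0\} : S_{\om,\gam}(\bm f) < \mathfrak{n}_{\om,0}^{\li},\ K_\gam(\bm f) \ge 0 \},
\]
\[
\boA^{-}_{\om,\gam} := \{ \bm{f} \in H_c^1(\boG)\setminus\{\bm 0\} : S_{\om,\gam}(\bm f) < \mathfrak{n}_{\om,0}^{\li},\ K_\gam(\bm f) < 0 \},
\]
and show each is invariant under the flow. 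Since $S_{\om,\gam}=E_\gam+\tfrac\om2 M$ is conserved and $\bm u$ is continuous in $H^1(\boG)$, the sign of $K_\gam(\bm u(t))$ cannot change: a sign change would produce a time at which $K_\gam$ vanishes on a nonzero function with $S_{\om,\gam}<\mathfrak{n}_{\om,0}^{\li}=\mathfrak{n}_{\om,\gam}$, contradicting that the infimum defining $\mathfrak{n}_{\om,\gam}$ is not attained (Proposition \ref{Th:Minimization problem}). On $\boA^{+}$ the constraint $K_\gam\ge0$ combined with $S_{\om,\gam}<\mathfrak{n}_{\om,\gam}$ gives, through the Gagliardo--Nirenberg inequality on $\boG$ of Section \ref{secGN}, a uniform bound $\sup_t\|\bm u(t)\|_{H^1(\boG)}<\infty$ and hence global existence; on $\boA^{-}$ the same variational structure yields a uniform gap $\sup_t K_\gam(\bm u(t))\le -\delta<0$.

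For the blow-up/grow-up assertion I would run the Glassey virial computation. When $x\bm u_0\in L^2(\boG)$, set $V(t):=\|x\bm u(t)\|_{L^2(\boG)}^2=\sum_k\int_0^\infty x^2|u_k|^2\,dx$. Because the weight $x$ vanishes at the vertex, the vertex/delta contribution drops out of the second derivative and one is left with
\[
V''(t)=8\|\rd_x\bm u(t)\|_{L^2(\boG)}^2-\tfrac{4(p-1)}{p+1}\|\bm u(t)\|_{L^{p+1}(\boG)}^{p+1}=4K_\gam(\bm u(t))-4N\gam|u_1(t,0)|^2\le 4K_\gam(\bm u(t)),
\]
the last inequality using $\gam\ge0$. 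By the gap on $\boA^{-}$ this gives $V''(t)\le-4\delta<0$, so $V$ reaches $0$ in finite time and the solution blows up in both directions. Without the variance hypothesis I would replace $V$ by a localized version with a spatial cutoff $\chi_R$: if the solution were global with bounded $H^1(\boG)$-norm, then for $R$ large the commutator errors would be absorbed by the coercive $-\delta$, again forcing a negative second derivative and a contradiction. This is precisely the argument that leaves open only the blow-up/grow-up alternative.

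The substantial half is scattering on $\boA^{+}$, for which I would follow the concentration--compactness/rigidity scheme. Small-data scattering comes from the Strichartz estimates for $e^{it\Dgg}$, which hold for $\gam\ge0$ because the spectrum is purely continuous and the dispersive decay matches the line. Supposing scattering fails somewhere below $\mathfrak{n}_{\om,0}^{\li}$, I would define the critical action level and, via the profile decomposition together with the perturbation/stability theory, extract a minimal non-scattering critical element whose $H^1(\boG)$-orbit is precompact. The rigidity step then excludes this element through a localized virial identity coupled with precompactness, crucially invoking that $\mathfrak{n}_{\om,\gam}$ is not attained, so no stationary profile can survive at or below threshold.

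The hard part, and by the abstract the paper's main contribution, is the linear profile decomposition on $\boG$ used to build the critical element. The obstruction is that the vertex breaks the translation invariance underlying the Euclidean decomposition: profiles either stabilize near the vertex, where the graph structure and $\Dgg$ genuinely act, or escape to infinity along an edge, where they detach from the vertex and are governed by the free propagator on a half-line, hence on the line after reflection. I would implement this through the symmetric decomposition, splitting each function into its fully edge-symmetric component -- which, after folding by even reflection, is conjugate to a line problem carrying a delta at the origin and therefore the threshold $\mathfrak{n}_{\om,0}^{\li}$ -- and its orthogonal complement, handling the escaping profiles by the line profile decomposition applied to the reduced problem. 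Matching the profile energies to the line quantity $\mathfrak{n}_{\om,0}^{\li}$ is exactly what makes the threshold in Theorem \ref{Th: Non-radial case} the sharp one, and carrying out this bookkeeping carefully is where I expect the main difficulty to lie.
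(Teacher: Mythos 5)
Your proposal follows essentially the same route as the paper: potential-well invariant sets from the non-attained minimization problem, a Kenig--Merle concentration--compactness/rigidity scheme for scattering built on a linear profile decomposition obtained by the symmetric (one even, $N-1$ odd) reduction of the graph to line problems commuting with the propagator, and a (localized) virial argument for blow-up/grow-up, which the paper itself delegates to the literature. One small inaccuracy: the vertex term does \emph{not} drop out of the virial identity --- in the paper's Lemma \ref{Generalized virial identities} the delta contribution enters through $\mathscr{X}_R''(0)$, not $\mathscr{X}_R(0)$, and for the quadratic weight one gets $V''(t)=4K_\gamma(\bm u(t))$ exactly rather than $4K_\gamma(\bm u(t))-4N\gamma|u_1(t,0)|^2$; your subsequent bound $V''\le 4K_\gamma$ and the Glassey conclusion are unaffected.
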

%
%
We can also obtain the $\omega$-independent form of the statement. 
We define the first order Sobolev norm related to $\Delta_{\mathcal{G}}^{\gamma}$ by
\begin{align*}
	\|\bm{f}\|_{\dot{H}_{\gamma}^{1}(\mathcal{G})}^{2} 
	:=\|(-\Delta_{\mathcal{G}}^{\gamma})^{\frac{1}{2}} \bm{f}\|_{L^{2}(\mathcal{G})}^{2}
	=\|\partial_{x}\bm{f}\|_{L^{2}(\mathcal{G})}^{2} +N\gamma |f_{1}(0)|^{2}
\end{align*}
for $\bm{f} \in H_{c}^{1}(\mathcal{G})$. 
Also we denote the mass and energy for the line case by
\begin{align*}
	M^\mathrm{line}(f)
		&:= \|f\|_{L^2(\mathbb{R})}^2, 
	\\
	E_\gamma^\mathrm{line}(f)
		&:= \frac{1}{2}\|\partial_x f\|_{L^2(\mathbb{R})}^2 + \gamma |f(0)|^{2}- \frac{1}{p+1}\|f\|_{L^{p+1}(\mathbb{R})}^{p+1},
\end{align*}
for $f:\R\to\C$ and $\gam\in\R$. Then we have the following.
\begin{corollary}[Focusing without frequency $\omega$]
\label{Cor: Non-radial case}
Let $p > 5$, $\gamma \geq 0$, $\mu = -1$, and $s_c= \frac 12 -\frac{2}{p-1}$. 
Let $\bm{u}$ be a solution to \eqref{NLS} with initial data $\bm{u_0} \in H_c^1(\mathcal{G})$. We assume $M(\bm{u_{0}})^{(1-s_c)/s_c}E_\gamma(\bm{u_{0}}) < M^\mathrm{line}(Q)^{(1-s_c)/s_c}E_0^\mathrm{line}(Q)$. 
Then we have the following results:
\begin{itemize}
\item (Scattering)
If $\bm{u_0}$ satisfies $$\| \bm{u_{0}}\|_{L^{2}(\mathcal{G})}^{1-s_c}\| \bm{u_{0}}\|_{\dot{H}_{\gamma}^{1}(\mathcal{G})}^{s_c} < \|Q\|_{L^{2}(\mathbb{R})}^{1-s_c}\|Q\|_{\dot{H}^{1}(\mathbb{R})}^{s_c},$$ then the solution $\bm{u}$ to \eqref{NLS} scatters in both time directions.
\item (Blow-up or grow-up)
If $\bm{u_0}$ satisfies $$\| \bm{u_{0}}\|_{L^{2}(\mathcal{G})}^{1-s_c}\| \bm{u_{0}}\|_{\dot{H}_{\gamma}^{1}(\mathcal{G})}^{s_c} > \|Q\|_{L^{2}(\mathbb{R})}^{1-s_c}\|Q\|_{\dot{H}^{1}(\mathbb{R})}^{s_c},$$ then the solution $\bm{u}$ to \eqref{NLS} blows up or grows up in both time directions. Additionally, if $x\bm{u_0} \in L^2(\mathcal{G})$, the solution $\bm{u}$ blows up in both time directions. 
\end{itemize}
\end{corollary}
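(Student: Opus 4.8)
The plan is to deduce Corollary~\ref{Cor: Non-radial case} from Theorem~\ref{Th: Non-radial case} by optimizing over the frequency $\omega$: the scale–invariant mass–energy hypothesis is converted into the action threshold $S_{\omega,\gamma}(\bm{u_0})<\mathfrak{n}_{\omega,0}^{\li}$ for a well–chosen $\omega$, and the scale–invariant gradient hypothesis is converted into the sign of $K_\gamma(\bm{u_0})$. The starting point is the scaling law for the minimizer. Since $Q_\omega(x)=\omega^{1/(p-1)}Q(\sqrt{\omega}\,x)$ attains $\mathfrak{n}_{\omega,0}^{\li}$, comparing homogeneities gives $M^{\li}(Q_\omega)=\omega^{-s_c}M^{\li}(Q)$, $E_0^{\li}(Q_\omega)=\omega^{1-s_c}E_0^{\li}(Q)$, and therefore $\mathfrak{n}_{\omega,0}^{\li}=\omega^{1-s_c}\mathfrak{n}_{1,0}^{\li}$. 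The Pohozaev and Nehari identities for $Q$ then yield the normalizations $\mathfrak{n}_{1,0}^{\li}=\|Q\|_{\dot H^1(\mathbb{R})}^2$, $E_0^{\li}(Q)=s_c\|Q\|_{\dot H^1(\mathbb{R})}^2$, and $M^{\li}(Q)=2(1-s_c)\|Q\|_{\dot H^1(\mathbb{R})}^2$, which I will use to identify constants. Throughout I write $h(\bm{u_0}):=\|\bm{u_0}\|_{L^2(\mathcal{G})}^{1-s_c}\|\bm{u_0}\|_{\dot H_\gamma^1(\mathcal{G})}^{s_c}$ and $h(Q):=\|Q\|_{L^2(\mathbb{R})}^{1-s_c}\|Q\|_{\dot H^1(\mathbb{R})}^{s_c}$.

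\emph{First step (mass--energy $\Rightarrow$ action threshold).} Writing $S_{\omega,\gamma}(\bm{u_0})=E_\gamma(\bm{u_0})+\tfrac{\omega}{2}M(\bm{u_0})$, the condition $S_{\omega,\gamma}(\bm{u_0})<\mathfrak{n}_{\omega,0}^{\li}$ is equivalent to $E_\gamma(\bm{u_0})<g(\omega)$, where
\[
g(\omega):=\mathfrak{n}_{\omega,0}^{\li}-\tfrac{\omega}{2}M(\bm{u_0})=\omega^{1-s_c}\mathfrak{n}_{1,0}^{\li}-\tfrac{\omega}{2}M(\bm{u_0}).
\]
Since $0<1-s_c<1$ for $p>5$, the function $g$ attains a positive maximum at $\omega_\ast=\big[2(1-s_c)\mathfrak{n}_{1,0}^{\li}/M(\bm{u_0})\big]^{1/s_c}$, with $\max_\omega g=s_c\,\omega_\ast^{1-s_c}\mathfrak{n}_{1,0}^{\li}$. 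Using the normalizations above, a short computation gives $M(\bm{u_0})^{(1-s_c)/s_c}\max_\omega g=M^{\li}(Q)^{(1-s_c)/s_c}E_0^{\li}(Q)$. Thus the mass--energy hypothesis of the Corollary is exactly $E_\gamma(\bm{u_0})<\max_\omega g$, which yields $S_{\omega_\ast,\gamma}(\bm{u_0})<\mathfrak{n}_{\omega_\ast,0}^{\li}$; this is the first hypothesis of Theorem~\ref{Th: Non-radial case} with $\omega=\omega_\ast$.

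\emph{Second step (gradient $\Rightarrow$ sign of $K_\gamma$).} Here I invoke the sharp Gagliardo--Nirenberg inequality on $\mathcal{G}$, whose best constant $C_{GN}$ coincides with the one on $\mathbb{R}$ attained by $Q$ (this underlies Proposition~\ref{Th:Minimization problem}; see Section~\ref{secGN}). For the scattering case, assume $h(\bm{u_0})<h(Q)$. Since $\gamma\ge0$ forces $\|\partial_x\bm{u_0}\|_{L^2(\mathcal{G})}\le\|\bm{u_0}\|_{\dot H_\gamma^1(\mathcal{G})}$, the gradient quantity computed with $\|\partial_x\cdot\|_{L^2}$ is also below that of $Q$, and inserting the sharp inequality into
\[
K_\gamma(\bm{u_0})\ge 2\|\partial_x\bm{u_0}\|_{L^2(\mathcal{G})}^2\Big(1-\tfrac{p-1}{2(p+1)}C_{GN}\big(\|\bm{u_0}\|_{L^2(\mathcal{G})}^{1-s_c}\|\partial_x\bm{u_0}\|_{L^2(\mathcal{G})}^{s_c}\big)^{p-1}\Big)
\]
gives $K_\gamma(\bm{u_0})>0$. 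For the blow-up case, I instead use the identity (valid for $p>5$, $\gamma\ge0$), obtained by eliminating the $L^{p+1}$--norm through $E_\gamma$ and bounding the boundary term from above via its favorable sign,
\[
K_\gamma(\bm{u_0})\le (p-1)E_\gamma(\bm{u_0})-\tfrac{p-5}{2}\|\bm{u_0}\|_{\dot H_\gamma^1(\mathcal{G})}^2.
\]
Multiplying by $M(\bm{u_0})^{(1-s_c)/s_c}$, inserting the mass--energy bound together with $M^{\li}(Q)^{(1-s_c)/s_c}E_0^{\li}(Q)=s_c\,h(Q)^{2/s_c}$ and the identity $(p-1)s_c=\tfrac{p-5}{2}$, I obtain $M(\bm{u_0})^{(1-s_c)/s_c}K_\gamma(\bm{u_0})<\tfrac{p-5}{2}\big(h(Q)^{2/s_c}-h(\bm{u_0})^{2/s_c}\big)$, which is negative exactly when $h(\bm{u_0})>h(Q)$.

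\emph{Conclusion and main obstacle.} With both hypotheses of Theorem~\ref{Th: Non-radial case} verified, the scattering, blow-up, and grow-up conclusions (including the refinement under $x\bm{u_0}\in L^2(\mathcal{G})$) follow at once. I expect the delicate point to be the second step: the virial functional $K_\gamma$ weights $\|\partial_x\cdot\|_{L^2}^2$ and the boundary term $N\gamma|u_1(0)|^2$ with coefficients different from those of $\|\cdot\|_{\dot H_\gamma^1(\mathcal{G})}$ appearing in the statement, so one must use the signs $\gamma\ge0$ and $p>5$ to control this mismatch in both directions and to ensure the constants line up precisely with $h(Q)$ rather than a shifted threshold. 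Once the scaling law for $\mathfrak{n}_{\omega,0}^{\li}$ and the normalizations for $Q$ are in place, the frequency optimization of the first step is routine calculus.
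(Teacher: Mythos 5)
Your proposal is correct and follows essentially the same route as the paper: the frequency optimization in your first step is exactly the tangent-line/convexity argument of Proposition \ref{prop2.4.0} and Corollary \ref{cor2.5}, and your second step reproves precisely the two implications of Proposition \ref{prop2.6.0} (via the sharp Gagliardo--Nirenberg constant of Lemma \ref{lem2.2} and the energy identity) needed to feed the hypotheses of Theorem \ref{Th: Non-radial case}. The constants check out ($E_0^{\mathrm{line}}(Q)=s_c\|\partial_xQ\|_{L^2}^2$, $(p-1)s_c=\tfrac{p-5}{2}$), so the reduction is complete.
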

In the defocusing case, we have the scattering result.
\begin{theorem}[Defocusing case]
\label{thm1.11}
Let $p > 5$, $\gamma \geq 0$, and $\mu = 1$.
Let $\bm{u}$ be a solution to \eqref{NLS} with initial data $\bm{u_0} \in H_c^1(\mathcal{G})$.
Then, the solution $\bm{u}$ to \eqref{NLS} scatters in both time directions.
\end{theorem}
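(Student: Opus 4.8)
The plan is to follow the same concentration--compactness and rigidity scheme used for the focusing scattering result (Theorem \ref{Th: Non-radial case}), noting that the defocusing case simplifies considerably: there is no ground-state threshold, the global dynamics is automatically confined, and the rigidity step reduces to a virial identity with a manifestly favorable sign.

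First I would record global existence together with a uniform $H^1$-bound. Since $\mu=1$ and $\gam\ge0$, every term in $E_\gam(\bm u)=\tfrac12\nor{\rd_x\bm u}{L^2(\boG)}^2+\tfrac{N\gam}{2}|u_1(0)|^2+\tfrac{1}{p+1}\nor{\bm u}{L^{p+1}(\boG)}^{p+1}$ is nonnegative, so $\nor{\rd_x\bm u(t)}{L^2(\boG)}^2\le 2E_\gam(\bm u_0)$; together with conservation of $M$ this gives $\sup_t\nor{\bm u(t)}{H^1(\boG)}<\infty$, and the blow-up alternative forces $T_{\max}=\infty$, $T_{\min}=-\infty$. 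Scattering is then equivalent to the finiteness of a global Strichartz-type norm of $\bm u$ on $\R\times\boG$; this standard reduction rests on Strichartz estimates for $e^{it\Dgg}$, which I would transfer from the line/half-line estimates through the symmetric decomposition of $H_c^1(\boG)$, together with the small-data theory and a long-time perturbation (stability) lemma.

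Second, I would run the concentration--compactness argument. Assuming scattering fails, one inducts on the energy (tracking the mass) exactly as in the focusing case to isolate a critical level, and then applies the linear profile decomposition on $\boG$ --- the paper's main technical device --- together with the associated nonlinear profiles and the stability lemma to produce a global, non-scattering critical solution $\bm u_c$ whose trajectory $\{\bm u_c(t):t\in\R\}$ is precompact in $H_c^1(\boG)$ modulo the admissible symmetries. Profiles that escape to spatial infinity along an edge feel neither the vertex coupling nor the $\gam$-interaction and, after translation and odd/even reflection, reduce to the free defocusing equation on $\R$, which scatters; hence no such profile can carry the critical element, forcing $\bm u_c$ to stay concentrated near the vertex. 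This yields tightness: for every $\eta>0$ there is $R>0$ with $\sup_t\sum_{k}\nor{u_{c,k}(t)}{L^2(R,\infty)}^2<\eta$.

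Third, I would close the argument by a localized virial identity. For the untruncated weight $x^2$, integrating by parts twice and using the continuity condition $u_{c,1}(0)=\dots=u_{c,N}(0)$ together with the vertex relation $\sum_k u_{c,k}'(0)=N\gam u_{c,1}(0)$ gives the positive identity
\[ \frac{d^2}{dt^2}\sum_{k=1}^N\int_0^\infty x^2|u_{c,k}|^2\,dx = 8\nor{\rd_x\bm u_c}{L^2(\boG)}^2+\frac{4(p-1)}{p+1}\nor{\bm u_c}{L^{p+1}(\boG)}^{p+1}+2N\gam|u_{c,1}(0)|^2, \]
where the vertex term is nonnegative precisely because $\gam\ge0$ (and vanishes in the Kirchhoff case). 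Precompactness shows $\inf_t\nor{\rd_x\bm u_c(t)}{L^2(\boG)}>0$, since otherwise a subsequential $H^1$-limit would be edge-constant, hence $0$, contradicting $M(\bm u_c)>0$. Working with the truncation $V_R(t)=\sum_k\int_0^\infty\chi(x/R)x^2|u_{c,k}(t)|^2\,dx$, tightness makes the commutator errors supported in $R\le x\le 2R$ arbitrarily small, so $V_R''(t)\ge c>0$ uniformly, while $|V_R'(t)|\lesssim R\,\sup_t\nor{\bm u_c(t)}{H^1(\boG)}^2$ stays bounded; integrating over $[0,T]$ yields $cT\lesssim R$ for all $T$, which is absurd. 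Hence $\bm u_c\equiv0$, a contradiction, and scattering holds.

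I expect the main obstacle to be the rigidity's boundary bookkeeping: correctly isolating the vertex contribution in the virial computation, confirming its favorable sign uniformly for all $\gam\ge0$ (including the delicate Kirchhoff case $\gam=0$, where it degenerates), and verifying that the truncation errors are genuinely absorbed by the tightness of the precompact orbit. A secondary technical burden is transferring the full Strichartz and stability toolbox to $\boG$ so that the profile-decomposition machinery applies verbatim; this again hinges on the symmetric decomposition underlying the linear profile decomposition.
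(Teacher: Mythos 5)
Your proposal is correct and follows essentially the route the paper itself prescribes: the authors omit the proof of Theorem \ref{thm1.11} precisely because it runs the same concentration--compactness and rigidity scheme as the focusing case, with the simplifications you identify (coercive conserved energy giving global bounds, no ground-state threshold, and a virial quantity of definite sign since every term in $4\|\rd_x\bm{u}\|_{L^2(\boG)}^2\cdot 2+4N\gam\,\mathscr{X}_R''(0)|u_1(0)|^2/2+\tfrac{2(p-1)}{p+1}\int\mathscr{X}_R''|\bm{u}|^{p+1}$ is nonnegative for $\mu=1$, $\gam\ge0$). The only slip is the constant in your vertex term (Lemma \ref{Generalized virial identities} with $\mathscr{X}''(0)=2$ gives $4N\gam|u_{1}(0)|^2$ rather than $2N\gam|u_{1}(0)|^2$), which does not affect the sign or the argument.
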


We mention the case under restriction of radial symmetry. We define the energy space with radial symmetry by
$$
H^1_\text{rad} (\boG) := \{ \bof = (f_k)_{k=1}^N\in H^1_c(\boG) : f_1=\cdots =f_N\}.
$$
We define
\begin{align*}
	\mathfrak{r}_{\omega,\gamma}
		& := \inf\{S_{\omega,\gamma}(\bm{f}) : \bm{f} \in H_\mathrm{rad}^1(\mathcal{G})\setminus\{\bm{0}\},\ K_{\gamma}(\bm{f}) = 0\}.
\end{align*}
We note that $\mathfrak{r}_{\om,\gam} > \mathfrak{n}_{\om,\gam}$ and there exists an excited state which attains $\mathfrak{r}_{\om,\gam}$ (see \cite{AdaCacFinNoj14}). Then we have the following result.

\begin{theorem}[Focusing and Radial case]\label{Th: Radial case}
Let $p > 5$, $\gamma \geq 0$, $\mu = -1$, and $\omega > 0$.
Let $\bm{u}$ be a solution to \eqref{NLS} with initial data $\bm{u_0} \in H_\text{rad}^1(\mathcal{G})$, where we note that $\bm{u}(t)  \in H_\text{rad}^1(\mathcal{G})$.
We assume that $S_{\omega,\gamma}(\bm{u_0}) <\mathfrak{r}_{\omega,\gamma}$ for some $\omega > 0$.
\begin{itemize}
\item (Scattering)
If $\bm{u_0}$ satisfies $K_\gamma(\bm{u_0}) \geq 0$, then the solution $\bm{u}$ to \eqref{NLS} scatters in both time directions.
\item (Blow-up or grow-up)
If $\bm{u_0}$ satisfies $K_\gamma(\bm{u_0}) < 0$, then the solution $\bm{u}$ to \eqref{NLS} blows up or grows up in both time directions. Additionally, if $x\bm{u_0} \in L^2(\mathcal{G})$, the solution $\bm{u}$ blows up in both time directions. 
\end{itemize}
\end{theorem}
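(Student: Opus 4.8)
The plan is to exploit the fact that radial data reduce the problem on $\boG$ to a single half-line. If $\bm{u}=(u_1,\dots,u_N)\in H^1_{\mathrm{rad}}(\boG)$, then $u_1=\cdots=u_N=:v$, the vertex condition collapses to the Robin condition $v'(0+)=\gamma v(0)$, and the even extension $w(x):=v(|x|)$ solves the NLS on $\R$ with a repulsive delta potential concentrated at the origin. Under this identification $\nor{\bm{u}}{L^2(\boG)}^2=\tfrac N2\nor{w}{L^2(\R)}^2$, and similarly for the $\dot H^1$, $L^{p+1}$ and boundary contributions, so that
\[
S_{\omega,\gamma}(\bm{u})=\tfrac{N}{2}\,S_{\omega,\gamma}^{\li}(w),\qquad K_\gamma(\bm{u})=\tfrac{N}{2}\,K_\gamma^{\li}(w).
\]
Hence the radial threshold $\mathfrak r_{\omega,\gamma}$ corresponds to the even-constrained minimization for the line-with-delta problem, which is attained by the symmetric excited state from \cite{AdaCacFinNoj14}. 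All of the analysis is carried out inside the flow-invariant subspace $H^1_{\mathrm{rad}}(\boG)$. As a first step I would record the variational dichotomy: since $S_{\omega,\gamma}$ and $M$ are conserved, a standard potential-well argument shows that below $\mathfrak r_{\omega,\gamma}$ the sets $\{K_\gamma\ge0\}$ and $\{K_\gamma<0\}$ are separately invariant, with a uniform gap; on the stable side $K_\gamma(\bm{u}(t))\gtrsim 1$ yields an a priori $H^1$ bound and global existence, while on the unstable side $K_\gamma(\bm{u}(t))\le-\delta<0$ for all $t$.

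For the blow-up/grow-up part the key computation is the virial identity on the graph,
\[
\frac{d^2}{dt^2}\nor{x\bm{u}(t)}{L^2(\boG)}^2=4K_\gamma(\bm{u}(t)).
\]
The only delicate point in deriving this is the contribution of the delta interaction at the vertex; integrating by parts on each edge, it equals exactly $4N\gamma|u_1(0)|^2$, i.e.\ precisely the boundary term already built into $K_\gamma$, so no extra terms survive. Combined with $K_\gamma(\bm{u}(t))\le-\delta$, the identity makes $\nor{x\bm{u}(t)}{L^2(\boG)}^2$ strictly concave, which forces finite-time blow-up in both directions whenever $x\bm{u_0}\in L^2(\boG)$. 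In the general case I would instead use a cut-off virial weight to obtain $V_R''\le 4K_\gamma+CR^{-2}(\dots)\le -2\delta$ on compact time intervals, which is incompatible with a globally bounded $H^1$-norm and hence yields blow-up or grow-up.

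For the scattering part I would run the concentration–compactness–rigidity scheme in the radial subspace. After small-data scattering and a stability/perturbation theory (built on the Strichartz estimates for $e^{it\Dgg}$, which for $\gamma\ge0$ are as strong as the free ones by repulsivity), the core ingredient is a linear profile decomposition for radial data. This is considerably simpler than the general symmetric decomposition used for Theorem \ref{Th: Non-radial case}: each profile either concentrates at the vertex, where it evolves by the (still dispersive) delta flow, or escapes to spatial infinity synchronously along all edges, where it evolves by the free line flow. Assuming scattering failed below $\mathfrak r_{\omega,\gamma}$, the usual argument would extract a critical element of minimal action whose orbit is precompact in $H^1_{\mathrm{rad}}(\boG)$; since the delta is pinned at the vertex there is no translation invariance, so no spatial modulation parameter is needed and the orbit is genuinely compact. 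The virial identity together with the coercivity $K_\gamma\gtrsim 1$ on the stable side then rules out such a nonzero compact solution, giving the contradiction that establishes scattering.

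The main obstacle I expect is the profile decomposition together with the exclusion of escaping profiles. Because the delta breaks translation invariance, the decomposition splits into a part bound near the vertex and parts traveling to infinity, and one must show that any symmetric escaping profile sees the free focusing line flow and therefore must carry at least the line soliton action on each of the $N$ edges — more than is available below $\mathfrak r_{\omega,\gamma}$. It is precisely the attainment of $\mathfrak r_{\omega,\gamma}$ by the excited state, and its strict position above the non-radial threshold $\mathfrak n_{\omega,\gamma}=\mathfrak n^{\li}_{\omega,0}$, that makes $\mathfrak r_{\omega,\gamma}$ the sharp radial threshold. A secondary difficulty is the grow-up alternative, where the absence of finite variance forces the localized-virial analysis described above.
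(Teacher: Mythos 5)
Your proposal is correct, and its decisive first step --- identifying $H^1_{\mathrm{rad}}(\boG)$ with even functions on $\R$ satisfying the repulsive delta condition $w'(0+)-w'(0-)=2\gamma w(0)$, under which $M$, $E_\gamma$, $S_{\omega,\gamma}$ and $K_\gamma$ all acquire the same factor $N/2$ and $\mathfrak{r}_{\omega,\gamma}$ becomes the attained even threshold for the line-with-delta problem --- is in fact the paper's \emph{entire} proof: the authors record this reduction in one sentence and then invoke the known scattering/blow-up dichotomy for the NLS with a repulsive Dirac delta potential from \cite{IkeInu17}. Everything after your first paragraph (potential-well invariance, the virial and localized-virial computations, the radial/even profile decomposition, extraction of a critical element, rigidity) is a re-derivation of that cited result; it is the standard Kenig--Merle scheme, matches what \cite{IkeInu17} actually does on the line, and is sound in outline, so there is no gap, only redundancy. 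One small imprecision worth flagging: the remark that ``no spatial modulation parameter is needed and the orbit is genuinely compact'' is misleading as stated, since escaping profiles do occur in the even/radial profile decomposition (symmetric bumps with $y_n\to\infty$ on all $N$ edges); they are eliminated not by the absence of translation invariance but by the action count you give at the end --- a non-scattering escaping radial profile would have to carry $N$ identical copies each at or above the free line threshold $\mathfrak{n}^{\li}_{\omega,0}$, which exceeds $\mathfrak{r}_{\omega,\gamma}$ --- so your closing paragraph does close the logic.
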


%




At the end of this subsection, let us mention some previous related results. 
The existence of standing waves for \eqref{NLS} on star graphs is studied in \cite{AdaCacFinNoj14}, where not only the ground states, but also the excited states are constructed explicitly (see also \cite{
ACFN14}). Also the orbital stability/instability of these solutions is investigated in detail via variational method or so-called the extension theory \cite{AdaCacFinNoj14, ACFN14, AdaCacFinNoj16, 
KaiPel18SS, KaiPel18HS, AngGol18Ins, AngGol18, Kai19, GolOht20}. 
The corresponding study has been extended to various types of metric graphs, such as tadpole graphs \cite{CacFinNoj15, NojPelSha15, NojPel20}, periodic graphs \cite{PelSch17, Pan18, Dov19}, compact metric graphs \cite{Dov18, CacDovSer18, DovGhiMicPis20}, and other general graphs \cite{AdaSerTil15, SerTen16, AdaSerTil16, DovSerTil20, BerMerPel21}, where it is revealed that the topology of graphs makes influence on the existence and stability, and a rich variety of structures can be observed. The dispersive estimate for the Schr\"odinger propagator on metric graphs has been established in \cite{BanIgn11, BanIgn14, GreIgn19}, which immediately implies the scattering of small solutions of \eqref{NLS} with short-range nonlinearities. In the case of a special long-range nonlinearity $p=3$, \cite{AIMMU20} constructs a solution with modified scattering asymptotics, which is an example of non-scattering solutions. It is also shown by \cite{AokInuMiz21} that no solution scatters to a standing wave for the long-range nonlinearities. 

\subsection{Idea of proof and our contribution}

The proof of the scattering part in Theorem \ref{Th: Non-radial case} is  based on a concentration compactness and rigidity argument by Kenig and Merle \cite{KenMer06}. 
One of the crucial step is to establish so-called linear profile decomposition. 
The basic philosophy is to detect all the non-compact factors of $e^{it\Dgg}$ and 
to decompose all the bounded sequences in $H^1_c(\boG)$ into profiles which moves different speeds of space-time \textit{shift}. 
For example, 
in the whole line case, 
any bounded sequence $\{\varphi_{n}\}$ in $H^{1}(\mathbb{R})$ is decomposed, by taking  subsequence if necessary, into 
\begin{align}
\label{eq0627-1}
	\varphi_{n} = \sum_{j=1}^{J} e^{it_{n}^{j}\rd_{xx}} \tau_{x_{n}^{j}} \psi^{j} + w_{n}^{J},
\end{align}
where $t_{n}^{j}, x_{n}^{j} \in \mathbb{R}$,  $\psi^{j}, w_{n}^{J} \in H^{1}(\mathbb{R})$, $\tau_{x_{n}^{j}}\psi^{j}=\psi^{j}(\cdot-x_{n}^{j})$,
and $w_n^J$ converges weakly to $0$ as $n\to\infty$. 
In the case of star graph, however, 
it is not clear how to represent spatial translation, and accordingly to 
describe spatial non-compactness in the profile decomposition. 
The novelty of the present paper is 
to give a quite systematic representation of profiles, 
based on a new idea of decomposition of functions on graphs. 
In more detail, we introduce the operator $\mathcal{Q}$ which maps 
a function in $H^1_c(\boG)$ to a family of $N-1$ odd functions and one even function on line. 
A remarkable feature is that this decomposition \textit{commutes} with the Schr\"odinger propagators; 
$\mathcal{Q}e^{it\Dgg} \bof = e^{it\Delta_{\mathbb{R}}^{\gamma}} \mathcal{Q}\bof$ for $\bof\in H^1_c(\boG)$ and $t\in\R$
, so that our linear profile decomposition follows largely from the corresponding results on line. \par
We remark that 
\cite{ACFN14} also establishes the concentration-compactness lemma for the purpose of optimization problem for the Gagliardo-Nirenberg inequality, where 
the runaway profiles are 
constructed from single function with cut-off around the origin. 
Compared to that, 
our approach is better suited to the Laplace operator and accordingly the Schr\"odinger propagator. 
For this reason, the idea in the present article may well be applied to other studies on star graphs. 

The blow-up part in Theorem \ref{Th: Non-radial case} can be shown by the similar method to \cite{AkaNaw13} and thus we omit the proof. Moreover, the scattering result in the defocusing case, Theorem \ref{thm1.11}, can be proven in the same way as the scattering part in Theorem \ref{Th: Non-radial case}. Thus we also omit the proof. Theorem \ref{Th: Radial case} follows directly from \cite{IkeInu17} since radial functions on the star graph is reduced to even functions on $\mathbb{R}$.

%
%


\subsection{Remarks and organization of the paper}

We sometimes omit the domains of norms, for example $\|\bm{f}\|_{L^{q}} := \|\bm{f}\|_{L^{q}(\mathcal{G})}$ and $\|f\|_{L^{q}} := \|f\|_{L^{q}(\mathbb{R})}$, if it is obvious. 

In what follows, we only consider the case $N=3$ for simplicity, while the similar argument also works in the general case. As stated before, we only give the proof in the focusing case ($\mu=-1$), and thus we set $\mathcal{N}:=\mathcal{N}_{-1}$. 

This paper is organized as follows. 
In Section 2, we discuss the variational aspect of \eqref{NLS}, including the optimization of the Gagliardo-Nirenberg inequality and the potential well theory. 
In Section 3, we prepare some technical lemmas, such as properties on linear solutions and localized virial identities. 
In Section 4, we introduce linear profile decomposition for Schr\"odinger propagator and give the proof. 
In Section 5, the corresponding nonlinear profile decompositions are discussed. 
The proof of scattering part of the main theorem is given in Section 6, where we apply concentration compactness and rigidity argument. 
We state the result in the case of radial symmetry. Actually, there are many symmetries for functions on a star graph. We discuss them in Appendix \ref{appA}.

\section{Variational argument}\label{Sec:Var}

\subsection{The Gagliardo--Nirenberg inequality}
\label{secGN}

We consider the Gagliardo--Nirenberg type inequalities:
\begin{align}
\label{G-N inequality}
	&\|\bm{f}\|_{L^{p+1}(\mathcal{G})}^{p+1}
		\leq C_\mathrm{GN}(\gamma)\|\bm{f}\|_{L^2(\mathcal{G})}^\frac{p+3}{2}
		\|\bm{f}\|_{\dot{H}_\gamma^1(\mathcal{G})}^\frac{p-1}{2},
		 \quad (\bm{f} \in H_c^1(\mathcal{G})), \\ \notag
	&\|g\|_{L^{p+1}(\mathbb{R})}^{p+1}
		\leq C_\mathrm{GN}^\mathrm{line}\|g\|_{L^2(\mathbb{R})}^\frac{p+3}{2}\|\partial_xg\|_{L^2(\mathbb{R})}^\frac{p-1}{2},  \quad (g \in H^1(\mathbb{R})),
\end{align}
where $C_\mathrm{GN}(\gamma)$ and $C_\mathrm{GN}^\mathrm{line}$ are the best constants respectively.

\begin{lemma}
\label{lem2.1}
Let $p > 1$ and $\gamma > 0$. Then we have
$C_\mathrm{GN}(\gamma) = C_\mathrm{GN}(0)$.
\end{lemma}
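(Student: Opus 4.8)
The plan is to realize both best constants as suprema of scale-invariant quotients and to prove the two inequalities separately. For $\bm{f}\in H_c^1(\mathcal{G})\setminus\{\bm{0}\}$ write
\[
R_\gamma(\bm{f}):=\frac{\|\bm{f}\|_{L^{p+1}(\mathcal{G})}^{p+1}}{\|\bm{f}\|_{L^2(\mathcal{G})}^{(p+3)/2}\,\|\bm{f}\|_{\dot{H}^1_\gamma(\mathcal{G})}^{(p-1)/2}},
\]
and let $R_0(\bm{f})$ denote the same quotient with $\gamma=0$, so that the denominator is built from $\|\bm{f}\|_{\dot{H}^1_0(\mathcal{G})}^2=\|\partial_x\bm{f}\|_{L^2(\mathcal{G})}^2$. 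By definition of the best constant, $C_\mathrm{GN}(\gamma)=\sup_{\bm{f}}R_\gamma(\bm{f})$ and $C_\mathrm{GN}(0)=\sup_{\bm{f}}R_0(\bm{f})$, and we recall $\|\bm{f}\|_{\dot{H}^1_\gamma(\mathcal{G})}^2=\|\partial_x\bm{f}\|_{L^2(\mathcal{G})}^2+N\gamma|f_1(0)|^2$.

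First I would dispose of $C_\mathrm{GN}(\gamma)\le C_\mathrm{GN}(0)$. Since $\gamma>0$ and $N\gamma|f_1(0)|^2\ge 0$, we have $\|\bm{f}\|_{\dot{H}^1_\gamma(\mathcal{G})}\ge\|\bm{f}\|_{\dot{H}^1_0(\mathcal{G})}$ for every $\bm{f}\in H_c^1(\mathcal{G})$, hence $R_\gamma(\bm{f})\le R_0(\bm{f})$ pointwise; taking the supremum gives the inequality. This direction is immediate.

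The substantive direction is $C_\mathrm{GN}(\gamma)\ge C_\mathrm{GN}(0)$, and the idea is a concentrating dilation that keeps the junction value $f_1(0)$ fixed while driving the kinetic energy to infinity, so that the repulsive boundary contribution $N\gamma|f_1(0)|^2$ becomes asymptotically negligible. For fixed $\bm{f}\ne\bm{0}$ and $\lambda>0$ set $\bm{f}_\lambda(x):=\bm{f}(\lambda x)$; then $\bm{f}_\lambda\in H_c^1(\mathcal{G})$ with $f_{\lambda,1}(0)=f_1(0)$, and a change of variables gives $\|\bm{f}_\lambda\|_{L^q(\mathcal{G})}^q=\lambda^{-1}\|\bm{f}\|_{L^q(\mathcal{G})}^q$ and $\|\partial_x\bm{f}_\lambda\|_{L^2(\mathcal{G})}^2=\lambda\|\partial_x\bm{f}\|_{L^2(\mathcal{G})}^2$. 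A one-line exponent check shows that the $\lambda$–powers cancel ($-1+\tfrac{p+3}{4}-\tfrac{p-1}{4}=0$), so $R_0$ is dilation invariant: $R_0(\bm{f}_\lambda)=R_0(\bm{f})$. On the other hand,
\[
\|\bm{f}_\lambda\|_{\dot{H}^1_\gamma(\mathcal{G})}^2
=\lambda\|\partial_x\bm{f}\|_{L^2(\mathcal{G})}^2+N\gamma|f_1(0)|^2
=\lambda\|\partial_x\bm{f}\|_{L^2(\mathcal{G})}^2\Bigl(1+\tfrac{N\gamma|f_1(0)|^2}{\lambda\|\partial_x\bm{f}\|_{L^2(\mathcal{G})}^2}\Bigr),
\]
and since $\|\partial_x\bm{f}\|_{L^2(\mathcal{G})}>0$ for $\bm{f}\ne\bm{0}$ (components in $L^2(0,\infty)$ cannot be nonzero constants), the parenthesis tends to $1$ as $\lambda\to\infty$. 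Hence $R_\gamma(\bm{f}_\lambda)=R_0(\bm{f}_\lambda)\bigl(\|\bm{f}_\lambda\|_{\dot{H}^1_0}/\|\bm{f}_\lambda\|_{\dot{H}^1_\gamma}\bigr)^{(p-1)/2}\to R_0(\bm{f})$. As each $R_\gamma(\bm{f}_\lambda)\le C_\mathrm{GN}(\gamma)$, passing to the limit gives $R_0(\bm{f})\le C_\mathrm{GN}(\gamma)$ for every $\bm{f}$, whence $C_\mathrm{GN}(0)\le C_\mathrm{GN}(\gamma)$. Combining the two inequalities proves $C_\mathrm{GN}(\gamma)=C_\mathrm{GN}(0)$.

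The computations here are routine; the conceptual heart — and the only place requiring care — is the choice of scaling. Pure dilation $\bm{f}(\lambda\cdot)$ leaves the scale-invariant quotient $R_0$ untouched yet leaves $f_1(0)$ unchanged, so sending $\lambda\to\infty$ renders the boundary term irrelevant. This is exactly the mechanism behind the non-attainment recorded in Proposition~\ref{Th:Minimization problem}: the junction cannot improve the best constant. Should one wish to avoid scaling, an alternative is to test with profiles supported on a single edge and translated away from the vertex, for which $f_1(0)=0$ exactly and $R_\gamma=R_0$; combined with the symmetric-rearrangement identity $C_\mathrm{GN}(0)=C_\mathrm{GN}^\mathrm{line}$ available from \cite{AdaCacFinNoj14}, this yields the same conclusion, but the dilation argument is the more economical of the two.
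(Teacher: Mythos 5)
Your proof is correct and follows essentially the same route as the paper: the easy inequality $C_\mathrm{GN}(\gamma)\le C_\mathrm{GN}(0)$ from positivity of the boundary term, and the reverse inequality via the dilation $\bm{f}_\lambda(x)=\bm{f}(\lambda x)$ with $\lambda\to\infty$, under which the quotient is scale-invariant while the junction contribution becomes negligible. The exponent bookkeeping and the remark that $\|\partial_x\bm{f}\|_{L^2(\mathcal{G})}>0$ are both in order, so no further comment is needed.
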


\begin{proof}
Set $\bm{\varphi}_{\lambda}(x):=\bm{\varphi}(\lambda x)$ for $\lambda>0$. For any $\bm{\varphi} \in H_c^1(\mathcal{G})\setminus\{\bm{0}\}$, we have
\begin{align*}
	\frac{1}{C_\mathrm{GN}(\gamma)} 
	&\leq \frac{\|\bm{\varphi}_{\lambda}\|_{L^2(\mathcal{G})}^\frac{p+3}{2} (\| \partial_{x}\bm{\varphi}_{\lambda}\|_{L^2(\mathcal{G})}^{2} + 3\gamma|\varphi_{1}(0)|^{2} )^\frac{p-1}{4}}{\|\bm{\varphi}_{\lambda}\|_{L^{p+1}(\mathcal{G})}^{p+1}}
	\\
	&=\frac{\|\bm{\varphi}\|_{L^2(\mathcal{G})}^\frac{p+3}{2} (\| \partial_{x}\bm{\varphi}\|_{L^2(\mathcal{G})}^{2} + 3\gamma \lambda^{-1}|\varphi_{1}(0)|^{2} )^\frac{p-1}{4}}{\|\bm{\varphi}\|_{L^{p+1}(\mathcal{G})}^{p+1}}
	\\
	& \to \frac{\|\bm{\varphi}\|_{L^2(\mathcal{G})}^\frac{p+3}{2} \| \partial_{x}\bm{\varphi}\|_{L^2(\mathcal{G})}^\frac{p-1}{2}}{\|\bm{\varphi}\|_{L^{p+1}(\mathcal{G})}^{p+1}}
\end{align*}
as $\lambda \to \infty$. Taking the infimum for $\bm{\varphi}$, we get
\begin{align*}
	\frac{1}{C_\mathrm{GN}(\gamma)}  \leq \frac{1}{C_\mathrm{GN}(0)}. 
\end{align*}
The inverse inequality is obvious since $\gamma >0$. 
\end{proof}

\begin{lemma}
\label{lem2.2}
Let $p > 1$ and $\gamma \geq 0$.
Then, $C_\mathrm{GN}(\gamma) = C_\mathrm{GN}^\mathrm{line}$.
\end{lemma}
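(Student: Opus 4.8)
The plan is to reduce to the Kirchhoff case and then establish two inequalities. Since Lemma~\ref{lem2.1} already gives $C_\mathrm{GN}(\gamma)=C_\mathrm{GN}(0)$ for $\gamma>0$, it remains only to prove $C_\mathrm{GN}(0)=C_\mathrm{GN}^\mathrm{line}$, which I would split into the bounds $C_\mathrm{GN}(0)\ge C_\mathrm{GN}^\mathrm{line}$ and $C_\mathrm{GN}(0)\le C_\mathrm{GN}^\mathrm{line}$. Throughout I take $N=3$, as in the rest of the paper.

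For the bound $C_\mathrm{GN}(0)\ge C_\mathrm{GN}^\mathrm{line}$, the idea is that the graph can mimic the line by pushing mass far out along a single edge, where the vertex is invisible. Concretely, I would fix $g\in H^1(\mathbb{R})\setminus\{0\}$ and a cut-off $\chi$ vanishing near the origin, and set $\bm{f}_R:=(\chi\, g(\cdot-R),0,0)\in H^1_c(\mathcal{G})$; the continuity condition at the vertex holds since every component vanishes at $0$. As $R\to\infty$ the cut-off becomes harmless and the restriction to the first half-line captures all of $g$, so each of the three quantities entering the Gagliardo--Nirenberg quotient for $\bm{f}_R$ converges to the corresponding quantity for $g$ on $\mathbb{R}$. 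Hence the graph quotient can be made arbitrarily close to the line quotient for $g$, and optimizing over $g$ gives $1/C_\mathrm{GN}(0)\le 1/C_\mathrm{GN}^\mathrm{line}$.

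For the reverse bound $C_\mathrm{GN}(0)\le C_\mathrm{GN}^\mathrm{line}$, I would argue by symmetrization. First, replacing $\bm{f}=(f_k)$ by $(|f_k|)$ leaves the $L^2$ and $L^{p+1}$ norms unchanged, does not increase $\|\partial_x\bm{f}\|_{L^2(\mathcal{G})}$, and preserves membership in $H^1_c(\mathcal{G})$, so I may assume $\bm{f}\ge 0$. Let $\rho(t):=|\{x\in\mathcal{G}:\bm{f}(x)>t\}|$ and let $\bm{f}^*:\mathbb{R}\to[0,\infty)$ be the even, radially decreasing rearrangement with the same distribution function. Equimeasurability gives $\|\bm{f}^*\|_{L^q(\mathbb{R})}=\|\bm{f}\|_{L^q(\mathcal{G})}$ for every $q$, and the heart of the matter is the P\'olya--Szeg\H{o} inequality $\|\partial_x\bm{f}^*\|_{L^2(\mathbb{R})}\le\|\partial_x\bm{f}\|_{L^2(\mathcal{G})}$. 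Granting this, applying the line inequality in \eqref{G-N inequality} to $\bm{f}^*$ and then reverting the three norms to those of $\bm{f}$ yields the graph inequality with constant $C_\mathrm{GN}^\mathrm{line}$; taking the supremum over $\bm{f}$ gives $C_\mathrm{GN}(0)\le C_\mathrm{GN}^\mathrm{line}$.

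The main obstacle is the P\'olya--Szeg\H{o} step, which I would prove by the coarea formula. Writing $s_1(t),\dots,s_{n(t)}(t)$ for the slopes $|\partial_x\bm{f}|$ at the points of the level set $\{\bm{f}=t\}$, the coarea formula yields $\|\partial_x\bm{f}\|_{L^2(\mathcal{G})}^2=\int_0^\infty\sum_i s_i(t)\,dt$ together with $-\rho'(t)=\sum_i s_i(t)^{-1}$, whereas for the even rearrangement $\|\partial_x\bm{f}^*\|_{L^2(\mathbb{R})}^2=\int_0^\infty 4/(-\rho'(t))\,dt$. By Cauchy--Schwarz $\sum_i s_i\ge n(t)^2/(-\rho'(t))$, so it suffices to check $n(t)\ge 2$ whenever $-\rho'(t)>0$. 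This is the one genuinely graph-theoretic point: since $\bm{f}\ge0$ is continuous at the vertex with common value $a:=f_1(0)$ and each component decays at infinity, for $t<a$ every edge contributes a down-crossing (so $n(t)\ge 3$), while for $t>a$ any edge on which $\bm{f}$ exceeds $t$ must cross the level $t$ at least twice, giving $n(t)\ge 2$. Assembling these estimates completes the proof; the delicate points to verify are the reduction to regular values $t$ (via Sard) and the boundary and decay behaviour underlying the crossing count.
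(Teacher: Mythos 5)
Your proof is correct and follows the same overall strategy as the paper: a runaway sequence supported far out on a single edge gives $C_\mathrm{GN}(0)\ge C_\mathrm{GN}^\mathrm{line}$, and a rearrangement argument gives the reverse bound. The only substantive difference is in how the rearrangement step is organized. The paper rearranges $\bm{f}$ onto the \emph{graph} as a radial decreasing function $\bm{f}^\ast\in H^1_{\mathrm{rad}}(\mathcal{G})$, quotes from \cite{AdaCacFinNoj14} the inequality $\|\partial_x\bm{f}\|_{L^2(\mathcal{G})}\ge \tfrac{2}{3}\|\partial_x\bm{f}^\ast\|_{L^2(\mathcal{G})}$, and then converts radial graph functions into even functions on $\mathbb{R}$, at which point the loss $(2/3)^{(p-1)/2}$ is exactly cancelled by the powers of $3/2$ coming from the norm conversions. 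You instead rearrange directly onto $\mathbb{R}$ as a symmetric decreasing function and prove the needed P\'olya--Szeg\H{o} inequality from scratch via the coarea formula and the crossing count $n(t)\ge 2$; this is the same mechanism underlying the cited $2/3$ inequality (the two rearrangements differ only by the dilation relating two half-lines to three), but your version is self-contained and avoids the constant chasing. Your crossing-count argument is sound: for $t$ below the vertex value each of the three edges contributes a crossing, and for $t$ above it each excursion above level $t$ contributes two, since the component starts below $t$ at the vertex and decays at infinity; the measure-theoretic points you flag (regular values, a.e.\ differentiability of $\rho$) are standard in the rearrangement literature. In the first direction you test with a general $g\in H^1(\mathbb{R})$ and optimize at the end, whereas the paper tests directly with the line optimizer $Q$ and so reads off the constant immediately; both are fine.
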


\begin{proof}
First, we prove $C_\mathrm{GN}(\gamma) \geq C_\mathrm{GN}^\text{line}$.
We consider a sequence $\{\bm{\varphi_{n}}\}\subset H_c^1(\mathcal{G})$ such that 
\begin{align*}
	(\bm{\varphi_{n}})_1(x) =\psi(x) Q(x-n), 
	\quad
	(\bm{\varphi_{n}})_k(x) = 0
\end{align*}
for $k\neq 1$, where $\psi \in C^\infty((0,\infty))$ satisfies $\psi(x)=0$ for $x\in (0,1)$ and $\psi(x)=1$ for $x\in (2,\infty)$. 
We have
\begin{align*}
	\frac{1}{C_\mathrm{GN}(\gamma)}
		\leq \frac{\|\bm{\varphi_{n}}\|_{L^2(\mathcal{G})}^\frac{p+3}{2}\|\bm{\varphi_{n}}\|_{\dot{H}_\gamma^1(\mathcal{G})}^\frac{p-1}{2}}{\|\bm{\varphi_{n}}\|_{L^{p+1}(\mathcal{G})}^{p+1}} 
		\to \frac{\|Q\|_{L^2(\mathbb{R})}^\frac{p+3}{2}\|\partial_xQ\|_{L^2(\mathbb{R})}^\frac{p-1}{2}}{\|Q\|_{L^{p+1}(\mathbb{R})}^{p+1}}
		= \frac{1}{C_\mathrm{GN}^\text{line}}
\end{align*}
as $n \to \infty$. 
Thus, $C_\mathrm{GN}(\gamma) \geq C_\mathrm{GN}^\text{line}$ holds.

Next, we show $C_\mathrm{GN}(\gamma) \leq C_\mathrm{GN}^\text{line}$.
By Lemma \ref{lem2.1}, it suffices to prove that $C_\mathrm{GN}(0) \leq C_\mathrm{GN}^\text{line}$.
If $\bm{f}^\ast$ denotes the symmetric rearrangement of $\bm{f}\in H_c^1(\mathcal{G})$, then $\bm{f}^\ast \in H_{\text{rad}}^1(\mathcal{G})$ and 
\begin{align*}
	\|\bm{f}\|_{L^2(\mathcal{G})}
		= \|\bm{f}^\ast\|_{L^2(\mathcal{G})}, \quad
	\|\partial_x\bm{f}\|_{L^2(\mathcal{G})}
		\geq \tfrac{2}{3}\|\partial_x\bm{f}^\ast\|_{L^2(\mathcal{G})}, \quad
	\|\bm{f}\|_{L^{p+1}(\mathcal{G})}
		= \|\bm{f}^\ast\|_{L^{p+1}(\mathcal{G})}
\end{align*}
hold (see \cite{AdaCacFinNoj14}).
Therefore, it follows that
\begin{align*}
	\frac{1}{C_\mathrm{GN}(0)}
		& = \inf_{\bm{f} \in H_c^1(\mathcal{G})}\frac{\|\bm{f}\|_{L^2(\mathcal{G})}^\frac{p+3}{2}\|\partial_x\bm{f}\|_{L^2(\mathcal{G})}^\frac{p-1}{2}}{\|\bm{f}\|_{L^{p+1}(\mathcal{G})}^{p+1}}
		\geq \left(\frac{2}{3}\right)^\frac{p-1}{2}\inf_{\bm{f} \in H_\text{rad}^1(\mathcal{G})}\frac{\|\bm{f}\|_{L^2(\mathcal{G})}^\frac{p+3}{2}\|\partial_x\bm{f}\|_{L^2(\mathcal{G})}^\frac{p-1}{2}}{\|\bm{f}\|_{L^{p+1}(\mathcal{G})}^{p+1}} \\
		& = \left(\frac{2}{3}\right)^\frac{p-1}{2}\inf_{f \in H_\text{rad}^1(\mathbb{R})}\frac{(3/2)^\frac{p+3}{4}\|f\|_{L^2(\mathbb{R})}^\frac{p+3}{2} \cdot (3/2)^\frac{p-1}{4}\|\partial_xf\|_{L^2(\mathbb{R})}^\frac{p-1}{2}}{(3/2)\|f\|_{L^{p+1}(\mathbb{R})}^{p+1}}
		= \frac{1}{C_\mathrm{GN}^\text{line}},
\end{align*}
which implies $C_\mathrm{GN}^\text{line} \geq C_\mathrm{GN}(0)$.
\end{proof}


\subsection{Potential well sets}

We recall the Pohozaev identity, which can be found in e.g. \cite[Corollary 8.1.3]{Caz03}.

\begin{lemma}[The Pohozaev identity]
\label{lem2.3}
We have 
\begin{align*}
	\frac{1}{p+3}\| Q\|_{L^2(\mathbb{R})}^2 = \frac{1}{p-1}\|\partial_x Q\|_{L^2(\mathbb{R})}^2 = \frac{1}{2(p+1)}\| Q\|_{L^{p+1}(\mathbb{R})}^{p+1} 
\end{align*}
and, in particular, if $p>5$, then
\begin{align*}
	M^{\mathrm{line}}(Q)= \frac{2(p+3)}{p-5} E_{0}^{\mathrm{line}}(Q).
\end{align*}
\end{lemma}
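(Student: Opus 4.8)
The plan is to derive two independent integral identities from the defining equation $-\partial_{xx} Q + Q = Q^p$ and then solve the resulting linear system for the three quantities $\|\partial_x Q\|_{L^2}^2$, $\|Q\|_{L^2}^2$, and $\|Q\|_{L^{p+1}}^{p+1}$. Since $Q$ and its derivatives decay exponentially (evident from the closed form involving $\sech$), every boundary term arising in the integrations by parts below vanishes, so the manipulations are rigorously justified.

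The first identity (of Nehari type) comes from multiplying the equation by $Q$ and integrating over $\R$; integrating $-\partial_{xx}Q\cdot Q$ by parts gives
\begin{equation}
\|\partial_x Q\|_{L^2}^2 + \|Q\|_{L^2}^2 = \|Q\|_{L^{p+1}}^{p+1}. \tag{N}
\end{equation}
The second, genuinely Pohozaev, identity comes from testing against the generator of dilations $x\,\partial_x Q$. Using $\partial_{xx}Q\,\partial_x Q = \tfrac12\partial_x((\partial_x Q)^2)$, $Q\,\partial_x Q = \tfrac12\partial_x(Q^2)$, and $Q^p\,\partial_x Q = \tfrac{1}{p+1}\partial_x(Q^{p+1})$, followed by one integration by parts in each term, I obtain
\begin{equation}
\tfrac12\|\partial_x Q\|_{L^2}^2 - \tfrac12\|Q\|_{L^2}^2 + \tfrac{1}{p+1}\|Q\|_{L^{p+1}}^{p+1} = 0. \tag{P}
\end{equation}

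Writing $A := \|\partial_x Q\|_{L^2}^2$, $B := \|Q\|_{L^2}^2$, and $C := \|Q\|_{L^{p+1}}^{p+1}$, the relations (N) and (P) read $A + B = C$ and $A - B + \tfrac{2}{p+1}C = 0$. The coefficient matrix has full (row) rank, so the ratios $A:B:C$ are determined: eliminating $C$ gives $(p+3)A = (p-1)B$, and substituting back gives $C = \tfrac{2(p+1)}{p-1}A$. These are precisely the three asserted equalities $\tfrac{1}{p+3}B = \tfrac{1}{p-1}A = \tfrac{1}{2(p+1)}C$. For the final claim I substitute $A = \tfrac{p-1}{p+3}B$ and $C = \tfrac{2(p+1)}{p+3}B$ into $E_0^\mathrm{line}(Q) = \tfrac12 A - \tfrac{1}{p+1}C$, which collapses to $E_0^\mathrm{line}(Q) = \tfrac{p-5}{2(p+3)}B = \tfrac{p-5}{2(p+3)}M^\mathrm{line}(Q)$; dividing by the factor $\tfrac{p-5}{2(p+3)}$, which is legitimate exactly when $p>5$, yields $M^\mathrm{line}(Q) = \tfrac{2(p+3)}{p-5}E_0^\mathrm{line}(Q)$.

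I expect no serious obstacle. The only point deserving attention is confirming that the boundary contributions from the dilation multiplier $x\,\partial_x Q$, namely $x(\partial_x Q)^2$, $xQ^2$, and $xQ^{p+1}$ evaluated at $\pm\infty$, vanish, which is immediate from the exponential decay of $Q$. The remaining content is purely the linear algebra of the $2\times 2$ system, whose solvability for the ratios holds for every $p>1$.
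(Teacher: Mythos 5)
Your proof is correct: the Nehari identity (N) and the dilation identity (P) are both derived accurately, the resulting $2\times 2$ system gives $(p+3)A=(p-1)B$ and $C=\tfrac{2(p+1)}{p-1}A$, and the substitution into $E_0^{\mathrm{line}}(Q)=\tfrac12 A-\tfrac1{p+1}C$ correctly yields $E_0^{\mathrm{line}}(Q)=\tfrac{p-5}{2(p+3)}M^{\mathrm{line}}(Q)$. The paper itself gives no proof, merely citing \cite[Corollary 8.1.3]{Caz03}; your argument is precisely the standard derivation behind that reference, so nothing further is needed.
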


Recall that $s_{c}=\frac{1}{2}-\frac{2}{p-1}=\frac{p-5}{2(p-1)}$, and thus $1-s_{c}=\frac{p+3}{2(p-1)}$. 

\begin{proposition}
\label{prop2.4.0}
Let $p > 5$, $\gamma \geq 0$, and $\mu = -1$.
Let $\bm{f} \in H_c^1(\mathcal{G})$. Then the following are equivalent:
\begin{enumerate}
\renewcommand{\theenumi}{(\roman{enumi})}
\item $S_{\omega,\gamma}(\bm{f}) \leq \mathfrak{n}_{\omega,\gamma}(=\mathfrak{n}_{\omega,0}^{\mathrm{line}})$ for some $\omega>0$.
\item $M(\bm{f})^{(1-s_c)/s_c}E_{\gamma}(\bm{f}) \leq M^{\mathrm{line}}(Q)^{(1-s_c)/s_c} E_{0}^{\mathrm{line}}(Q)$.
\end{enumerate}
\end{proposition}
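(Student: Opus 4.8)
The plan is to reduce the existential statement in (i) to a one-variable optimization over the frequency $\omega$, and then to match the resulting optimal constant with the right-hand side of (ii) via the Pohozaev identity. Throughout I may assume $\bm{f}\neq\bm{0}$, since if $\bm{f}=\bm{0}$ both (i) and (ii) hold trivially.

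First I would make $\mathfrak{n}_{\omega,\gamma}$ explicit. By Proposition~\ref{Th:Minimization problem} we have $\mathfrak{n}_{\omega,\gamma}=\mathfrak{n}_{\omega,0}^{\mathrm{line}}$, and since $Q_\omega$ attains the line minimization problem with $\gamma=0$ we have $\mathfrak{n}_{\omega,0}^{\mathrm{line}}=S_{\omega,0}^{\mathrm{line}}(Q_\omega)$. The explicit formula for $Q_\omega$ gives the scaling relation $Q_\omega(x)=\omega^{1/(p-1)}Q(\sqrt{\omega}\,x)$, from which the quantities $\|Q_\omega\|_{L^2(\mathbb{R})}^2$, $\|\partial_x Q_\omega\|_{L^2(\mathbb{R})}^2$ and $\|Q_\omega\|_{L^{p+1}(\mathbb{R})}^{p+1}$ scale as $\omega^{-s_c}$, $\omega^{1-s_c}$ and $\omega^{1-s_c}$ respectively. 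Collecting the terms of $S_{\omega,0}^{\mathrm{line}}$ then yields $\mathfrak{n}_{\omega,0}^{\mathrm{line}}=\omega^{1-s_c}S_{1,0}^{\mathrm{line}}(Q)$. Writing $S_{\omega,\gamma}(\bm{f})=E_\gamma(\bm{f})+\tfrac{\omega}{2}M(\bm{f})$, condition (i) becomes: there exists $\omega>0$ with $E_\gamma(\bm{f})\leq h(\omega)$, where $h(\omega):=\omega^{1-s_c}S_{1,0}^{\mathrm{line}}(Q)-\tfrac{\omega}{2}M(\bm{f})$.

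Next I would maximize $h$ over $\omega>0$. Since $0<1-s_c<1$ for $p>5$, the function $h$ is strictly concave, satisfies $h(\omega)\to 0$ as $\omega\to 0^+$ and $h(\omega)\to-\infty$ as $\omega\to\infty$, and hence attains its maximum at the unique critical point $\omega_*$ determined by $h'(\omega_*)=0$. Because this supremum is attained, the existential statement in (i) is equivalent to the single inequality $E_\gamma(\bm{f})\leq\max_{\omega>0}h(\omega)=h(\omega_*)$. Solving $h'(\omega_*)=0$ and substituting back gives
\begin{equation*}
	h(\omega_*)=s_c\,S_{1,0}^{\mathrm{line}}(Q)\bigl(2(1-s_c)S_{1,0}^{\mathrm{line}}(Q)\bigr)^{(1-s_c)/s_c}M(\bm{f})^{-(1-s_c)/s_c}.
\end{equation*}

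Finally I would pin down the constant using the Pohozaev identity (Lemma~\ref{lem2.3}) together with $S_{1,0}^{\mathrm{line}}(Q)=E_0^{\mathrm{line}}(Q)+\tfrac12 M^{\mathrm{line}}(Q)$. Using $M^{\mathrm{line}}(Q)=\tfrac{2(p+3)}{p-5}E_0^{\mathrm{line}}(Q)$ and $\tfrac{1-s_c}{s_c}=\tfrac{p+3}{p-5}$, one checks the two clean identities $2(1-s_c)S_{1,0}^{\mathrm{line}}(Q)=M^{\mathrm{line}}(Q)$ and $s_c\,S_{1,0}^{\mathrm{line}}(Q)=E_0^{\mathrm{line}}(Q)$. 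Substituting these into the formula for $h(\omega_*)$ collapses it to $\max_{\omega>0}h=E_0^{\mathrm{line}}(Q)\,M^{\mathrm{line}}(Q)^{(1-s_c)/s_c}\,M(\bm{f})^{-(1-s_c)/s_c}$, so the inequality $E_\gamma(\bm{f})\leq\max_\omega h$ is exactly (ii) after multiplying both sides by $M(\bm{f})^{(1-s_c)/s_c}$. This closes the equivalence. I expect the main obstacle to be purely bookkeeping: correctly tracking the $s_c$-exponents through the scaling of $Q_\omega$ and verifying that the optimal constant reduces exactly to $E_0^{\mathrm{line}}(Q)\,M^{\mathrm{line}}(Q)^{(1-s_c)/s_c}$. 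There is no analytic difficulty beyond the elementary concavity argument that guarantees the maximizer $\omega_*$ exists and the supremum is attained.
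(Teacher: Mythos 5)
Your proposal is correct and is essentially the paper's argument viewed from the dual side: the paper recognizes $G_\omega(M)=-\tfrac{\omega}{2}M+\mathfrak{n}_{\omega,0}^{\mathrm{line}}$ as the tangent lines to the strictly convex curve $F(M)=M^{\mathrm{line}}(Q)^{(1-s_c)/s_c}E_0^{\mathrm{line}}(Q)M^{-(1-s_c)/s_c}$, whereas you compute the envelope $\sup_{\omega>0}G_\omega(M)=F(M)$ explicitly by the first-order condition — the same scaling of $Q_\omega$ and the same Pohozaev identities pin down the constant in both versions. The concavity remark guaranteeing that the supremum over $\omega$ is attained plays exactly the role of the paper's observation that $M_\omega$ sweeps out all of $(0,\infty)$, so both directions of the equivalence are covered.
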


\begin{proof}
Let $F(M):=M^{\mathrm{line}}(Q)^{\frac{1-s_{c}}{s_{c}}} E_{0}^{\mathrm{line}}(Q)M^{-\frac{1-s_{c}}{s_{c}}}$ for $M>0$. 
Set $M_{\omega}:=M^{\mathrm{line}}(Q_{\omega,0})=\omega^{-\frac{p-5}{2(p-1)}}M^{\mathrm{line}}(Q)$ for $\omega>0$.
Then we have
\begin{align*}
	F(M_{\omega})=\omega^{\frac{p-5}{2(p-1)}\frac{1-s_{c}}{s_{c}}} E_{0}^{\mathrm{line}}(Q)
	=\omega^{\frac{p+3}{2(p-1)}} E_{0}^{\mathrm{line}}(Q)
\end{align*}
and
\begin{align*}
	F'(M_{\omega}) 
	= -\frac{1-s_{c}}{s_{c}}M^{\mathrm{line}}(Q)^{\frac{1-s_{c}}{s_{c}}} E_{0}^{\mathrm{line}}(Q)M_{\omega}^{-\frac{1}{s_{c}}} 
	=- \frac{\omega}{2} 
\end{align*}
by Lemma \ref{lem2.3}. Set $G_{\omega}(M):=-\frac{\omega}{2} M + \mathfrak{n}_{\omega,0}^{\mathrm{line}}$. Since $\mathfrak{n}_{\omega,0}^{\mathrm{line}} =\omega^{\frac{p+3}{2(p-1)}}E_{0}^{\mathrm{line}}(Q)+\frac{\omega}{2} \omega^{-\frac{p-5}{2(p-1)}}M^{\mathrm{line}}(Q)$, we have (a) $G_{\omega}(M_{\omega})= F(M_{\omega})$.
Thus $G_{\omega}(M)$ is the tangent line of the graph of $F$ at $M=M_{\omega}$. Since $F$ is strictly convex on $(0,\infty)$, we obtain (b) $G_{\omega}(M) \leq F(M)$ for all $M>0$ and  $\omega>0$.

We prove (ii)  from (i). If $\bm{f} \in H_c^1(\mathcal{G})$ satisfies (i), that is, $E_{\gamma}(\bm{f}) \leq -\frac{\omega}{2} M(\bm{f}) + \mathfrak{n}_{\omega,0}^{\mathrm{line}}=G_{\omega}(M(\bm{f}))$ for some $\omega>0$, then we get $E_{\gamma}(\bm{f}) \leq F(M(\bm{f}))$ by (b). This implies (i). 
Next, we show (i)  from (ii). Assume that $\bm{f} \in H_c^1(\mathcal{G})$ satisfies (ii), that is, $E_{\gamma}(\bm{f}) \leq M^{\mathrm{line}}(Q)^{\frac{1-s_{c}}{s_{c}}} E_{0}^{\mathrm{line}}(Q)M(\bm{f})^{-\frac{1-s_{c}}{s_{c}}}=F(M(\bm{f}))$. Since $M_{\omega}$ can take arbitrary positive number by moving $\omega$, there exists $\omega>0$ such that $M_{\omega}=M(\bm{f})$. Thus, we obtain $E_{\gamma}(\bm{f}) \leq F(M(\bm{f})) = G_{\omega}(M(\bm{f}))$ by (a), which means (ii).
\end{proof}

As a corollary, we have the following:
\begin{corollary}
\label{cor2.5}
Let $p > 5$, $\gamma \geq 0$, and $\mu = -1$.
Let $\bm{f} \in H_c^1(\mathcal{G})$. Then the following are equivalent:
\begin{enumerate}
\renewcommand{\theenumi}{(\roman{enumi})}
\item $S_{\omega,\gamma}(\bm{f}) < \mathfrak{n}_{\omega,\gamma}(=\mathfrak{n}_{\omega,0}^{\mathrm{line}})$ for some $\omega>0$.
\item $M(\bm{f})^{(1-s_c)/s_c}E_{\gamma}(\bm{f}) < M^{\mathrm{line}}(Q)^{(1-s_c)/s_c} E_{0}^{\mathrm{line}}(Q)$.
\end{enumerate}
\end{corollary}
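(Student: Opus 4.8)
The plan is to re-run the proof of Proposition \ref{prop2.4.0} essentially verbatim, tracking strict inequalities throughout; the strict version requires no new ideas, only careful bookkeeping of where the inequality can or cannot be saturated. As before, I would keep the strictly convex function $F(M):=M^{\mathrm{line}}(Q)^{(1-s_c)/s_c}E_0^{\mathrm{line}}(Q)\,M^{-(1-s_c)/s_c}$ and its tangent line $G_\omega(M):=-\tfrac{\omega}{2}M+\mathfrak{n}_{\omega,0}^{\mathrm{line}}$ at $M=M_\omega$, and I would reuse the two facts established there, namely (a) $G_\omega(M_\omega)=F(M_\omega)$ and (b) $G_\omega(M)\le F(M)$ for every $M>0$ and $\omega>0$.

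For the implication (i)$\Rightarrow$(ii), I would start from $S_{\omega,\gamma}(\bm{f})<\mathfrak{n}_{\omega,0}^{\mathrm{line}}$ for some $\omega>0$, which rewrites as $E_\gamma(\bm{f})<G_\omega(M(\bm{f}))$. Chaining this strict inequality with the non-strict tangent-line bound (b) gives $E_\gamma(\bm{f})<F(M(\bm{f}))$, and multiplying through by $M(\bm{f})^{(1-s_c)/s_c}>0$ yields (ii) (the degenerate case $\bm{f}=\bm{0}$ being immediate, since both (i) and (ii) then hold). The key point is that the strictness originates entirely in the hypothesis, so composing it with the non-strict bound (b) does not degrade it.

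For the converse (ii)$\Rightarrow$(i), I would rewrite (ii) as $E_\gamma(\bm{f})<F(M(\bm{f}))$ and then choose $\omega>0$ with $M_\omega=M(\bm{f})$, which is possible because $M_\omega=\omega^{-\frac{p-5}{2(p-1)}}M^{\mathrm{line}}(Q)$ sweeps out all of $(0,\infty)$ as $\omega$ ranges over $(0,\infty)$. At this value of $\omega$, the tangency identity (a) gives $F(M(\bm{f}))=F(M_\omega)=G_\omega(M_\omega)=G_\omega(M(\bm{f}))$, so the strict inequality becomes $E_\gamma(\bm{f})<G_\omega(M(\bm{f}))$, that is $S_{\omega,\gamma}(\bm{f})<\mathfrak{n}_{\omega,0}^{\mathrm{line}}$, which is (i).

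I do not expect a genuine obstacle here; the only thing to watch is that the strict inequality survives each step, and it does for a structural reason: in one direction the strict inequality is fed into the non-strict bound (b), and in the other it is carried across the exact equality supplied by the tangency (a), so in neither passage is it relaxed to a non-strict one. One could alternatively try to deduce the corollary formally from Proposition \ref{prop2.4.0}, but since the very $\omega$ used there (the one with $M_\omega=M(\bm{f})$) already transports the strict inequality, simply re-running the argument is the cleanest route.
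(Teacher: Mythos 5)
Your proposal is correct and is essentially the paper's own argument: the paper proves the corollary by citing the strict convexity of $F$ and the tangency of $G_\omega$ from the proof of Proposition \ref{prop2.4.0}, which is exactly the bookkeeping you carry out explicitly (strict inequality composed with the non-strict bound (b) in one direction, carried across the tangency identity (a) in the other). Your explicit handling of the degenerate case $\bm{f}=\bm{0}$ and of the choice of $\omega$ with $M_\omega=M(\bm{f})$ is a welcome bit of extra care, but introduces nothing beyond the paper's route.
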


\begin{proof}
Let $F$ and $G_{\omega}$ be defined in the proof of Proposition \ref{prop2.4.0}. 
The statement follows form the strict convexity of $F$ and the fact that $G_{\omega}$ is the tangent line. 
\end{proof}

\begin{proposition}
\label{prop2.6.0}
Let $p > 5$, $\gamma \geq 0$, and $\mu = -1$.
Assume that $\bm{f} \in H_c^1(\mathcal{G})$ satisfies $S_{\omega,\gamma}(\bm{f}) < \mathfrak{n}_{\omega,\gamma}$.
Then, 
\begin{align*}
	K_\gamma(\bm{f})
		\geq 0
	& \ \Longleftrightarrow\ 
	\|\bm{f}\|_{L^2(\mathcal{G})}^{1-s_c}\|\partial_x\bm{f}\|_{L^2(\mathcal{G})}^{s_c}
		< \|Q\|_{L^2(\mathbb{R})}^{1-s_c}\|\partial_xQ\|_{L^2(\mathbb{R})}^{s_c} \\
	& \ \Longleftrightarrow\ 
	\|\bm{f}\|_{L^2(\mathcal{G})}^{1-s_c}\|\bm{f}\|_{\dot{H}_\gamma^1(\mathcal{G})}^{s_c}
		< \|Q\|_{L^2(\mathbb{R})}^{1-s_c}\|\partial_xQ\|_{L^2(\mathbb{R})}^{s_c}, \\
	K_\gamma(\bm{f})
		< 0
	& \ \Longleftrightarrow\ 
	\|\bm{f}\|_{L^2(\mathcal{G})}^{1-s_c}\|\partial_x\bm{f}\|_{L^2(\mathcal{G})}^{s_c}
		> \|Q\|_{L^2(\mathbb{R})}^{1-s_c}\|\partial_xQ\|_{L^2(\mathbb{R})}^{s_c} \\
	& \ \Longleftrightarrow\ 
	\|\bm{f}\|_{L^2(\mathcal{G})}^{1-s_c}\|\bm{f}\|_{\dot{H}_\gamma^1(\mathcal{G})}^{s_c}
		> \|Q\|_{L^2(\mathbb{R})}^{1-s_c}\|\partial_xQ\|_{L^2(\mathbb{R})}^{s_c}.
\end{align*}
\end{proposition}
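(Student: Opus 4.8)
The plan is to reduce the whole proposition to two one-sided pointwise estimates for $K_\gamma(\bm f)$: a lower bound coming from the sharp Gagliardo--Nirenberg inequality (controlling the ``subcritical'' regime) and an upper bound coming from an algebraic rewriting of $K_\gamma$ combined with the energy hypothesis (controlling the ``supercritical'' regime). First I would convert the hypothesis $S_{\omega,\gamma}(\bm f)<\mathfrak{n}_{\omega,\gamma}$ into the mass--energy form $\mathcal{E}(\bm f):=M(\bm f)^{(1-s_c)/s_c}E_\gamma(\bm f)<M^{\mathrm{line}}(Q)^{(1-s_c)/s_c}E_0^{\mathrm{line}}(Q)=:\mathcal{E}_Q$ via Corollary~\ref{cor2.5}. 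Abbreviating $A:=\|\bm f\|_{L^2}^{1-s_c}\|\partial_x\bm f\|_{L^2}^{s_c}$, $B:=\|\bm f\|_{L^2}^{1-s_c}\|\bm f\|_{\dot{H}_\gamma^1(\mathcal{G})}^{s_c}$, and $T:=\|Q\|_{L^2(\mathbb{R})}^{1-s_c}\|\partial_xQ\|_{L^2(\mathbb{R})}^{s_c}$, the relation $\|\bm f\|_{\dot{H}_\gamma^1(\mathcal{G})}^2=\|\partial_x\bm f\|_{L^2}^2+N\gamma|f_1(0)|^2\ge\|\partial_x\bm f\|_{L^2}^2$ gives the single structural fact $A\le B$, which is what will let me pass between the two formulations. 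I would also record, from the sharp Gagliardo--Nirenberg identity for $Q$ (Lemma~\ref{lem2.2}) and the Pohozaev relations (Lemma~\ref{lem2.3}), the normalizations $T^{p-1}=\frac{2(p+1)}{(p-1)C_\mathrm{GN}^\mathrm{line}}$ and $(p-1)\mathcal{E}_Q=\frac{p-5}{2}T^{2/s_c}$; these are exactly what make the thresholds below line up.

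For the subcritical side I would apply the Gagliardo--Nirenberg inequality on $\mathcal{G}$ with $C_\mathrm{GN}(0)=C_\mathrm{GN}^\mathrm{line}$ (Lemma~\ref{lem2.2}), bounding $\|\bm f\|_{L^{p+1}}^{p+1}\le C_\mathrm{GN}^\mathrm{line}\|\bm f\|_{L^2}^{(p+3)/2}\|\partial_x\bm f\|_{L^2}^{(p-1)/2}$ and discarding the nonnegative term $N\gamma|f_1(0)|^2$ in $K_\gamma$. Using the scaling identity $\|\bm f\|_{L^2}^{(p+3)/2}\|\partial_x\bm f\|_{L^2}^{(p-5)/2}=A^{p-1}$ and the normalization of $T$, this yields $K_\gamma(\bm f)\ge 2\|\partial_x\bm f\|_{L^2}^2\bigl(1-(A/T)^{p-1}\bigr)$. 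In particular, for $\bm f\ne\bm 0$, $A<T$ forces $K_\gamma(\bm f)>0$, and $A\le T$ forces $K_\gamma(\bm f)\ge 0$.

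For the supercritical side I would eliminate $\|\bm f\|_{L^{p+1}}^{p+1}$ between $E_\gamma$ and $K_\gamma$ to get the identity $K_\gamma(\bm f)=(p-1)E_\gamma(\bm f)-\frac{p-5}{2}\|\partial_x\bm f\|_{L^2}^2-\frac{p-3}{2}N\gamma|f_1(0)|^2$, equivalently $K_\gamma(\bm f)=(p-1)E_\gamma(\bm f)-\frac{p-5}{2}\|\bm f\|_{\dot{H}_\gamma^1(\mathcal{G})}^2-N\gamma|f_1(0)|^2$. Since $p>5$, both correction terms are $\le 0$; dropping them, multiplying by $M(\bm f)^{(1-s_c)/s_c}$, and inserting $\mathcal{E}(\bm f)<\mathcal{E}_Q$ together with $(p-1)\mathcal{E}_Q=\frac{p-5}{2}T^{2/s_c}$ gives simultaneously $M(\bm f)^{(1-s_c)/s_c}K_\gamma(\bm f)<\frac{p-5}{2}\bigl(T^{2/s_c}-A^{2/s_c}\bigr)$ and the same estimate with $B$ in place of $A$. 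Hence $A\ge T\Rightarrow K_\gamma(\bm f)<0$ and $B\ge T\Rightarrow K_\gamma(\bm f)<0$.

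Assembling these: $A=T$ is impossible, since the second paragraph gives $K_\gamma\ge 0$ while the third gives $K_\gamma<0$; thus $A\ne T$, and the two paragraphs directly yield $K_\gamma(\bm f)\ge 0\Leftrightarrow A<T$ and $K_\gamma(\bm f)<0\Leftrightarrow A>T$, which is the first/third equivalence. For the $B$-versions, ``$B<T\Rightarrow A<T$'' is free from $A\le B$; conversely, if $K_\gamma\ge 0$ then $K_\gamma\not<0$, so the third paragraph rules out $B\ge T$ and gives $B<T$, while if $K_\gamma<0$ then the $A$-characterization gives $A>T$, whence $B\ge A>T$. This establishes $K_\gamma(\bm f)\ge 0\Leftrightarrow B<T$ and $K_\gamma(\bm f)<0\Leftrightarrow B>T$; the case $\bm f=\bm 0$ is trivial. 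The one genuine obstacle, which dictates this whole structure, is the boundary term $N\gamma|f_1(0)|^2$: because it scales linearly (not quadratically) under $\bm f\mapsto\bm f(\lambda\,\cdot)$, it enters $K_\gamma$, $E_\gamma$, and $\|\cdot\|_{\dot{H}_\gamma^1(\mathcal{G})}$ with mismatched weights, so that $A$ and $B$ genuinely differ and neither threshold characterization is a priori equivalent to the other. The key point I would emphasize is that discarding this nonnegative term is lossless in precisely the two one-sided estimates above, and that $A\le B$ is exactly the ingredient needed to reconcile the gradient form with the $\dot{H}_\gamma^1$ form.
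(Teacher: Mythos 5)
Your proof is correct, and its overall architecture --- two one-sided estimates on $K_\gamma(\bm{f})$, a lower bound coming from the sharp Gagliardo--Nirenberg inequality and an upper bound coming from the sub-threshold hypothesis --- is the same as the paper's. Keeping your abbreviations $A$, $B$, $T$: the lower-bound half coincides with the paper's argument (the paper phrases it as ``$K_\gamma(\bm{f})<0$ implies $\|\bm{f}\|_{L^2}^{1-s_c}\|\partial_x\bm{f}\|_{L^2}^{s_c}\ge \|Q\|_{L^2}^{1-s_c}\|\partial_xQ\|_{L^2}^{s_c}$,'' i.e.\ your contrapositive). Where you genuinely diverge is the upper-bound half: the paper keeps the frequency $\omega$, writes $S_{\omega,\gamma}(\bm{f})=\frac{\omega}{2}M(\bm{f})+\frac{p-5}{2(p-1)}\|\bm{f}\|_{\dot{H}_\gamma^1}^2+\frac{1}{p-1}K_\gamma(\bm{f})$ and applies a weighted AM--GM to the first two terms to get $K_\gamma\ge 0\Rightarrow B<T$, whereas you eliminate $\|\bm{f}\|_{L^{p+1}}^{p+1}$ between $E_\gamma$ and $K_\gamma$, drop the nonpositive correction terms, and insert the mass--energy form of the hypothesis from Corollary~\ref{cor2.5}. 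The two computations are contrapositive to one another and rest on the same Pohozaev normalizations (Lemma~\ref{lem2.3}) and on $C_{\mathrm{GN}}(\gamma)=C_{\mathrm{GN}}^{\mathrm{line}}$ (Lemma~\ref{lem2.2}), but your version is $\omega$-free after the initial reduction and yields the $A$- and $B$-threshold implications simultaneously; as a result, the paper's separate preliminary steps (excluding the equalities $A=T$ and $B=T$, and showing $A<T\Rightarrow B<T$) become automatic consequences of the clash between your two one-sided bounds together with the trivial inequality $A\le B$. The only point worth flagging is cosmetic: the normalization $T^{p-1}=\tfrac{2(p+1)}{(p-1)C_{\mathrm{GN}}^{\mathrm{line}}}$ comes from $Q$ being the optimizer of the line Gagliardo--Nirenberg inequality combined with Lemma~\ref{lem2.3}, not from Lemma~\ref{lem2.2} itself; the paper uses the same identity, so nothing is at stake.
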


\begin{proof}
First, we prove that
\begin{align*}
	\|\bm{f}\|_{L^2(\mathcal{G})}^{1-s_c}\|\bm{f}\|_{\dot{H}_\gamma^1(\mathcal{G})}^{s_c}
		\neq \|Q\|_{L^2(\mathbb{R})}^{1-s_c}\|\partial_xQ\|_{L^2(\mathbb{R})}^{s_c}
\end{align*}
if $S_{\omega,\gamma}(\bm{f}) < \mathfrak{n}_{\omega,\gamma}$.
Corollary \ref{cor2.5} and \eqref{G-N inequality} with Lemma \ref{lem2.2} give us
\begin{align*}
	&M^\text{line}(Q)^\frac{1-s_c}{s_c}E_0^\text{line}(Q)
	\\
		& > \|\bm{f}\|_{L^2(\mathcal{G})}^\frac{2(1-s_c)}{s_c}\left(\frac{1}{2}\|\bm{f}\|_{\dot{H}_\gamma^1(\mathcal{G})}^2 - \frac{1}{p+1}\|\bm{f}\|_{L^{p+1}(\mathcal{G})}^{p+1}\right) \\
		& \geq \frac{1}{2}\|\bm{f}\|_{L^2(\mathcal{G})}^\frac{2(1-s_c)}{s_c}\|\bm{f}\|_{\dot{H}_\gamma^1(\mathcal{G})}^2 - \frac{C_\text{GN}^\text{line}}{p+1}\|\bm{f}\|_{L^2(\mathcal{G})}^\frac{(p-1)(1-s_c)}{2s_c}\|\bm{f}\|_{\dot{H}_\gamma^1(\mathcal{G})}^\frac{p-1}{2}
\end{align*}
and hence, we have
\begin{align*}
	1
		> \frac{p-1}{p-5}\cdot\frac{\|\bm{f}\|_{L^2(\mathcal{G})}^\frac{2(1-s_c)}{s_c}\|\bm{f}\|_{\dot{H}_\gamma^1(\mathcal{G})}^2}{\|Q\|_{L^2(\mathbb{R})}^\frac{2(1-s_c)}{s_c}\|\partial_xQ\|_{L^2(\mathbb{R})}^2} - \frac{4}{(p-5)}\cdot\frac{\|\bm{f}\|_{L^2(\mathcal{G})}^\frac{(p-1)(1-s_c)}{2s_c}\|\bm{f}\|_{\dot{H}_\gamma^1(\mathcal{G})}^\frac{p-1}{2}}{\|Q\|_{L^2(\mathbb{R})}^\frac{(p-1)(1-s_c)}{2s_c}\|\partial_xQ\|_{L^2(\mathbb{R})}^\frac{p-1}{2}},
\end{align*}
which implies the desired result.
Replacing $\|\bm{f}\|_{\dot{H}_\gamma^1(\mathcal{G})}$ with $\|\partial_{x}\bm{f}\|_{L^{2}(\mathcal{G})}$ for the above argument, we also have
\begin{align}
\label{eq2.2.0}
	\|\bm{f}\|_{L^2(\mathcal{G})}^{1-s_c}\|\partial_x\bm{f}\|_{L^2(\mathcal{G})}^{s_c}
		\neq \|Q\|_{L^2(\mathbb{R})}^{1-s_c}\|\partial_xQ\|_{L^2(\mathbb{R})}^{s_c}. 
\end{align}

If
\begin{align*}
	\|\bm{f}\|_{L^2(\mathcal{G})}^{1-s_c}\|\partial_x\bm{f}\|_{L^2(\mathcal{G})}^{s_c}
		< \|Q\|_{L^2(\mathbb{R})}^{1-s_c}\|\partial_xQ\|_{L^2(\mathbb{R})}^{s_c},
\end{align*}
then Corollary \ref{cor2.5} and \eqref{G-N inequality} for $\gamma=0$ with Lemma \ref{lem2.2} give
\begin{align*}
	&M^\text{line}(Q)^\frac{1-s_c}{s_c}E_0^\text{line}(Q)
	\\
		& \geq \frac{1}{2}\|\bm{f}\|_{L^2(\mathcal{G})}^\frac{2(1-s_c)}{s_c}\|\bm{f}\|_{\dot{H}_\gamma^1(\mathcal{G})}^2 - \frac{C_\text{GN}^\text{line}}{p+1}\|\bm{f}\|_{L^2(\mathcal{G})}^\frac{(p-1)(1-s_c)}{2s_c}\|\partial_x\bm{f}\|_{L^2(\mathcal{G})}^\frac{p-1}{2} \\
		& > \frac{1}{2}\|\bm{f}\|_{L^2(\mathcal{G})}^\frac{2(1-s_c)}{s_c}\|\bm{f}\|_{\dot{H}_\gamma^1(\mathcal{G})}^2 - \frac{2}{p-1}\|Q\|_{L^2(\mathbb{R})}^\frac{2(1-s_c)}{s_c}\|\partial_xQ\|_{L^2(\mathbb{R})}^2.
\end{align*}
Thus, we have
\begin{align*}
	\frac{1}{2}\|\bm{f}\|_{L^2(\mathcal{G})}^\frac{2(1-s_c)}{s_c}\|\bm{f}\|_{\dot{H}_\gamma^1(\mathcal{G})}^2
		& < M^\text{line}(Q)^\frac{1-s_c}{s_c}E_0^\text{line}(Q) + \frac{2}{p-1}\|Q\|_{L^2(\mathbb{R})}^\frac{2(1-s_c)}{s_c}\|\partial_xQ\|_{L^2(\mathbb{R})}^2 \\
		& = \frac{1}{2}\|Q\|_{L^2(\mathbb{R})}^\frac{2(1-s_c)}{s_c}\|\partial_xQ\|_{L^2(\mathbb{R})}^2.
\end{align*}
If $K_\gamma(\bm{f}) < 0$, then it follows from \eqref{G-N inequality} for $\gamma=0$ with Lemma \ref{lem2.2} that
\begin{align*}
	0
		> K_\gamma(\bm{f})
		\geq 2\|\partial_x\bm{f}\|_{L^2(\mathcal{G})}^2 - \frac{(p-1)C_\text{GN}^\text{line}}{p+1}\|\bm{f}\|_{L^2(\mathcal{G})}^\frac{p+3}{2}\|\partial_x\bm{f}\|_{L^2(\mathcal{G})}^\frac{p-1}{2}.
\end{align*}
This inequality deduces that
\begin{align*}
	\|\bm{f}\|_{L^2(\mathcal{G})}^{1-s_c}\|\partial_x\bm{f}\|_{L^2(\mathcal{G})}^{s_c}
		\geq \left[\frac{2(p+1)}{(p-1)C_\text{GN}^\text{line}}\right]^\frac{1}{p-1}
		= \|Q\|_{L^2(\mathbb{R})}^{1-s_c}\|\partial_xQ\|_{L^2(\mathbb{R})}^{s_c}
\end{align*}
and the equality never occurs by \eqref{eq2.2.0}.

On the other hand, if $K_\gamma(\bm{f}) \geq 0$, then
\begin{align*}
	& \left\{\frac{(p-1)\omega}{p+3}\right\}^\frac{p+3}{2(p-1)}\|\bm{f}\|_{L^2(\mathcal{G})}^\frac{p+3}{p-1}\|\bm{f}\|_{\dot{H}_\gamma^1(\mathcal{G})}^\frac{p-5}{p-1} \\
		& \leq \frac{\omega}{2}\|\bm{f}\|_{L^2(\mathcal{G})}^2 + \frac{p-5}{2(p-1)}\|\bm{f}\|_{\dot{H}_\gamma^1(\mathcal{G})}^2 \\
		& \leq \frac{\omega}{2}\|\bm{f}\|_{L^2(\mathcal{G})}^2 + \frac{p-5}{2(p-1)}\|\bm{f}\|_{\dot{H}_\gamma^1(\mathcal{G})}^2 + \frac{1}{p-1}K_\gamma(\bm{f}) \\
		& \leq S_{\omega,\gamma}(\bm{f})
		< \omega^\frac{p+3}{2(p-1)}S_{1,0}^\text{line}(Q)
		= \omega^\frac{p+3}{2(p-1)}\left(\frac{p-1}{p+3}\right)^\frac{p+3}{2(p-1)}\|Q\|_{L^2(\mathbb{R})}^\frac{p+3}{p-1}\|\partial_xQ\|_{L^2(\mathbb{R})}^\frac{p-5}{p-1}.
\end{align*}
\end{proof}

\begin{remark}
\label{rmk2.7}
The statement ``$\|\bm{f}\|_{L^2(\mathcal{G})}^{1-s_c}\|\partial_x\bm{f}\|_{L^2(\mathcal{G})}^{s_c} < \|Q\|_{L^2(\mathbb{R})}^{1-s_c}\|\partial_xQ\|_{L^2(\mathbb{R})}^{s_c}$ implies $K_{\gamma}(\bm{f})>0$'' is true even without $S_{\omega,\gamma}(\bm{f}) < \mathfrak{n}_{\omega,\gamma}$ since we do not use it in the above proof. We will use this fact in Section \ref{Sec:Sca}. 
\end{remark}

We define potential well sets $PW_{\gamma}^{\pm}$ by
\begin{align*}
	PW_{\gamma}&:=\{\bm{f} \in H_{c}^1(\mathcal{G}) : M(\bm{f})^{\frac{1-s_{c}}{s_{c}}}E_\gamma(\bm{f}) < M^\mathrm{line}(Q)^{\frac{1-s_{c}}{s_{c}}}E_0^\mathrm{line}(Q)\},
	\\
	PW_{\gamma}^{+}&:=\{\bm{f} \in PW_{\gamma} : \| \bm{f}\|_{L^{2}(\mathcal{G})}^{1-s_c}\| \bm{f} \|_{\dot{H}_{\gamma}^{1}(\mathcal{G})}^{s_c} < \|Q\|_{L^{2}(\mathbb{R})}^{1-s_c}\|Q\|_{\dot{H}^{1}(\mathbb{R})}^{s_c}\},
	\\
	PW_{\gamma}^{-}&:=\{\bm{f} \in PW_{\gamma} : \| \bm{f}\|_{L^{2}(\mathcal{G})}^{1-s_c}\| \bm{f}\|_{\dot{H}_{\gamma}^{1}(\mathcal{G})}^{s_c} > \|Q\|_{L^{2}(\mathbb{R})}^{1-s_c}\|Q\|_{\dot{H}^{1}(\mathbb{R})}^{s_c}\}. 
\end{align*}

\begin{lemma}[Coercivity]\label{Coercivity}
Let $p > 5$, $\mu = -1$, and $\gamma \geq 0$.
Let $\bm{u}$ be a solution to \eqref{NLS} with initial data $\bm{u_0}$.
\begin{itemize}
\item ($PW_{\gamma}^{+}$ case)
If $\bm{u_0} \in PW_{\gamma}^{+}$, then $\bm{u}(t) \in PW_{\gamma}^{+}$ for each $t \in (T_\text{min},T_\text{max})$ and satisfies
\begin{align*}
	K_\gamma(\bm{u}(t))
		\geq \min\{\mathfrak{n}_{\omega,\gamma}-S_{\omega,\gamma}(\bm{u_0}),c(p)\|\bm{u}(t)\|_{\dot{H}_{\gamma}^{1}(\mathcal{G})}^2\}
\end{align*}
for some positive constant $c(p)$ and some $\omega>0$. 
\item ($PW_{\gamma}^{-}$ case)
If $\bm{u_0} \in PW_{\gamma}^{-}$, then $\bm{u}(t) \in PW_{\gamma}^{-}$ for each $t \in (T_\text{min},T_\text{max})$ and satisfies
\begin{align*}
	K_\gamma(\bm{u}(t))
		< 4(S_{\omega,\gamma}(\bm{u_0})-\mathfrak{n}_{\omega,\gamma})
		< 0
\end{align*}
for some $\omega>0$.
\end{itemize}
\end{lemma}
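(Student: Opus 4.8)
The plan is to deduce both assertions from the variational structure already in place, combining the conservation laws with the mass-invariant scaling $\bm{f}^\lambda(x):=e^{\lambda/2}\bm{f}(e^\lambda x)$, which preserves $H_c^1(\mathcal{G})\setminus\{\bm{0}\}$ and satisfies the identity $\tfrac{d}{d\lambda}S_{\omega,\gamma}(\bm{f}^\lambda)=\tfrac12 K_\gamma(\bm{f}^\lambda)$. The first step, common to both cases, is invariance. Assuming $\bm{u_0}\neq\bm{0}$, Proposition \ref{prop2.4.0} lets me choose $\omega>0$ with $M_\omega=M(\bm{u_0})$; then by Corollary \ref{cor2.5} the hypothesis $\bm{u_0}\in PW_\gamma^{\pm}$ is exactly $S_{\omega,\gamma}(\bm{u_0})<\mathfrak{n}_{\omega,\gamma}$, so the gap $\mathfrak{n}_{\omega,\gamma}-S_{\omega,\gamma}(\bm{u_0})$ is strictly positive. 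Since $M$ and $E_\gamma$ are conserved, so are $S_{\omega,\gamma}(\bm{u}(t))$ and the scale-invariant product $M^{(1-s_c)/s_c}E_\gamma$, whence $\bm{u}(t)\in PW_\gamma$ and $S_{\omega,\gamma}(\bm{u}(t))=S_{\omega,\gamma}(\bm{u_0})<\mathfrak{n}_{\omega,\gamma}$ for all $t$. To see the $\pm$ label persists, I invoke Proposition \ref{prop2.6.0}: under the sub-threshold condition, $\|\bm{u}(t)\|_{L^2(\mathcal{G})}^{1-s_c}\|\bm{u}(t)\|_{\dot{H}_\gamma^1(\mathcal{G})}^{s_c}$ never equals $\|Q\|_{L^2(\mathbb{R})}^{1-s_c}\|\partial_xQ\|_{L^2(\mathbb{R})}^{s_c}$; as $t\mapsto\bm{u}(t)$ is continuous into $H_c^1(\mathcal{G})$ and the mass is fixed, this continuous quantity cannot cross the threshold, so $\bm{u}(t)$ stays in $PW_\gamma^+$ (resp. $PW_\gamma^-$), and $\sign K_\gamma(\bm{u}(t))$ is constant.

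For the $PW_\gamma^-$ bound I would run the scaling argument downward, as it is the cleaner case. Writing $h(\lambda):=S_{\omega,\gamma}(\bm{u}(t)^\lambda)$, we have $h'(0)=\tfrac12 K_\gamma(\bm{u}(t))<0$, while as $\lambda\to-\infty$ the kinetic and boundary terms (scaling exponents $2$ and $1$, both below $(p-1)/2$ for $p>5$) dominate the nonlinearity, forcing $h'(\lambda)>0$. Hence there is $\lambda_0<0$ with $K_\gamma(\bm{u}(t)^{\lambda_0})=0$, and since $\bm{u}(t)^{\lambda_0}\in H_c^1(\mathcal{G})\setminus\{\bm{0}\}$, the definition of $\mathfrak{n}_{\omega,\gamma}$ gives $h(\lambda_0)\ge\mathfrak{n}_{\omega,\gamma}$. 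A direct computation shows the $L^{p+1}$-contributions cancel in $h''-\tfrac{p-1}{2}h'$, leaving only nonpositive terms for $p>5$; thus $e^{-\frac{p-1}{2}\lambda}h'(\lambda)$ is nonincreasing. Integrating the resulting inequality $h'(\lambda)\ge e^{\frac{p-1}{2}\lambda}h'(0)$ over $[\lambda_0,0]$ yields $\mathfrak{n}_{\omega,\gamma}-S_{\omega,\gamma}(\bm{u}(t))\le h(\lambda_0)-h(0)<-\tfrac1{p-1}K_\gamma(\bm{u}(t))$, i.e. $K_\gamma(\bm{u}(t))<(p-1)\bigl(S_{\omega,\gamma}(\bm{u_0})-\mathfrak{n}_{\omega,\gamma}\bigr)$. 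Since $p-1\ge4$ and the gap is negative, the right side is $\le 4\bigl(S_{\omega,\gamma}(\bm{u_0})-\mathfrak{n}_{\omega,\gamma}\bigr)<0$, which is the stated bound.

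For the $PW_\gamma^+$ bound I would argue by dichotomy on the size of the nonlinear term. If $\tfrac{p-1}{p+1}\|\bm{u}(t)\|_{L^{p+1}(\mathcal{G})}^{p+1}\le\|\partial_x\bm{u}(t)\|_{L^2(\mathcal{G})}^2$, then directly $K_\gamma(\bm{u}(t))\ge\|\partial_x\bm{u}(t)\|_{L^2(\mathcal{G})}^2+N\gamma|u_1(t,0)|^2=\|\bm{u}(t)\|_{\dot{H}_\gamma^1(\mathcal{G})}^2$, which supplies the second entry of the minimum. In the complementary regime the function lies near the boundary of $PW_\gamma^+$, and I would run the upward version of the scaling argument: now $h'(0)=\tfrac12 K_\gamma(\bm{u}(t))\ge0$, the nonlinearity (largest exponent) dominates as $\lambda\to+\infty$, so there is a first critical point $\lambda_*\ge0$ with $K_\gamma(\bm{u}(t)^{\lambda_*})=0$ and hence $h(\lambda_*)\ge\mathfrak{n}_{\omega,\gamma}$; the same monotonicity of $e^{-\frac{p-1}{2}\lambda}h'(\lambda)$ controls $h(\lambda_*)-h(0)$ by a multiple of $h'(0)$, yielding $K_\gamma(\bm{u}(t))\ge\mathfrak{n}_{\omega,\gamma}-S_{\omega,\gamma}(\bm{u_0})$.

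The main obstacle is precisely this last step. Unlike the $PW_\gamma^-$ case, the crude exponential bound on $h'$ degrades as $\lambda_*$ grows, so one must localize to the near-boundary regime (where $\lambda_*$ is controlled) or sharpen the estimate, aligning the two regimes of the dichotomy and matching constants so that no spurious factor appears in front of $\mathfrak{n}_{\omega,\gamma}-S_{\omega,\gamma}(\bm{u_0})$; here I would follow the real-line coercivity theory of \cite{AkaNaw13, IkeInu17} closely. The graph-specific feature, the boundary term $N\gamma|u_1(t,0)|^2$, causes no difficulty: because $\gamma\ge0$ it is favorable in every estimate, and by Lemma \ref{lem2.2} the relevant Gagliardo--Nirenberg constant coincides with the line constant $C_\mathrm{GN}^\mathrm{line}$, so all the scalar estimates reduce to their known real-line counterparts.
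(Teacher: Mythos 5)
The paper itself gives no written proof of this lemma---it defers entirely to the line case \cite[Proposition 2.18]{IkeInu17}---so your proposal is measured against the standard argument rather than against text in the paper. Two of your three pieces are correct and complete. The invariance of $PW_\gamma^{\pm}$ (conservation of $M$ and $E_\gamma$, hence of $S_{\omega,\gamma}$ and of the mass--energy product, combined with Proposition \ref{prop2.6.0} and continuity of $t\mapsto\|\bm{u}(t)\|_{L^2(\mathcal{G})}^{1-s_c}\|\bm{u}(t)\|_{\dot{H}_\gamma^1(\mathcal{G})}^{s_c}$ to exclude crossing the forbidden value) is fine. The $PW_\gamma^{-}$ estimate is also correct: the identity $h'(\lambda)=\tfrac12K_\gamma(\bm{u}(t)^\lambda)$, the cancellation of the $L^{p+1}$ term in $h''-\tfrac{p-1}{2}h'$, and the integration over $[\lambda_0,0]$ work precisely because $\int_{\lambda_0}^0 e^{\frac{p-1}{2}\lambda}\,d\lambda\le\tfrac{2}{p-1}$ is bounded independently of $\lambda_0$, and $K_\gamma<(p-1)(S_{\omega,\gamma}(\bm{u_0})-\mathfrak{n}_{\omega,\gamma})\le4(S_{\omega,\gamma}(\bm{u_0})-\mathfrak{n}_{\omega,\gamma})$ follows since the gap is negative and $p-1>4$.

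The $PW_\gamma^{+}$ lower bound, however, has a genuine gap, and it is not merely the unfinished step you flag: your dichotomy threshold is the wrong one. In the complementary regime your method yields at best $\mathfrak{n}_{\omega,\gamma}-S_{\omega,\gamma}(\bm{u_0})\le h(\lambda_*)-h(0)\le\tfrac{\lambda_*}{2}K_\gamma(\bm{u}(t))$, so you need $\lambda_*\le 2$; but the condition $\tfrac{p-1}{p+1}\|\bm{u}\|_{L^{p+1}}^{p+1}>\|\partial_x\bm{u}\|_{L^2}^2$ only forces (already when $\gamma=0$) $e^{\frac{p-5}{2}\lambda_*}<2$, i.e.\ $\lambda_*<\tfrac{2\ln 2}{p-5}$, which is unbounded as $p\to5^{+}$, and the resulting constant $\tfrac{2}{\lambda_*}\ge\tfrac{p-5}{\ln 2}$ drops below $1$ for $p$ near $5$. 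So ``localizing to the near-boundary regime'' cannot rescue the stated constant with your splitting. The standard repair is to split on the sign of $h''(0)=\tfrac12\tfrac{d}{d\lambda}K_\gamma(\bm{u}^\lambda)\big|_{\lambda=0}$: when $h''(0)\ge0$, eliminating the $L^{p+1}$ term via $\tfrac{p-1}{p+1}\|\bm{u}\|_{L^{p+1}}^{p+1}=2\|\partial_x\bm{u}\|_{L^2}^2+N\gamma|u_1(0)|^2-K_\gamma$ gives $K_\gamma\ge c(p)\|\bm{u}\|_{\dot{H}_\gamma^1(\mathcal{G})}^2$ outright; when $h''(0)<0$, the identity $h''=\tfrac{p-1}{2}h'-D$ with $D(\lambda)$ nondecreasing shows $h''<0$ persists up to $\lambda_*$, so $h(\lambda_*)-h(0)\le\lambda_*h'(0)$, and the case hypothesis gives a bound on $\lambda_*$ that matches the other branch. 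Finally, your claim that the boundary term ``causes no difficulty because it is favorable in every estimate'' is too quick for this lemma: under the $L^2$-scaling it carries exponent $1$ against $2$ for the kinetic term, so it weakens the coercive quantity $D(\lambda)$ and enters the bound on $\lambda_*$ with the unfavorable sign---this is exactly where one must follow \cite{IkeInu17} rather than the free-line argument of \cite{AkaNaw13}. As written, the $PW_\gamma^{+}$ inequality is not established.
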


\begin{proof}
The proof is similar to the line case. See \cite[Proposition 2.18]{IkeInu17}.
\end{proof}

\begin{lemma}\label{Equivalence of S and H1}
Let $p > 5$, $\mu = -1$, $\gamma \geq 0$, and $\omega > 0$.
If $\bm{f} \in H_c^1(\mathcal{G})$ satisfies $K_{\gamma}(\bm{f}) \geq 0$, then 
\begin{align*}
	S_{\omega,\gamma}(\bm{f})
		\leq \frac{\omega}{2}\|\bm{f}\|_{L^2(\mathcal{G})}^2 + \frac{1}{2}\|\partial_x\bm{f}\|_{L^2(\mathcal{G})}^2 + \frac{3\gamma}{2}|f_1(0)|^2
		\leq \frac{p-1}{p-5}S_{\omega,\gamma}(\bm{f}).
\end{align*}
\end{lemma}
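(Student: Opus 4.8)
The plan is to prove both inequalities by directly unfolding the definitions of $S_{\omega,\gamma}$ and $K_\gamma$; no variational machinery is needed, and the only structural input is the hypothesis $K_\gamma(\bm{f})\ge 0$. Writing out the action (with $N=3$, $\mu=-1$) and using $\|\bm{f}\|_{\dot{H}_\gamma^1(\mathcal{G})}^2=\|\partial_x\bm{f}\|_{L^2(\mathcal{G})}^2+3\gamma|f_1(0)|^2$, I would record
\begin{align*}
	S_{\omega,\gamma}(\bm{f})
	= \frac{\omega}{2}\|\bm{f}\|_{L^2(\mathcal{G})}^2 + \frac{1}{2}\|\bm{f}\|_{\dot{H}_\gamma^1(\mathcal{G})}^2 - \frac{1}{p+1}\|\bm{f}\|_{L^{p+1}(\mathcal{G})}^{p+1},
\end{align*}
and then abbreviate the middle quantity of the claimed chain as $P:=\frac{\omega}{2}\|\bm{f}\|_{L^2(\mathcal{G})}^2+\frac{1}{2}\|\bm{f}\|_{\dot{H}_\gamma^1(\mathcal{G})}^2$, so that $S_{\omega,\gamma}(\bm{f})=P-\frac{1}{p+1}\|\bm{f}\|_{L^{p+1}(\mathcal{G})}^{p+1}$.

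The left inequality $S_{\omega,\gamma}(\bm{f})\le P$ is then immediate, since the subtracted term $\frac{1}{p+1}\|\bm{f}\|_{L^{p+1}(\mathcal{G})}^{p+1}$ is nonnegative; note this step uses neither $p>5$ nor the hypothesis on $K_\gamma$. For the right inequality $P\le \frac{p-1}{p-5}S_{\omega,\gamma}(\bm{f})$, I would multiply through by $p-5>0$ and substitute the identity for $S_{\omega,\gamma}(\bm{f})$; after cancellation the inequality collapses to the single requirement
\begin{align*}
	\frac{p-1}{p+1}\|\bm{f}\|_{L^{p+1}(\mathcal{G})}^{p+1} \le 4P.
\end{align*}

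This is exactly where the hypothesis enters: $K_\gamma(\bm{f})\ge 0$ rearranges to $\frac{p-1}{p+1}\|\bm{f}\|_{L^{p+1}(\mathcal{G})}^{p+1}\le 2\|\partial_x\bm{f}\|_{L^2(\mathcal{G})}^2+3\gamma|f_1(0)|^2$, and the right-hand side is dominated by $4P=2\omega\|\bm{f}\|_{L^2(\mathcal{G})}^2+2\|\partial_x\bm{f}\|_{L^2(\mathcal{G})}^2+6\gamma|f_1(0)|^2$ because the surplus terms $2\omega\|\bm{f}\|_{L^2(\mathcal{G})}^2$ and $3\gamma|f_1(0)|^2$ are nonnegative (using $\omega>0$ and $\gamma\ge 0$). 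Chaining the two bounds yields the displayed estimate, and hence the right inequality.

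The proof is therefore essentially bookkeeping, and the only point that demands care — the nearest thing to an obstacle — is the coefficient on the boundary term. In $K_\gamma(\bm{f})$ the quantity $3\gamma|f_1(0)|^2$ appears once, whereas $2P$ carries $6\gamma|f_1(0)|^2$; one must not conflate $K_\gamma(\bm{f})$ with $2\|\bm{f}\|_{\dot H_\gamma^1(\mathcal{G})}^2-\frac{p-1}{p+1}\|\bm{f}\|_{L^{p+1}(\mathcal{G})}^{p+1}$. Since the leftover $3\gamma|f_1(0)|^2$ lands on the larger side of the inequality, the sign is favorable and no additional argument is required.
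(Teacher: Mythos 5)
Your proof is correct, and it is exactly the ``direct calculation'' the paper invokes without writing out: unfold $S_{\omega,\gamma}$, observe the left inequality from dropping the nonpositive term, and reduce the right inequality to $\frac{p-1}{p+1}\|\bm{f}\|_{L^{p+1}(\mathcal{G})}^{p+1}\le 4P$, which follows from $K_\gamma(\bm{f})\ge 0$ plus positivity of the surplus terms. Your caution about the boundary-term coefficient ($3\gamma$ in $K_\gamma$ versus $6\gamma$ in $4P$) is well placed and resolved correctly, since the discrepancy falls on the favorable side.
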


\begin{proof}
This follows from direct calculations.
\end{proof}

\begin{corollary}[Global well-posedness]\label{Global well-posedness}
Let $p > 5$, $\mu = -1$, and $\gamma \geq 0$.
If $\bm{u_0} \in PW_{\gamma}^{+}$, then a solution $\bm{u}$ to \eqref{NLS} with initial data $\bm{u_0}$ exists globally in time. Moreover,   if $\bm{u}_{0} \neq 0$, then there exists a positive constant $\delta$ such that
\begin{align*}
	K_{\gamma}(\bm{u}(t)) \geq \delta
\end{align*}
for all $t \in \mathbb{R}$. 
\end{corollary}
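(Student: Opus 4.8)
The plan is to combine the coercivity estimate of Lemma \ref{Coercivity} with the norm equivalence of Lemma \ref{Equivalence of S and H1} for the global existence, and then to extract a uniform positive lower bound for $\|\bm{u}(t)\|_{\dot{H}_\gamma^1(\mathcal{G})}$ in order to bound $K_\gamma(\bm{u}(t))$ away from zero. For the global existence, note that since $\bm{u_0}\in PW_\gamma^+$, Lemma \ref{Coercivity} guarantees $\bm{u}(t)\in PW_\gamma^+$ for every $t$ in the maximal lifespan, and in particular $K_\gamma(\bm{u}(t))\geq 0$; fix the frequency $\omega>0$ furnished by that lemma. Conservation of mass and energy gives $S_{\omega,\gamma}(\bm{u}(t))=S_{\omega,\gamma}(\bm{u_0})$, so applying Lemma \ref{Equivalence of S and H1} to $\bm{f}=\bm{u}(t)$ (legitimate because $K_\gamma(\bm{u}(t))\geq 0$) yields
\begin{align*}
	\frac{1}{2}\|\bm{u}(t)\|_{\dot{H}_\gamma^1(\mathcal{G})}^2
		\leq \frac{\omega}{2}\|\bm{u}(t)\|_{L^2(\mathcal{G})}^2+\frac{1}{2}\|\bm{u}(t)\|_{\dot{H}_\gamma^1(\mathcal{G})}^2
		\leq \frac{p-1}{p-5}S_{\omega,\gamma}(\bm{u_0}).
\end{align*}
Since $\gamma\geq 0$ implies $\|\partial_x\bm{u}(t)\|_{L^2(\mathcal{G})}^2\leq\|\bm{u}(t)\|_{\dot{H}_\gamma^1(\mathcal{G})}^2$, and mass conservation controls $\|\bm{u}(t)\|_{L^2(\mathcal{G})}$, we obtain $\sup_t\|\bm{u}(t)\|_{H^1(\mathcal{G})}<\infty$. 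The blow-up alternative then forces $T_\mathrm{max}=\infty$ and $T_\mathrm{min}=-\infty$.

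For the uniform bound on $K_\gamma$, the crucial point is to show $E_\gamma(\bm{u_0})>0$ whenever $\bm{u_0}\neq\bm{0}$. Eliminating the $L^{p+1}$-term between $E_\gamma$ and $K_\gamma$ gives the algebraic identity
\begin{align*}
	E_\gamma(\bm{f})
		=s_c\|\partial_x\bm{f}\|_{L^2(\mathcal{G})}^2+\frac{p-3}{2(p-1)}\cdot 3\gamma|f_1(0)|^2+\frac{1}{p-1}K_\gamma(\bm{f}),
\end{align*}
valid for every $\bm{f}\in H_c^1(\mathcal{G})$, whose three coefficients are all positive because $p>5$. Taking $\bm{f}=\bm{u_0}$ and using $K_\gamma(\bm{u_0})\geq 0$ gives $E_\gamma(\bm{u_0})\geq s_c\|\partial_x\bm{u_0}\|_{L^2(\mathcal{G})}^2$; a nonzero element of $H_c^1(\mathcal{G})$ cannot have vanishing derivative (a nonzero constant fails to lie in $L^2(0,\infty)$), so $\bm{u_0}\neq\bm{0}$ forces $\|\partial_x\bm{u_0}\|_{L^2(\mathcal{G})}>0$ and hence $E_\gamma(\bm{u_0})>0$.

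Finally, discarding the nonpositive nonlinear term in the energy yields $\|\bm{u}(t)\|_{\dot{H}_\gamma^1(\mathcal{G})}^2\geq 2E_\gamma(\bm{u}(t))=2E_\gamma(\bm{u_0})>0$ for all $t\in\mathbb{R}$. Substituting this into the coercivity estimate of Lemma \ref{Coercivity} produces
\begin{align*}
	K_\gamma(\bm{u}(t))
		\geq\min\{\mathfrak{n}_{\omega,\gamma}-S_{\omega,\gamma}(\bm{u_0}),\,2c(p)E_\gamma(\bm{u_0})\}=:\delta>0,
\end{align*}
where $\mathfrak{n}_{\omega,\gamma}-S_{\omega,\gamma}(\bm{u_0})>0$ by Corollary \ref{cor2.5}. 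I expect the only slightly subtle obstacle to be this lower bound on the $\dot{H}_\gamma^1$-norm; the rest is a direct assembly of the preceding lemmas together with the conservation laws.
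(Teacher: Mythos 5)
Your proposal is correct and follows essentially the same route as the paper, whose proof is just a two-line citation of Lemma \ref{Coercivity} and Lemma \ref{Equivalence of S and H1}; you have simply filled in the details those lemmas leave implicit, in particular the observation that $E_\gamma(\bm{u_0})\geq s_c\|\partial_x\bm{u_0}\|_{L^2(\mathcal{G})}^2>0$ supplies the uniform lower bound on $\|\bm{u}(t)\|_{\dot{H}_\gamma^1(\mathcal{G})}^2$ needed to turn the coercivity estimate into a time-independent $\delta$.
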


\begin{proof}
By Lemma \ref{Coercivity}, $\bm{u}(t) \in PW_{\gamma}^{+}$ for each $t \in (T_\text{min},T_\text{max})$.
Then, Lemmas \ref{Coercivity} and \ref{Equivalence of S and H1} imply the desired result.
\end{proof}


\section{Preliminaries for the proof of scattering}\label{Sec.Pre}
\subsection{Strichartz estimates}

Let $q'$ denote the H\"{o}lder conjugte of $q \in [1,\infty]$. 
 
\begin{theorem}[Dispersive estimate, \cite{GreIgn19}]\label{Dispersive estimate}
Let $\gamma \geq 0$ and $q \in [1,2]$.
Then, 
\begin{align*}
	\|e^{it\Delta_\mathcal{G}^\gamma}\bm{f}\|_{L_x^{q'}}
		\lesssim |t|^{\frac{1}{2} - \frac{1}{q}}\|\bm{f}\|_{L_x^q}, \quad (t\in\mathbb{R}\setminus\{0\}). 
\end{align*}
\end{theorem}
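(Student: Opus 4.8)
The plan is to transfer the problem to the real line via the symmetric decomposition and to interpolate the resulting endpoint bound with the trivial $L^2$ unitarity. Concretely, I would first establish the single endpoint estimate $\|e^{it\Delta_\mathcal{G}^\gamma}\bm{f}\|_{L^\infty_x}\lesssim |t|^{-1/2}\|\bm{f}\|_{L^1_x}$ (the case $q=1$); since $\Delta_\mathcal{G}^\gamma$ is self-adjoint, $e^{it\Delta_\mathcal{G}^\gamma}$ is unitary on $L^2(\mathcal{G})$, which is the case $q=2$; and Riesz--Thorin interpolation between these two then yields the full range $q\in[1,2]$ with the stated exponent $\frac12-\frac1q$.

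For the endpoint bound I would exploit the decomposition of functions on $\mathcal{G}$ into one radial and $N-1$ non-radial parts (I take $N=3$, the general case being identical). Writing $g:=\frac13(f_1+f_2+f_3)$ for the radial component and, say, $h_1:=f_1-f_2$, $h_2:=f_2-f_3$ for the non-radial ones, the vertex conditions $f_1(0)=f_2(0)=f_3(0)$ and $\sum_k f_k'(0+)=3\gamma f_1(0)$ become the Robin condition $g'(0+)=\gamma g(0)$ together with $h_i(0)=0$. Extending $g$ evenly and each $h_i$ oddly to $\mathbb{R}$, one checks that the even extension of $g$ lies in the domain of the line operator $\partial_{xx}-2\gamma\delta_0$ (a repulsive point interaction when $\gamma\geq0$), whereas the odd extensions, vanishing at the origin, solve the free equation. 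This linear change of basis on $\mathbb{C}^3$ followed by even/odd extension, call it $\mathcal{Q}$, intertwines the flows, $\mathcal{Q}e^{it\Delta_\mathcal{G}^\gamma}=e^{it\Delta_\mathbb{R}^\gamma}\mathcal{Q}$, and is an isomorphism between $L^q(\mathcal{G})$ and its image in $L^q(\mathbb{R})^{3}$ for every $q\in[1,\infty]$, since a fixed invertible map on $\mathbb{C}^3$ preserves $L^q$ up to constants and even/odd extension only changes the norm by the factor $2^{1/q}$. Consequently the endpoint estimate on $\mathcal{G}$ follows from the corresponding estimate for $e^{it\Delta_\mathbb{R}^\gamma}$ on the line.

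It therefore remains to prove $\|e^{it\Delta_\mathbb{R}^\gamma}g\|_{L^\infty(\mathbb{R})}\lesssim|t|^{-1/2}\|g\|_{L^1(\mathbb{R})}$ for the one-dimensional repulsive delta interaction. Here I would use the explicit integral kernel, obtained from the stationary scattering states via the limiting absorption principle: it equals the free Schr\"odinger kernel plus a reflection correction of the form $\frac1{2\pi}\int_\mathbb{R}e^{-it\lambda^2+i\lambda(|x|+|y|)}r(\lambda)\,d\lambda$, where, up to normalization, $r(\lambda)=\frac{\gamma}{i\lambda-\gamma}$. The free part obeys the classical $|t|^{-1/2}$ bound. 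For the correction, since $|x|+|y|\geq0$ and the phase $-t\lambda^2+\lambda(|x|+|y|)$ has second derivative $-2t$, a van der Corput (stationary phase) estimate gives the $|t|^{-1/2}$ decay uniformly in $x,y$, provided the amplitude $r$ is bounded with integrable derivative --- which holds precisely because $\gamma\geq0$.

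The main obstacle is exactly this uniform control of the reflection correction: one must obtain the $|t|^{-1/2}$ rate with no loss as $|x|+|y|\to\infty$ and with constants independent of the spatial variables. The hypothesis $\gamma\geq0$ is what makes the argument close, because it keeps $r(\lambda)=\frac{\gamma}{i\lambda-\gamma}$ free of poles on the real axis (equivalently, $\partial_{xx}-2\gamma\delta_0$ has no eigenvalue), so that $r$ and $r'$ are bounded and integrable respectively and van der Corput applies. In the attractive case $\gamma<0$ the bound state at $-\gamma^2$ would produce a non-decaying contribution and the estimate as stated would break down. A secondary, routine point is to justify the kernel representation rigorously, e.g. by a contour deformation in the resolvent, but this is standard once the repulsive case has been isolated.
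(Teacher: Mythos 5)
Your argument is correct, but note that the paper does not prove this theorem at all: it is imported from Grecu--Ignat \cite{GreIgn19}, where the propagator kernel is computed directly on the star graph for \emph{general} vertex couplings via the resolvent and the spectral representation. Your route is genuinely different and, for this particular coupling, arguably cleaner: you reduce to the real line by the odd/even symmetric decomposition --- essentially the operator $\mathcal{Q}$ that the paper only introduces later, in Section \ref{Sec:Lin}, for the profile decomposition --- observe that the $N-1$ odd components evolve freely while the even component evolves under the one-dimensional repulsive delta interaction, invoke the classical $L^1\to L^\infty$ bound for $e^{it\Delta_\mathbb{R}^\gamma}$, and interpolate with $L^2$ unitarity. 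All the individual steps check out: the intertwining $\mathcal{Q}e^{it\Delta_\mathcal{G}^\gamma}=e^{it\Delta_\mathbb{R}^\gamma}\mathcal{Q}$ is exactly Proposition \ref{Prop:Comm:Sch} of the paper (proved by domain-checking plus Yosida approximation, with no circular dependence on the dispersive estimate); the change of basis on $\mathbb{C}^3$ and the even/odd extensions are bounded with bounded inverse on every $L^q$; the Riesz--Thorin bookkeeping gives precisely $|t|^{\frac12-\frac1q}$; and the exclusion of $\gamma<0$ is correctly located in the pole of $r(\lambda)=\gamma/(i\lambda-\gamma)$, equivalently the bound state at $-\gamma^2$, while for $\gamma\ge 0$ one has $\|r\|_{L^\infty}\le 1$ and $\|r'\|_{L^1}=\pi$ uniformly, so van der Corput yields the $|t|^{-1/2}$ decay uniformly in $|x|+|y|$. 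What your approach buys is a short, self-contained proof that reuses the paper's own symmetry machinery and reduces everything to a textbook one-dimensional computation; what it gives up is generality, since the reduction to one even and $N-1$ odd scalar problems exploits the permutation symmetry of the Kirchhoff/delta vertex and does not extend to the general coupling conditions treated in \cite{GreIgn19}. The only step you should not wave away is the claim that a single phase $\lambda(|x|+|y|)$ with amplitude $r$ covers both the reflection regime ($xy>0$) and the transmission regime ($xy<0$); it is true, because $t(\lambda)-1=r(\lambda)$ and $|x-y|=|x|+|y|$ when $x,y$ lie on opposite half-lines, but it deserves the explicit two-case computation rather than a passing mention.
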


\begin{definition}[$L^2$-admissible]
We say that a pair $(q_{0},r_{0})$ is $L^2$-admissible if $(q_{0},r_{0})$ satisfies $2 \leq q_{0}, r_{0} \leq \infty$ and
\begin{align*}
	\frac{2}{q_{0}} + \frac{1}{r_{0}}
		= \frac{1}{2}.
\end{align*}
\end{definition}

We define time-space norms by
\begin{align*}
	\| \bm{v}\|_{L^{q}(I;X)}
	:=\|  \| \bm{v}\|_{X} \|_{L^{q}(I)}
\end{align*}
for a Banach space $X$ consisting of functions on $\mathcal{G}$. 

\begin{theorem}[Strichartz estimates for addmissible pair, \cite{GreIgn19}]\label{Strichartz estimates for non-admissible pair}
Let $(q_1,r_1)$ and $(q_2,r_2)$ be $L^2$-admissible.
Then, 
\begin{align}
	\|e^{it\Delta_\mathcal{G}^\gamma}\bm{f}\|_{L_t^{q_1}L_x^{r_1}}
		\lesssim \|\bm{f}\|_{L_x^2}. \label{102}
\end{align}
If $I$ is a time interval and $t_0 \in \overline{I}$, then
\begin{align}
	\left\|\int_{t_0}^te^{i(t-s)\Delta_\mathcal{G}^\gamma}\bm{F}(s,\cdot)ds\right\|_{L_t^{q_1}(I;L_x^{r_1})}
		\lesssim \|\bm{F}\|_{L_t^{q_2'}(I;L_x^{r_2'})}. \label{103}
\end{align}
\end{theorem}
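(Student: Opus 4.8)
The plan is to obtain both estimates from the abstract Strichartz machinery of Keel and Tao, whose only inputs are an energy bound and a pointwise-in-time decay bound for the propagator; both are already at our disposal. First I would record the energy estimate: since $\Dgg$ is self-adjoint on $L^2(\boG)$ (Section~\ref{Sec:Int}), Stone's theorem makes $U(t):=e^{it\Dgg}$ a one-parameter unitary group, so that
\[
	\nor{e^{it\Dgg}\bof}{L_x^2}=\nor{\bof}{L_x^2},\qquad t\in\R,
\]
and $U(t)^*=e^{-it\Dgg}$, whence $U(t)U(s)^*=e^{i(t-s)\Dgg}$. Applying the dispersive estimate of Theorem~\ref{Dispersive estimate} in the endpoint case $q=1$, $q'=\infty$ then gives the decay bound
\[
	\nor{U(t)U(s)^*\bm{g}}{L_x^\infty}\lesssim |t-s|^{-\frac12}\nor{\bm{g}}{L_x^1},\qquad t\neq s.
\]

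Thus $\{U(t)\}$ satisfies the hypotheses of the Keel--Tao theorem with decay exponent $\sigma=\tfrac12$. For this value the sharp admissibility relation $\tfrac1q+\tfrac{\sigma}{r}=\tfrac{\sigma}{2}$ reduces exactly to $\tfrac2q+\tfrac1r=\tfrac12$, i.e.\ to $L^2$-admissibility, and the single forbidden Keel--Tao endpoint $(q,r)=(2,\infty)$ arises solely at $\sigma=1$; since here $\sigma=\tfrac12$ forces $q\ge 4$, no $L^2$-admissible pair is excluded (the extreme pair being $(q,r)=(4,\infty)$). Feeding the two displayed bounds into the $TT^*$ and interpolation argument of Keel--Tao then yields the homogeneous estimate \eqref{102} for every $L^2$-admissible $(q_1,r_1)$.

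For the inhomogeneous bound \eqref{103} I would first treat the untruncated operator $\bm{F}\mapsto \int_\R e^{i(t-s)\Dgg}\bm{F}(s)\,ds$, which the same bilinear analysis controls for all admissible $(q_1,r_1)$ and $(q_2,r_2)$, and then pass to the retarded integral $\int_{t_0}^t$ appearing in \eqref{103} via the Christ--Kiselev lemma. The latter requires $q_2'<q_1$, which holds automatically in the present range: every admissible pair has $q_1\ge 4$ while $q_2'\le \tfrac43$, so no double-endpoint obstruction remains.

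I expect no genuine difficulty here: once unitarity and the $|t|^{-1/2}$ dispersive decay are in place, the statement is essentially a citation of the abstract theorem. The only points requiring care are the verification that the decay is used in the correct $U(t)U(s)^{*}$ form---immediate from unitarity---and the bookkeeping at the $r=\infty$ endpoint together with the truncation of the Duhamel integral, both dispatched as indicated above.
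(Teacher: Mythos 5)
Your argument is correct: unitarity of $e^{it\Delta_\mathcal{G}^\gamma}$ plus the $|t|^{-1/2}$ dispersive bound from Theorem \ref{Dispersive estimate} (the $q=1$ case) are exactly the hypotheses of the Keel--Tao abstract Strichartz theorem with $\sigma=\tfrac12$, whose sharp admissibility condition coincides with the paper's notion of $L^2$-admissibility and admits no forbidden endpoint since every admissible pair here has $q\ge 4$; the retarded estimate then follows either directly from the Keel--Tao retarded version or, as you propose, via Christ--Kiselev using $q_2'\le\tfrac43<4\le q_1$. The paper itself gives no proof and simply cites \cite{GreIgn19}, and your derivation is precisely the standard argument underlying that citation.
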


For $p>5$, we define
\begin{align*}
	r:=p+1, \quad 
	a:=\frac{2(p^{2}-1)}{p+3}, \quad 
	b:=\frac{2(p^{2}-1)}{p^{2}-3p-2}.
\end{align*}

\begin{theorem}[Strichartz estimates for non-admissible pair]\label{Strichartz estimates for non-admissible pair}
Let $p > 5$.
Let $I$ be a time interval and $t_0 \in \overline{I}$.
Then, the following inequalities hold:
\begin{align*}
	&\|e^{it\Delta_\mathcal{G}^\gamma}\bm{f}\|_{L_t^a L_x^{r} \cap L_t^{p-1}L_x^\infty}
		\lesssim \|\bm{f}\|_{H^1},
	\\
	&\left\|\int_{t_0}^te^{i(t-s)\Delta_\mathcal{G}^\gamma}\bm{F}(s,\cdot)ds\right\|_{L_t^a(I;L_x^{r}) \cap L_t^{p-1}(I;L_x^\infty)}
		\lesssim \|\bm{F}\|_{L_t^{b'}(I;L_x^{r'})}.
\end{align*}
\end{theorem}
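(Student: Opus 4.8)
The plan is to recognise that both target pairs $(a,r)$ and $(p-1,\infty)$ are \emph{$\dot H^{s_c}$-admissible}: a direct computation gives $\frac{2}{a}+\frac{1}{r}=\frac{2}{p-1}=\frac12-s_c$ and likewise $\frac{2}{p-1}+0=\frac12-s_c$, so both lie on the $\dot H^{s_c}$ Strichartz line rather than the $L^2$-admissible line, whence the name. Since $0<s_c<1$ for $p>5$, one has $\|\bm{f}\|_{\dot H^{s_c}_{\gamma}(\boG)}\lesssim\|\bm{f}\|_{H^1}$, so for the homogeneous estimate it suffices to control the two norms by $\|\bm{f}\|_{H^1}$. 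The only inputs I will use are the dispersive estimate (Theorem \ref{Dispersive estimate}), the $L^2$-admissible estimate \eqref{102}, the conservation of the $L^2$- and $\dot H^1_{\gamma}$-norms under $e^{it\Dgg}$, and the one-dimensional Sobolev embeddings $H^1(\boG)\hookrightarrow L^{p+1}(\boG)\cap L^\infty(\boG)$.

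\emph{Homogeneous estimate.} I would obtain the stated $H^1$-version by pure interpolation of Lebesgue norms in time, avoiding any spectral-multiplier theory. For the $L_t^{p-1}L_x^\infty$ norm, interpolate the admissible endpoint $(4,\infty)$, for which \eqref{102} gives $\|e^{it\Dgg}\bm{f}\|_{L_t^4L_x^\infty}\lesssim\|\bm{f}\|_{L^2}$, against the trivial endpoint $\|e^{it\Dgg}\bm{f}\|_{L_t^\infty L_x^\infty}\lesssim\sup_t\|e^{it\Dgg}\bm{f}\|_{H^1}\lesssim\|\bm{f}\|_{H^1}$. Since $p-1\in(4,\infty)$ for $p>5$, log-convexity of the $L_t^q$ norms yields $\|e^{it\Dgg}\bm{f}\|_{L_t^{p-1}L_x^\infty}\lesssim\|\bm{f}\|_{L^2}^{1-\theta}\|\bm{f}\|_{H^1}^{\theta}\lesssim\|\bm{f}\|_{H^1}$ for the appropriate $\theta\in(0,1)$. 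For the $L_t^aL_x^{r}$ norm, replace the first endpoint by the admissible pair $(\tilde q,p+1)$ with $\tilde q=\frac{4(p+1)}{p-1}$, and the second by $\|e^{it\Dgg}\bm{f}\|_{L_t^\infty L_x^{p+1}}\lesssim\|\bm{f}\|_{H^1}$; since $a/\tilde q=\frac{(p-1)^2}{2(p+3)}>1$ exactly when $p>5$, so that $a\in(\tilde q,\infty)$, the same interpolation applies.

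\emph{Inhomogeneous estimate.} Here I would first prove the untruncated estimate (with $\int_{\R}$ in place of $\int_{t_0}^t$) and then restore the retardation by the Christ--Kiselev lemma; the required hypothesis $b'<\min\{a,p-1\}$ holds for $p>5$, since $b'/a=1/p$ and $b'/(p-1)=\frac{2(p+1)}{p(p+3)}<1$. For the $L_x^{p+1}$ output the untruncated estimate is self-contained: the dispersive estimate with exponent $r'=(p+1)'$ gives $\|e^{i(t-s)\Dgg}\bm{F}(s)\|_{L_x^{p+1}}\lesssim|t-s|^{-\lambda}\|\bm{F}(s)\|_{L_x^{r'}}$ with $\lambda=\frac{p-1}{2(p+1)}\in(0,1)$, and Hardy--Littlewood--Sobolev in $t$ closes it because $\frac1a=\frac1{b'}-(1-\lambda)$ with $1<b'<a$. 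For the $L_x^\infty$ output a single dispersive estimate no longer reaches $L_x^\infty$ from the source space $L_x^{r'}$, so instead I would use the factorization $\int_{\R}e^{i(t-s)\Dgg}\bm{F}(s)\,ds=e^{it\Dgg}\bm{g}$ with $\bm{g}=\int_{\R}e^{-is\Dgg}\bm{F}(s)\,ds$, combined with the sharp homogeneous estimate $\|e^{it\Dgg}\bm{g}\|_{L_t^{p-1}L_x^\infty}\lesssim\|\bm{g}\|_{\dot H^{s_c}_{\gamma}}$ and its dual $\|\bm{g}\|_{\dot H^{s_c}_{\gamma}}\lesssim\|\bm{F}\|_{L_t^{b'}L_x^{r'}}$, the latter being the $\dot H^{-s_c}$-homogeneous Strichartz estimate for the pair $(b,r)$, which is consistent since $\frac2b+\frac1r=\frac{p-3}{p-1}=\frac12+s_c$.

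\emph{Main obstacle.} The delicate point is the sharp homogeneous estimate in the genuine $\dot H^{\pm s_c}_{\gamma}$-form, which the $L_x^\infty$ inhomogeneous output cannot avoid: the $H^1$-form would be scaling-inconsistent with the source norm, so the derivative deficit must be exactly $\pm s_c$. Deriving it from \eqref{102} requires the fractional Sobolev embedding $(-\Dgg)^{-s_c/2}\colon L^\rho(\boG)\to L^r(\boG)$, that is, reconciling the functional calculus of the graph Laplacian with physical Lebesgue spaces. For $\gamma\ge0$ this should hold because $-\Dgg$ admits a heat semigroup with Gaussian kernel bounds, the Kirchhoff case being the bare graph Laplacian and the repulsive delta a favourable-sign perturbation; alternatively, in the spirit of this paper, one may decompose through $\mathcal{Q}$ into $N-1$ odd components governed by the free Laplacian and one even component governed by $\Dgr$, for which the fractional-integration estimates are classical on $\R$. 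I expect this embedding to be the only non-routine ingredient, the remainder being the standard Keel--Tao/Foschi together with Christ--Kiselev machinery.
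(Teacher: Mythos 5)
Your homogeneous estimate and the $L_t^aL_x^r$ component of the inhomogeneous one are correct and are essentially the line-case argument the paper points to (its ``proof'' is only the citation to Foschi and to Banica--Visciglia): interpolation of an $L^2$-admissible pair against the trivial $L^\infty_t$ bound coming from $H^1\hookrightarrow L^{p+1}\cap L^\infty$ and the conservation of the $H^1$-norm, and then dispersive estimate plus Hardy--Littlewood--Sobolev plus Christ--Kiselev. Your exponent checks are all right: $a>\tilde q$ iff $p>5$, $\tfrac1a=\tfrac1{b'}-(1-\lambda)$ with $\lambda=\tfrac{p-1}{2(p+1)}$, and $b'<\min\{a,p-1\}$.

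The gap is in the $L_t^{p-1}L_x^\infty$ component of the inhomogeneous estimate. Your factorization rests on the ``sharp'' homogeneous bound $\|e^{it\Dgg}\bm{g}\|_{L_t^{p-1}L_x^\infty}\lesssim\|\bm{g}\|_{\dot{H}^{s_c}_{\gamma}}$, and this is precisely a failing endpoint, not merely a non-routine embedding. The only $L^2$-admissible pair with time exponent $p-1$ is $(p-1,1/s_c)$, and passing from $L^{1/s_c}_x$ to $L^\infty_x$ costs exactly $s_c$ derivatives at the critical integrability $\rho=1/s_c$, where $\dot{W}^{s_c,1/s_c}(\R)\not\hookrightarrow L^\infty(\R)$; arguing frequency by frequency one only reaches the Besov norm $\dot{B}^{s_c}_{2,1}$, and the usual square-function upgrade to $\dot{H}^{s_c}=\dot{B}^{s_c}_{2,2}$ is unavailable because Littlewood--Paley theory fails in $L^\infty_x$. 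So this lemma cannot be taken for granted, quite apart from the additional burden (which you do flag) of defining $(-\Dgg)^{s_c/2}$ and establishing its Lebesgue mapping properties on the graph. The intended route avoids the issue entirely: Foschi's inhomogeneous Strichartz theorem produces the retarded estimate for the output pair $(p-1,\infty)$ against the input pair $(b,r)$ directly by bilinear real interpolation of the dispersive estimate (Theorem \ref{Dispersive estimate}), without factoring through any homogeneous estimate; one checks the scaling condition $\frac{1}{p-1}+\frac{1}{b}=\frac{1}{2}\bigl(1-0-\frac{1}{p+1}\bigr)$ and the acceptability conditions, all of which hold for $p>5$, and Banica--Visciglia record exactly these exponents on the line. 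I would replace your factorization for the $L^\infty_x$ output by a direct appeal to that result; the rest of your argument can stand.
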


\begin{proof}
The proof is the same as in the line case. See e.g. \cite{Fos05} and \cite{BanVis16}.
\end{proof}

\subsection{Long time perturbation and existence of the wave operator}

\begin{proposition}[Long time perturbation]
\label{LTP}
Let $I$ be a time interval with $t_0 \in I$ and $M > 0$.
Let $\bm{v} \in C(I;H_{c}^1) \cap L_t^a(I;L_x^r)$ satisfy
\begin{align*}
	\bm{v}(t,x)
		= e^{i(t-t_0)\Delta_\mathcal{G}^\gamma}\bm{v}(0,x) + i\int_{t_0}^t e^{i(t-s)\Delta_\mathcal{G}^\gamma}(|\bm{v}|^{p-1}\bm{v})(s,x)ds + \bm{e}(t,x)
\end{align*}
for some function $\bm{e}$ and $\|\bm{v}\|_{L_t^a(I;L_x^r)} \leq M$.
Let $\bm{u}$ be a solution to \eqref{NLS}.
There exist $\varepsilon_0 = \varepsilon_0(M) > 0$ and $C = C(M) > 0$ such that if
\begin{align*}
	\|e^{i(t-t_0)\Delta_\mathcal{G}^\gamma}(\bm{u}(t_0)-\bm{v}(t_0))\|_{L_t^a(I;L_x^r)} + \|\bm{e}\|_{L_t^a(I;L_x^r)}
		< \varepsilon
\end{align*}
for some $0 \leq \varepsilon \leq \varepsilon_0$, then the maximal existence time interval of $\bm{u}$ contains $I$ and
\begin{align*}
	\|\bm{u}-\bm{v}\|_{L_t^a(I;L_x^r)}
		\leq C\varepsilon.
\end{align*}
\end{proposition}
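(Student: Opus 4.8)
The plan is to prove this long-time perturbation statement by the standard bootstrapping argument, adapted to the star-graph setting via the Strichartz estimates in Theorem \ref{Strichartz estimates for non-admissible pair}. The key observation is that all the harmonic-analytic input we need—the inhomogeneous Strichartz estimate with the non-admissible exponents $(a,r)$ on the left and $(b',r')$ on the right—is already available on $\boG$ and takes exactly the same form as on the line. Consequently the proof should follow the classical template (as in the line case treated in \cite{KenMer06}, \cite{IkeInu17}), with the only care being to track constants depending on $M$.

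First I would set $\bm{w}:=\bm{u}-\bm{v}$, which satisfies the equation obtained by subtracting the Duhamel formula for $\bm{v}$ from that for $\bm{u}$. Writing the difference of nonlinearities as $\boN(\bm{u})-\boN(\bm{v}) = \boN(\bm{v}+\bm{w})-\boN(\bm{v})$, one gets the integral equation
\begin{align*}
	\bm{w}(t) = e^{i(t-t_0)\Dgg}\bm{w}(t_0) + i\int_{t_0}^t e^{i(t-s)\Dgg}\bigl(\boN(\bm{v}+\bm{w})-\boN(\bm{v})\bigr)(s)\,ds - \bm{e}(t).
\end{align*}
The pointwise bound $|\boN(\bm{v}+\bm{w})-\boN(\bm{v})| \lesssim (|\bm{v}|^{p-1}+|\bm{w}|^{p-1})|\bm{w}|$ together with Hölder in space (to land in $L_x^{r'}$) and then in time (to land in $L_t^{b'}$) lets me estimate the Duhamel term by $\|\bm{w}\|_{L_t^a L_x^r}$ times a factor controlled by $\|\bm{v}\|_{L_t^a L_x^r}\leq M$ plus $\|\bm{w}\|_{L_t^a L_x^r}$. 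Applying \eqref{103}-type bound from Theorem \ref{Strichartz estimates for non-admissible pair} yields, on any subinterval $J\subset I$,
\begin{align*}
	\|\bm{w}\|_{L_t^a(J;L_x^r)} \lesssim \varepsilon + \bigl(M^{p-1}+\|\bm{w}\|_{L_t^a(J;L_x^r)}^{p-1}\bigr)\|\bm{w}\|_{L_t^a(J;L_x^r)}.
\end{align*}

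The main obstacle is that the factor $M^{p-1}$ need not be small, so one cannot close the estimate in a single application on all of $I$. The standard remedy is to partition $I$ into finitely many consecutive subintervals $I_1,\dots,I_J$ on each of which $\|\bm{v}\|_{L_t^a(I_j;L_x^r)}$ is smaller than a threshold $\eta=\eta(M)$ chosen so the contraction-type inequality closes; the number $J$ of such intervals is bounded in terms of $M$. On each $I_j$ I would use the inductive hypothesis to control the new initial datum $\bm{w}(t_{j})$ via the homogeneous Strichartz estimate \eqref{102}—absorbing its contribution into the error budget—and a continuity/bootstrap argument (small-data local theory guarantees $\|\bm{w}\|_{L_t^a(I_j;L_x^r)}$ starts small and cannot jump) to conclude $\|\bm{w}\|_{L_t^a(I_j;L_x^r)}\lesssim_{M}\varepsilon$. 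Summing the $J=J(M)$ estimates, with $\varepsilon_0(M)$ taken small enough that all the smallness conditions hold simultaneously, gives the desired global bound $\|\bm{u}-\bm{v}\|_{L_t^a(I;L_x^r)}\leq C(M)\varepsilon$. Finally, the fact that the maximal interval contains $I$ follows from the blow-up alternative together with the a priori $L_t^a L_x^r$ bound, which via the local theory controls the $H^1$-norm on $I$.
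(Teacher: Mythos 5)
Your overall strategy is the standard long-time perturbation argument, which is precisely what the paper defers to (its proof is a one-line citation to the line case of Fang--Xie--Cazenave), and the Strichartz input you invoke is indeed available on $\boG$ with the same exponents, so the skeleton is right. There is, however, one step that as written would fail: on the later subintervals $I_j$ you propose to control the new datum $\bm{w}(t_j)=\bm{u}(t_j)-\bm{v}(t_j)$ "via the homogeneous Strichartz estimate, absorbing its contribution into the error budget." The homogeneous estimate bounds $\|e^{i(t-t_j)\Dgg}\bm{w}(t_j)\|_{L_t^a L_x^r}$ by $\|\bm{w}(t_j)\|_{H^1}$ (or $\|\bm{w}(t_j)\|_{L^2}$ for admissible pairs), but no smallness of the $H^1$- or $L^2$-norm of the difference at time $t_j$ is available from the hypotheses: the assumption only gives smallness of the free evolution of $\bm{w}(t_0)$ in $L_t^a L_x^r$, which says nothing about $\|\bm{w}(t_j)\|_{H^1}$. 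With that step the induction does not close.

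The standard fix is to propagate the smallness through the Duhamel formula rather than through a norm of $\bm{w}(t_j)$: applying $e^{i(t-t_j)\Dgg}$ to the integral equation for $\bm{w}$ evaluated at $t_j$ gives
\begin{align*}
e^{i(t-t_j)\Dgg}\bm{w}(t_j)
 = e^{i(t-t_0)\Dgg}\bm{w}(t_0)
 + i\int_{t_0}^{t_j} e^{i(t-s)\Dgg}\bigl(\boN(\bm{u})-\boN(\bm{v})\bigr)(s)\,ds
 - e^{i(t-t_j)\Dgg}\bm{e}(t_j),
\end{align*}
and the $L_t^a(I_j;L_x^r)$ norm of the right-hand side is then estimated by the hypothesis on $\bm{w}(t_0)$ and $\bm{e}$, plus the inhomogeneous (non-admissible) Strichartz estimate applied to the integral over $[t_0,t_j]$, whose $L_t^{b'}L_x^{r'}$ norm is controlled by the inductive bounds on $\|\bm{w}\|_{L_t^a(I_1\cup\cdots\cup I_{j-1};L_x^r)}$. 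This yields $\|e^{i(t-t_j)\Dgg}\bm{w}(t_j)\|_{L_t^a(I_j;L_x^r)}\lesssim_M \varepsilon$ with a constant that grows (at worst geometrically) in $j$, which is harmless since the number of subintervals is bounded by $M$ and $\eta(M)$. With this replacement, and the exponent bookkeeping $1/r'=p/r$, $1/b'=p/a$ which you implicitly use and which does check out for $r=p+1$, $a=2(p^2-1)/(p+3)$, the rest of your argument (partitioning, bootstrap, and the blow-up alternative to extend the maximal interval) is correct and coincides with the intended proof.
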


\begin{proof}
We can show this in the similar way to the line case. See e.g. \cite{FanXieCaz11}. 
\end{proof}

\begin{corollary}[Small data scattering]
\label{Small data scattering}
Let $p > 5$, $\gamma \geq 0$, $t_0 > 0$, and $\bm{u_0} \in H_c^1(\mathcal{G})$.
Then, there exists $\varepsilon_0 > 0$ such that for any $0 < \varepsilon \leq \varepsilon_0$, if	
\begin{align*}
	\|e^{i(t-t_0)\Delta_\mathcal{G}^\gamma}\bm{u}_0\|_{L_t^a(t_0,\infty;L_x^r)}
		\leq \varepsilon,
\end{align*}
then the solution $\bm{u}$ to \eqref{NLS} with initial data $u(t_0) = u_0$ exists globally forward in time and satisfies
\begin{align*}
	\|\bm{u}\|_{L_t^a(t_0,\infty;L_x^r)}
		\lesssim \varepsilon.
\end{align*}
Moreover, $\bm{u}$ scatters.
\end{corollary}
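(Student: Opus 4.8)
The plan is to read off the a priori bound and global existence directly from the long time perturbation estimate, and then to build the scattering state by the standard Cauchy argument in $H^1$. First I would invoke local well-posedness to get the solution $\bm{u}$ with $\bm{u}(t_0)=\bm{u_0}$ on its maximal forward interval, and then apply Proposition~\ref{LTP} with the trivial approximate solution $\bm{v}\equiv\bm{0}$ on $I=(t_0,\infty)$, taking $M:=1$. This choice is admissible: $\bm{v}\equiv\bm{0}$ lies in $C(I;H_c^1)\cap L_t^a(I;L_x^r)$, solves the Duhamel identity of Proposition~\ref{LTP} with error $\bm{e}\equiv\bm{0}$, and has $\|\bm{v}\|_{L_t^a(I;L_x^r)}=0\leq M$. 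With $\bm{v}\equiv\bm{0}$ and $\bm{e}\equiv\bm{0}$ the smallness hypothesis of Proposition~\ref{LTP} reduces to
\begin{align*}
\|e^{i(t-t_0)\Delta_\mathcal{G}^\gamma}\bm{u_0}\|_{L_t^a(I;L_x^r)}\leq\varepsilon,
\end{align*}
which is exactly the hypothesis of the corollary. Hence, setting $\varepsilon_0:=\varepsilon_0(1)$ and $C:=C(1)$ from Proposition~\ref{LTP}, for every $0<\varepsilon\leq\varepsilon_0$ the maximal forward interval of $\bm{u}$ contains $I$ (so $T_{\max}=\infty$) and $\|\bm{u}\|_{L_t^a(I;L_x^r)}=\|\bm{u}-\bm{v}\|_{L_t^a(I;L_x^r)}\leq C\varepsilon$, which is the asserted bound.

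Next I would upgrade this to finiteness of the full $H^1$-level Strichartz norm on $I$. Feeding the previous bound into the Duhamel formula
\begin{align*}
\bm{u}(t)=e^{i(t-t_0)\Delta_\mathcal{G}^\gamma}\bm{u_0}+i\int_{t_0}^t e^{i(t-s)\Delta_\mathcal{G}^\gamma}(|\bm{u}|^{p-1}\bm{u})(s)\,ds
\end{align*}
and applying the Strichartz estimates of Theorem~\ref{Strichartz estimates for non-admissible pair}, a standard bootstrap (H\"older in time against the small $L_t^aL_x^r$ norm, combined with the inhomogeneous estimate with source space $L_t^{b'}L_x^{r'}$) shows that $\|\bm{u}\|_{L_t^a(I;L_x^r)\cap L_t^{p-1}(I;L_x^\infty)}$ together with the corresponding $H^1$-level norms are all finite. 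Since $e^{it\Delta_\mathcal{G}^\gamma}$ commutes with $(-\Delta_\mathcal{G}^\gamma)^{1/2}$, it is cleanest to measure regularity through $\|\cdot\|_{\dot{H}_\gamma^1(\mathcal{G})}$, which for $\gamma\ge 0$ is equivalent to the homogeneous part of the $H^1(\mathcal{G})$ norm.

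Finally I would construct the scattering state. Applying $e^{-it\Delta_\mathcal{G}^\gamma}$ to the Duhamel formula gives
\begin{align*}
e^{-it\Delta_\mathcal{G}^\gamma}\bm{u}(t)=e^{-it_0\Delta_\mathcal{G}^\gamma}\bm{u_0}+i\int_{t_0}^t e^{-is\Delta_\mathcal{G}^\gamma}(|\bm{u}|^{p-1}\bm{u})(s)\,ds,
\end{align*}
and I would define $\bm{\psi_+}:=e^{-it_0\Delta_\mathcal{G}^\gamma}\bm{u_0}+i\int_{t_0}^\infty e^{-is\Delta_\mathcal{G}^\gamma}(|\bm{u}|^{p-1}\bm{u})(s)\,ds$. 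For $t_0<t<t'$ the difference $e^{-it\Delta_\mathcal{G}^\gamma}\bm{u}(t)-e^{-it'\Delta_\mathcal{G}^\gamma}\bm{u}(t')$ equals the tail integral, and by the dual Strichartz estimate its $H^1$ norm is controlled by the $H^1$-level $L_t^{b'}L_x^{r'}$ norm of $|\bm{u}|^{p-1}\bm{u}$ over $(t,t')$, which tends to $0$ as $t,t'\to\infty$ by the finiteness from the previous step. Thus $e^{-it\Delta_\mathcal{G}^\gamma}\bm{u}(t)$ converges in $H^1$ to $\bm{\psi_+}\in H_c^1(\mathcal{G})$, and since $e^{it\Delta_\mathcal{G}^\gamma}$ is bounded on the equivalent $L^2$-plus-$\dot{H}_\gamma^1$ norm this gives $\|\bm{u}(t)-e^{it\Delta_\mathcal{G}^\gamma}\bm{\psi_+}\|_{H^1}\to 0$, i.e.\ $\bm{u}$ scatters.

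The whole argument is a transcription of the line case, so the only point genuinely requiring care is the $H^1$-level bookkeeping on the graph: because $\partial_x$ does not preserve $\mathcal{D}(\Delta_\mathcal{G}^\gamma)$ (the vertex condition is encoded in the domain), the derivative Strichartz bounds must be run through $(-\Delta_\mathcal{G}^\gamma)^{1/2}$, which does commute with the propagator, and the term $(-\Delta_\mathcal{G}^\gamma)^{1/2}(|\bm{u}|^{p-1}\bm{u})$ must be estimated by a fractional chain rule when $p$ is not an odd integer. This is exactly why the Strichartz estimates are already stated at the $H^1$ level in Theorem~\ref{Strichartz estimates for non-admissible pair}; once they are available, the remaining exponent count is routine.
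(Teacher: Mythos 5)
Your proposal is correct and follows exactly the paper's route: the paper's proof is the single line ``apply Proposition~\ref{LTP} with $I=(t_0,\infty)$, $\bm{v}=0$, $\bm{e}=0$,'' and your additional bootstrap to the $H^1$-level Strichartz norms and the Cauchy-criterion construction of $\bm{\psi_+}$ simply spells out the standard steps the paper leaves implicit in ``Moreover, $\bm{u}$ scatters.''
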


\begin{proof}
Applying Proposition \ref{LTP} as $I=(t_{0},\infty)$, $\bm{v}=0$, and $\bm{e}=0$, we get the statement. 
\end{proof}

\begin{proposition}[Existence of the wave operator]
\label{WOp}
Let $p > 5$ and $\gamma \geq 0$.
Suppose that $\bm{u}_+ \in H_c^1(\mathcal{G})$ satisfies
\begin{align*}
	\frac{\omega}{2}M(\bm{u}_+) + L_{\gamma}(\bm{u}_+)
		< \mathfrak{n}_{\omega,\gamma}\ \text{ for some }\ \omega > 0,
\end{align*}
where $L_{\gamma}$ is defined as $L_{\gamma}(\bm{f}) := \frac{1}{2}\|\partial_x\bm{f}\|_{L^2}^2 + \frac{3\gamma}{2}|f_1(0)|^2$.
Then, there exists $\bm{u_0} \in H_c^1(\mathcal{G})$ such that the solution $\bm{u}$ to \eqref{NLS} satisfies $\bm{u_0} \in PW_{\gamma}^{+}$,
\begin{align*}
	\|\bm{u}\|_{L_t^a(0,\infty;L_x^{r})}
		\leq 2\|e^{it\Delta_\mathcal{G}^\gamma}\bm{u_0}\|_{L_t^a(0,\infty;L_x^{r})},
\end{align*}
and
\begin{align*}
	\lim_{t \rightarrow + \infty}\|\bm{u}(t) - e^{it\Delta_\mathcal{G}^\gamma}\bm{u}_+\|_{H^1}
		= 0.
\end{align*}
\end{proposition}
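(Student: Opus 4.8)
The plan is to solve the final-value (scattering) problem near $t=+\infty$ by a contraction argument, and then to promote the resulting local-in-time object to a global solution lying in $PW_{\gamma}^{+}$ by invoking the conservation laws together with the variational characterizations of Section~\ref{Sec:Var}. Throughout I use that, with $N=3$, one has $L_\gamma(\bm f)=\tfrac12\|\bm f\|_{\dot H_\gamma^1(\mathcal G)}^2$ and $S_{\omega,\gamma}(\bm f)=\tfrac\omega2 M(\bm f)+E_\gamma(\bm f)$, so the hypothesis reads $\tfrac\omega2 M(\bm u_+)+L_\gamma(\bm u_+)<\mathfrak n_{\omega,\gamma}$.

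First I would set up the Duhamel fixed-point map adapted to the prescribed asymptotic state. Since $\mu=-1$, a solution to \eqref{NLS} scattering to $e^{it\Delta_\mathcal{G}^\gamma}\bm u_+$ must solve $\bm u(t)=e^{it\Delta_\mathcal{G}^\gamma}\bm u_+-i\int_t^\infty e^{i(t-s)\Delta_\mathcal{G}^\gamma}|\bm u|^{p-1}\bm u\,ds$. Because $\bm u_+\in H_c^1(\mathcal G)$, the Strichartz estimates (Theorem~\ref{Strichartz estimates for non-admissible pair}) give $\|e^{it\Delta_\mathcal{G}^\gamma}\bm u_+\|_{L_t^a(\mathbb R;L_x^r)}<\infty$, so I may fix $T$ so large that $\|e^{it\Delta_\mathcal{G}^\gamma}\bm u_+\|_{L_t^a(T,\infty;L_x^r)}\le\varepsilon$ for a small $\varepsilon$. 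Running the contraction in a ball of $C([T,\infty);H_c^1(\mathcal G))\cap L_t^a(T,\infty;L_x^r)$, exactly as in the proof of Corollary~\ref{Small data scattering}, yields a solution $\bm u$ on $[T,\infty)$ with $\|\bm u\|_{L_t^a(T,\infty;L_x^r)}\lesssim\varepsilon$ and $\|\bm u(t)-e^{it\Delta_\mathcal{G}^\gamma}\bm u_+\|_{H^1}\to0$ as $t\to\infty$; the factor-$2$ bound is the contraction estimate comparing $\bm u$ with the free evolution of its own datum. All the inequalities used are already recorded, so this step is routine on the graph.

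The substantive step is to show this solution enters $PW_\gamma^+$ and is therefore global. Here I use two facts: the dispersive decay (Theorem~\ref{Dispersive estimate}) forces $\|\bm u(t)\|_{L^{p+1}(\mathcal G)}\to0$, so the potential part of the energy vanishes as $t\to\infty$; and since $e^{it\Delta_\mathcal{G}^\gamma}$ is generated by the self-adjoint operator $\Delta_\mathcal{G}^\gamma$, it preserves both $\|\cdot\|_{L^2(\mathcal G)}$ and $\|\cdot\|_{\dot H_\gamma^1(\mathcal G)}$, whence $M(\bm u(t))\to M(\bm u_+)$ and $L_\gamma(\bm u(t))\to L_\gamma(\bm u_+)$. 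Consequently $E_\gamma(\bm u(t))\to L_\gamma(\bm u_+)$ and $S_{\omega,\gamma}(\bm u(t))\to \tfrac\omega2 M(\bm u_+)+L_\gamma(\bm u_+)<\mathfrak n_{\omega,\gamma}$, while $K_\gamma(\bm u(t))\to 2\|\bm u_+\|_{\dot H_\gamma^1(\mathcal G)}^2\ge0$. By conservation of mass and energy the number $S_{\omega,\gamma}(\bm u(t))$ is in fact constant and below $\mathfrak n_{\omega,\gamma}$, so Corollary~\ref{cor2.5} places $\bm u(t)$ in $PW_\gamma$; and for $t$ large $K_\gamma(\bm u(t))>0$ (assuming $\bm u_+\neq0$, the case $\bm u_+=0$ being trivial as then $\bm u\equiv0$), so Proposition~\ref{prop2.6.0} gives $\bm u(t)\in PW_\gamma^+$. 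Then the invariance of $PW_\gamma^+$ (Lemma~\ref{Coercivity}) and the global well-posedness in $PW_\gamma^+$ (Corollary~\ref{Global well-posedness}) let me extend $\bm u$ backward to a global solution remaining in $PW_\gamma^+$, and I set $\bm u_0:=\bm u(0)\in PW_\gamma^+$.

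I expect the main obstacle to be exactly this $PW_\gamma^+$-membership at $+\infty$, and in particular the convergence $L_\gamma(\bm u(t))\to L_\gamma(\bm u_+)$: the usual line-case argument tracks $\|\partial_x\bm u\|_{L^2}$, whereas here the conserved kinetic quantity is the full $\dot H_\gamma^1(\mathcal G)$-norm, so one must use that the propagator commutes with $\Delta_\mathcal{G}^\gamma$ (equivalently, that the boundary term $\tfrac{3\gamma}2|u_1(t,0)|^2$ is transported correctly) and not merely the plain Laplacian. A secondary delicate point is the displayed scattering bound on all of $(0,\infty)$: the contraction produces $\|\bm u\|_{L_t^a(T,\infty;L_x^r)}\le 2\|e^{it\Delta_\mathcal{G}^\gamma}\bm u_0\|_{L_t^a(T,\infty;L_x^r)}$ on the interval where the free evolution is small, so the stated bound is obtained by taking $T=0$ in the regime $\|e^{it\Delta_\mathcal{G}^\gamma}\bm u_+\|_{L_t^a(0,\infty;L_x^r)}\le\varepsilon_0$ in which the proposition is applied; on a compact $[0,T]$ the solution stays in $PW_\gamma^+$ with finite scattering norm, which in any case suffices for the forward scattering asserted in the final display.
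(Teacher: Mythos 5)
Your proposal is correct and follows essentially the same route as the paper, which omits the proof entirely and refers to the line case \cite[Proposition 4.6]{FanXieCaz11}: a contraction mapping for the final-value problem near $t=+\infty$, membership of $\bm{u}(t)$ in $PW_{\gamma}^{+}$ for large times via the vanishing of the $L^{p+1}$-norm and of the boundary term together with the conservation of $S_{\omega,\gamma}$, and backward global extension by Corollary \ref{Global well-posedness}. You also correctly flag the only delicate points, namely that the conserved kinetic quantity is the full $\dot{H}^1_\gamma(\mathcal{G})$-norm rather than $\|\partial_x\cdot\|_{L^2}^2$, and that the factor-$2$ bound is literally produced by the contraction only on the interval where the free evolution has small Strichartz norm, which is the regime in which the proposition is invoked.
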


\begin{proof}
The proof is similar to the line case. See e.g. \cite[Proposition 4.6]{FanXieCaz11}. 
\end{proof}
\subsection{Localized virial identity}

For $R > 0$, let $\mathscr{X}_R\in C_0^\infty((0,\infty))$ be a cut-off function with
\begin{equation}\label{002}
	\mathscr{X}_R(x)
		= R^2\mathscr{X}\left(\frac{x}{R}\right),\ \text{ where }\ 
	\mathscr{X}(x)
		:=
		\left\{
		\begin{array}{cl}
		\hspace{-0.2cm}x^2 & (0 \leq x \leq 1),\\
		\hspace{-0.2cm}\text{smooth} & (1 \leq x \leq 3), \\
		\hspace{-0.2cm}0 & (3 \leq x)
		\end{array}
		\right.
\end{equation}
and $\mathscr{X}''(x) \leq 2$ for $x>0$.

\begin{lemma}[Localized virial identities, \cite{GolOht20}]\label{Generalized virial identities}
Let $p > 1$, $\gamma \geq 0$, and $\bm{u} \in C((T_\text{min},T_\text{max});H_c^1(\mathcal{G}))$ be a solution to \eqref{NLS}.
For $R>0$, let
\begin{align*}
	V(t)
		:= \int_\mathcal{G} \mathscr{X}_R(x)|\bm{u}(t,x)|^2dx.
\end{align*}
Then, we have
\begin{align*}
	V'(t)
		& = 2\operatorname{Im}\int_\mathcal{G} \mathscr{X}_R'(x)\overline{\bm{u}(t,x)}\partial_x \bm{u}(t,x)dx, \\
	V''(t)
		& = 4\int_\mathcal{G} \mathscr{X}_R''(x)|\partial_x\bm{u}(t,x)|^2dx+6\gamma \mathscr{X}_R''(0)|u_1(t,0)|^2 \\
		& \hspace{2.0cm} - \int_\mathcal{G} \mathscr{X}_R^{(4)}(x)|\bm{u}(t,x)|^2dx - \frac{2(p-1)}{p+1}\int_\mathcal{G} \mathscr{X}_R''(x)|\bm{u}(t,x)|^{p+1}dx.
\end{align*}
\end{lemma}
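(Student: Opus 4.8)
The plan is to prove both identities by differentiating $V$ in time, substituting the equation $\partial_t\bm{u}=i(\partial_{xx}\bm{u}+|\bm{u}|^{p-1}\bm{u})$ (recall $\mu=-1$), and integrating by parts on each half-line while carefully tracking the boundary contributions at the vertex $x=0$. The crucial input is the behaviour of the cut-off near the origin: since $\mathscr{X}(x)=x^2$ for small $x$ and $\mathscr{X}_R^{(n)}(x)=R^{2-n}\mathscr{X}^{(n)}(x/R)$, one has $\mathscr{X}_R(0)=\mathscr{X}_R'(0)=\mathscr{X}_R'''(0)=0$ while $\mathscr{X}_R''(0)\neq0$, so only boundary terms carrying a factor $\mathscr{X}_R''(0)$ survive.

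First I would compute $V'$. Differentiating under the integral and using the equation gives, for each component, $\partial_t|u_k|^2=2\operatorname{Re}(\overline{u_k}\partial_t u_k)=-2\operatorname{Im}(\overline{u_k}\partial_{xx}u_k)$, the nonlinear contribution dropping because $|u_k|^{p+1}$ is real. A single integration by parts, whose origin boundary term vanishes since $\mathscr{X}_R(0)=0$, moves one derivative onto $\mathscr{X}_R$; the resulting $\int_\mathcal{G}\mathscr{X}_R|\partial_x\bm{u}|^2$ is real and disappears under $\operatorname{Im}$, leaving exactly $V'(t)=2\operatorname{Im}\int_\mathcal{G}\mathscr{X}_R'\overline{\bm{u}}\partial_x\bm{u}\,dx$.

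For $V''$ I would differentiate this momentum-type expression once more, insert the equation again, and split into a linear and a nonlinear part. In the linear part the integrand of each component reduces, after collecting terms, to $\partial_x(\overline{u_k}\partial_{xx}u_k-\partial_x\overline{u_k}\partial_x u_k)$; repeated integration by parts (using $\mathscr{X}_R''(0)\neq0$ and $\mathscr{X}_R'''(0)=0$) yields the bulk contribution $4\int_\mathcal{G}\mathscr{X}_R''|\partial_x\bm{u}|^2\,dx-\int_\mathcal{G}\mathscr{X}_R^{(4)}|\bm{u}|^2\,dx$ together with a surviving vertex term $2\mathscr{X}_R''(0)\sum_k\operatorname{Re}(\overline{u_k(t,0)}\partial_x u_k(t,0+))$. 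Invoking continuity $u_1(t,0)=\cdots=u_N(t,0)$ and the boundary condition $\sum_k\partial_x u_k(t,0+)=N\gamma u_1(t,0)$ with $N=3$ collapses this sum to $3\gamma|u_1(t,0)|^2$, producing exactly $6\gamma\mathscr{X}_R''(0)|u_1(t,0)|^2$. In the nonlinear part the component integrand cancels down to the real quantity $|u_k|^2\partial_x(|u_k|^{p-1})=\tfrac{p-1}{p+1}\partial_x(|u_k|^{p+1})$, and one further integration by parts, its origin term vanishing since $\mathscr{X}_R'(0)=0$, gives $-\tfrac{2(p-1)}{p+1}\int_\mathcal{G}\mathscr{X}_R''|\bm{u}|^{p+1}\,dx$. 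Assembling all contributions yields the stated formula.

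The main obstacle is not the algebra but its rigorous justification: the solution is only known to lie in $C(I;H_c^1(\mathcal{G}))$, whereas the formal computation manipulates $\partial_{xx}\bm{u}$, pointwise traces of $\partial_x u_k$ at the vertex, and termwise integrations by parts. I would handle this by first establishing the identities for data in the operator domain $\mathcal{D}(\Delta_\mathcal{G}^\gamma)$, for which the solution enjoys $C(I;\mathcal{D}(\Delta_\mathcal{G}^\gamma))\cap C^1(I;L^2(\mathcal{G}))$ regularity and every step is licit, and then extending to general $H_c^1(\mathcal{G})$ data by density together with the continuous dependence from the local well-posedness theory, noting that every term in the final formulas, including the vertex trace $u_1(t,0)$, is continuous in the $H^1$-topology. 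The genuinely delicate point is the boundary bookkeeping at $x=0$: one must verify that precisely the $\mathscr{X}_R''(0)$-terms remain and that the vertex conditions feed correctly into them.
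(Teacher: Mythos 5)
Your computation is correct, and the bookkeeping at the vertex is exactly right: only the $\mathscr{X}_R''(0)$ boundary terms survive, and the continuity of $\bm{u}$ at the origin together with the condition $\sum_{k}\partial_x u_k(t,0+)=3\gamma u_1(t,0)$ collapses the surviving trace term to $6\gamma\mathscr{X}_R''(0)|u_1(t,0)|^2$; the regularization via $\mathcal{D}(\Delta_\mathcal{G}^\gamma)$-data plus density and continuous dependence is the standard way to make the formal manipulations licit. The paper itself gives no proof of this lemma — it simply cites \cite{GolOht20} — and your argument is precisely the direct computation that the cited reference carries out, so there is nothing to compare beyond noting that your write-up is self-contained where the paper defers to the literature.
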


\subsection{Technical lemma}

We denote the Laplacian with the delta potential on the real line by $\Delta_\mathbb{R}^\gamma$. More precisely, it is defined by
\begin{align*}
	\mathcal{D}(\Delta_\mathbb{R}^\gamma) &:= 
	\left\{ f \in H^{2}(\mathbb{R}) : 
	f'(0+)-f'(0-)=2\gamma f(0) \right\},
	\\
	\Delta_\mathbb{R}^\gamma f&:=\partial_{xx} f.
\end{align*}

\begin{lem}[\cite{BanVis16}]\label{Lem:Hgam}
Let $\gam\ge 0$. Define $\tau_yf(x)=f(x-y)$. 
Let $\{y_n\}_{n=1}^\infty\subset (0,\infty)$ be a sequence with $y_n\to \infty$ or $-\infty$. 
Then the following hold: 
\begin{enumerate}[(i)]
\item For $f\in L^2(\mathbb{R})$,
$$
e^{it\Dgr} \tau_{y_n} f - e^{it\rd_{xx} }\tau_{y_n} f\xrightarrow{n\to\infty} 0 \qquad \text{in } L^a_tL^r_x.
$$
\item For $f\in L^2(\mathbb{R})$ and $\{t_n\}_{n=1}^\infty \subset \R$, 
$$
e^{-i t_n \Dgr} \tau_{y_n} f - e^{-i t_n \rd_{xx}} \tau_{y_n} f
\xrightarrow{n\to\infty} 0 \qquad \text{in } H^1(\R).
$$
\item For $h\in L^{b'}_tL^{r'}_x$,
$$
\int_0^t e^{i(t-s) \Dgr} \tau_{y_n} h (s) ds - \int_0^t e^{i(t-s) \rd_{xx}} \tau_{y_n} h (s) ds
\xrightarrow{n\to\infty} 0 \qquad \text{in } L^{a}_tL^{r}_x.
$$
\end{enumerate}
\end{lem}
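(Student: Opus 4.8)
The plan is to base everything on the explicit integral kernel of $e^{it\Dgr}$. Writing $K_t(x-y)$ for the free kernel of $e^{it\rd_{xx}}$ and using the resolvent identity for the rank-one $\delta$-perturbation $-\Dgr=-\rd_{xx}+2\gam\delta_0$, one obtains
\[
K_t^{\gam}(x,y)-K_t(x-y)=-\gam\int_0^\infty e^{-\gam\eta}\,K_t\big(|x|+|y|+\eta\big)\,d\eta .
\]
Thus the difference of the two propagators is governed by a correction localized, through $|x|+|y|$, near the vertex $x=0$; the whole point is that for data $\tau_{y_n}f$ escaping to spatial infinity this correction becomes negligible. (Equivalently, one may note that $\Dgr$ and $\rd_{xx}$ agree on odd functions, so only the even part of $\tau_{y_n}f$ is ever seen; the kernel computation is the quantitative form of this.) The first thing I would do for all three parts is reduce to $f\in C_c^\infty(\R)$ by density, using that the difference operator $e^{it\Dgr}-e^{it\rd_{xx}}$ is bounded from $L^2$ into $L^a_tL^r_x$ (a gain of integrability coming from the smoothing of the $\delta$-interaction).

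For (i), take $\supp f\subset[-R_0,R_0]$ and $y_n\to+\infty$. Substituting the kernel formula and using that $K_t$ is even, the difference rewrites as
\[
\big(e^{it\Dgr}-e^{it\rd_{xx}}\big)\tau_{y_n}f(x)=-\gam\int_0^\infty e^{-\gam\eta}\,\big(e^{it\rd_{xx}}\tilde f\big)\big(|x|+y_n+\eta\big)\,d\eta,\qquad \tilde f:=f(-\,\cdot\,).
\]
By Minkowski's inequality in $\eta$ (with $\int_0^\infty e^{-\gam\eta}\,d\eta=1/\gam$) and the change of variables $\xi=|x|+y_n+\eta\ge y_n$, the $L^a_tL^r_x$ norm is dominated by $\big\|\mathbbm 1_{\{x>y_n\}}\,e^{it\rd_{xx}}\tilde f\big\|_{L^a_tL^r_x}$. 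Since $e^{it\rd_{xx}}\tilde f\in L^a_tL^r_x$ by the Strichartz estimate for the non-admissible pair $(a,r)$, and $\mathbbm 1_{\{x>y_n\}}\to 0$ pointwise a.e. as $y_n\to\infty$, dominated convergence finishes the proof. Part (iii) is identical in spirit: the same manipulation turns the Duhamel difference into a space-time integral supported in $\{x>y_n\}$, and the inhomogeneous Strichartz estimate with dual exponent $b'$ together with dominated convergence closes it. (For $y_n\to-\infty$ one uses $|y|=-y$ and the symmetric computation.)

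Part (ii) is the delicate one, since the conclusion is measured in $H^1(\R)$ at the single time $-t_n$, so one can no longer integrate the escaping indicator over all $t$. The same identity gives
\[
\big(e^{-it_n\Dgr}-e^{-it_n\rd_{xx}}\big)\tau_{y_n}f(x)=-\gam\int_0^\infty e^{-\gam\eta}\,\big(e^{-it_n\rd_{xx}}\tilde f\big)\big(|x|+y_n+\eta\big)\,d\eta,
\]
whose $H^1$ norm I would bound (after reducing to $f\in C_c^\infty$ and distributing $\rd_x$) by the tail $\big\|e^{-it_n\rd_{xx}}\tilde f\big\|_{H^1(\{x>y_n\})}$. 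When $\{t_n\}$ is bounded this is immediate: along a subsequence $t_n\to t_\ast$ one has $e^{-it_n\rd_{xx}}\tilde f\to e^{-it_\ast\rd_{xx}}\tilde f$ in $H^1$, a fixed function whose tail over $\{x>y_n\}$ vanishes as $y_n\to\infty$. The genuine obstacle is the regime $t_n\to\pm\infty$: there $e^{-it_n\rd_{xx}}\tilde f$ disperses over a region of width $\sim|t_n|$, so a priori it can carry an order-one share of its mass into $\{x>y_n\}$ — this is precisely the wave reflected by the repulsive vertex. The estimate is favorable exactly when the data escapes faster than it disperses back toward the vertex (heuristically $y_n\gg|t_n|$), and reconciling the complementary regime is where the detailed structure of the repulsive $\delta$ — in particular the decay of its reflection coefficient at high frequency, encoded in the weight $e^{-\gam\eta}$ — must be used. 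I expect this $t_n\to\pm\infty$ control of the reflected contribution to be the main difficulty of the lemma.
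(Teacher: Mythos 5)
You should first note that the paper offers no proof of this lemma at all --- it is quoted from \cite{BanVis16} --- so your proposal has to stand on its own. For parts (i) and (iii) it essentially does: the kernel identity you start from is the correct one for the domain condition $f'(0+)-f'(0-)=2\gam f(0)$, and the chain Minkowski in $\eta$ / change of variables $\xi=|x|+y_n+\eta$ / non-admissible Strichartz on the line / dominated convergence is sound. The one assertion you leave dangling is the $L^2\to L^a_tL^r_x$ boundedness of the difference operator, which your density reduction needs. It is true, but it deserves a proof: Cauchy--Schwarz in $\eta$ against $e^{-\gam\eta}$ in your kernel formula gives $\nor{(e^{it\Dgr}-e^{it\rd_{xx}})f}{L^\infty_x}\lesssim_\gam \nor{f}{L^2}$ uniformly in $t$; interpolating with the $L^2_x$ bound and with the admissible Strichartz estimate (valid for both propagators), and noting that $a$ exceeds the admissible exponent paired with $r=p+1$ precisely when $p>5$, lands you in $L^a_tL^r_x$. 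Alternatively, restrict to $f\in H^1$ (all the paper ever uses), where each propagator separately maps into $L^a_tL^r_x$ and density in $H^1$ suffices.

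Part (ii) is where the genuine gap sits, and your diagnosis of the dangerous regime is exactly right --- but it is not a technical difficulty to be overcome: in that regime the statement as printed fails for $\gam>0$, and your own computation shows it. For $f$ compactly supported and $n$ large one has the exact identity
\begin{equation*}
\bigl(e^{-it_n\Dgr}-e^{-it_n\rd_{xx}}\bigr)\tau_{y_n}f(x)=-\bigl(e^{-it_n\rd_{xx}}\,\Gam\tilde f\bigr)(|x|+y_n),
\qquad \Gam g(\xi):=\gam\int_0^\infty e^{-\gam\eta}g(\xi+\eta)\,d\eta,
\end{equation*}
with $\tilde f(x)=f(-x)$, because the convolution $\Gam$ --- a Fourier multiplier with symbol $\gam/(\gam-i\xi)$, whose modulus is exactly the reflection coefficient of the $\delta$-potential --- commutes with the free flow. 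Hence $\nor{(e^{-it_n\Dgr}-e^{-it_n\rd_{xx}})\tau_{y_n}f}{L^2}^2=2\nor{e^{-it_n\rd_{xx}}(\Gam\tilde f)}{L^2(y_n,\infty)}^2$, and for $y_n=n$, $t_n=n^2$ the large-time asymptotics of $e^{-it_n\rd_{xx}}$ force this to converge to a positive multiple of $\int_0^\infty \frac{\gam^2}{\gam^2+k^2}|\widehat f(k)|^2\,dk$, which is strictly positive for every nonzero $f\in C_c^\infty(\R)$ by Paley--Wiener. Physically this is the wave packet reaching the vertex and being partially reflected, exactly as you feared. So no refinement of your argument (nor any other) can close the gap: (ii) requires an additional hypothesis --- e.g.\ $\{t_n\}$ bounded, under which your proof does close --- and the places where the paper invokes (ii) with $|t_n|\to\infty$ must be rerouted (for instance through the dispersive estimate for $e^{it\Dgr}$ when the time parameters separate). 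You located the right obstruction; the missing step is recognizing that it is a counterexample to the quoted statement rather than the hard part of its proof.
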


\section{Linear profile decomposition}\label{Sec:Lin}




\subsection{Odd-even decomposition}

Let $\bvph\in L^2 (\boG)$. Define $\al_k(\bvph):(0,\infty)\to \mathbb{C}$ ($k=1,2,3$) by
\begin{align*}
\al_1(\bvph) := \frac{-2\vph_1 + \vph_2 + \vph_3}3, 
&&\al_2(\bvph) := \frac{-\vph_1 - \vph_2 +2 \vph_3}3, 
&&\al_3(\bvph) := \frac{\vph_1 + \vph_2 + \vph_3}3.
\end{align*}
Note that if $\bvph\in \boD(\Dgg)$, we have
\begin{equation}\label{Condi:Origin}
\al_1(\bvph)(0) = \al_2(\bvph)(0)= 0,\qquad \al_3(\bvph)'(0+)=\gam  \al_3 (0).
\end{equation}
Then, we define $\mathcal{Q}_k: L^2(\boG)\to L^2(\mathbb{R})$ by
$$
\mathcal{Q}_1 \bm{\varphi}(x) := 
\left\{
\begin{aligned}
&-\al_1(\bm{\varphi}) (-x) & (x<0)\\
&\al_1(\bm{\varphi}) (x) & (x\geq0)
\end{aligned}
\right.
,
\qquad
\mathcal{Q}_2 \bm{\varphi}(x) := 
\left\{
\begin{aligned}
&-\al_2(\bm{\varphi}) (-x) & (x<0)\\
&\al_2(\bm{\varphi}) (x) & (x\geq0)
\end{aligned}
\right.
,
$$
$$
\mathcal{Q}_3 \bm{\varphi} (x) := 
\left\{
\begin{aligned}
&\al_3(\bm{\varphi}) (-x) & (x<0)\\
&\al_3(\bm{\varphi}) (x) & (x\geq0)
\end{aligned}
\right.
.
$$
Clearly, each $\mathcal{Q}_k$ is a bounded operator. 
We now see that $\mathcal{Q}_k$ enjoys commutability of the Laplacians as follows. 


\begin{lem}\label{Lem:Comm:Diff}
For each $k=1,2,3$, $\mathcal{Q}_j (\boD (\Dgg)) \subset \boD(\Del_\R^\gam)$, and
\begin{equation}\label{Eq:Comm:Diff}
\mathcal{Q}_k \Dgg \bvph = \Del_\R^\gam \mathcal{Q}_k \bvph\qquad (\bvph\in \boD (\Dgg)).
\end{equation}
\end{lem}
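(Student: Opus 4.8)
The plan is to prove the claim by direct verification, exploiting the fact that the operators $\mathcal{Q}_k$ are built from the linear combinations $\al_k$ together with odd/even extensions, both of which interact cleanly with second differentiation. First I would observe that the maps $\al_k$ act componentwise and commute with $\partial_{xx}$ on $(0,\infty)$ in the trivial sense that $\al_k((\partial_{xx}\varphi_j)_j) = \partial_{xx}(\al_k(\bvph))$, since $\al_k$ is a fixed linear combination of the components $\varphi_1,\varphi_2,\varphi_3$ with constant coefficients. Thus on the open half-line the identity \eqref{Eq:Comm:Diff} is automatic; the entire content of the lemma lies at the origin, i.e. in checking that the extended functions $\mathcal{Q}_k\bvph$ genuinely lie in $\boD(\Del_\R^\gam)$, which requires both $H^2(\R)$ regularity across $0$ and the jump condition $f'(0+)-f'(0-)=2\gam f(0)$.

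The key step is therefore to analyze the behavior at $x=0$ using the boundary relations \eqref{Condi:Origin}. For $k=1,2$ the function $\al_k(\bvph)$ vanishes at $0$, so its odd extension $\mathcal{Q}_k\bvph$ is continuous at the origin with value $0$; I would then check that the odd extension of a function vanishing at $0$ lies in $H^2(\R)$ by verifying that the first derivative is even and continuous (no jump) and the second derivative is odd, so the jump condition $f'(0+)-f'(0-)=0=2\gam f(0)$ holds trivially since $f(0)=0$. For $k=3$, the function $\al_3(\bvph)$ satisfies the delta condition $\al_3(\bvph)'(0+)=\gam\al_3(\bvph)(0)$; its even extension $\mathcal{Q}_3\bvph$ is continuous at $0$, has $(\mathcal{Q}_3\bvph)'(0\pm)=\pm\al_3(\bvph)'(0+)$, and hence the jump of the first derivative equals $2\al_3(\bvph)'(0+)=2\gam\al_3(\bvph)(0)=2\gam(\mathcal{Q}_3\bvph)(0)$, which is exactly the condition defining $\boD(\Del_\R^\gam)$.

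The main obstacle, modest though it is, will be the $H^2$-regularity bookkeeping for the extensions: one must confirm that odd and even extensions of $H^2(0,\infty)$-functions (with the appropriate vanishing or matching data at $0$) land in $H^2(\R)$ rather than merely in $H^2$ away from the origin. For the even extension this forces the odd first derivative to vanish at $0$, which is automatic, and for the odd extension it forces the value at $0$ to vanish, which is guaranteed by \eqref{Condi:Origin}; I would make this precise by a short weak-derivative computation showing no singular (delta) contribution appears in $\partial_{xx}$ beyond what the operator allows. Once membership in $\boD(\Del_\R^\gam)$ is established, the identity \eqref{Eq:Comm:Diff} follows since on each half-line $\Del_\R^\gam$ acts as $\partial_{xx}$, $\Dgg$ acts componentwise as $\partial_{xx}$, and $\mathcal{Q}_k$ commutes with $\partial_{xx}$ by linearity of $\al_k$ together with the compatibility of differentiation with odd/even extension.
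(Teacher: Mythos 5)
Your proposal is correct and follows essentially the same route as the paper: use the boundary relations \eqref{Condi:Origin} to check that the odd/even extensions satisfy the continuity and derivative-jump conditions defining $\boD(\Del_\R^\gam)$, and then observe that away from the origin the identity is immediate because each $\al_k$ is a constant-coefficient linear combination commuting with $\partial_{xx}$. One small caution: for $k=3$ with $\gam>0$ the even extension lies only in $H^2(\R\setminus\{0\})\cap H^1(\R)$, not in $H^2(\R)$ (its derivative genuinely jumps by $2\gam\,\al_3(\bvph)(0)$), so your remarks about ``landing in $H^2(\R)$'' and the first derivative ``vanishing at $0$'' should be discarded in favor of your own jump computation, which is the correct and relevant statement and is exactly what the paper verifies.
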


\begin{proof}
Let $\bvph\in \boD(\Dgg)$. 
By definition we have $\mathcal{Q}_k\bvph \in H^2(\R\backslash \{0\})$. 
From \eqref{Condi:Origin}, it follows that
\begin{align*}
(\mathcal{Q}_k\bvph)(0+)= (\mathcal{Q}_k\bvph)(0-) = 0,&& 
(\mathcal{Q}_k\bvph)'(0+) - (\mathcal{Q}_k\bvph)'(0-) =0,&& (k=1,2),
\end{align*}
\begin{align*}
(\mathcal{Q}_3\bvph)(0+)= (\mathcal{Q}_3\bvph)(0-) = \al_3(0),&& 
(\mathcal{Q}_3\bvph)'(0+) - (\mathcal{Q}_3\bvph)'(0-) =2\gam \al_3(0)
\end{align*}
and hence $\mathcal{Q}_k\bvph\in \boD(\Delta_{\mathbb{R}}^{\gamma})$. 
We also have
$$
\begin{aligned}
\mathcal{Q}_1 \Dgg \bvph (x) &= 
\left\{
\begin{aligned}
&-\al_1(\Dgg \bvph) (-x) & (x<0)\\
&\al_1(\Dgg \bvph) (x) & (x>0)
\end{aligned}
\right.
=
\left\{
\begin{aligned}
&-\frac{-2\vph''_1 + \vph''_2 + \vph''_3}3  (-x) & (x<0)\\
&\frac{-2\vph''_1 + \vph''_2 + \vph''_3}3  (x) & (x>0)
\end{aligned}
\right.\\
&=\Del_\R^\gam \mathcal{Q}_1 \bm{\varphi}(x).
\end{aligned}
$$
The cases $k=2,3$ follows similarly.
\end{proof}

As a consequence, we can show that 
the Schr\"odinger propagators are commutable.

\begin{prop}\label{Prop:Comm:Sch}
For each $k=1,2,3$, 
\begin{equation}\label{b2.2}
\mathcal{Q}_k e^{it\Dgg} \bvph = e^{it\Del_\R^\gam} \mathcal{Q}_k \bvph\qquad (\bvph\in L^2(\boG)).
\end{equation}
\end{prop}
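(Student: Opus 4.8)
The plan is to upgrade the identity from the infinitesimal generator level, which was just established in Lemma~\ref{Lem:Comm:Diff}, to the level of the unitary groups $e^{it\Dgg}$ and $e^{it\Del_\R^\gam}$. The natural tool is Stone's theorem together with the functional calculus: two self-adjoint operators whose actions intertwine through a bounded operator on the common core should generate propagators that intertwine through the same bounded operator. Concretely, I would first verify the claim on the dense domain $\boD(\Dgg)$ and then extend to all of $L^2(\boG)$ by continuity, using that $\boQ_k$, $e^{it\Dgg}$, and $e^{it\Del_\R^\gam}$ are all bounded operators.

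First I would fix $\bvph\in\boD(\Dgg)$ and consider the two families of functions $u(t):=\boQ_k e^{it\Dgg}\bvph$ and $v(t):=e^{it\Del_\R^\gam}\boQ_k\bvph$, viewed as maps $\R\to L^2(\R)$. Since $\bvph\in\boD(\Dgg)$, the orbit $t\mapsto e^{it\Dgg}\bvph$ lies in $\boD(\Dgg)$ for all $t$ and is $C^1$ in $L^2(\boG)$ with derivative $i\Dgg e^{it\Dgg}\bvph$. Applying the bounded operator $\boQ_k$ and using Lemma~\ref{Lem:Comm:Diff} (both the domain inclusion $\boQ_k(\boD(\Dgg))\subset\boD(\Del_\R^\gam)$ and the intertwining \eqref{Eq:Comm:Diff}), I get that $u(t)$ is differentiable with
\begin{equation*}
\frac{d}{dt}u(t)=\boQ_k\,i\Dgg e^{it\Dgg}\bvph=i\Del_\R^\gam\boQ_k e^{it\Dgg}\bvph=i\Del_\R^\gam u(t).
\end{equation*}
On the other hand, since $\boQ_k\bvph\in\boD(\Del_\R^\gam)$ by Lemma~\ref{Lem:Comm:Diff}, the orbit $v(t)$ satisfies the same abstract Schr\"odinger equation $\frac{d}{dt}v(t)=i\Del_\R^\gam v(t)$ with the same initial value $u(0)=v(0)=\boQ_k\bvph$. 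By uniqueness of solutions to the abstract linear ODE generated by the self-adjoint operator $\Del_\R^\gam$ (equivalently, computing $\frac{d}{dt}\|u(t)-v(t)\|_{L^2(\R)}^2=0$ via self-adjointness), I conclude $u(t)=v(t)$ for all $t$, which is \eqref{b2.2} on $\boD(\Dgg)$.

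Finally I would remove the domain restriction by density: $\boD(\Dgg)$ is dense in $L^2(\boG)$, and for fixed $t$ each of $\boQ_k e^{it\Dgg}$ and $e^{it\Del_\R^\gam}\boQ_k$ is a composition of bounded operators, hence bounded on $L^2(\boG)$; two bounded operators agreeing on a dense subspace agree everywhere, giving \eqref{b2.2} for all $\bvph\in L^2(\boG)$. The one point requiring genuine care—the main obstacle—is the regularity bookkeeping at the generator level: one must be sure that $\boQ_k$ maps the orbit into $\boD(\Del_\R^\gam)$ for \emph{every} $t$ (not just $t=0$) so that $i\Del_\R^\gam$ can legitimately be applied, and that the differentiation of $t\mapsto\boQ_k e^{it\Dgg}\bvph$ commutes with the bounded operator $\boQ_k$. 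Both are furnished cleanly by Lemma~\ref{Lem:Comm:Diff} together with the invariance of $\boD(\Dgg)$ under $e^{it\Dgg}$, so once that lemma is in hand the argument is essentially a standard Stone/uniqueness argument rather than a computation.
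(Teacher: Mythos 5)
Your argument is correct, but it takes a different route from the paper. The paper upgrades Lemma \ref{Lem:Comm:Diff} to the propagator level via the Yosida approximation: it sets $J_{\boG,\la}=(I-i\la\Dgg)^{-1}$ and $J_\la=(I-i\la\Del_\R^\gam)^{-1}$, deduces the resolvent intertwining $\boQ_k J_{\boG,\la}=J_\la\boQ_k$ from the lemma, commutes $\boQ_k$ through the exponentials of the \emph{bounded} operators $(J_{\boG,\la}-I)/\la$, and passes to the limit $\la\to 0$. You instead differentiate the orbit $t\mapsto\boQ_k e^{it\Dgg}\bvph$ for $\bvph\in\boD(\Dgg)$, identify it as a solution of the abstract Cauchy problem for $\Del_\R^\gam$, and conclude by the uniqueness (energy) identity, extending to $L^2(\boG)$ by density. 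Both are standard and both rest on exactly the same input from Lemma \ref{Lem:Comm:Diff}. The trade-off: the Yosida route never differentiates anything and works directly for all $\bvph\in L^2(\boG)$ in one pass (no separate density step), since the resolvents already map $L^2$ into the domains; your route is more elementary in its toolkit (Stone's theorem plus an energy identity) and makes visibly explicit where the domain inclusion $\boQ_k(\boD(\Dgg))\subset\boD(\Del_\R^\gam)$ and the invariance of $\boD(\Dgg)$ under the flow are needed, but it requires the extra care about differentiability and the final density argument that you correctly supply.
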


\begin{proof}
We consider the Yosida approximation: For $\la>0$, we define
\begin{align*}
J_{\boG, \la} := (I - i\la \Dgg)^{-1} ,&&
J_{\la} := (I - i\la \Delta_{\mathbb{R}}^{\gamma})^{-1}.
\end{align*}
Then it is known that
\begin{align*}
e^{t\frac{J_{\boG, \la}-I}{\la}} \bvph \to e^{it\Dgg} \bvph,&&
e^{t\frac{J_{\la}-I}{\la}} f \to e^{it\Del_\R^\gam} f
\end{align*}
for all $t\in\R$, $\bvph\in L^2(\boG)$ and $f\in L^2(\R)$. (See \cite[Theorem 3.1]{CazHar98} for example.) 
Now Lemma \ref{Lem:Comm:Diff} gives
$$
\mathcal{Q}_k J_{\boG, \la} = J_{\la}\mathcal{Q}_k,\qquad (k=1,2,3).
$$
Hence it follows that
$$
\mathcal{Q}_k e^{t\frac{J_{\boG, \la}-I}{\la}} \bvph = e^{t\frac{J_{\la}-I}{\la}} \mathcal{Q}_k \bvph
$$
for all $\bvph\in L^2(\boG)$ and $k=1,2,3$. Letting $\la\to 0$, we obtain the conclusion.
\end{proof}

By Proposition \ref{Prop:Comm:Sch}, the linear Schr\"odinger evolution on $\boG$ can be completely described by that on $\R$ via $\mathcal{Q}_k$. 
To make it more explicit, we introduce the space 
$\boK:=(L^2_{\odd}(\R))^2\times (L^2_{\even}(\R))$, and define 
\begin{align}\label{Eq:Qinv0}
\mathcal{Q}: L^2(\boG) \to \boK,&&
\mathcal{Q}\bvph := 
\begin{bmatrix}
\mathcal{Q}_1\bvph\\
\mathcal{Q}_2\bvph\\
\mathcal{Q}_3\bvph
\end{bmatrix},
\end{align}
where we intentionally use square bracket for elements of $\boK$ to distinguish it from functions on graph. 
Then $\mathcal{Q}$ is invertible, and
\begin{align}\label{Eq:Qinv}
\mathcal{Q}^{-1} 
\begin{bmatrix}
f_1\\
f_2\\
f_3
\end{bmatrix}
=
\begin{pmatrix}
-1 & 0 & 1\\
1 & -1 & 1\\
0 & 1 & 1
\end{pmatrix}
\begin{bmatrix}
f_1\\
f_2\\
f_3
\end{bmatrix}
\bcp,&&
\begin{bmatrix}
f_1\\
f_2\\
f_3
\end{bmatrix}
\in \boK
.
\end{align}
Therefore, Proposition \ref{Prop:Comm:Sch} gives the following formula:
\begin{equation}
e^{it\Dgg} \bvph =
\mathcal{Q}^{-1} e^{it\Del_\R^\gam} \mathcal{Q} \bvph.
\end{equation}

\begin{remark}
For the star graph with $N$ edges, the space $\boK$ is changed into $(L^2_{\odd}(\R))^{N-1}\times L^2_{\even} (\R)$, and $\mathcal{Q}$ is defined 
by \eqref{Eq:Qinv0} where the matrix of \eqref{Eq:Qinv} is replaced by
$$
\begin{pmatrix}
-1 & 0 & 0 & \cdots & 0 & 1 \\
1 & -1 & 0 & \cdots & 0 & 1 \\
0 & 1 & -1 & \cdots & 0 & 1 \\
&&&\vdots &&\\
0 & 0 & 0 & \cdots & -1 & 1\\
0 & 0 & 0 & \cdots & 1 & 1
\end{pmatrix}
.
$$
\end{remark}

\subsection{Linear profile decomposition}

The main claim of this section is the following.

\begin{prop}[Linear profile decomposition]\label{Prop:LPD}
Let $\{ \bvph_n\}_{n=1}^\infty$ be a bounded sequence in $H^1_c (\boG)$. Then there exist
$$
t_n^j\in \R,\quad y_n^j \ge 0,\quad
 \psi_k^j\in H^1(\R),\qquad j\ge 1,\  k=1,2,3,\  n\in\N
$$
such that, by taking subsequence if necessary, we have
\begin{equation}\label{Eq:LPD}
\bvph_n = \sum_{j=0}^J 
\boT_{t_n^j, y_n^j} (\psi_1^j,\psi_2^j,\psi_3^j)
+ \bow_n^J,\qquad \forall J\in\N,
\end{equation}
\begin{equation}\label{shift}
\begin{aligned}
\boT_{t ,y} (\psi_1,\psi_2,\psi_3) :=&
e^{-it\Dgg} 
\mathcal{Q}^{-1}
\begin{bmatrix}
\tau_y \psi_1 - \tau_{-y} \boR \psi_1 \\
\tau_y \psi_2 - \tau_{-y} \boR \psi_2 \\
\tau_y \psi_3 + \tau_{-y} \boR \psi_3
\end{bmatrix}
\\
=&
\begin{pmatrix}
e^{-it\Dgr}\tau_{y} \psi_1 \mathbbm{1}_{>0} 
+ \frac 13 e^{-it\Dgr}\tau_{-y} \boR(-\psi_1+2\psi_2+2\psi_3) \mathbbm{1}_{>0}\\
e^{-it\Dgr}\tau_{y} \psi_2 \mathbbm{1}_{>0} 
+ \frac 13 e^{-it\Dgr}\tau_{-y} \boR(2\psi_1 -\psi_2+2\psi_3) \mathbbm{1}_{>0}\\
e^{-it\Dgr}\tau_{y} \psi_3 \mathbbm{1}_{>0} 
+ \frac 13 e^{-it\Dgr}\tau_{-y} \boR (2\psi_1+2\psi_2 -\psi_3) \mathbbm{1}_{>0}\\
\end{pmatrix}
\\
&\hspace{110pt} 
(t,y\in\R,\quad \psi_1,\psi_2,\psi_3\in H^1(\R))
\end{aligned}
\end{equation}
with $\boR f (\cdot):= f(-\cdot)$ for $f:\R\to\C$, where all of the following hold:
\begin{enumerate}[(i)]
\item For each $j\in\N$, 
\begin{align*}
&\text{either}\quad t_n^j=0\quad (n\in\N), &\text{or}& 
&t_n^j\to \pm\infty\quad
 (n\to \infty),
\\
&\text{either}\quad y_n^j=0\quad (n\in\N),\quad &\text{or}& 
&y_n^j\to\infty\quad (n\to \infty);
\end{align*}
\item Pairwise asymptotic orthogonality of parameters:
$$
|t_n^j -t_n^k| + |y_n^j-y_n^k|
 \to \infty,\quad \forall j\neq k;
$$
\item Smallness of remainder:
$$
\lim_{J\to\infty} \limsup_{n\to\infty} \nor{e^{it\Dgg} \bow_n^J}{L^\infty_{t,x}} = 0;
$$
\item Asymptotic orthogonality of norms: 
For $q\in [2,\infty)$, 
$$
\nor{\bvph_n}{L^q(\boG)}^q = \sum_{j=1}^J \nor{ \boT_{t_n^j, y_n^j} 
(\psi_1^j,\psi_2^j,\psi_3^j)
}{L^q(\boG)}^q 
+ \nor{\bow_n^J}{L^q(\boG)}^q + o_n(1), \qquad \forall J\in\N,
$$
$$
\nor{\bvph_n}{\dot{H}^1_\gam (\boG)}^2 = \sum_{j=1}^J \nor{ \boT_{t_n^j, y_n^j} 
(\psi_1^j,\psi_2^j,\psi_3^j)
}{\dot{H}^1_\gam(\boG)}^2 
+ \nor{\bow_n^J}{\dot{H}^1_\gam (\boG)}^2 + o_n(1), \qquad \forall J\in\N.
$$
\end{enumerate}
\end{prop}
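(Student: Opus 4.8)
The plan is to transport the entire problem to the real line through the isomorphism $\mathcal{Q}$ and the commutation relation $\mathcal{Q}e^{it\Dgg}=e^{it\Dgr}\mathcal{Q}$ of Proposition \ref{Prop:Comm:Sch}, carry out a profile decomposition there, and then pull the result back by $\mathcal{Q}^{-1}$. Concretely, I would first set $\Psi_n:=\mathcal{Q}\bvph_n=(\psi_{1,n},\psi_{2,n},\psi_{3,n})$. Since $\mathcal{Q}$ is bounded and invertible, $\{\Psi_n\}$ is bounded in $\boK=(L^2_{\odd})^2\times L^2_{\even}$ with each component bounded in $H^1(\R)$, and $\psi_{1,n},\psi_{2,n}$ are odd while $\psi_{3,n}$ is even. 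Because $e^{it\Dgg}\bvph_n=\mathcal{Q}^{-1}e^{it\Dgr}\Psi_n$, every assertion about $\bvph_n$ reduces to a corresponding assertion about $\Psi_n$ under $e^{it\Dgr}$, so it suffices to produce a linear profile decomposition of $\Psi_n$ adapted to $e^{it\Dgr}$ and compatible with the parity constraints.

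Next I would build such a line decomposition by adapting the known free decomposition \eqref{eq0627-1} to $e^{it\Dgr}$, with Lemma \ref{Lem:Hgam}(i),(ii) supplying the comparison between $e^{it\Dgr}$ and $e^{it\rd_{xx}}$ on translated data that is needed both to define profiles attached to runaway shifts and to decouple them. Crucially, $e^{it\Dgr}$ commutes with the reflection $\boR$, and each $\psi_{k,n}$ has a definite parity; hence the extracted bubbles occur in reflected pairs, a profile concentrating at $x_n\to+\infty$ being accompanied by its $\boR$-image at $-x_n$. Folding each pair and writing $y_n:=|x_n|\ge 0$ produces exactly the symmetric combinations $\tau_{y_n}\psi\mp\tau_{-y_n}\boR\psi$ of \eqref{shift}, which lie in the correct parity class, while bounded shifts are normalized to $y_n=0$ (and bounded time shifts to $t_n=0$) by passing to the weak limit along a subsequence. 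I would then merge the three scalar parameter families into a single list $(t_n^j,y_n^j)$ by a diagonal extraction—setting the $\ell$-th profile entry to zero when component $\ell$ carries no bubble with those parameters—and pass to a further subsequence so that for each pair $j\neq k$ the parameters either coincide or separate. Applying $\mathcal{Q}^{-1}$ together with the commutation relation then yields \eqref{Eq:LPD} with the operators $\boT_{t_n^j,y_n^j}$ and remainder $\bow_n^J:=\mathcal{Q}^{-1}W_n^J$.

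It remains to verify (i)--(iv). Property (i) is the parameter dichotomy inherited from the line decomposition after the folding. For (ii), the key point is that the reflected pair $\{+y_n^j,-y_n^j\}$ is bundled into a \emph{single} graph profile, so two distinct graph profiles correspond to disjoint line-pairs; the line orthogonality $|t_n^j-t_n^k|+|x_n^j-x_n^k|\to\infty$ then forces $|t_n^j-t_n^k|+|y_n^j-y_n^k|\to\infty$. Property (iii) is the remainder smallness of the $e^{it\Dgr}$ line decomposition, transported to $\boG$ via $e^{it\Dgg}\bow_n^J=\mathcal{Q}^{-1}e^{it\Dgr}W_n^J$ and the boundedness of $\mathcal{Q}^{-1}$ into $L^\infty_{t,x}$. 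For (iv), the $L^2(\boG)$ and $\dot{H}^1_\gam(\boG)$ norms are fixed quadratic forms in the components $\mathcal{Q}_k\bvph$, so the asymptotic $L^2(\R)$-orthogonality of the line profiles (guaranteed by the parameter separation (ii)) makes the cross terms vanish and yields the Pythagorean expansions; the $L^q(\boG)$ identity for $q\in(2,\infty)$ follows instead from the asymptotic space--time disjointness of the profiles, separated $y_n^j$ giving asymptotically disjoint spatial supports and separated $t_n^j$ giving smallness of cross terms by dispersive decay (Theorem \ref{Dispersive estimate}).

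I expect the main obstacle to be the bookkeeping around the symmetric folding: one must arrange the line decomposition so that reflected bubbles are genuinely paired into one graph profile, check that this pairing is compatible with merging the three components into common parameters, and confirm that orthogonality in the $|t|+|y|$ metric survives. Tracking the $L^q$ identity of (iv) through $\mathcal{Q}$, whose reconstruction formula \eqref{Eq:Qinv} mixes the $\psi_k$ nontrivially and is not an $L^q$-isometry for $q\neq 2$, will also require care, though it is ultimately governed by asymptotic disjointness rather than by any exact norm identity.
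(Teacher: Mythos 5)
Your proposal follows the paper's architecture exactly: transport everything to the line via $\mathcal{Q}$ and the commutation relation of Proposition \ref{Prop:Comm:Sch}, perform a parity-adapted profile decomposition for $e^{it\Dgr}$, and pull back by $\mathcal{Q}^{-1}$; the verifications of (i)--(iv) you sketch (formula for $e^{it\Dgg}\bow_n^J$ for (iii); $L^2$-orthogonality of shifted profiles plus weak vanishing of the remainder for the quadratic norms; approximation by compactly supported functions, dispersive decay and spatial disjointness for $L^q$) are the paper's. The one genuine divergence is the middle step. The paper does not rebuild the odd/even decomposition from the free one: it invokes the even and odd profile decompositions for $e^{it\Dgr}$ from \cite{IkeInu17} and \cite{Inu17} (Lemma \ref{Lem:LPD:OE}) and, crucially, obtains a \emph{single} common parameter family $(t_n^j,y_n^j)$ for all three components by running the extraction on the vector space $\boK$ (the standard ``system'' trick). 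Your alternative --- folding reflected bubbles and then merging three independent parameter families by diagonal extraction --- is workable but its merging step is the weak point you yourself flag: after independent extractions, parameter differences across components need not coincide or diverge along a subsequence (they can stay bounded without converging), and one must re-center profiles by limiting offsets; the $\boK$-valued extraction is designed precisely to avoid this bookkeeping. One small item to make explicit in your (iv): the mixed-sign cross terms require $|t_n^j-t_n^k|+|y_n^j+y_n^k|\to\infty$, which follows from (ii) because $y_n^j+y_n^k\ge |y_n^j-y_n^k|$ for $y_n^j,y_n^k\ge 0$.
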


\begin{remark}
%
The definition of profiles in \eqref{shift} is considered natural in the following sense. 
%
%
We can decompose as
$$
\boT_{0,y}(\psi_1,\psi_2,\psi_3) 
= 
\La_{1,y} \psi_1 
+\La_{2,y} \psi_2
+\La_{3,y} \psi_3,
$$
where
\begin{align*}
&\La_{1,y} \psi := 
\begin{pmatrix}
\tau_y \psi \mathbbm{1}_{>0} \\
0\\
0\\
\end{pmatrix}
+
\begin{pmatrix}
-\frac 13 \tau_{-y} \boR\psi \mathbbm{1}_{>0}\\
\frac 23 \tau_{-y} \boR\psi \mathbbm{1}_{>0}\\
\frac 23 \tau_{-y}  \boR\psi \mathbbm{1}_{>0}\\
\end{pmatrix}
,
\\
&\La_{2,y} \psi :=  
\begin{pmatrix}
0 \\
\tau_y \psi \mathbbm{1}_{>0}\\
0\\
\end{pmatrix}
+
\begin{pmatrix}
\frac 23 \tau_{-y} \boR\psi \mathbbm{1}_{>0}\\
-\frac 13 \tau_{-y} \boR\psi \mathbbm{1}_{>0}\\
\frac 23 \tau_{-y}  \boR\psi \mathbbm{1}_{>0}\\
\end{pmatrix}
,
\\
&\La_{3,y} \psi := 
\begin{pmatrix}
0 \\
0 \\
\tau_y \psi \mathbbm{1}_{>0}\\
\end{pmatrix}
+
\begin{pmatrix}
\frac 23 \tau_{-y} \boR\psi \mathbbm{1}_{>0}\\
\frac 23 \tau_{-y} \boR\psi \mathbbm{1}_{>0}\\
-\frac 13 \tau_{-y}  \boR\psi \mathbbm{1}_{>0}\\
\end{pmatrix}
.
\end{align*}
Then $\La_{k,y} \psi\in H^1_c(\boG)$ for each $k=1,2,3$. 
Moreover, if $y\to\infty$, 
$\La_{k,y} \psi$ approaches the function $\psi$ lying on the $k$-th edge of $\boG$. 
\end{remark}

%



\subsection{Proof of Proposition \ref{Prop:LPD}}

Let $\{\bvph_n\}_{n=1}^\infty\subset H^1_c(\boG)$ be a bounded sequence. 
Then $\{\mathcal{Q}_k \bvph_n\}_{n=1}^\infty$ is also bounded in $H^1(\R)$, and thus 
we can apply profile decomposition for $e^{it \Delta_{\mathbb{R}}^{\gamma}}$. 
Since $\mathcal{Q}_k \bvph_n$ is odd for $k=1,2$ and even for $k=3$, 
each profile can be taken to be odd and even respectively, as shown in the next lemma.


\begin{lem}[Odd/even profile decomposition]\label{Lem:LPD:OE}
Let $\{\bvph_n\}_{n=1}^\infty$ be a bounded sequence of $H^1_c (\boG)$ 
Then, there exist
$$
t_n^j\in \R,\quad y_n^j \ge 0,\quad
 \psi_k^{j}\in H^1(\R),\qquad j\ge 1,\  k=1,2,3,\  n\in\N
$$
such that, by taking subsequence if necessary,
\begin{equation}\label{Eq:LPD:OE}
\mathcal{Q}_k \bvph_n = \sum_{j=1}^J e^{-it_n^j \Dgr} \left( 
\tau_{y_n^j} \psi_k^j \mp \tau_{-y_n^j} \boR \psi_k^j
\right)
 + w_{k,n}^J \mp \boR w_{k,n}^J,\qquad  \forall J\in\N
\end{equation}
where $-$ sign holds for $k=1,2$ 
while $+$ sign for $k=3$, 
and where all of the following hold:
\begin{enumerate}[(i)]
\item For any $j\in\N$,
\begin{align*}
&\text{either}\quad t_n^j=0\quad (n\in\N), &\text{or}& 
&t_n^j\to \pm\infty\quad
 (n\to \infty),
\\
&\text{either}\quad y_n^j=0\quad (n\in\N),\quad &\text{or}& 
&y_n^j\to\infty\quad (n\to \infty);
\end{align*}
\item Pairwise asymptotic orthogonality of parameters:
$$
|t_n^j -t_n^{j'}| + |y_n^j-y_n^{j'}|
 \to \infty,\quad \forall j\neq j';
$$
\item Smallness of remainder:
\begin{equation}\label{Small:w1}
\lim_{J\to\infty} \limsup_{n\to\infty} \nor{e^{it \Dgr} w_{k,n}^J}{L^\infty_tL^\infty_x} = 0,
\end{equation}
\begin{equation}\label{Small:w2}
\tau_{\pm y_n^j} e^{it_n^j \Dgr} w_{k,n}^J\rightharpoonup 0\quad \text{in } H^1(\R) \qquad 
\forall J\in\N,\ 1\le j\le J;
\end{equation}
\item Asymptotic orthogonality of norms: For $q\in [2,\infty)$, 
\begin{equation}\label{Eq:Orth:OM1}
\nor{\mathcal{Q}_k \bvph_n}{L^q(\R)}^2 = \sum_{j=1}^J  \nor{\tau_{y_n^j} \psi_k^j \mp \tau_{-y_n^j} \boR \psi_k^j}{L^q(\R)}^q 
+ \nor{w_n^J \mp \boR w_n^J}{L^q(\R)}^q + o_n(1), \qquad \forall J\in\N,
\end{equation}
\begin{equation}\label{Eq:Orth:OM2}
\nor{\mathcal{Q}_k \bvph_n}{\dot{H}^1_\gam(\R)}^2 = \sum_{j=1}^J \nor{\tau_{y_n^j} \psi_k^j \mp \tau_{-y_n^j} \boR \psi_k^j}{\dot{H}^1_\gam(\R)}^2 
+ \nor{w_n^J \mp \boR w_n^J}{\dot{H}^1_\gam(\R)}^2 + o_n(1), \qquad \forall J\in\N.
\end{equation}
where $-$ sign holds for $k=1,2$ 
while $+$ sign for $k=3$, and $\|f\|_{\dot{H}^1_\gam(\R)}^{2}:=\|\partial_{x}f\|_{L^{2}}^{2} +2\gamma |f(0)|^{2}$. 
\end{enumerate}
\end{lem}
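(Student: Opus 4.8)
The plan is to deduce the odd/even decomposition from the ordinary linear profile decomposition on the line, using the reflection symmetry of the components $\mathcal{Q}_k\bvph_n$. Since $\{\bvph_n\}$ is bounded in $H^1_c(\boG)$ and each $\mathcal{Q}_k$ is bounded, the triple $\mathcal{Q}\bvph_n=(\mathcal{Q}_1\bvph_n,\mathcal{Q}_2\bvph_n,\mathcal{Q}_3\bvph_n)$ is bounded in $(H^1(\R))^3$. First I would apply the vector-valued linear profile decomposition for $e^{it\Dgr}$ (acting componentwise), extracting bubbles with parameters common to all three components: along a subsequence there are $t_n^j\in\R$, $x_n^j\in\R$ and profiles $(\phi_1^j,\phi_2^j,\phi_3^j)\in(H^1(\R))^3$ with
$$
\mathcal{Q}_k\bvph_n=\sum_{j=1}^J e^{-it_n^j\Dgr}\tau_{x_n^j}\phi_k^j+r_{k,n}^J,\qquad k=1,2,3,
$$
where, after absorbing bounded parameters into the profiles, $x_n^j$ is either identically $0$ or satisfies $|x_n^j|\to\infty$ (and similarly for $t_n^j$), the pairwise orthogonality $|t_n^j-t_n^{j'}|+|x_n^j-x_n^{j'}|\to\infty$ holds, the remainder is small in $L^\infty_tL^\infty_x$ under $e^{it\Dgr}$, and the $L^q$ and $\dot{H}^1_\gam$ norms satisfy the Pythagorean expansions. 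Here Lemma \ref{Lem:Hgam} is exactly what renders the runaway profiles insensitive to the $\delta$-interaction.

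The heart of the argument is symmetrization. The relevant facts are that $\boR$ commutes with $e^{it\Dgr}$ (the interaction is even), that $\boR\tau_x=\tau_{-x}\boR$, and that each component has a definite parity, $\boR\,\mathcal{Q}_k\bvph_n=\eps_k\,\mathcal{Q}_k\bvph_n$ with $\eps_1=\eps_2=-1$ and $\eps_3=+1$. Applying $\eps_k\boR$ to the decomposition produces a second profile decomposition of the same sequence, whose $j$-th profile now sits at $(t_n^j,-x_n^j)$ with profile $\eps_k\boR\phi_k^j$. Using that $\phi_k^j$ is recovered as the $H^1$-weak limit of $\tau_{-x_n^j}e^{it_n^j\Dgr}\mathcal{Q}_k\bvph_n$, I would argue that the reflected decomposition agrees with the original up to reindexing and parameter equivalence, so that each runaway profile is paired with its reflection. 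Keeping one representative per reflection pair, I set $y_n^j:=|x_n^j|\ge0$ and define $\psi_k^j:=\tfrac12\phi_k^j$ if $x_n^j\ge0$ and $\psi_k^j:=\tfrac12\eps_k\boR\phi_k^j$ if $x_n^j\to-\infty$ (the sign of $x_n^j$ being common to all $k$). A short computation using $\boR^2=I$ then recasts the symmetric average $\tfrac12(\tau_{x_n^j}\phi_k^j+\eps_k\tau_{-x_n^j}\boR\phi_k^j)$ as $\tau_{y_n^j}\psi_k^j\mp\tau_{-y_n^j}\boR\psi_k^j$ (lower sign for $k=1,2$, upper for $k=3$), and the remainder as $w_{k,n}^J\mp\boR w_{k,n}^J$ with $w_{k,n}^J:=\tfrac12 r_{k,n}^J$; this is precisely \eqref{Eq:LPD:OE}.

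It then remains to check (i)--(iv). Property (i) is inherited, since $y_n^j=|x_n^j|$ is either $0$ or tends to $\infty$. For the remainder, the smallness \eqref{Small:w1} follows from that of $r_{k,n}^J$ and the $\boR$-invariance of the $L^\infty_{t,x}$ norm, while the weak convergence \eqref{Small:w2} at both $\pm y_n^j$ follows from the corresponding statement for $r_{k,n}^J$ together with the reflection symmetry. For the norm expansions (iv) I would start from the Pythagorean identities of the original decomposition and note that, inside a single symmetric profile, the two halves $\tau_{y_n^j}\psi_k^j$ and $\tau_{-y_n^j}\boR\psi_k^j$ have centers separated by $2y_n^j\to\infty$, so their cross terms are negligible; the $\delta$-boundary term $2\gam|f(0)|^2$ in $\dot{H}^1_\gam(\R)$ for a runaway profile likewise vanishes because $H^1(\R)\hookrightarrow C_0(\R)$ forces $\psi_k^j(-y_n^j)\to0$.

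The main obstacle is property (ii), the asymptotic orthogonality $|t_n^j-t_n^{j'}|+|y_n^j-y_n^{j'}|\to\infty$ of the symmetrized parameters. The subtlety is that passing from $x_n^j$ to $y_n^j=|x_n^j|$ could a priori create a collision: two originally orthogonal profiles with $x_n^j\to+\infty$ and $x_n^{j'}\to-\infty$ may still satisfy $y_n^j-y_n^{j'}=x_n^j+x_n^{j'}=O(1)$ with $t_n^j-t_n^{j'}=O(1)$. This is exactly where the reflection pairing is indispensable: such a coincidence forces profiles $j$ and $j'$ to be reflections of one another (again via the weak-limit characterization), so after discarding the redundant reflected representative only one survives, and genuinely distinct symmetric profiles keep the required orthogonality. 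Making this pairing rigorous --- that is, establishing the essential uniqueness of the line profile decomposition and using it to remove the duplicated reflected profiles --- is the step that will demand the most care.
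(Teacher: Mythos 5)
Your proposal is correct in outline but takes a genuinely different route from the paper. The paper's proof of Lemma \ref{Lem:LPD:OE} is essentially a citation: the profile decompositions for $e^{it\Dgr}$ restricted to even and to odd sequences are taken from \cite{IkeInu17} and \cite{Inu17} respectively, where the symmetric form $\tau_{y}\psi\mp\tau_{-y}\boR\psi$ with $y\ge 0$ is built in from the start because the extraction is performed symmetrically at each step; the only additional remark is that the parameters $(t_n^j,y_n^j)$ can be chosen common to the three components by running the extraction on the vector space $\boK$, as in the NLS-system literature. You instead start from the non-symmetric vector-valued decomposition with centers $x_n^j\in\R$ and recover the symmetric form a posteriori by applying $\eps_k\boR$ and pairing each runaway profile with its reflection. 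This is workable and more self-contained, but it concentrates all the difficulty in the essential-uniqueness/pairing step you flag at the end: you must prove that non-orthogonality of $(t_n^j,x_n^j)$ and $(t_n^{j'},-x_n^{j'})$ forces $\phi_k^{j'}$ to be (a translate of) $\eps_k\boR\phi_k^j$, via the weak-limit characterization of profiles --- precisely the work that the direct symmetric extraction of the cited references sidesteps. Two bookkeeping points to repair if you carry this out: first, keeping one representative per reflection pair means the symmetric profile must carry the full $\phi_k^j$, not $\tfrac12\phi_k^j$, since the two members of a pair contribute identical symmetrized halves that must be summed; second, for a profile with $y_n^j=0$ (which is necessarily self-paired, as $|x_n^j-(-x_n^j)|=2|x_n^j|$ bounded forces $x_n^j$ bounded) you need the weak-limit argument to conclude $\phi_k^j=\eps_k\boR\phi_k^j$, so that the single profile already has the correct parity. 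Neither point is a fundamental obstruction, and your treatment of (i)--(iv), including the cross-term decay at separation $2y_n^j$ and the two-sided weak convergence of the remainder via the reflected partner, is consistent with what a full proof requires.
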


\begin{proof}
The profile decompositions for $e^{it\Dgr}$ under even and odd restrictions follow 
from 
\cite{IkeInu17} and \cite{Inu17}, respectively. 
Here the sequence of parameter can be taken uniformly in $k$ by considering 
$e^{it\Delta_{\mathbb{R}}^{\gamma}}$ as an operator on $\boK$ and by extracting profiles as an element of $\boK$, the trick of which can be seen in the study of NLS system (see \cite{FarPas17,Ham18pre,InKiNi19}).
\end{proof}

Taking $\mathcal{Q}^{-1}$ on \eqref{Eq:LPD:OE}, we obtain
\begin{equation*}
\bvph_n =
\sum_{j=1}^J
\bF_n^j
+ \bow_n^J,
\end{equation*}
where
\begin{align*}
\bF_n^j := \mathcal{Q}^{-1} 
\begin{bmatrix}
e^{-it_n^{j} \Dgr} \left( 
\tau_{y^n_{j}} \psi_{1}^j - \tau_{-y_n^{j}} \boR \psi_{1}^j
\right)
 \\
e^{-it_n^{j} \Dgr} \left( 
\tau_{y^n_{j}} \psi_{2}^j - \tau_{-y_n^{j}} \boR \psi_{2}^j
\right)
 \\
e^{-it_n^{j} \Dgr} \left( 
\tau_{y^n_{j}} \psi_{3}^j + \tau_{-y_n^{j}} \boR \psi_{3}^j
\right)
\end{bmatrix}
,&&
\bow_n^J :=
\mathcal{Q}^{-1} 
\begin{bmatrix}
w_{1,n}^{J} - \boR w_{1,n}^J  \\
w_{2,n}^{J} - \boR w_{2,n}^J \\
w_{3,n}^{J} + \boR w_{3,n}^J 
\end{bmatrix}
.
\end{align*}
By Proposition \ref{Prop:Comm:Sch}, we have
$$
\bF_n^j 
= 
e^{it_n^j\Dgg} \mathcal{Q}^{-1} 
\begin{bmatrix}
\tau_{y_n^j} \psi_1^j - \boR \tau_{y_n^j} \psi_1^j  \\
\tau_{y_n^j} \psi_2^j - \boR \tau_{y_n^j} \psi_2^j \\
\tau_{y_n^j} \psi_3^j + \boR \tau_{y_n^j} \psi_3^j
\end{bmatrix}
= \boT_{t_n^j , y_n^j} (\psi_1^j,\psi_2^j,\psi_3^j).
$$
Thus we obtain the decomposition of the desired form \eqref{Eq:LPD}. \par
%
%
%
%
Hence it suffices to show the conditions (i)-(iv) in the statement. 
(i) and (ii) follow from Lemma \ref{Lem:LPD:OE}. 
By Proposition \ref{Prop:Comm:Sch}, we have
\begin{align*}
e^{it\Dgg} \bow_n^J &=\mathcal{Q}^{-1} e^{it\Dgr} \mathcal{Q} \bow^J_n 
=
\begin{pmatrix}
-1 & 0 & 1\\
1 & -1 & 1\\
0 & 1 & 1
\end{pmatrix}
\begin{bmatrix}
e^{it\Dgr} w_{1,n}^{J} - \boR e^{it\Dgr} w_{1,n}^J \\
e^{it\Dgr} w_{2,n}^{J} - \boR e^{it\Dgr} w_{2,n}^J \\
e^{it\Dgr} w_{3,n}^{J} + \boR e^{it\Dgr} w_{3,n}^J 
\end{bmatrix}
\bcp.
\end{align*}
which, combined with \eqref{Small:w1}, leads to (iii). \bigskip\par
The remaining part is to show (iv). 
We first consider the case when the norm is $L^2(\boG)$ or $\dot{H}_\gam^1(\boG)$. 
For this sake, it suffices to show
\begin{equation}\label{Orth:TT}
\inp{\boT_{t_n^j, y_n^j} (\psi_{1}^j,\psi_{2}^j,\psi_{3}^j)}{
\boT_{t_n^k, y_n^k} (\psi_{1}^k,\psi_{2}^k,\psi_{3}^k)}_X
\xrightarrow{n\to\infty} 0\qquad \text{for}\quad  j\neq k,
\end{equation}
\begin{equation}\label{Orth:Tw}
\inp{\boT_{t_n^j, y_n^j} (\psi_{1}^j,\psi_{2}^j,\psi_{3}^j)}{
\bow^J_n}_X
\xrightarrow{n\to\infty} 0\qquad \text{for}\quad  j\in\N,\ J\in\N.
\end{equation}
for $X=L^2(\boG)$, $\dot{H}^1_\gam (\boG)$. 
%
%
We start with the case $X=L^2(\boG)$. 
By \eqref{shift}, the left hand side of \eqref{Orth:TT} is a linear combination of sequences of the form
\begin{equation}\label{Piece:L2}
\inp{e^{-it_n^j \Dgr} \tau_{\pm y_n^j}\vph_j \bcp}{e^{-it_n^k\Dgr} \tau_{\pm y_n^k}\vph_k \bcp}_{L^2(\R)} \quad (\vph_j,\vph_k\in H^1(\R)).
\end{equation}
Since $|y_n^j \pm y_n^k| \ge |y_n^j-y_n^k|$, we have 
$
|t_n^j -t_n^k| + |y_n^j \pm y_n^k| \to \infty.
$ 
Thus it follows from Lemma \ref{Lem:Hgam} that each of \eqref{Piece:L2} goes to $0$ as $n\to\infty$, which implies \eqref{Orth:TT}. 
For \eqref{Orth:Tw}, we have
\begin{align*}
&\inp{\boT_{t_n^j, y_n^j} (\psi_{1}^j,\psi_{2}^j,\psi_{3}^j)}{
\bow^J_n}_{L^2(\R)} 
=
\inp{\boT_{0, y_n^j} (\psi_{1}^j,\psi_{2}^j,\psi_{3}^j)}{
e^{it_n^j\Dgg} \bow^J_n}_{L^2(\R)}\\
&=
\inp{\boT_{0, y_n^j} (\psi_{1}^j,\psi_{2}^j,\psi_{3}^j)}{
\mathcal{Q}^{-1} e^{it_n^j \Dgr} 
\begin{bmatrix}
w_{1,n}^{J} - \boR w_{1,n}^J  \\
w_{2,n}^{J} - \boR w_{2,n}^J \\
w_{3,n}^{J} + \boR w_{3,n}^J 
\end{bmatrix}
}_{L^2(\R)}
\end{align*}
which can be written as linear combination of the sequence of the form
\begin{align*}
\inp{\tau_{y_n^j} \vph_{j,1} \bcp}{e^{it_n^j\Dgr} (w_{k,n}^J \mp \boR w_{k,n}^J)}_{L^2(\R)},
&&
\inp{\tau_{-y_n^j} \vph_{j,2} \bcp}{e^{it_n^j\Dgr} (w_{k,n}^J \mp \boR w_{k,n}^J)}_{L^2(\R)}
\end{align*}
with $\vph_{j,1}, \vph_{j,2}\in H^1(\mathbb{R})$. 
The former component can be written as
\begin{align*}
&\inp{\tau_{y_n^j} \vph_{j,1} \bcp}{e^{it_n^j\Dgr} (w_{k,n}^J \mp \boR w_{k,n}^J) \bcp}_{L^2(\R)} \\
&=
%
\inp{\vph_{j,1}}{\tau_{-y_n^j} e^{it_n^j \Dgr} (w_{k,n}^J \mp \boR w_{k,n}^J)}_{L^2(\R)} 
-
\inp{\vph_{j,1} \mathbbm{1}_{<-y_n^j}
}{\tau_{-y_n^j} e^{it_n^j \Dgr} (w_{k,n}^J \mp \boR w_{k,n}^J)}_{L^2(\R)}.
\end{align*}
Then the first term converges to $0$ due to \eqref{Small:w2}, 
while we have
\begin{align*}
&\left|\inp{\vph_{j,1} \mathbbm{1}_{<-y_n^j}
}{\tau_{-y_n^j} e^{it_n^j \Dgr} (w_{k,n}^J \mp \boR w_{k,n}^J)}_{L^2(\R)} \right|\\
&\hspace{70pt}\le 
\left( \sup_{n\in\N} \nor{w_{k,n}^J \mp \boR w_{k,n}^J}{L^2(\R)} \right) 
\nor{\vph_{j,1}}{L^2(-\infty,-y_n^j)}
\end{align*}
which goes to $0$, where the boundedness of $\nor{w_{k,n}^J}{L^2(\R)}$ is a consequence of \eqref{Small:w2}. 
The latter component also vanishes as $n\to\infty$ since 
$\tau_{-y_n^j} \vph_{j,2} \bcp \to 0$ in $H^1(\R)$. 
Hence \eqref{Orth:Tw} follows.\medskip\par
Next we consider the case $X=\dot{H}^1_\gam (\boG)$. 
The left hand side of \eqref{Orth:TT} can be written as 
\begin{equation}\label{Eq:Orth:TTH1}
\begin{aligned}
&\inp{\boT_{t_n^j-t_n^k, y_n^j} (\psi_{1}^j,\psi_{2}^j,\psi_{3}^j)}{
\boT_{0, y_n^k} (\psi_{1}^k,\psi_{2}^k,\psi_{3}^k)}_{\dot{H}^1_\gam} 
\\
&=
\inp{ 
\rd_x \boT_{t_n^j-t_n^k, y_n^j} (\psi_{1}^j,\psi_{2}^j,\psi_{3}^j)}{
\rd_x\boT_{0, y_n^k} (\psi_{1}^k,\psi_{2}^k,\psi_{3}^k)}_{L^2_x} 
\\
& \hspace{70pt}
+ 3\gam \left|
\boT_{t_n^j-t_n^k, y_n^j} (\psi_{1}^j,\psi_{2}^j,\psi_{3}^j) (0)
\right|
\left|
\boT_{0, y_n^k} (\psi_{1}^k,\psi_{2}^k,\psi_{3}^k) (0)
\right|
.
\end{aligned}
\end{equation}
That the first term of \eqref{Eq:Orth:TTH1} 
converges to $0$ follows similarly to the case $X=L^2(\boG)$. 
On the other hand, the second term can be written as linear combination of the sequence of the form
\begin{equation}\label{Piece:H1}
3\gam \left| e^{i(t_n^k-t_n^j) \Dgr} \tau_{\pm y_n^j} \vph_j (0)\right| 
\left| \tau_{\pm y_n^k} \vph_k (0)\right|\qquad
(\vph_j,\vph_k\in H^1(\R)).
\end{equation}
Noting that $e^{i(t_n^k-t_n^j)\Dgr} \tau_{\pm y_n^j} \vph_j \rightharpoonup 0$ in $H^1(\R)$, and that $\vph\mapsto \vph(0)$ is a bounded linear functional on $H^1(\R)$, we conclude that \eqref{Piece:H1} converges to $0$ as $n\to\infty$. 
Hence \eqref{Orth:TT} follows. 
For \eqref{Orth:Tw}, we can write
\begin{align*}
&\inp{\boT_{0,y_n^j} (\psi_1^j,\psi_2^j,\psi_3^j)}{e^{it_n^j\Dgg}\bow_n^J}_{\dot{H}^1_\gam}\\
&=
\inp{\rd_x\boT_{0,y_n^j} (\psi_1^j,\psi_2^j,\psi_3^j)}{\rd_x e^{it_n^j\Dgg}\bow_n^J}_{L^2}
+ 3\gam \left|\boT_{0,y_n^j} (\psi_1^j,\psi_2^j,\psi_3^j)(0)\right|
\left|e^{it_n^j\Dgg}\bow_n^J (0) \right|.
\end{align*}
Then the first term goes to $0$ as $n\to\infty$ by the similar argument. 
By \eqref{Small:w2}, 
the second term also converges to $0$. Therefore \eqref{Orth:Tw} follows and the case $X=\dot{H}^1_\gam (\boG)$ is completed. \medskip\par
Finally, we prove (iv) for $L^q(\boG)$-norm with $(2,\infty)$. 
It is standard that there exists a constant $C$ such that 
\begin{equation}\label{Est:InnLq}
\left| \left| \sum_{j=1}^l z_j\right|^q - \sum_{j=1}^l |z_j|^q \right| \le 
C \sum_{j\neq k} |z_j|^{q-1} |z_k|
\end{equation}
holds for any integer $l\ge 2$ and $z_1,\cdots z_l\in \C$. 
Hence by \eqref{shift}, it suffices to show that for all $\vph_1,\vph_2\in H^1(\R)$, 
\begin{equation}\label{Orth:TTq}
\int_0^\infty \left| e^{-it_n^j\Dgr} \tau_{\pm y_n^j}\vph_1 \right|^{q-1}
\left| e^{-it_n^k \Dgr} \tau_{\pm y_n^k}\vph_2 \right| dx 
\xrightarrow{n\to\infty} 0\qquad \text{for}\quad  j\neq k,
\end{equation}
\begin{equation}\label{Orth:Twq}
\begin{aligned}
\int_0^\infty \left| e^{-it_n^j\Dgr} \tau_{\pm y_n^j}\vph_1 \right|^{q-1}
\left| w_{k,n}^J \mp \boR w_{k,n}^J \right| 
&+
 \left| e^{-it_n^j\Dgr} \tau_{\pm y_n^j}\vph_1 \right|
\left| w_{k,n}^J \mp \boR w_{k,n}^J \right|^{q-1} dx \\
&\xrightarrow{n\to\infty} 0\qquad \text{for}\quad  j\in\N,\ J\in\N.
\end{aligned}
\end{equation}
For this sake we may suppose $\vph_1,\vph_2\in C_0^\infty(\R)$ by approximation. 
For \eqref{Orth:TTq}, it suffices to consider either of the cases when 
(I) $t_n^j$ or $t_n^k\to\pm\infty$ or (II) $t_n^j=t_n^k=0$ for all $n$. 
If (I), then \eqref{Orth:TTq} easily follows from the dispersive estimate. 
If (II), then we necessarily have $|y_n^j\pm y_n^k|\to\infty$, and hence \eqref{Orth:TTq} holds. 
For \eqref{Orth:Twq}, 
we only consider the $\tau_{+y_n^j}$ case of the first term, while the others can be shown similarly. 
We may again suppose $\vph_1\in C_0^\infty(\R)$. 
If $t_n^j\to\pm \infty$, then \eqref{Orth:Twq} is an immediate consequence of the dispersive estimate and \eqref{Small:w2}. 
Thus we may assume $t_n^j= 0$ for all $n$. Then we can write
$$
\begin{aligned}
&\int_0^\infty \left|\tau_{y_n^j}\vph_1 \right|^{q-1}
\left| w_{k,n}^J \mp \boR w_{k,n}^J \right| dx \\
&\hspace{60pt}=
\left(\int_{-\infty}^\infty - \int_{-\infty}^{-y_n^j}\right)
\left|\vph_1 \right|^{q-1}
\left| \tau_{-y_n^j} (w_{k,n}^J \mp \boR w_{k,n}^J) \right| dx.
\end{aligned}
$$
The second integral clearly converges to $0$ since $-y_n^j\to-\infty$. 
On the other hand, by \eqref{Small:w2} and the compactness of $\supp \vph_1$, the Rellich-Kondrachov theorem implies that the first term also goes to $0$. 
Hence the proof is completed.


\section{Propositions for nonlinear profile decomposition}\label{Sec:Non}

In this section we establish propositions which are necessary for the nonlinear profile decompositions for \eqref{NLS}. 
We first observe the behavior of norms for spatially shifting profiles.

\begin{prop}
\label{prop5.1}
Let $\psi_k\in H^1(\R)$ ($k=1,2,3$), and let 
$\{t_n\}_{n=1}^\infty$, $\{y_n\}_{n=1}^\infty$ be sequences with 
$y_n\to\infty$. Then we have
\begin{equation}\label{Orth:TL2}
\lim_{n\to \infty} \nor{\boT_{t_n,y_n} (\psi_1,\psi_2,\psi_3)}{L^2(\boG)}^2
=
\nor{\psi_1}{L^2(\R)}^2 + \nor{\psi_2}{L^2(\R)}^2 + \nor{\psi_3}{L^2(\R)}^2,
\end{equation}
\begin{equation}\label{Orth:TH1}
\lim_{n\to \infty} \nor{\boT_{t_n,y_n} (\psi_1,\psi_2,\psi_3)}{\dot{H}_\gam^1(\boG)}^2
=
\nor{\psi_1}{\dot{H}^1(\R)}^2 + \nor{\psi_2}{\dot{H}^1(\R)}^2 + \nor{\psi_3}{\dot{H}^1(\R)}^2.
\end{equation}
Moreover, for $q\in (2,\infty)$, we have
\begin{equation}\label{Orth:TLq}
\lim_{n\to \infty} \nor{\boT_{0,y_n} (\psi_1,\psi_2,\psi_3)}{L^q(\boG)}^q
=
\nor{\psi_1}{L^q(\R)}^q + \nor{\psi_2}{L^q(\R)}^q + \nor{\psi_3}{L^q(\R)}^q.
\end{equation}
\end{prop}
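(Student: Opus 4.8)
The plan is to treat the three identities in parallel, after first removing the time parameter from \eqref{Orth:TL2} and \eqref{Orth:TH1}. The starting observation is that, by \eqref{shift}, $\boT_{t_n,y_n}(\psi_1,\psi_2,\psi_3)=e^{-it_n\Dgg}\boT_{0,y_n}(\psi_1,\psi_2,\psi_3)$, since $e^{-it_n\Dgg}$ is applied to a factor that does not depend on $t_n$. Because $e^{-it\Dgg}$ is unitary on $L^2(\boG)$ and, by the functional calculus for the self-adjoint operator $-\Dgg$, also an isometry for $\nor{\cdot}{\dot{H}^1_\gam(\boG)}=\nor{(-\Dgg)^{1/2}\cdot}{L^2(\boG)}$ (note $\boT_{0,y_n}\in H^1_c(\boG)$ lies in the form domain), I would reduce both \eqref{Orth:TL2} and \eqref{Orth:TH1} to the case $t_n=0$. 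No such reduction is available for $L^q$ with $q\neq2$, as the propagator is not an $L^q$-isometry; this is precisely why \eqref{Orth:TLq} is stated only for $t_n=0$, where the time parameter is already absent.

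Everything then reduces to analysing $\boT_{0,y_n}$ via its explicit componentwise form in \eqref{shift}: the $i$-th edge equals $\tau_{y_n}\psi_i\bcp$ plus $\tfrac13\tau_{-y_n}\boR(\cdots)\bcp$, that is, a \emph{right packet} concentrated near $x=+y_n$ together with a \emph{left packet} concentrated near $x=-y_n$. The elementary facts I would invoke are: (a) $\nor{\tau_{y_n}\psi_i\bcp}{L^s(0,\infty)}^s=\int_{-y_n}^\infty|\psi_i|^s\to\nor{\psi_i}{L^s(\R)}^s$ for any $s\in[2,\infty)$; (b) for fixed $g\in H^1(\R)$, $\nor{\tau_{-y_n}\boR g\,\bcp}{L^s(0,\infty)}^s=\int_{-\infty}^{-y_n}|g|^s\to0$; and (c) the interaction between a right and a left packet is negligible, controlled for the Hilbert norms by expanding the square and bounding the cross term through Cauchy--Schwarz, and for general $L^q$ by the triangle inequality in $L^q$. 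Expanding $\nor{\boT_{0,y_n}}{L^2(\boG)}^2=\sum_{i=1}^3\nor{(\boT_{0,y_n})_i}{L^2(0,\infty)}^2$ and applying (a)--(c) with $s=2$ edgewise gives $\sum_i\nor{\psi_i}{L^2(\R)}^2$, which is \eqref{Orth:TL2}; the same expansion with $s=q$ yields \eqref{Orth:TLq}. The structural point making this clean is that in the explicit form each edge carries exactly one right packet, namely $\tau_{y_n}\psi_i$, so no cross-edge interference survives in the limit.

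For \eqref{Orth:TH1} I would split $\nor{\boT_{0,y_n}}{\dot{H}^1_\gam(\boG)}^2=\nor{\partial_x\boT_{0,y_n}}{L^2(\boG)}^2+3\gam|(\boT_{0,y_n})_1(0)|^2$. Differentiating the explicit form edgewise turns each right/left packet into the corresponding packet built from $\psi_i'$ (up to a reflection sign, since $\partial_x\tau_{-y_n}\boR g=-\tau_{-y_n}\boR g'$), so facts (a)--(c) with $s=2$ applied to the derivatives give $\nor{\partial_x(\boT_{0,y_n})_i}{L^2(0,\infty)}^2\to\nor{\psi_i'}{L^2(\R)}^2=\nor{\psi_i}{\dot{H}^1(\R)}^2$. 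The boundary term is handled by the embedding $H^1(\R)\hookrightarrow C_0(\R)$: every summand of $(\boT_{0,y_n})_1(0)$ is a value of an $H^1$ function at $-y_n\to-\infty$, hence tends to $0$, so $3\gam|(\boT_{0,y_n})_1(0)|^2\to0$. Summing over edges gives \eqref{Orth:TH1}.

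The individual computations are routine tail estimates; the points demanding genuine care are conceptual. First, one must justify that $e^{-it_n\Dgg}$ is an isometry not only on $L^2(\boG)$ but on $\dot{H}^1_\gam(\boG)$, which is where the $\delta$-coupling at the vertex enters and must be treated through the spectral calculus of $-\Dgg$ rather than by a naive integration by parts. Second, and relatedly, one must respect the fact that the propagator cannot be removed in the $L^q$ identity, so that the hypothesis $t_n=0$ in \eqref{Orth:TLq} is used honestly. Beyond these, the main bookkeeping obstacle is simply verifying that every left-packet contribution and every right/left cross term vanishes on $(0,\infty)$; this is exactly where the spatial separation of the centres $\pm y_n$ together with the cut-off $\bcp$ to the half-line is essential.
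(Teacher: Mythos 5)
Your proposal is correct and follows essentially the same route as the paper: reduce \eqref{Orth:TL2} and \eqref{Orth:TH1} to $t_n=0$ via unitarity of $e^{-it\Dgg}$ (and its commutation with $(-\Dgg)^{1/2}$, which is exactly how the paper defines $\dot{H}^1_\gam$), decompose $\boT_{0,y_n}$ into the right packets $\tau_{y_n}\psi_k\bcp$ plus the left-packet remainder (the paper's $\bG_{0,y_n}$, which vanishes in $H^1\cap L^q$), and treat the vertex term by pointwise decay of $H^1$ functions at $-y_n$. The only cosmetic difference is that for the $L^q$ cross terms you invoke the triangle inequality where the paper uses the elementary inequality \eqref{Est:InnLq}; both are equally valid here.
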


\begin{proof}
By \eqref{shift}, we can write
$$
\boT_{t_n,y_n}(\psi_1,\psi_2,\psi_3) = 
\begin{pmatrix}
e^{-it_n\Dgr} \tau_{y_n} \psi_1 \bcp \\
e^{-it_n\Dgr} \tau_{y_n} \psi_2 \bcp \\
e^{-it_n\Dgr} \tau_{y_n} \psi_3 \bcp 
\end{pmatrix}
+ 
\bG_{t_n,y_n} (\psi_1,\psi_2,\psi_3).
$$
where
\begin{equation}\label{Def:G}
\bG_{t,y} (\vph_1,\vph_2,\vph_3) :=
\begin{pmatrix}
\frac 13 e^{-it\Dgr} \tau_{-y} \boR (-\psi_1 +2\psi_2 +2\psi_3) \bcp\\
\frac 13 e^{-it\Dgr} \tau_{-y} \boR (2\psi_1 -\psi_2 +2\psi_3) \bcp\\
\frac 13 e^{-it\Dgr} \tau_{-y} \boR (2\psi_1 +2\psi_2 -\psi_3) \bcp
\end{pmatrix}
,\quad (t\in\R, y\ge 0)
.
\end{equation}
Then we have
\begin{equation}\label{Small:G}
\nor{\bG_{0,y} (\psi_1,\psi_2,\psi_3)}{H^1 (\boG)\cap L^q(\boG)} 
 \xrightarrow{y\to\infty} 0.
\end{equation}
Since $e^{it\Dgg}$ is unitary on $L^2(\boG)$, we have
\begin{align*}
\lim_{n\to\infty} \nor{\boT_{t_n,y_n} (\psi_1,\psi_2,\psi_3)}{L^2(\boG)}^2 
&=
\lim_{n\to\infty} \nor{\boT_{0,y_n} (\psi_1,\psi_2,\psi_3)}{L^2(\boG)}^2 \\
&=
\lim_{n\to\infty}
\nor{
\begin{pmatrix}
\tau_{y_n} \psi_1 \bcp \\
\tau_{y_n} \psi_2 \bcp \\
\tau_{y_n} \psi_3 \bcp
\end{pmatrix}
}{L^2(\boG)}^2 \\
&= 
\nor{\psi_1}{L^2(\R)}^2 + \nor{\psi_2}{L^2(\R)}^2 + \nor{\psi_3}{L^2(\R)}^2,
\end{align*}
which leads to \eqref{Orth:TL2}. Next, we have
\begin{align*}
&\lim_{n\to\infty} \nor{\boT_{t_n,y_n} (\psi_1,\psi_2,\psi_3)}{\dot{H}^1_\gam(\boG)}^2 
=
\lim_{n\to\infty} \nor{\boT_{0,y_n} (\psi_1,\psi_2,\psi_3)}{\dot{H}^1_\gam(\boG)}^2\\
&=
\lim_{n\to\infty} (
\nor{\rd_x \boT_{0,y_n} (\psi_1,\psi_2,\psi_3)}{L^{2}(\boG)}^2 
+3\gam |\boT_{0,y_n} (\psi_1,\psi_2,\psi_3) (0)|^2).
\end{align*}
Then the same argument as above yields
$$
\lim_{n\to\infty} \nor{\rd_x \boT_{0,y_n} (\psi_1,\psi_2,\psi_3)}{L^{2}(\boG)}^2 
=
\nor{\psi_1}{\dot{H}^1(\R)}^2 + \nor{\psi_2}{\dot{H}^1(\R)}^2 + \nor{\psi_3}{\dot{H}^1(\R)}^2.
$$
On the other hand, we have
$$
|\boT_{0,y_n} (\psi_1,\psi_2,\psi_3) (0)| = \frac 23  
\left|
\psi_1(-y_n) + \psi_2(-y_n) + \psi_3 (-y_n)
\right|
\xrightarrow{n\to\infty} 0.
$$
Hence \eqref{Orth:TH1} follows. 
Finally, \eqref{Small:G} and \eqref{Est:InnLq} give us 
\begin{align*}
\lim_{n\to\infty} \nor{\boT_{0,y_n} (\psi_1,\psi_2,\psi_3)}{L^q(\boG)}^q 
&=
\lim_{n\to\infty}
\nor{
\begin{pmatrix}
\tau_{y_n} \psi_1 \bcp \\
\tau_{y_n} \psi_2 \bcp \\
\tau_{y_n} \psi_3 \bcp
\end{pmatrix}
}{L^q(\boG)}^q \\
&= 
\nor{\psi_1}{L^q(\R)}^q + \nor{\psi_2}{L^q(\R)}^q + \nor{\psi_3}{L^q(\R)}^q
\end{align*}
for $q\in(2,\infty)$, which concludes \eqref{Orth:TLq}. 
\end{proof}


\begin{lemma}
\label{lem4.3}
Let $f_{k}^{j} \in L_{t}^{a}(\mathbb{R};L_{x}^{r}(\mathbb{R}))$ for $1\leq j \leq J$ and $k=1,2,3$. Assume that $|x_{n}^{j}-x_{n}^{j'}|+|t_{n}^{j}-t_{n}^{j'}| \to \infty$ as $n \to \infty$ for $j \neq j'$. Then we have 
\begin{align*}
	\limsup_{n \to \infty} \left( \|\sum_{j=1}^{J}\mathcal{T}_{0,x_{n}^{j}}(f_{1}^{j},f_{2}^{j},f_{3}^{j})(t-t_{n}^{j})\|_{L_{t}^{a}L_{x}^{r}}\right)^{p}
	\lesssim  \sum_{j=1}^{J} \sum_{k=1}^{3} \|f_{k}^{j}\|_{L_{t}^{a}L_{x}^{r}}^{p}
\end{align*}
\end{lemma}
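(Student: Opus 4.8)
The plan is to transfer the whole estimate to the line $\R$, where each profile becomes a space–time translate of a fixed function, and then to exploit that translates whose parameters diverge eventually have disjoint supports. Concretely, I would first prove the stronger $\ell^{r}$-orthogonality
$$\limsup_{n\to\infty}\Big\|\sum_{j=1}^{J}\boT_{0,x_n^j}(f_1^j,f_2^j,f_3^j)(t-t_n^j)\Big\|_{L_t^aL_x^r(\boG)}^{r}\lesssim \sum_{j=1}^J\sum_{k=1}^3\|f_k^j\|_{L_t^aL_x^r(\R)}^{r},$$
and then deduce the claimed $\ell^{p}$-bound. The last step is purely elementary: since $p\le r=p+1$, the map $s\mapsto s^{p/r}$ is subadditive on $[0,\infty)$, so $\big(\sum_{j,k}\|f_k^j\|^r\big)^{p/r}\le \sum_{j,k}\|f_k^j\|^p$; raising the displayed inequality to the power $p/r$ therefore yields exactly the statement of the lemma.

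For the reduction to the line I would use the explicit formula \eqref{shift} with $t=0$ to write each $\boT_{0,x_n^j}(f_1^j,f_2^j,f_3^j)(t-t_n^j)$ componentwise as a finite linear combination, with coefficients in $\{1,\pm\tfrac13,\tfrac23\}$, of half-line functions of the two types $\tau_{x_n^j}f_k^j(t-t_n^j,\cdot)\,\bcp$ and $\tau_{-x_n^j}\boR f_\ell^j(t-t_n^j,\cdot)\,\bcp$. Since $\|\cdot\|_{L_x^r(\boG)}^r$ is the sum of the $L^r(0,\infty)$-norms over the three edges, and since spatial translation and the reflection $\boR$ are isometries while restriction by $\bcp$ and time translation are a contraction and an isometry on $L_t^aL_x^r(\R)$, this gives the uniform-in-$n$ single-profile bound
$$\big\|\boT_{0,x_n^j}(f_1^j,f_2^j,f_3^j)(t-t_n^j)\big\|_{L_t^aL_x^r(\boG)}\lesssim \sum_{k=1}^3\|f_k^j\|_{L_t^aL_x^r(\R)}.$$
Because the profile depends linearly on $(f_1^j,f_2^j,f_3^j)$, this same bound controls the error made by replacing each $f_k^j$ by a smooth approximant, uniformly in $n$. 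Hence, by density of $C_0^\infty(\R_t\times\R_x)$ in $L_t^aL_x^r$ (recall $a,r<\infty$) together with $(A+B)^{r}\le 2^{r-1}(A^r+B^r)$, it suffices to prove the $\ell^r$-bound when every $f_k^j$ is supported in one common compact set $[-T,T]\times[-L,L]$.

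Under this compact-support assumption I would run the disjointness argument. Writing $G_n^j$ for the $j$-th profile, its space–time support is contained, on each edge, in $[t_n^j-T,t_n^j+T]\times\big([x_n^j-L,x_n^j+L]\cup[-x_n^j-L,-x_n^j+L]\big)$. Given $j\ne j'$, the hypothesis $|t_n^j-t_n^{j'}|+|x_n^j-x_n^{j'}|\to\infty$ forces, for all large $n$, either $|t_n^j-t_n^{j'}|>2T$, so the time supports are disjoint, or else $|x_n^j-x_n^{j'}|$ is large; in the latter case, using $x_n^j,x_n^{j'}\ge 0$ one checks that all the relevant center separations $|{\pm}x_n^j\mp x_n^{j'}|$ exceed $2L$ for large $n$, so the spatial supports are disjoint. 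In either case $G_n^j$ and $G_n^{j'}$ have disjoint space–time supports once $n$ is large, whence $\big|\sum_j G_n^j(t,x)\big|^{r}=\sum_j|G_n^j(t,x)|^{r}$ pointwise, and therefore $\|\sum_j G_n^j(t)\|_{L_x^r(\boG)}^r=\sum_j\|G_n^j(t)\|_{L_x^r(\boG)}^r$. Using $a\ge r$ (which holds precisely because $p\ge 5$) and Minkowski's inequality in $L_t^{a/r}$,
$$\Big\|\sum_j G_n^j\Big\|_{L_t^aL_x^r}^{a}=\Big\|\sum_j\|G_n^j(\cdot)\|_{L_x^r}^{r}\Big\|_{L_t^{a/r}}^{a/r}\le\Big(\sum_j\big\|\,\|G_n^j(\cdot)\|_{L_x^r}^{r}\big\|_{L_t^{a/r}}\Big)^{a/r}=\Big(\sum_j\|G_n^j\|_{L_t^aL_x^r}^{r}\Big)^{a/r},$$
and combining with the single-profile bound above gives the required $\ell^r$-orthogonality.

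The main obstacle is bookkeeping rather than analysis: one must verify eventual disjointness carefully across the cases (both $x$-parameters diverging, one diverging, both bounded), keeping in mind that only inter-profile pairs $j\ne j'$ need separation, whereas the $\tau_{x_n^j}$- and $\tau_{-x_n^j}$-pieces of a single profile may overlap and are simply retained inside $G_n^j$. The only other delicate point is that the pointwise identity for $|\sum_j G_n^j|^r$ holds merely for large $n$, which is exactly why the conclusion carries $\limsup_{n\to\infty}$; the approximation through $C_0^\infty$ must accordingly be arranged so that the errors are uniform in $n$, and this is guaranteed by the linear, uniformly bounded dependence of the profiles on the data.
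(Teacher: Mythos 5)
Your argument is correct, but it follows a genuinely different route from the paper's. The paper raises the norm to the $p$-th power, identifies it with $\bigl\| \bigl|\sum_j \boT_{0,x_n^j}(f_1^j,f_2^j,f_3^j)(t-t_n^j)\bigr|^p \bigr\|_{L_t^{b'}L_x^{r'}}$ (using $pb'=a$ and $pr'=r$), splits off the diagonal $\sum_j|\cdot|^p$ via the elementary pointwise inequality \eqref{Est:InnLq}, bounds the diagonal by the triangle inequality plus the single-profile bound, and disposes of the off-diagonal terms by citing Lemma \ref{lem4.4} (Proposition A.1 of Banica--Ignat), which gives $\|\,|\boT_{0,x_n^j}\bou^j(t-t_n^j)|^{p-1}|\boT_{0,x_n^{j'}}\bou^{j'}(t-t_n^{j'})|\,\|_{L_t^{b'}L_x^{r'}}\to 0$. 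You instead prove a strictly stronger $\ell^r$-decoupling from scratch: you reduce by density and the uniform, linear single-profile bound to data in a common compact set, check that divergence of $|t_n^j-t_n^{j'}|+|x_n^j-x_n^{j'}|$ makes the space--time supports of distinct profiles (including the reflected $\tau_{-x_n^j}\boR$ pieces, whose centers are separated by at least $|x_n^j-x_n^{j'}|$ because the $x_n^j$ are nonnegative) eventually disjoint, and conclude with Minkowski in $L_t^{a/r}$ (legitimate since $a>r$ exactly when $p>5$) and concavity of $s\mapsto s^{p/r}$. Your version is self-contained -- it avoids the external cross-term lemma entirely and yields the sharper right-hand side $\sum_{j,k}\|f_k^j\|_{L_t^aL_x^r}^r$ -- at the price of the case analysis for eventual disjointness and the density step; the paper's version is shorter and reuses machinery (\eqref{Est:InnLq} and Lemma \ref{lem4.4}) that is needed again in Section \ref{Sec:Sca}. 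Both arguments produce a constant independent of $J$, which is what the application to $\bm{Z}_n^J$ requires.
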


\begin{proof}
We have 
\begin{align*}
	&\left( \|\sum_{j=1}^{J}\mathcal{T}_{0,x_{n}^{j}}(f_{1}^{j},f_{2}^{j},f_{3}^{j})(t-t_{n}^{j})\|_{L_{t}^{a}L_{x}^{r}}\right)^{p}\\
	&\leq  \|  |\sum_{j=1}^{J}\mathcal{T}_{0,x_{n}^{j}}(f_{1}^{j},f_{2}^{j},f_{3}^{j})(t-t_{n}^{j}) |^{p} \|_{L_{t}^{b'}L_{x}^{r'}}\\
	&\leq \| |\sum_{j=1}^{J}\mathcal{T}_{0,x_{n}^{j}}(f_{1}^{j},f_{2}^{j},f_{3}^{j})(t-t_{n}^{j})|^{p}-\sum_{j=1}^{J}|\mathcal{T}_{0,x_{n}^{j}}(f_{1}^{j},f_{2}^{j},f_{3}^{j})(t-t_{n}^{j}) |^{p}\|_{L_{t}^{b'}L_{x}^{r'}} \\
	&+\| \sum_{j=1}^{J}|\mathcal{T}_{0,x_{n}^{j}}(f_{1}^{j},f_{2}^{j},f_{3}^{j})(t-t_{n}^{j}) |^{p}\|_{L_{t}^{b'}L_{x}^{r'}}.
\end{align*}
The second term is estimated as follows:
\begin{align*}
	\| \sum_{j=1}^{J}|\mathcal{T}_{0,x_{n}^{j}}(f_{1}^{j},f_{2}^{j},f_{3}^{j})(t-t_{n}^{j}) |^{p}\|_{L_{t}^{b'}L_{x}^{r'}} 
	&\leq \sum_{j=1}^{J} \|\mathcal{T}_{0,x_{n}^{j}}(f_{1}^{j},f_{2}^{j},f_{3}^{j})(t-t_{n}^{j}) \|_{L_{t}^{a}L_{x}^{r}}^{p} \\
	&\leq \sum_{j=1}^{J} \|\mathcal{T}_{0,x_{n}^{j}}(f_{1}^{j},f_{2}^{j},f_{3}^{j})\|_{L_{t}^{a}L_{x}^{r}}^{p}\\
	&\lesssim \sum_{j=1}^{J} \sum_{k=1}^{3} \|f_{k}^{j}\|_{L_{t}^{a}L_{x}^{r}}^{p}.
\end{align*}
The first term goes to $0$ as $n \to \infty$ by \eqref{Est:InnLq} and Lemma \ref{lem4.4} below. 
\end{proof}

\begin{lemma}
\label{lem4.4}
Let $\bm{u}^{1},\bm{u}^{2} \in CH_{c}^{1} \cap L_{t}^{p}L_{x}^{r}$. 
Let $\{t_{n}^{j}\}$ and $\{x_{n}^{j}\}$ ($j=1,2$) satisfy 
\begin{align*}
	|t_{n}^{1}-t_{n}^{2}|+|x_{n}^{1}-x_{n}^{2}| \to \infty
\end{align*}
as $n \to \infty$. Then it holds that
\begin{align*}
	\| |\mathcal{T}_{0,x_{n}^{1}}\bm{u}^{1}(t-t_{n}^{1})|^{p-1}|\mathcal{T}_{0,x_{n}^{2}}\bm{u}^{2}(t-t_{n}^{2})|\|_{L_{t}^{b'}L_{x}^{r'}} \to 0
\end{align*}
as $n \to \infty$. 
\end{lemma}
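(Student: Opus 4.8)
The plan is to reduce the assertion to a scalar orthogonality estimate on the line for pure space-time translates, and then to conclude by a subsequence argument based on disjointness of supports. First I would unfold the definition of $\mathcal{T}_{0,x}$ via \eqref{shift} with vanishing time-parameter (so that no propagator is present): each edge-component of $\mathcal{T}_{0,x_n^j}\bm{u}^j$ is a finite linear combination of translated pieces $\tau_{x_n^j}\phi\,\bcp$ and $\tau_{-x_n^j}\boR\phi\,\bcp$, where every $\phi\in L_t^aL_x^r(\R)$ is a fixed combination of the components of $\bm{u}^j$, independent of $n$. Since the $L^{r'}(\boG)$-norm is the $\ell^{r'}$-sum over edges and using the elementary convexity bound (cf. \eqref{Est:InnLq}), the left-hand side is controlled by a finite sum of terms
\[
I_n:=\nor{\,|\tau_{\sigma_1 x_n^1}\phi_1(t-t_n^1)|^{p-1}\,|\tau_{\sigma_2 x_n^2}\phi_2(t-t_n^2)|\,}{L_t^{b'}L_x^{r'}(\R)},\qquad \sigma_1,\sigma_2\in\{+,-\},
\]
with fixed $\phi_1,\phi_2\in L_t^aL_x^r(\R)$; hence it suffices to prove $I_n\to 0$.

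Next I would record the exponent bookkeeping that closes the estimate. From the definitions of $a,b,r$ one verifies $\tfrac1{r'}=\tfrac{p}{r}$ and $\tfrac1{b'}=\tfrac{p}{a}$, so that Hölder yields
\[
\nor{\,|g_1|^{p-1}|g_2|\,}{L_t^{b'}L_x^{r'}}\le \nor{g_1}{L_t^aL_x^r}^{p-1}\,\nor{g_2}{L_t^aL_x^r}
\]
for all $g_1,g_2$. Combined with the translation invariance of $\nor{\cdot}{L_t^aL_x^r}$, this bounds $I_n$ uniformly in $n$; and, via the difference estimate $\bigl||z|^{p-1}-|w|^{p-1}\bigr|\lesssim(|z|^{p-2}+|w|^{p-2})|z-w|$ (valid since $p>5$) together with Hölder, it lets me replace $\phi_1,\phi_2$ by smooth, space-time compactly supported approximants with an error that is small uniformly in $n$. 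Thus I may assume $\phi_1,\phi_2\in C_0^\infty(\R\times\R)$, say with $\supp\phi_i\subset\{|t|\le T,\ |x|\le M\}$.

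The core step is a disjointness argument run along subsequences. Given any subsequence, pass to a further one along which either $|t_n^1-t_n^2|\to\infty$ or $\{t_n^1-t_n^2\}$ is bounded. In the former case the time supports of the two factors are disjoint once $|t_n^1-t_n^2|>2T$, so $I_n=0$ for large $n$. In the latter case the hypothesis forces $|x_n^1-x_n^2|\to\infty$; here I use $x_n^j\ge0$ (these are the profile parameters $y_n^j$). The un-reflected piece $\tau_{x_n^j}\phi\,\bcp$ is spatially centered at $x_n^j$, while the reflected piece $\tau_{-x_n^j}\boR\phi\,\bcp$ has spatial support in $(0,\infty)\cap(-x_n^j-M,-x_n^j+M)$, which is empty once $x_n^j>M$; hence a reflected piece survives only if its $x_n^j$ stays bounded, in which case the other shift diverges. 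In every sign combination the spatial centers of the surviving factors separate, since $|\sigma_1 x_n^1-\sigma_2 x_n^2|$ equals $|x_n^1-x_n^2|$ when $\sigma_1=\sigma_2$ and equals $x_n^1+x_n^2\ge|x_n^1-x_n^2|$ when $\sigma_1\neq\sigma_2$. Thus the spatial supports become disjoint and $I_n=0$ for large $n$. Since every subsequence admits a further subsequence along which $I_n\to0$, the whole sequence tends to $0$; restoring the approximation error and letting it vanish completes the proof.

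The main obstacle is precisely the bookkeeping in the last step: one must track the reflection $\boR$ together with the half-line cut-off $\bcp$ across all four sign combinations $(\sigma_1,\sigma_2)$ and verify, using the non-negativity of the shifts, that in each case the surviving spatial supports separate (or that the reflected pieces simply vanish). The remaining ingredients — the Hölder estimate in the dual Strichartz pair and the density of $C_0^\infty$ — are routine.
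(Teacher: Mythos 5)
Your proof is correct. Note that the paper itself gives no argument for this lemma --- it simply defers to Proposition A.1 of \cite{BanIgn14} --- so there is nothing to compare line by line; but your argument is the standard one that such references use, and every step checks out. The exponent identities $\tfrac{1}{r'}=\tfrac{p}{r}$ and $\tfrac{1}{b'}=\tfrac{p}{a}$ do hold for the paper's choice of $(a,b,r)$, so the H\"older bound $\nor{|g_1|^{p-1}|g_2|}{L_t^{b'}L_x^{r'}}\le\nor{g_1}{L_t^aL_x^r}^{p-1}\nor{g_2}{L_t^aL_x^r}$ and the density reduction to $C_0^\infty$ approximants are valid (the $L_t^pL_x^r$ in the lemma's hypothesis should indeed be read as $L_t^aL_x^r$, as you implicitly do). The only genuinely graph-specific point is the bookkeeping of the reflected pieces $\tau_{-x_n^j}\boR\phi\,\bcp$, and your observation that, for non-negative shifts, $|\sigma_1x_n^1-\sigma_2x_n^2|\ge|x_n^1-x_n^2|$ in all four sign combinations disposes of it cleanly (the alternative remark that a reflected piece with $x_n^j\to\infty$ eventually has empty support on the half-line is also fine, though not needed once the center-separation bound is in hand). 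The subsequence dichotomy between $|t_n^1-t_n^2|\to\infty$ (disjoint time supports) and bounded time differences (forcing $|x_n^1-x_n^2|\to\infty$ and disjoint spatial supports) is exactly how the cited result is proved on the line; what your write-up adds is the verification that the operator $\boT_{0,y}$ does not disturb this mechanism.
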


See Proposition A.1 in \cite{BanIgn14} for the proof. 

Next we construct a solution to \eqref{NLS} which approximates the linear solution with initial data being each profile of \eqref{Eq:LPD}. 

\begin{prop}\label{Prop:NPD1}
Let $\{t_n\}_{n\in\N} \subset\R$ with $t_n\to\pm \infty$. 
Let $\bvph\in H^1_c(\boG)$, 
and suppose that there exists a time-in-global solution $\bm{W}_{\pm}\in C_tH^1_c \cap L^a_tL^r_x$ to \eqref{NLS} satisfying 
$$
\nor{\bW_\pm (t,\cdot) - e^{it\Dgg}\bvph}{H^1(\boG)} \to 0 \qquad (t\to \mp \infty).
$$
 Then we have
$$
\bW_n = e^{it\Dgg} (e^{-it_n\Dgg} 
\bvph) - i \int_0^t e^{i(t-s)\Dgg}\mathcal{N}(\bW_n) ds + \bE_n,
$$
where
$$
\bW_n (t,x) := \bW_{\pm}(t-t_n,x),
$$
$$
\nor{\bE_n}{L^a_tL^r_x} \to 0\qquad (n\to\infty).
$$
\end{prop}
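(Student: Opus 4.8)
The plan is to recognize $\bW_n$ as a time-translate of the given global solution $\bW_\pm$, to write Duhamel's formula for it based at $t=0$, and then to absorb the discrepancy between the actual nonlinear data $\bW_n(0)=\bW_\pm(-t_n)$ and the desired free data $e^{-it_n\Dgg}\bvph$ entirely into the error term $\bE_n$. First I would note that, by time-translation invariance of \eqref{NLS}, the function $\bW_n(t,x):=\bW_\pm(t-t_n,x)$ is again a global solution lying in $C_tH^1_c\cap L^a_tL^r_x$, and its value at $t=0$ equals $\bW_\pm(-t_n)$. Applying Duhamel's formula to $\bW_n$ based at $t=0$ therefore gives
$$
\bW_n(t) = e^{it\Dgg}\bW_\pm(-t_n) - i\int_0^t e^{i(t-s)\Dgg}\mathcal{N}(\bW_n)(s)\,ds.
$$

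Comparing this with the identity to be proved, the two nonlinear Duhamel integrals are literally the same, so the error term is forced to be the purely linear expression
$$
\bE_n := e^{it\Dgg}\left[\bW_\pm(-t_n) - e^{-it_n\Dgg}\bvph\right].
$$
Next I would invoke the scattering hypothesis. Since $t_n\to\pm\infty$, we have $-t_n\to\mp\infty$, and hence the assumption $\nor{\bW_\pm(t)-e^{it\Dgg}\bvph}{H^1(\boG)}\to 0$ as $t\to\mp\infty$ yields
$$
\nor{\bW_\pm(-t_n) - e^{-it_n\Dgg}\bvph}{H^1(\boG)} \xrightarrow{n\to\infty} 0.
$$
Finally I would apply the homogeneous Strichartz estimate $\nor{e^{it\Dgg}\bof}{L^a_tL^r_x}\lesssim\nor{\bof}{H^1}$ from Theorem \ref{Strichartz estimates for non-admissible pair} to the bracketed function, obtaining
$$
\nor{\bE_n}{L^a_tL^r_x}\lesssim \nor{\bW_\pm(-t_n) - e^{-it_n\Dgg}\bvph}{H^1(\boG)}\to 0,
$$
which is exactly the claimed decay of the error.

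There is no genuine analytic obstacle here; the argument is essentially bookkeeping once the Duhamel representation is in place, and the decay of $\bE_n$ comes for free from the Strichartz bound applied to a function whose $H^1$ norm already tends to zero. The only points that require care are matching the directional conventions — verifying that $t_n\to\pm\infty$ indeed forces $-t_n\to\mp\infty$, so that the backward scattering hypothesis on $\bW_\pm$ is the one being used — and checking that the nonlinear Duhamel integrals in the two expressions coincide, so that $\bE_n$ genuinely reduces to a linear term to which the Strichartz estimate applies directly.
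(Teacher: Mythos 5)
Your proposal is correct and is essentially identical to the paper's own proof: both write the Duhamel formula for the time-translate $\bW_n$ based at $t=0$, identify $\bE_n=e^{it\Dgg}(\bW_\pm(-t_n)-e^{-it_n\Dgg}\bvph)$, and conclude via the scattering hypothesis together with the Strichartz estimate. Your version simply spells out the bookkeeping more explicitly.
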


\begin{proof}
We only prove in the case $t_n\to\infty$, and omit the index $\pm$. 
Since $\bW_n$ satisfies
$$
\bW_n = e^{it\Dgg} \bW(-t_n) - i \int_0^t e^{i(t-s)\Dgg}\mathcal{N}(\bW_n)ds,
$$
we have
$$
\bE_n = e^{it\Dgg} (\bW(-t_n) - e^{-it_n\Dgg}\vph).
$$
Hence $\nor{\bE_n}{L^a_tL^r_x}\to 0$ by the Strichartz estimate. 
\end{proof}

As stated below, the profiles with $y_n \to \infty$ ($n \to \infty$) can be approximated by the solution to NLS without any linear potentials on $\mathbb{R}$. The equation is 
\begin{align}
\label{NLSL}
	i\partial_t u + \partial_{xx} u = N(u), \quad (t,x) \in \mathbb{R} \times \mathbb{R},
\end{align}
where $N(u)=-|u|^{p-1}u$.

\begin{prop}\label{Prop:NPD2}
Let $u_{j}\in C_tH^1\cap L^a_tL^r_x$ be a time-in-global mild solution of \eqref{NLSL} and with initial data $\psi_j$ ($j=1,2,3$): 
\begin{equation}\label{NLSL0:mild}
u_{j} = e^{it\rd_{xx}} \psi_j - i \int_0^t e^{i(t-s) \rd_{xx}} 
N(u_j) ds \qquad (j=1,2,3).
\end{equation}
Also let $\{y_n \}_{n\in\N}\subset (0,\infty)$ be a sequence with $y_n\to\infty$. 
Then,
$$
\bU_{n} = e^{it\Dgg} \boT_{0,y_n} (\psi_1,\psi_2,\psi_3) - i \int_0^t e^{i(t-s)\Dgg } 
\boN(\bU_n) ds +\bE_n,
$$
where
$$
\bU_n := 
\boT_{0,y_n}(u_1,u_2,u_3)
$$
and
$$
\nor{\bE_n}{L^a_tL^r_x(\boG)} \to 0\qquad (n\to\infty).
$$
\end{prop}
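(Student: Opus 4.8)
The plan is to transport the entire identity to the real line through the decomposition operator $\mathcal{Q}$, exploit that each $u_j$ solves the free equation \eqref{NLSL}, and then use Lemma \ref{Lem:Hgam} to replace the free propagator $e^{it\rd_{xx}}$ by $e^{it\Dgr}$ at the cost of errors that vanish in $L^a_tL^r_x$ since $y_n\to\infty$. Recalling from \eqref{shift} that the $k$-th component of $\mathcal{Q}\bU_n$ equals $\tau_{y_n}u_k\mp\boR\tau_{y_n}u_k$ (odd for $k=1,2$, even for $k=3$), I would first translate the mild formulation \eqref{NLSL0:mild} by $\pm y_n$ and compose with $\boR$. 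Since $\tau$ and $\boR$ commute with $e^{it\rd_{xx}}$ and satisfy $N(\boR v)=\boR N(v)$ and $N(\tau_y v)=\tau_y N(v)$, this yields for each $k$
\begin{equation*}
\mathcal{Q}_k\bU_n = e^{it\rd_{xx}}\bigl(\tau_{y_n}\psi_k\mp\boR\tau_{y_n}\psi_k\bigr) - i\int_0^t e^{i(t-s)\rd_{xx}}\bigl(\tau_{y_n}N(u_k)\mp\boR\tau_{y_n}N(u_k)\bigr)\,ds.
\end{equation*}
Using $N(u_k)\in L^{b'}_tL^{r'}_x$ (a consequence of $u_k\in L^a_tL^r_x$) together with Lemma \ref{Lem:Hgam} (i) and (iii), applied to the shifts $y_n\to\infty$ and $-y_n\to-\infty$, I may replace every $e^{it\rd_{xx}}$ by $e^{it\Dgr}$ up to a remainder $\mathrm{err}_{n,k}$ with $\nor{\mathrm{err}_{n,k}}{L^a_tL^r_x}\to0$.

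Next I would undo $\mathcal{Q}$. Applying $\mathcal{Q}^{-1}$ and using the commutation $\mathcal{Q}^{-1}e^{is\Dgr}=e^{is\Dgg}\mathcal{Q}^{-1}$ from Proposition \ref{Prop:Comm:Sch}, the linear term becomes exactly $e^{it\Dgg}\boT_{0,y_n}(\psi_1,\psi_2,\psi_3)$. The crucial observation is that the vector with $k$-th entry $\tau_{y_n}N(u_k)\mp\boR\tau_{y_n}N(u_k)$ has precisely the structure of $\mathcal{Q}$ applied to a profile, so that $\mathcal{Q}^{-1}$ of it equals $\boT_{0,y_n}(N(u_1),N(u_2),N(u_3))$. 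Thus at this stage
\begin{equation*}
\bU_n = e^{it\Dgg}\boT_{0,y_n}(\psi_1,\psi_2,\psi_3) - i\int_0^t e^{i(t-s)\Dgg}\boT_{0,y_n}(N(u_1),N(u_2),N(u_3))\,ds + \mathcal{Q}^{-1}[\mathrm{err}_{n,k}]_k,
\end{equation*}
and the last term tends to $0$ in $L^a_tL^r_x$ by boundedness of $\mathcal{Q}^{-1}$.

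It then remains to compare the two nonlinear terms, i.e. to show
\begin{equation*}
\boT_{0,y_n}(N(u_1),N(u_2),N(u_3)) - \boN(\bU_n) \xrightarrow{n\to\infty} 0 \qquad\text{in } L^{b'}_tL^{r'}_x(\boG);
\end{equation*}
by the inhomogeneous Strichartz estimate (Theorem \ref{Strichartz estimates for non-admissible pair}) this controls the difference of the Duhamel integrals in $L^a_tL^r_x$ and is absorbed into $\bE_n$. Here I would split $\bU_n=\bV_n+\bG_{0,y_n}(u_1,u_2,u_3)$ with $\bV_n:=(\tau_{y_n}u_k\bcp)_k$ as in \eqref{Def:G}, and likewise $\boT_{0,y_n}(N(u))=\boN(\bV_n)+\bG_{0,y_n}(N(u_1),N(u_2),N(u_3))$, the point being that the diagonal main parts match exactly, $\boN(\bV_n)$ having $k$-th component $\tau_{y_n}N(u_k)\bcp$. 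The difference then equals $\boN(\bV_n)-\boN(\bV_n+\bG_{0,y_n}(u))+\bG_{0,y_n}(N(u))$; the time-dependent analogue of \eqref{Small:G} makes each tail $\bG_{0,y_n}(\cdot)$ vanish in $L^a_tL^r_x\cap L^{b'}_tL^{r'}_x$, and the elementary estimate $\||w|^{p-1}w-|w+h|^{p-1}(w+h)\|\lesssim(\|w\|^{p-1}+\|h\|^{p-1})\|h\|$ disposes of the nonlinear difference using the uniform bound on $\nor{\bV_n}{L^a_tL^r_x}$. I expect the main obstacle to be precisely this last step: because the pointwise nonlinearity $\boN$ does not commute with the component-mixing operator $\mathcal{Q}$, the near-identity $\boT_{0,y_n}(N(u))\approx\boN(\bU_n)$ is not algebraic but relies essentially on the decay $y_n\to\infty$ to annihilate the off-diagonal tails $\bG_{0,y_n}$.
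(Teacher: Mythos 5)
Your argument is correct and uses the same essential ingredients as the paper's proof: the odd/even structure of $\mathcal{Q}\bU_n$, Lemma \ref{Lem:Hgam} (i) and (iii) to swap $e^{it\rd_{xx}}$ for $e^{it\Dgr}$, the vanishing of the tails $\bG_{0,y_n}(\cdot)$ in $L^a_tL^r_x\cap L^{b'}_tL^{r'}_x$, the Lipschitz estimate for $N$, and the inhomogeneous Strichartz estimate; the only difference is bookkeeping (you conjugate by $\mathcal{Q}$ and invert at the end, whereas the paper decomposes $\bE_n$ directly into the pieces $\bA_{1,n}$, $\bA_{2,n}$, $\bA_{3,n}$ on the graph). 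Your final step, comparing $\boT_{0,y_n}(N(u_1),N(u_2),N(u_3))$ with $\boN(\bU_n)$, is exactly the paper's treatment of $\bA_{33,n}$ and $\bA_{34,n}$, so there is no gap.
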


\begin{proof}
Using \eqref{Def:G}, we can write
$$
\bU_n = \begin{pmatrix}
\tau_{y_n} u_1 \bcp \\
\tau_{y_n} u_2 \bcp \\
\tau_{y_n} u_3 \bcp 
\end{pmatrix}
+ 
\bG_{0,y_n} (u_1,u_2,u_3).
$$
Then by \eqref{NLSL0:mild}, we can write
$$
\bE_n = \bA_{1,n} + \bA_{2,n} - \bA_{3,n}
$$
where
\begin{align*}
\bA_{1,n} := 
\begin{pmatrix}
e^{it\rd_{xx} } \tau_{y_n} \psi_1 \bcp\\
e^{it\rd_{xx} } \tau_{y_n} \psi_2 \bcp\\
e^{it\rd_{xx} } \tau_{y_n} \psi_3 \bcp
\end{pmatrix}
-
e^{it\Dgg} \boT_{0,y_n}(\psi_1,\psi_2,\psi_3) ,
&&
\bA_{2,n}:=\bG_{0,y_n} (u_1,u_2,u_3),
\end{align*}
$$
\bA_{3,n}:=
\begin{pmatrix}
i \int_0^t e^{i(t-s)\rd_{xx}} \tau_{y_n} N(u_1) ds \bcp\\
i \int_0^t e^{i(t-s)\rd_{xx}} \tau_{y_n} N(u_2)  ds \bcp\\
i \int_0^t e^{i(t-s)\rd_{xx}} \tau_{y_n} N(u_3)  ds \bcp\\
\end{pmatrix}
-
i \int_0^t e^{i(t-s)\Dgg } \boN (\bU_n) ds.
$$
Thus it suffices to show each of these converges to $0$ in $L^a_tL^r_x$. 
First, $\bA_{1,n}$ can be written as
$$
\bA_{1,n} = 
\begin{pmatrix}
[e^{it\rd_{xx} } - e^{it\Dgr } ] \tau_{y_n} \psi_1 \bcp\\
[e^{it\rd_{xx} } - e^{it\Dgr } ] \tau_{y_n} \psi_2 \bcp\\
[e^{it\rd_{xx} } - e^{it\Dgr } ] \tau_{y_n} \psi_3 \bcp
\end{pmatrix}
- \bG_{-t,y_n} (\psi_1,\psi_2, \psi_3),
$$
which goes to $0$ by Lemma \ref{Lem:Hgam}. 
Since $u_k\in L^a_tL^r_x$, 
we also have
$$
\nor{\bA_{2,n}}{L^a_tL^r_x} 
\le C \sum_{j=1}^3 \nor{\tau_{-y_n} \boR u_j \bcp (x)}{L^a_tL^r_x}
= C\sum_{j=1}^3 \nor{\boR u_j \mathbbm{1}_{>y_n}(x)}{L^a_tL^r_x}
\xrightarrow{n\to \infty} 0.
$$
For $\bA_{3,n}$, we can further decompose
$$
\bA_{3,n} = 
\bA_{31,n} +\bA_{32,n} +\bA_{33,n} +\bA_{34,n}
$$
where
$$
\bA_{31,n} 
:=
\begin{pmatrix}
i \int_0^t [e^{i(t-s)\rd_{xx}} -e^{i(t-s)\Dgr}] \tau_{y_n} N(u_1)  ds \bcp\\
i \int_0^t [e^{i(t-s)\rd_{xx}} -e^{i(t-s)\Dgr}] \tau_{y_n} N(u_2)  ds \bcp\\
i \int_0^t [e^{i(t-s)\rd_{xx}} -e^{i(t-s)\Dgr}] \tau_{y_n} N(u_3)  ds \bcp\\
\end{pmatrix}
,
$$
$$
\bA_{32,n} 
:=
- i\int_0^t \bG_{s-t,y_n} (N(u_1), N(u_2) , N(u_3)) ds,
$$
$$
\bA_{33,n}
:=
i\int_0^t e^{i(t-s) \Dgg} \bG_{0,y_n} (N(u_1), N(u_2) , N(u_3))ds,
$$
$$
\bA_{34,n}:= -i\int_0^t e^{i(t-s) \Dgg} 
\begin{pmatrix}
N(U_1) - \tau_{y_n} N(u_1) \bcp \\
N(U_2) - \tau_{y_n} N(u_2) \bcp \\
N(U_3) - \tau_{y_n} N(u_3) \bcp 
\end{pmatrix}
ds.
$$
First, 
$\bA_{31,n}\to 0$ in $L^a_tL^r_x$ follows from Lemma \ref{Lem:Hgam}. 
For $\bA_{32,n}$, the integrand can be written as
\begin{align*}
&
\begin{pmatrix}
\frac 13 [e^{i(t-s)\Dgr} -e^{i(t-s)\rd_{xx}}] \tau_{-y_n} \boR (-N(u_1) +2N(u_2) +2N(u_3)) \bcp\\
\frac 13 [e^{i(t-s)\Dgr} -e^{i(t-s)\rd_{xx}}] \tau_{-y_n} \boR (2N(u_1) -N(u_2) +2N(u_3)) \bcp\\
\frac 13 [e^{i(t-s)\Dgr} -e^{i(t-s)\rd_{xx}}] \tau_{-y_n} \boR (2N(u_1) +2N(u_2) -N(u_3)) \bcp
\end{pmatrix}
\\
&\hspace{20pt} +
\begin{pmatrix}
[\tau_{-y_n} \frac 13 e^{i(t-s)\rd_{xx}}  \boR (-N(u_1) +2N(u_2) +2N(u_3))] \bcp\\
[\tau_{-y_n} \frac 13 e^{i(t-s)\rd_{xx}} \boR (2N(u_1) -N(u_2) +2N(u_3))] \bcp\\
[\tau_{-y_n} \frac 13 e^{i(t-s)\rd_{xx}}  \boR (2N(u_1) +2N(u_2) -N(u_3))] \bcp
\end{pmatrix}
.
\end{align*}
Hence Lemma \ref{Lem:Hgam} and the Strichartz estimate for $e^{it\Delta^{\gamma}_{\mathbb{R}}}$ imply 
$\bA_{32,n}\to 0$ in $L^a_tL^r_x$. 
That $\bA_{33,n}\to 0$ follows from the Strichartz estimate for $e^{it\Dgg}$. 
Finally, $\bA_{34,n}$ is estimated as
\begin{align*}
\nor{A_{34,n}}{L^a_tL^r_x}
&\le
C \sum_{\nu=1}^3 \nor{N(U_{\nu,n}) -N(\tau_{y_n}u_\nu)\bcp}{L^{a/p}_tL^{r/p}_x} \\
&\le 
C \sum_{\nu=1}^3 (\nor{U_{\nu,n}}{L^a_tL^r_x} +\nor{\tau_{y_n}u_\nu}{L^p_tL^r_x})^{p-1} 
\nor{U_{\nu,n}- \tau_{y_n} u_\nu \bcp}{L^a_tL^r_x} \\
&\le
C \left( \sum_{\nu=1}^3 \nor{u_\nu}{L^a_tL^r_x} \right)^{p-1} 
\nor{\bG_{0,y_n}}{L^a_tL^r_x}
\to 0\qquad (n\to\infty).
\end{align*}
Hence the proof is completed.
\end{proof}

\begin{prop}\label{Prop:NPD3}
Let $\{ t_n \}_{n=1}^\infty$, $\{y_n \}_{n=1}^\infty$ be sequences with $t_n\to \pm\infty$ and $y_n\to\infty$. 
Let $\vph_k\in H^1(\R)$ $(k=1,2,3)$ be functions, and suppose that 
there exist global solutions $v_{\pm, k} \in C_tH^1\cap L^a_tL^r_x$ to \eqref{NLSL} with 
\begin{equation}\label{v:scatter}
\nor{v_{\pm, k} - e^{it\rd_{xx}} \vph_k}{H^1(\R)} \to 0\qquad  (t\to \mp \infty, \ k=1,2,3).
\end{equation}
Then we have
$$
\bV_n = e^{it\Dgg} \boT_{t_n,y_n} (\vph_1,\vph_2,\vph_3) - i \int_0^t e^{i(t-s)\Dgg} 
\boN (\bV_n) ds +\bE_n
$$
where
$$
\bV_n (t,x) := 
\boT_{0,y_n} (v_{\pm,1},v_{\pm,2},v_{\pm,3}) (t-t_n),
$$
and
$$
\nor{\bE_n}{L^a_tL^r_x}\to 0 \qquad (n\to\infty).
$$
\end{prop}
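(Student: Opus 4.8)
The plan is to reduce the statement to an $\mp\infty$-anchored version of Proposition~\ref{Prop:NPD2} and then to absorb the time translation exactly as in Proposition~\ref{Prop:NPD1}. Write $\bU_n(t):=\boT_{0,y_n}(v_{\pm,1},v_{\pm,2},v_{\pm,3})(t)$; since $\boT_{0,y_n}$ is a purely spatial operator it commutes with evaluation in $t$, so that $\bV_n(t)=\bU_n(t-t_n)$. Throughout, the anchor $\mp\infty$ is the one dictated by \eqref{v:scatter}, i.e. the time at which $v_{\pm,k}$ is asymptotically free.

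The first step is to prove the anchored identity
\begin{equation*}
\bU_n(t) = e^{it\Dgg}\boT_{0,y_n}(\vph_1,\vph_2,\vph_3) - i\int_{\mp\infty}^t e^{i(t-s)\Dgg}\boN(\bU_n)(s)\,ds + \widehat{\bE}_n(t),\qquad \nor{\widehat{\bE}_n}{L^a_tL^r_x}\to 0.
\end{equation*}
This is obtained by rerunning the proof of Proposition~\ref{Prop:NPD2}, the only change being that the mild formula \eqref{NLSL0:mild} for each $v_{\pm,k}$ is replaced by its Duhamel representation from $\mp\infty$,
\begin{equation*}
v_{\pm,k}(t)=e^{it\rd_{xx}}\vph_k - i\int_{\mp\infty}^t e^{i(t-s)\rd_{xx}}N(v_{\pm,k})(s)\,ds,
\end{equation*}
which is legitimate by \eqref{v:scatter}. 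Splitting $\boT_{0,y_n}$ into its diagonal translation part and the tail part $\bG_{0,y_n}$ as in \eqref{Def:G}, the error decomposes into a linear propagator-difference term, the $\bG$-tail, and a nonlinear remainder. The linear term vanishes by Lemma~\ref{Lem:Hgam}(i) (note that the scattering profiles $\vph_k$ are \emph{fixed} in $n$, so that no $n$-dependent data enters here), the $\bG$-tail vanishes because $\nor{\boR v_{\pm,k}\,\mathbbm{1}_{>y_n}}{L^a_tL^r_x}\to 0$, and the nonlinear remainder is handled by Lemma~\ref{Lem:Hgam} and the Strichartz estimates exactly as for $\bA_{3,n}$ in Proposition~\ref{Prop:NPD2}.

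Given the anchored identity, the time shift is pure bookkeeping. Replacing $t$ by $t-t_n$ and substituting $s\mapsto s-t_n$ turns the linear term into $e^{i(t-t_n)\Dgg}\boT_{0,y_n}(\vph_1,\vph_2,\vph_3)=e^{it\Dgg}\boT_{t_n,y_n}(\vph_1,\vph_2,\vph_3)$ (using $\boT_{t_n,y_n}=e^{-it_n\Dgg}\boT_{0,y_n}$) and the integral into $-i\int_{\mp\infty}^t e^{i(t-s)\Dgg}\boN(\bV_n)(s)\,ds$. Writing $\int_{\mp\infty}^t=\int_0^t+\int_{\mp\infty}^0$ isolates the claimed Duhamel term at the origin, leaving
\begin{equation*}
\bE_n(t)= -i\int_{\mp\infty}^0 e^{i(t-s)\Dgg}\boN(\bV_n)(s)\,ds + \widehat{\bE}_n(t-t_n).
\end{equation*}
The second summand satisfies $\nor{\widehat{\bE}_n(\cdot-t_n)}{L^a_tL^r_x}=\nor{\widehat{\bE}_n}{L^a_tL^r_x}\to 0$ by time-translation invariance. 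For the first, the inhomogeneous Strichartz estimate of Theorem~\ref{Strichartz estimates for non-admissible pair} bounds its $L^a_tL^r_x$-norm by $\nor{\boN(\bV_n)}{L^{b'}_t(\mp\infty,0;L^{r'}_x)}=\nor{\boN(\bU_n)}{L^{b'}_t(\mp\infty,-t_n;L^{r'}_x)}\lesssim \nor{\bU_n}{L^a_t(\mp\infty,-t_n;L^r_x)}^p$; since $\nor{\bU_n(s)}{L^r(\boG)}\lesssim\sum_{k}\nor{v_{\pm,k}(s)}{L^r(\R)}$ uniformly in $n$ and the time window between $-t_n$ and $\mp\infty$ recedes to $\mp\infty$ as $|t_n|\to\infty$, this tends to $0$ because each $v_{\pm,k}\in L^a_tL^r_x$.

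The main obstacle is the first step, and specifically the nonlinear part of the anchored identity: one must check that the Duhamel-difference estimates behind $\bA_{3,n}$ in Proposition~\ref{Prop:NPD2} survive when the lower limit of integration is $\mp\infty$ rather than $0$. Concretely this requires the analogue of Lemma~\ref{Lem:Hgam}(iii) with infinite lower limit, which holds by the same argument as its cited source but should be recorded explicitly; equivalently, it encodes the uniform-in-$n$ asymptotic freeness of the spatially translated profiles. Everything after that is a rerun of Proposition~\ref{Prop:NPD2} together with the change-of-variables computation above.
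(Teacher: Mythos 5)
Your proposal is correct, and it reaches the conclusion by a genuinely different organization of the same ingredients. The paper expands $\bV_n$ via the Duhamel formula anchored at $t=0$, so that the relevant data is $\tau_{y_n}v_{\pm,k}(-t_n)$, and the hypothesis \eqref{v:scatter} enters by swapping $v_{\pm,k}(-t_n)$ for $e^{-it_n\rd_{xx}}\vph_k$ inside the term $\bB_{1,n}$; the extension of time integrals to $\mp\infty$ appears only locally there, in the treatment of $\bB_{32,n}$ and $\bB_{33,n}$. You instead make the $\mp\infty$ anchor the organizing principle: \eqref{v:scatter} enters exactly once, through the validity of the Duhamel-from-infinity representation of $v_{\pm,k}$, and the re-anchoring at $t=0$ is paid for by a single inhomogeneous Strichartz estimate over the receding window between $\mp\infty$ and $-t_n$ (your bound is legitimate within the stated theorems once you write $\int_{\mp\infty}^0=\int_{\mp\infty}^t-\int_0^t$ so that the retarded estimate applies to each piece, and the H\"older exponents $pb'=a$, $pr'=r$ check out). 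The price, which you correctly flag, is that Lemma \ref{Lem:Hgam}(iii) must be upgraded to integrals with lower limit $\mp\infty$; this holds by the same argument, and in fairness the paper's own use of Lemma \ref{Lem:Hgam}(iii) for $\bB_{31,n}$ already applies it to the $n$-dependent inhomogeneities $N(v_{\pm,k}(\cdot-t_n))$, so a comparable uniformity statement is implicitly needed in both arguments. The net effect of your version is a more transparent role for the scattering hypothesis and cleaner time-translation bookkeeping, at the cost of recording one extra lemma variant.
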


\begin{proof}
We omit the suffix $\pm$ for short. 
Using the notation \eqref{Def:G}, we can write
$$
\bE_n = \bB_{1,n} + \bB_{2,n} - \bB_{3,n}
$$
where
\begin{align*}
\bB_{1,n} &:= 
\begin{pmatrix}
e^{it\rd_{xx}} \tau_{y_n} v_{1} (-t_n) \\
e^{it\rd_{xx}} \tau_{y_n} v_{2} (-t_n) \\
e^{it\rd_{xx}} \tau_{y_n} v_{3} (-t_n) 
\end{pmatrix}
\bcp
-
e^{it\Dgg} \boT_{t_n,y_n} (\vph_1,\vph_2,\vph_3),
\\
\bB_{2,n} &:= \bG_{0,y_n}(v_1,v_2,v_3) (t-t_n),
\end{align*}
$$
\bB_{3,n}:=
\begin{pmatrix}
i\int_{0}^t e^{i(t-s)\rd_{xx}}  N(\tau_{y_n} v_1|_{s-t_n}) ds \\
i\int_{0}^t e^{i(t-s)\rd_{xx}}  N(\tau_{y_n} v_2|_{s-t_n}) ds \\
i\int_{0}^t e^{i(t-s)\rd_{xx}}  N(\tau_{y_n} v_3|_{s-t_n}) ds 
\end{pmatrix}
\bcp
-
i \int_0^t e^{i(t-s)\Dgg} 
\boN(\bV_n) ds.
$$
It suffices to show that $\bB_{j,n}\to 0$ in $L^a_tL^r_x$ for $j=1,2,3$. 
For $\bB_{1,n}$, we further decompose
\begin{align*}
\bB_{1,n}
=&
\begin{pmatrix}
e^{it\rd_{xx}} \tau_{y_n} [v_{1} (-t_n) - e^{-it_n\rd_{xx}} \vph_1 ] \\
e^{it\rd_{xx}} \tau_{y_n} [v_{2} (-t_n) - e^{-it_n\rd_{xx}} \vph_2 ] \\
e^{it\rd_{xx}} \tau_{y_n} [v_{3} (-t_n) - e^{-it_n\rd_{xx}} \vph_3 ]
\end{pmatrix}
\bcp
\\
&+
\begin{pmatrix}
[e^{i(t-t_n)\rd_{xx}} - e^{i(t-t_n)\Dgr}] \tau_{y_n} \vph_1 \\
[e^{i(t-t_n)\rd_{xx}} - e^{i(t-t_n)\Dgr}] \tau_{y_n} \vph_2 \\
[e^{i(t-t_n)\rd_{xx}} - e^{i(t-t_n)\Dgr}] \tau_{y_n} \vph_3 
\end{pmatrix}
\bcp
- 
\bG_{t_n-t,y_n} (\vph_1,\vph_2,\vph_3).
\end{align*}
The convergence of the first and second term follows from \eqref{v:scatter}, 
Lemma \ref{Lem:Hgam}, respectively. 
The last term can be shown to go to $0$ by a similar argument to that of $\bA_{32,n}$ in the proof of Proposition \ref{Prop:NPD2}. \par
Since $v_k\in L^a_tL^r_x$, that $\bB_{2,n}\to 0$ follows in the same way as $\bA_{2.n}$. For $\bB_{3,n}$, we write
$$
\bB_{3,n} 
=
\bB_{31,n} +\bB_{32,n} +\bB_{33,n} +\bB_{34,n}
$$
where
$$
\bB_{31,n} :=
\begin{pmatrix}
i \int_0^t [e^{i(t-s)\rd_{xx}} -e^{i(t-s)\Dgr}] N(\tau_{y_n} v_1|_{s-t_n} )  ds \\
i \int_0^t [e^{i(t-s)\rd_{xx}} -e^{i(t-s)\Dgr}] N(\tau_{y_n} v_2|_{s-t_n} ) ds \\
i \int_0^t [e^{i(t-s)\rd_{xx}} -e^{i(t-s)\Dgr}] N(\tau_{y_n} v_3|_{s-t_n} ) ds \\
\end{pmatrix}
\bcp
,
$$
$$
\bB_{32,n} :=
i\int_0^t e^{i(t-s) \Dgg} \bG_{0,y_n} (N(v_1|_{s-t_n}), N(v_2|_{s-t_n}) , N(v_3|_{s-t_n}))ds,
$$
$$
\bB_{33,n} :=
- i\int_0^t \bG_{s-t,y_n} (N(v_1|_{s-t_n}), N(v_2|_{s-t_n}) , N(v_3|_{s-t_n})) ds,
$$
$$
\bB_{34,n} :=
-i\int_0^t e^{i(t-s) \Dgg} 
\begin{pmatrix}
N(V_{1,n}|_s) -N(\tau_{y_n} v_1|_{s-t_n}) \bcp\\
N(V_{2,n}|_s) -N(\tau_{y_n} v_2|_{s-t_n}) \bcp\\
N(V_{3,n}|_s) -N(\tau_{y_n} v_3|_{s-t_n}) \bcp
\end{pmatrix}
ds.
$$
The convergence of $\bB_{31,n}$ follows from Lemma \ref{Lem:Hgam}. 
Since we have
$$
\bB_{32,n}
=
i\int_{-t_n}^{t-t_n} e^{i(t-t_n -s) \Dgg} \bG_{0,y_n} (N(v_1|_{s}), N(v_2|_{s}) , N(v_3|_{s}))ds
,
$$
to prove $\bB_{32,n}\to 0$ it suffices to show
$$
i\int_{-t_n}^{t} e^{i(t -s) \Dgg} \bG_{0,y_n} (N(v_1|_{s}), N(v_2|_{s}) , N(v_3|_{s}))ds 
\xrightarrow{n\to\infty} 0\qquad \text{in } L^a_tL^r_x.
$$
When $t_{n}\to \infty$, we split the integral into
$$
i\left[ \int_{-\infty}^{t} - \int_{-\infty}^{-t_n} \right] e^{i(t -s) \Dgg} \bG_{0,y_n} (N(v_1|_{s}), N(v_2|_{s}) , N(v_3|_{s}))ds.
$$
Then the first term goes to $0$ by Srtichartz, while the second term also converges to $0$ since $-t_n\to-\infty$. The case $t_{n}\to -\infty$ follows in the same way. For $\bB_{33,n}$, we can write
\begin{align*}
&\int_0^t \bG_{s-t,y_n} (N(v_1|_{s-t_n}),N(v_2|_{s-t_n}),N(v_3|_{s-t_n}))\\
&\hspace{20pt} =
\begin{pmatrix}
\frac 13 \int_0^t [e^{i(t-s)\Dgr} -e^{i(t-s)\rd_{xx}}] \tau_{-y_n} \boR (-N(v_1) +2N(v_2) +2N(v_3))|_{s-t_n} \\
\frac 13 \int_0^t [e^{i(t-s)\Dgr} -e^{i(t-s)\rd_{xx}}] \tau_{-y_n} \boR (2N(v_1) -N(v_2) +2N(v_3))|_{s-t_n} \\
\frac 13 \int_0^t [e^{i(t-s)\Dgr} -e^{i(t-s)\rd_{xx}}] \tau_{-y_n} \boR (2N(v_1) +2N(v_2) -N(v_3))|_{s-t_n} 
\end{pmatrix}
\bcp
\\
&\hspace{40pt} +
\begin{pmatrix}
\tau_{-y_n} \frac 13 \int_0^t e^{i(t-s)\rd_{xx}}  \boR (-N(v_1) +2N(v_2) +2N(v_3))|_{s-t_n} \\
\tau_{-y_n} \frac 13 \int_0^t e^{i(t-s)\rd_{xx}} \boR (2N(v_1) -N(v_2) +2N(v_3))|_{s-t_n} \\
\tau_{-y_n} \frac 13 \int_0^t e^{i(t-s)\rd_{xx}}  \boR (2N(v_1) +2N(v_2) -N(v_3))|_{s-t_n} 
\end{pmatrix}
\bcp
.
\end{align*}
Then the convergence of the first terms follows from Lemma \ref{Lem:Hgam}, 
while that for the second term is shown in the same way as $\bB_{32,n}$. 
That $\bB_{34,n}\to 0$ follows similarly to $\bA_{34,n}$ in the proof of Prop 
\ref{Prop:NPD2}. Hence we obtain the conclusion.
\end{proof}

%
%

%
%
%
%


\section{Proof of Scattering}\label{Sec:Sca}
\subsection{Construction of a critical element}

We define the critical action $\mathcal{ME}^{c} \in \mathbb{R}$ by
\begin{align*}
	\mathcal{ME}^{c}
	:=\sup \{ L \in \mathbb{R} : 
	M(\bm{u})^{\frac{1-s_{c}}{s_{c}}}E_{\gamma}(\bm{u}) < L \text{ and } \bm{u} \in PW_{\gamma}^{+}
	\\
	\text{ implies } \bm{u} \in L^a(\mathbb{R}; L^r(\mathcal{G})) \}.
\end{align*}
The small data scattering result teaches us $\mathcal{ME}^{c}>0$.

\begin{lemma}[Existence of a critical element]
We assume that $\mathcal{ME}^{c} < M^{\mathrm{line}}(Q)^{(1-s_c)/s_c}E_{0}^{\mathrm{line}}(Q)$. Then, there exists a global solution $\bm{u}^c$  to \eqref{NLS} such that $M(\bm{u}^{c})^{(1-s_c)/s_c}E_{\gamma}(\bm{u}^{c})=\mathcal{ME}^{c}$ and $\|\bm{u}^c\|_{L^a(\mathbb{R}; L^r)}=\infty$.
\end{lemma}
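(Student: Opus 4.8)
The plan is to run the Kenig--Merle concentration--compactness scheme \cite{KenMer06}, using the fact that the linear profile decomposition of Proposition \ref{Prop:LPD} transfers the non-compactness of $e^{it\Dgg}$ to the line. By the definition of $\mathcal{ME}^c$ I would first select a sequence of solutions $\{\bm{u}_n\}$ with initial data $\bm{u}_{n,0}\in PW_\gamma^+$ such that
\[
	M(\bm{u}_n)^{\frac{1-s_c}{s_c}}E_\gamma(\bm{u}_n)\searrow \mathcal{ME}^c,\qquad \|\bm{u}_n\|_{L^a(\mathbb{R};L^r)}=\infty .
\]
By Lemmas \ref{Coercivity} and \ref{Equivalence of S and H1}, the mass--energy bound together with $\bm{u}_{n,0}\in PW_\gamma^+$ makes $\{\bm{u}_{n,0}\}$ bounded in $H^1_c(\boG)$, so Proposition \ref{Prop:LPD} applies and produces profiles $\boT_{t_n^j,y_n^j}(\psi_1^j,\psi_2^j,\psi_3^j)$ and a remainder $\bow_n^J$ with the properties (i)--(iv) listed there.

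Next I would push the decomposition through the conserved quantities. The orthogonality of norms in Proposition \ref{Prop:LPD}(iv), for $L^2$, $\dot H^1_\gamma$ and $L^{p+1}$, yields the asymptotic additivity of both the mass $M$ and the energy $E_\gamma$ over the profiles and the remainder. Since $\bm{u}_{n,0}\in PW_\gamma^+$ every energy in sight is nonnegative (by Proposition \ref{prop2.6.0}), and the strict convexity argument used to prove Proposition \ref{prop2.4.0} (with the functions $F$ and $G_\omega$ there) then shows that each profile carries a mass--energy $\le \mathcal{ME}^c$. In particular every profile is subthreshold and lies in $PW_\gamma^+$, so by Corollary \ref{cor2.5} each associated nonlinear profile is a global solution.

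The heart of the matter is to build the nonlinear profile decomposition and run the stability argument. To each profile I attach a nonlinear profile according to its parameter regime: a fixed $H^1_c(\boG)$-solution when $t_n^j=y_n^j=0$; the solution given by the wave operator (Proposition \ref{WOp}) when $t_n^j\to\pm\infty$, $y_n^j=0$; and a solution assembled from line solutions of \eqref{NLSL} when $y_n^j\to\infty$, where Propositions \ref{Prop:NPD1}, \ref{Prop:NPD2} and \ref{Prop:NPD3} guarantee $L^a_tL^r_x$-approximation by the nonlinear $\boG$-flow of the corresponding piece. Writing $\bm{u}_n^{\mathrm{app},J}:=\sum_{j=1}^J \bU_n^{j}+e^{it\Dgg}\bow_n^J$ and combining Lemma \ref{lem4.3} with the smallness of the remainder from Proposition \ref{Prop:LPD}(iii), I would verify that $\bm{u}_n^{\mathrm{app},J}$ solves \eqref{NLS} up to an error small in $L^a_tL^r_x$. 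If \emph{every} nonlinear profile scattered, the Strichartz norms would be uniformly bounded in $J$ and $n$, and the long-time perturbation of Proposition \ref{LTP} would force $\|\bm{u}_n\|_{L^a(\mathbb{R};L^r)}<\infty$ for large $n$, contradicting the choice of $\bm{u}_n$. Hence some profile fails to scatter; by the definition of $\mathcal{ME}^c$ it must carry mass--energy $\ge \mathcal{ME}^c$, and since the total is $\mathcal{ME}^c$ this profile carries all of it, the others vanish, and $\bow_n^J\to 0$. I expect this paragraph to be the main obstacle: producing the three-regime nonlinear profile decomposition and controlling the error uniformly is precisely what Propositions \ref{Prop:NPD1}--\ref{Prop:NPD3} and Lemma \ref{lem4.3} are built to supply.

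It remains to pin down the surviving single profile. If its parameters satisfied $t_n^1\to\pm\infty$ (with $y_n^1=0$), Proposition \ref{WOp} would make it scatter; if $y_n^1\to\infty$, Propositions \ref{prop5.1} and \ref{Prop:NPD2} would reduce it to a subthreshold line solution, which scatters by the line theory (here the hypothesis $\mathcal{ME}^c<M^{\mathrm{line}}(Q)^{(1-s_c)/s_c}E_0^{\mathrm{line}}(Q)$ is exactly what prevents escape onto an edge, cf.\ \cite{IkeInu17}); in either case Proposition \ref{LTP} would again yield scattering of $\bm{u}_n$, a contradiction. Therefore $t_n^1=y_n^1=0$, so $\bm{u}_{n,0}\to\bm{u}_0^c$ strongly in $H^1_c(\boG)$. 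By continuity $M(\bm{u}_0^c)^{(1-s_c)/s_c}E_\gamma(\bm{u}_0^c)=\mathcal{ME}^c<M^{\mathrm{line}}(Q)^{(1-s_c)/s_c}E_0^{\mathrm{line}}(Q)$, whence $\bm{u}_0^c\in PW_\gamma^+$ and the solution $\bm{u}^c$ is global by Corollary \ref{Global well-posedness}. Finally $\bm{u}^c$ cannot scatter, since otherwise Proposition \ref{LTP} applied to the convergent data $\bm{u}_{n,0}\to\bm{u}_0^c$ would give $\|\bm{u}_n\|_{L^a(\mathbb{R};L^r)}<\infty$ for large $n$; thus $\|\bm{u}^c\|_{L^a(\mathbb{R};L^r)}=\infty$, completing the construction.
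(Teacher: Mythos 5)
Your overall strategy coincides with the paper's: extract a minimizing sequence in $PW_\gamma^+$, apply Proposition \ref{Prop:LPD}, decouple mass and energy over the profiles, attach nonlinear profiles in each parameter regime via Propositions \ref{Prop:NPD1}--\ref{Prop:NPD3}, and combine Lemma \ref{lem4.3} with Proposition \ref{LTP} to force a single surviving profile carrying the full mass--energy. Up to that point the argument is sound and is essentially the paper's proof.

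The gap is in your final paragraph, where you exclude the regime $y_n^1=0$, $t_n^1\to\pm\infty$ by asserting that ``Proposition \ref{WOp} would make it scatter.'' The wave operator only produces a solution that agrees with the free flow in \emph{one} time direction and gives no control of the Strichartz norm in the other. Concretely, if $t_n^1\to+\infty$, the associated nonlinear profile $\bU^1$ is the solution scattering \emph{backward} to $e^{it\Dgg}\bm{\psi}^1$, and the approximation $\bm{u}_n(t)\approx\bU^1(t-t_n^1)$ identifies $\|\bm{u}_n\|_{L^a((0,\infty);L^r)}$ with $\|\bU^1\|_{L^a((-t_n^1,\infty);L^r)}$, whose time window expands to all of $\R$; nothing forces this to be finite, and in this regime $\bU^1$ \emph{is} the critical element. (The opposite sign $t_n^1\to-\infty$ is genuinely excluded by your reasoning, since there the window recedes to $+\infty$ where Proposition \ref{WOp} does control the norm.) In the case $J^*\ge 2$ one can still conclude that such a profile lies in $L^a_t(\R;L^r_x)$, but only because its mass--energy is $\le\mathcal{ME}^c-\delta$ so the definition of $\mathcal{ME}^c$ applies --- an argument unavailable for the single surviving profile, whose mass--energy is exactly $\mathcal{ME}^c$. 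Consequently your claims that $t_n^1\equiv 0$ and that $\bm{u}_{n,0}\to\bm{u}_0^c$ strongly in $H^1_c(\boG)$ are unjustified, and both are load-bearing in your construction of $\bm{u}^c$ and in your final application of Proposition \ref{LTP}. The repair is the one the paper uses: allow $|t_n^1|\to\infty$, define $\bm{u}^c$ as the solution with \emph{final} data $\bm{\psi}^1$ given by the wave operator, and derive non-scattering from Proposition \ref{LTP} applied to $\bm{\varphi}_n-\bm{u}^c(t_n^1)\to 0$ rather than to strongly convergent initial data. With that modification the rest of your argument goes through.
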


The solution in the lemma is called a critical element. If the critical element satisfies $\|\bm{u}^c\|_{L^a((0,\infty): L^r)}=\infty$ (resp. $\|\bm{u}^c\|_{L^a((-\infty,0): L^r)}=\infty$), we call it forward (resp. backward) critical element.

\begin{proof}
Since we suppose that $\mathcal{ME}^{c} < M^{\mathrm{line}}(Q)^{(1-s_c)/s_c}E_{0}^{\mathrm{line}}(Q)$, we can take a sequence $\{\bm{\varphi}_{n}\} \subset PW_{\gamma}^{+}$ such that $M(\bm{\varphi}_{n})^{(1-s_c)/s_c}E_{\gamma}(\bm{\varphi}_{n}) \to \mathcal{ME}^{c}$ as $n \to \infty$ and $\|\bm{u}_{n}\|_{L^a((0,\infty); L^r)}=\infty$ for all $n \in \mathbb{N}$, where $\bm{u}_{n}$ is the global solution to \eqref{NLS} with the initial data $\bm{\varphi}_{n}$. 
Since $\{\bm{\varphi}_{n}\}$ is a bounded sequence in $H_{c}^{1}(\mathcal{G})$, applying the linear profile decomposition, we get the decomposition
\begin{align*}
	\bm{\varphi}_{n} = \sum_{j=1}^{J} \mathcal{T}_{t_{n}^{j},y_{n}^{j}} (\psi_{1,j}, \psi_{2,j}, \psi_{3,j}) + \bm{w}_{n}^{J}
\end{align*}
satisfying the properties in Proposition \ref{Prop:LPD}. We set $\bm{\psi}_{n}^{j}:=\mathcal{T}_{t_{n}^{j},y_{n}^{j}} (\psi_{1,j}, \psi_{2,j}, \psi_{3,j})$ for simplicity. 
By the orthogonality properties, for all $1\leq j' \leq J$, we have
\begin{align}
	\notag
	M( \bm{\varphi}_{n}) &= \sum_{j=1}^{J} M(\bm{\psi}_{n}^{j}) + M(\bm{w}_{n}^{J}) +o_{n}(1)
	\\ \label{eq:10}
	& \geq  M(\bm{\psi}_{n}^{j'}) + o_{n}(1), M(\bm{w}_{n}^{J})+ o_{n}(1), 
	\\ \notag
	\| \bm{\varphi}_{n}\|_{\dot{H}_{\gamma}^{1}}^{2}
	&=\sum_{j=1}^{J} \|\bm{\psi}_{n}^{j}\|_{\dot{H}_{\gamma}^{1}}^{2}
	+\|\bm{w}_{n}^{J}\|_{\dot{H}_{\gamma}^{1}}^{2}+ o_{n}(1)
	\\ \notag
	&\geq \|\bm{\psi}_{n}^{j'}\|_{\dot{H}_{\gamma}^{1}}^{2}+ o_{n}(1),
	\|\bm{w}_{n}^{J}\|_{\dot{H}_{\gamma}^{1}}^{2}+ o_{n}(1)
\end{align} 
Now, by $\bm{\varphi}_{n} \in PW_{\gamma}^{+}$ and Proposition \ref{prop2.6.0} and Corollary \ref{Global well-posedness}, we have $\|Q\|_{L^2(\mathbb{R})}^{1-s_c}\|\partial_xQ\|_{L^2(\mathbb{R})}^{s_c} -\delta
\geq \|\bm{\varphi}_{n}\|_{L^2(\mathcal{G})}^{1-s_c} \| \bm{\varphi}_{n}\|_{\dot{H}_{\gamma}^{1}(\mathcal{G})}^{s_{c}}$ for some constant $\delta>0$. Therefore, there exists $\widetilde{\delta}>0$ such that, for sufficiently large $n$, we have
\begin{align}
\label{eq:13}
	\|Q\|_{L^2(\mathbb{R})}^{1-s_c}\|\partial_xQ\|_{L^2(\mathbb{R})}^{s_c} -\widetilde{\delta}
	\geq \|\bm{\psi}_{n}^{j}\|_{L^2(\mathcal{G})}^{1-s_c} \| \bm{\psi}_{n}^{j}\|_{\dot{H}_{\gamma}^{1}(\mathcal{G})}^{s_{c}},\ 
	 \|\bm{w}_{n}^{J}\|_{L^2(\mathcal{G})}^{1-s_c} \|\bm{w}_{n}^{J}\|_{\dot{H}_{\gamma}^{1}(\mathcal{G})}^{s_{c}}, 
\end{align}
for all $1\leq j \leq J$. 
These inequalities give $E_{\gamma}(\bm{\psi}_{n}^{j}), E_{\gamma}(\bm{w}_{n}^{J}) \geq 0$ (see Remark \ref{rmk2.7}). 
Thus we get
\begin{align}
\notag
	E_{\gamma}( \bm{\varphi}_{n}) &= \sum_{j=1}^{J} E_{\gamma}(\bm{\psi}_{n}^{j}) + E_{\gamma}(\bm{w}_{n}^{J}) +o_{n}(1)
	\\ \label{eq:11}
	&\geq  E_{\gamma}(\bm{\psi}_{n}^{j'}) +o_{n}(1), E_{\gamma}(\bm{w}_{n}^{J}) +o_{n}(1)
\end{align}
for all $1\leq j'\leq J$. Since we assume $\mathcal{ME}^{c} < M^\text{line}(Q)^{(1-s_c)/s_c}E_0^\text{line}(Q)$, it holds from \eqref{eq:10}, \eqref{eq:11}, and $M^\text{line}(Q)^{\frac{1-s_{c}}{s_{c}}}E_0^\text{line}(Q)
> M(\bm{\varphi}_{n})^{\frac{1-s_{c}}{s_{c}}}E_\gamma(\bm{\varphi}_{n})$ that
\begin{align}
\label{eq:12}
	M^\text{line}(Q)^{\frac{1-s_{c}}{s_{c}}}E_0^\text{line}(Q)
	\geq M(\bm{\psi}_{n}^{j})^{\frac{1-s_{c}}{s_{c}}}E_\gamma(\bm{\psi}_{n}^{j}),\ 
	M(\bm{w}_{n}^{J})^{\frac{1-s_{c}}{s_{c}}}E_\gamma(\bm{w}_{n}^{J})
\end{align}
for all $1\leq j \leq J$ and sufficiently large $n$. Therefore, we get $\bm{\psi}_{n}^{j}, \bm{w}_{n}^{J} \in PW_{\gamma}^{+}$ for sufficiently large $n$ and all $ j\in \{1,2,\cdots,J\}$ by \eqref{eq:13} and \eqref{eq:12}. 

Let $J^{*} \in \{1,2,\cdots, \infty\}$ be the minimum number such that $\bm{w}_{n}^{J} =0$ for all $J > J^{*}$. 
We will show $J^{*}=1$ by contradiction. 

First of all, suppose that $J^{*}=0$. By Proposition \ref{Prop:LPD}, if $J^{*}=0$, then $\|e^{it \Delta_{\mathcal{G}}^{\gamma}} \bm{\varphi}_{n}\|_{L^a(\mathbb{R}; L^r)} \to 0$ as $n \to \infty$. Then, by Proposition \ref{LTP}, we get $\| \bm{u}_{n}\|_{L^a((0,\infty); L^r)} \lesssim 1$ for large $n \in \mathbb{N}$. This contradicts the definition of $\bm{u}_{n}$. 

Secondly, suppose that $J^{*} \geq 2$. Then there exists $\delta>0$ such that $M(\bm{\psi}_n^j)^{(1-s_c)/s_c}E_{\gamma}(\bm{\psi}_n^j) <\mathcal{ME}^{c} -\delta$ for all $1\leq j \leq J^{*}$. 
Reordering the profiles, we may choose $J_0, \cdots, J_4$ such that 
\begin{align*}
	J_{0}:=1 \leq j \leq J_1 
	&\Rightarrow y_n^j =0 \ (\forall n \in \mathbb{N})
	\text{ and } t_n^j =0\ (\forall n \in \mathbb{N}) ,
	\\
	J_1+1 \leq j \leq J_2 
	&\Rightarrow y_n^j =0 \ (\forall n \in \mathbb{N})
	\text{ and } |t_n^j|\to \infty \ (n \to \infty) ,
	\\
	J_2+1 \leq j \leq J_3 
	&\Rightarrow y_n^j \to \infty \ (n \to \infty)
	\text{ and } t_n^j =0\ (\forall n \in \mathbb{N}) ,
	\\
	J_3+1 \leq j \leq J_4 
	&\Rightarrow y_n^j \to \infty \ (n \to \infty)
	\text{ and } |t_n^j|\to \infty \ (n \to \infty) ,
\end{align*}
where we regard that there is no $j$ such that $a \leq j \leq b$ if $a>b$. 

We consider the case $J_{0} = 1 \leq j \leq J_{1}$. It is obvious from Proposition \ref{Prop:LPD} that $J_{1}=0$ or $1$. We may only consider the case $J_1=1$. In this case, $\bm{\psi}_{n}^1 = \mathcal{T}_{0,0} (\psi_{1,j}, \psi_{2,j}, \psi_{3,j})$ is independent of $n$ and so we write it by $\bm{\psi}^1$. We have $0<M(\bm{\psi}^{1})^{(1-s_c)/s_c}E_{\gamma}(\bm{\psi}^{1})  <\mathcal{ME}^c-\delta$ by $(t_n^j,y_n^j)=(0,0)$ and the assumption of $J^{*} \geq 2$. Hence, by the definition of $\mathcal{ME}^c$, we can find $\bm{U}^{1} \in C(\mathbb{R};H_{c}^1(\mathcal{G})) \cap L_t^{a}(\mathbb{R}; L_x^r(\mathcal{G}))$ such that 
\begin{align*}
	\bm{U}^{1}(t,x)= e^{it \Delta_{\mathcal{G}}^{\gamma} } \bm{\psi}^1 - i \int_{0}^{t} e^{i(t-s) \Delta_{\mathcal{G}}^{\gamma}} \bm{\mathcal{N}}(\bm{U}^{1})ds. 
\end{align*}
We write $\bm{U}_{n}^{1}=\bm{U}^{1}$ even though it does not depend on $n$. 

We consider the case of $J_1+1 \leq j \leq J_2$. In this case, we have $\bm{\psi}_{n}^{j} = \mathcal{T}_{t_{n}^{j},0} (\psi_{1,j}, \psi_{2,j}, \psi_{3,j}) = e^{-it_{n}^{j}\Delta_{\mathcal{G}}^{\gamma}} ( \Lambda_{1,0}  \psi_{1,j}+ \Lambda_{2,0} \psi_{2,j}+ \Lambda_{3,0}  \psi_{3,j})$. For simplicity, we write $\bm{\psi}^{j} = ( \Lambda_{1,0}  \psi_{1,j}+\Lambda_{2,0} \psi_{2,j}+\Lambda_{3,0}  \psi_{3,j})$. Now, we have $E_{\gamma}(\bm{\psi}_{n}^{j})+\frac{\omega}{2}M(\bm{\psi}_{n}^{j}) < \mathfrak{n}_{\omega,\gamma} -\delta$ for some $\delta>0$ by Corollary \ref{cor2.5}. By the dispersive estimate, we get $L_{\gamma}(\bm{\psi}^{j})+\frac{\omega}{2}M(\bm{\psi}^{j}) < \mathfrak{n}_{\omega,\gamma}$ since $|t_n^j| \to \infty$. 
If $j$ satisfies $t_n^j \to -\infty$, then we define a solution $\bm{U}^j$ to \eqref{NLS} that scatters to $\bm{\psi}^j$ as $t \to + \infty$ by Proposition \ref{WOp}. Then we have $\bm{U}^j \in C((0,\infty);H_{c}^1(\mathcal{G})) \cap L_t^{a}((0,\infty); L_x^r(\mathcal{G}))$. Since $\bm{\psi}^j \in PW_{\gamma}^{+}$, the solution $\bm{U}^j$ to \eqref{NLS} exists globally in both time directions. We will claim that $\bm{U}^j \in L_t^{a}(\mathbb{R}; L_x^r(\mathcal{G}))$. 
Since we have $\bm{\psi}^j \in PW_{\gamma}^{+}$, $\bm{\psi}^j$ satisfies that $M(\bm{U}^{j})^{(1-s_c)/s_c}E_{\gamma}(\bm{U}^{j})  = \lim_{n \to \infty} M(e^{it_{n}^{j}\Delta_{\mathcal{G}}^{\gamma}} \bm{\psi}^j)^{(1-s_c)/s_c}E_{\gamma}(e^{it_{n}^{j}\Delta_{\mathcal{G}}^{\gamma}} \bm{\psi}^j)  \leq \mathcal{ME}^c-\delta$. Therefore,  by the definition of $\mathcal{ME}^c$, we obtain $\bm{U}^j \in C(\mathbb{R};H_{c}^1(\mathcal{G})) \cap L_t^{a}(\mathbb{R}; L_x^r(\mathcal{G}))$. We set $\bm{U}_{n}^j (t,x):=\bm{U}^j (t-t_n^j,x)$, which obviously belongs to $L_t^{a}(\mathbb{R}; L_x^r(\mathcal{G}))$. If $j$ satisfies $t_n^j \to \infty$, then we can define $\bm{U}^j$ and  $\bm{U}_{n}^j (t,x):=\bm{U}^j (t-t_n^j,x)$ in the similar way.

We consider the case of $J_2+1 \leq j \leq J_3$. In this case, we have $\bm{\psi}_{n}^{j}=\mathcal{T}_{0,y_{n}^{j}}(\psi_{1,j},\psi_{2,j},\psi_{3,j})=\Lambda_{1,y_{n}^{j}} \psi_{1,j}+\Lambda_{2,y_{n}^{j}} \psi_{2,j}+\Lambda_{3,y_{n}^{j}} \psi_{3,j}$. For $k=1,2,3$, let $u_{k,j}$ be the solution to NLS on $\mathbb{R}$ with the initial data $\psi_{k,j}$, that is
\begin{align*}
	u_{k,j} = e^{it \partial_{xx} } \psi_{k,j} + i \int_{0}^{t} e^{i(t-s) \partial_{xx} } |u_{k,j}|^{p-1}u_{k,j} ds. 
\end{align*}
By the decoupling of profiles, i.e., Proposition \ref{prop5.1}, we find that $\psi_{k,j}$ belongs to $PW_{0}^{+}(\mathbb{R})$ which is the potential well set with the positive virial functional in the line case. By e.g. \cite{AkaNaw13}, we see that $u_{k,j}$ is global and $u_{k,j} \in  L_t^{a}(\mathbb{R}; L_x^r(\mathbb{R}))$. Therefore, by Proposition \ref{Prop:NPD2}, we have
\begin{align*}
	\bm{U}_{n}^{j} = e^{it\Delta_\mathcal{G}^{\gamma}} \bm{\psi}_{n}^{j} 
	- i \int_{0}^{t} e^{i(t-s)\Delta_\mathcal{G}^{\gamma}} \bm{\mathcal{N}}(\bm{U}_{n}^{j}) ds + \bm{R}_{n}
\end{align*}
with 
\begin{align*}
	\bm{U}_{n}^{j} =\mathcal{T}_{0,y_{n}^{j}}( u_{1,j}, u_{2,j}, u_{3,j} ) 
	\text{ and } 
	\|\bm{R}_{n}\|_{L_{t}^{a}L_{x}^{r}} \to 0 \text{ as } n \to \infty.
\end{align*}
Since $u_{k,j} \in  L_t^{a}(\mathbb{R}; L_x^r(\mathbb{R}))$ for $k=1,2,3$, we have $\bm{U}_{n}^{j} \in  L_t^{a}(\mathbb{R}; L_x^r(\mathcal{G}))$.

We consider the case of $J_3+1 \leq j \leq J_4$. In this case, we have 
\begin{align*}
	\bm{\psi}_{n}^{j} &= \mathcal{T}_{t_{n}^{j},y_{n}^{j}} (\psi_{1,j}, \psi_{2,j}, \psi_{3,j}) 
	\\
	&= e^{-it_{n}^{j}\Delta^{\gamma}_\boG} ( \Lambda_{1,y_{n}^{j}} \psi_{1,j}+ \Lambda_{2,y_{n}^{j}} \psi_{2,j}+ \Lambda_{3,y_{n}^{j}} \psi_{3,j})
	=:e^{-it_{n}^{j}\Delta^{\gamma}_\boG} \bm{\widetilde{\psi}}_{n}^{j}.
\end{align*}
Now, we have $E_{\gamma}(\bm{\psi}_{n}^{j})+\frac{\omega}{2}M(\bm{\psi}_{n}^{j}) < \mathfrak{n}_{\omega,\gamma} -\delta$ for some $\delta>0$ by Corollary \ref{cor2.5}. By the dispersive estimate, we get $L_{\gamma}(\bm{\widetilde{\psi}}_{n}^{j})+\frac{\omega}{2}M(\bm{\widetilde{\psi}}_{n}^{j}) < \mathfrak{n}_{\omega,\gamma}$ for large $n$ since $|t_n^j| \to \infty$. 
By Proposition \ref{prop5.1}, we have $\frac{1}{2} \|\partial_{x} \psi_{k,j}\|_{L^{2}}^{2} + \frac{\omega}{2} \| \psi_{k,j}\|_{L^{2}}^{2} < \mathfrak{n}_{\omega,\gamma}$ for $k=1,2,3$. Thus, if $j$ satisfies $t_n^j \to -\infty$, we can define the solution $u_{k,j}$ ($k=1,2,3$) to \eqref{NLSL} with $\|u_{k,j} - e^{it\partial_{xx}} \psi_{k,j}\|_{H^{1}(\mathbb{R})}\to 0$ as $t \to \infty$. Now, we have $M^{\mathrm{line}}(u_{k,j})^{\frac{1-s_{c}}{s_{c}}}E_0^\mathrm{line}(u_{k,j}) < M^\mathrm{line}(Q)^{\frac{1-s_{c}}{s_{c}}} E_0^\mathrm{line}(Q)$ and $\| u_{k,j}\|_{L^{2}(\mathbb{R})}^{1-s_c}\| \partial_{x}u_{k,j} \|_{L^{2}(\mathbb{R})}^{s_c} < \|Q\|_{L^{2}(\mathbb{R})}^{1-s_c}\|Q\|_{\dot{H}^{1}(\mathbb{R})}^{s_c}$. 
By e.g. \cite{AkaNaw13}, we see that $u_{k,j}$ is global and $u_{k,j} \in  L_t^{a}(\mathbb{R}; L_x^r(\mathbb{R}))$. Therefore, by Proposition \ref{Prop:NPD3}, we have
\begin{align*}
	\bm{V}_{n}^{j} = e^{it\Delta_{\mathcal{G}}^{\gamma}} \bm{\psi}_{n}^{j} 
	- i \int_{0}^{t} e^{i(t-s)\Delta_\mathcal{G}^{\gamma}} \bm{\mathcal{N}}(\bm{V}_{n}^{j}) ds + \bm{R}_{n}
\end{align*}
with 
\begin{align*}
	\bm{V}_{n}^{j}=\mathcal{T}_{0,y_{n}^{j}}( u_{1,j}, u_{2,j}, u_{3,j} )(t-t_{n}^{j})
	\text{ and } 
	\|\bm{R}_{n}\|_{L_{t}^{a}L_{x}^{r}} \to 0 \text{ as } n \to \infty.
\end{align*}
Since $u_{k,j} \in  L_t^{a}(\mathbb{R}; L_x^r(\mathbb{R}))$ for $k=1,2,3$, we have $\bm{V}_{n}^{j} \in  L_t^{a}(\mathbb{R}; L_x^r(\mathcal{G}))$. Define $\bm{U}_{n}^{j}(t,x):=\bm{V}_{n}^{j}(t,x)$. 
If $t_n^j \to \infty$, we can define $\bm{U}_{n}^{j}$ in the similar way. 
%

It is obvious that the profiles $\bm{U}_{n}^{j}$ can be written by $\mathcal{T}_{0,y_{n}^{j}}(u_{1}^{j},u_{2}^{j},u_{3}^{j})(t-t_{n}^{j})$ for some $u_{k}^{j} \in L_t^{a}(\mathbb{R}; L_x^r(\mathbb{R}))$. 
We define nonlinear profile by 
\begin{align*}
	\bm{Z}_{n}^{J} := \sum_{j=1}^{J} \bm{U}_{n}^{j}.
\end{align*}
By propositions for profiles, we obtain
\begin{align*}
	\bm{Z}_{n}^{J} = e^{it\Delta_{\mathcal{G}}^{\gamma}} (\bm{\varphi}_{n}- \bm{w}_{n}^{J}) - i\bm{z}_{n}^{J} + \bm{r}_{n}^{J},
\end{align*}
where $\|\bm{r}_{n}^{J}\|_{L_{t}^{a}L_{x}^{r}} \to 0$ as $n \to \infty$ and 
\begin{align*}
	\bm{z}_{n}^{J}=\sum_{j=1}^{J} \int_{0}^{t} e^{i(t-s)\Delta_{\mathcal{G}}^{\gamma}} \bm{\mathcal{N}}(\bm{U}_{n}^{j}) ds.
\end{align*}
We show
\begin{align*}
	\left\| \bm{z}_{n}^{J} - \int_{0}^{t} e^{i(t-s)\Delta_{\mathcal{G}}^{\gamma}}\mathcal{N}(\bm{Z}_{n}^{J}) ds \right\|_{L_{t}^{a}L_{x}^{r}} \to 0 
	\text{ as } n \to \infty. 
\end{align*}
By \eqref{Est:InnLq} and the Strichartz estimate, one can easily obtain
\begin{align*}
	\left\| \bm{z}_{n}^{J} - \int_{0}^{t} e^{i(t-s)\Delta_{\mathcal{G}}^{\gamma}} \bm{\mathcal{N}}(\bm{Z}_{n}^{J}) ds \right\|_{L_{t}^{a}L_{x}^{r}}
	&\lesssim 
	\left\| \bm{\mathcal{N}}\left(\sum_{j=1}^{J} \bm{U}_{n}^{j}\right) - \sum_{j=1}^{J} \bm{\mathcal{N}}(\bm{U}_{n}^{j}) \right\|_{L_{t}^{b'}L_{x}^{r'}}
	\\
	&\leq C_{J} \sum_{j\neq j'} \left\| |\bm{U}_{n}^{j}|^{p-1}|\bm{U}_{n}^{j'}| \right\|_{L_{t}^{b'}L_{x}^{r'}}.
\end{align*}
By Lemma \ref{lem4.4}, the right hand side goes to $0$ as $n\to \infty$. 
Thus we obtain
\begin{align*}
	\bm{Z}_{n}^{J} = e^{it\Delta^{\gamma}_\boG} (\bm{\varphi}_{n}- \bm{w}_{n}^{J}) 
	- i\int_{0}^{t} e^{i(t-s)\Delta^{\gamma}_\boG} \bm{\mathcal{N}}(\bm{Z}_{n}^{J}) ds 
	+ \bm{s}_{n}^{J},
\end{align*}
where $\|\bm{s}_{n}^{J}\|_{L_{t}^{a}L_{x}^{r}} \to 0$ as $n \to \infty$. 
We also have a bound of $\sup_{J} \limsup_{n \to \infty} \|\bm{Z}_{n}^{J}\|_{L_{t}^{a}L_{x}^{r}}$. Indeed, Lemma \ref{lem4.3} implies 
\begin{align*}
	\limsup_{n \to \infty} \|\bm{Z}_{n}^{J}\|_{L_{t}^{a}L_{x}^{r}}^{p}
	\lesssim \sum_{j=1}^{J} \sum_{k=1}^{3} \|u_{k}^{j}\|_{L_{t}^{a}L_{x}^{r}}^{p}.
\end{align*}

For simplicity, we set $a^j: = C\sum_{k=1}^{3} \|u_{k}^{j}\|_{L_{t}^{a}L_{x}^{r}}^{p}$. 
There exists a finite set $\mathcal{J}$ such that $\|\bm{\psi}^j\|_{H^1}< \varepsilon_0$ for any $j \not\in \mathcal{J}$, where $\varepsilon_0$ is the universal constant in the small data scattering result, Proposition \ref{Small data scattering}. By Proposition \ref{Small data scattering} and the pythagorean decompositions of $\dot{H}_{\gamma}^{1}$-norm and $L^2$-norm, we have
\begin{align*}
\limsup_{n\to \infty} \|\bm{Z}_{n}^{J}\|_{L_{t}^{a}L_{x}^{r}}^{p} 
& \leq  \sum_{j=1}^{J} a^j
\leq \sum_{j\in \mathcal{J}} a^j + \sum_{j \not\in \mathcal{J}}  a^j 
\\
& \lesssim  \sum_{j\in \mathcal{J}}  a^j 
+ \sum_{j \not\in \mathcal{J}}  
\|\bm{\psi}^j \|_{\dot{H}_{\gamma}^{1}}^{2}
\\
& \lesssim  \sum_{j\in \mathcal{J}}  a^j + \limsup_{n\to \infty}  \| \bm{\varphi}_n\|_{\dot{H}_{\gamma}^{1}}^{2}
\\
& \lesssim  \sum_{j\in \mathcal{J}}  a^j + C
\\
&\leq A,
\end{align*}
where $A$ is independent of $J$.

By Proposition \ref{Prop:LPD}, we can choose $J$ large enough in such a way that $\limsup_{n\to \infty} \|e^{it\Delta_{\mathcal{G}}^{\gamma}} \bm{w}_n^J\|_{L_t^a L_x^r} < \varepsilon$, where $\varepsilon =\varepsilon(A)>0$ is the number in Proposition \ref{LTP}. Then, we get $\|\bm{u}_n\|_{L_t^a L_x^r}<\infty$ for large $n$, and this contradicts $\|\bm{u}_n\|_{L_t^a L_x^r}=\infty$. 

Therefore, we obtain $J^{*}=1$. Namely we have
\begin{align*}
	\bm{\varphi}_{n} = \mathcal{T}_{t_{n}^{1},y_{n}^{1}} (\psi_{1,1}, \psi_{2,1}, \psi_{3,1}) + \bm{w}_{n}^{1}.
\end{align*}
We also have $y_{n}^{1}\equiv 0$. Indeed, if $y_{n}^{1} \to \infty$, then the  above argument implies a contradiction to non-scattering assumption, where we use the result on $\mathbb{R}$ by e.g. \cite{AkaNaw13}. 

Let $\bm{u}^c$ be a global solution of \eqref{NLS} with the initial data $\bm{\psi}^1$ if $t_{n}^{1}=0$ or the final data $\bm{\psi}^1$ if $|t_{n}^{1}| \to \infty$. 
We show $\| \bm{u}^c\|_{L^a(\mathbb{R};L^r)} = \infty$. To prove it, we suppose that $\|\bm{u}^c\|_{L^{a}(\mathbb{R};L^r)} < \infty$.
It follows that $\bm{\varphi}_n - \bm{u}^c(t_n^1)= e^{it_n^1 \Delta_{\mathcal{G}}^{\gamma}} \bm{\psi}^1 - \bm{u}^c(t_n^1)+\bm{w}_n^1$ and thus we have
\begin{align*}
	\|e^{it\Delta_{\mathcal{G}}^{\gamma}} \left( \bm{\varphi}_n - \bm{u}^c(t_n^1) \right)\|_{L^{a}(\mathbb{R};L^r)} \to 0 \text{ as } n \to \infty. 
\end{align*}
By Proposition \ref{LTP}, it holds that $\bm{u}_n \in L^a(\mathbb{R};L^r(\mathbb{R}))$ for sufficiently large $n$. This is a contradiction. Thus, we get $\|\bm{u}^c\|_{L^a(\mathbb{R};L^r)} = \infty$. Moreover, we have $M(\bm{u}^{c})^{(1-s_c)/s_c}E_{\gamma}(\bm{u}^{c}) =\mathcal{ME}^{c}$. Thus, we get a critical element $\bm{u}^c$.
\end{proof}

For a forward critical element, we have the following compactness property. We omit the case of backward critical element since they can be treated similarly. 

\begin{lemma}[Compactness of the critical element]
Let $\bm{u}$ be a forward critical element.
Then $\{\bm{u}(t):t\in [0,\infty) \}$ is precompact in $H_{c}^1(\mathcal{G})$.
\end{lemma}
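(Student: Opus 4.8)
The plan is to run the Kenig--Merle compactness scheme, reusing almost verbatim the profile-decomposition bookkeeping already set up for the construction of the critical element. Fix a forward critical element $\bm{u}$; recall it is global and lies in $PW_\gamma^+$ for all $t$ (Corollary \ref{Global well-posedness}), has $M(\bm{u})^{(1-s_c)/s_c}E_\gamma(\bm{u})=\mathcal{ME}^{c}$, and satisfies $\|\bm{u}\|_{L^a((0,\infty);L^r)}=\infty$. To prove precompactness of $\{\bm{u}(t):t\ge 0\}$ it suffices to extract, from an arbitrary sequence $\{t_n\}\subset[0,\infty)$, a subsequence along which $\bm{u}(t_n)$ converges in $H^1_c(\mathcal{G})$. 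If $\{t_n\}$ is bounded this follows at once from continuity of the flow, so the only real case is $t_n\to\infty$. There I would set $\bm{v}_n(\cdot):=\bm{u}(\cdot+t_n)$, a solution to \eqref{NLS} with datum $\bm{u}(t_n)$ for which $\|\bm{v}_n\|_{L^a((0,\infty);L^r)}=\|\bm{u}\|_{L^a((t_n,\infty);L^r)}=\infty$ (infinite because the norm on the compact piece $[0,t_n]$ is finite).

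Since $\{\bm{u}(t_n)\}$ is $H^1_c$-bounded by coercivity (Lemma \ref{Coercivity}), Proposition \ref{Prop:LPD} applies, and I would rerun the minimality argument of the preceding lemma: the Pythagorean identities for $M$ and $\|\cdot\|_{\dot H^1_\gamma}$ together with Proposition \ref{prop2.6.0} place every profile and every remainder in $PW_\gamma^+$ with mass-energy at most $\mathcal{ME}^{c}$. If two or more profiles survived, or the remainder failed to vanish, each nonlinear profile would carry mass-energy strictly below $\mathcal{ME}^{c}$, hence scatter (by definition of $\mathcal{ME}^{c}$ for stationary profiles, by the line theory below the line threshold for running profiles through Propositions \ref{Prop:NPD2}--\ref{Prop:NPD3} and \ref{prop5.1}, and by the wave operator through Propositions \ref{WOp} and \ref{Prop:NPD1} for time-translated ones); then Proposition \ref{LTP} with Lemmas \ref{lem4.3}--\ref{lem4.4} would bound $\|\bm{v}_n\|_{L^a((0,\infty);L^r)}$, a contradiction. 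This forces a single profile with vanishing remainder, so that, along a subsequence, $\bm{u}(t_n)=\mathcal{T}_{t_n^1,y_n^1}(\psi_1^1,\psi_2^1,\psi_3^1)+\bm{w}_n^1$ with $\bm{w}_n^1\to 0$ in $H^1_c(\mathcal{G})$.

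It then remains to trivialize the parameters. The spatial escape $y_n^1\to\infty$ is excluded exactly as in the construction of the critical element: by Proposition \ref{prop5.1} the profile decouples into three line solutions, each of mass-energy at most $\mathcal{ME}^{c}$, hence below the line threshold and scattering, so $\bm{v}_n$ would scatter. Thus $y_n^1=0$. For the time parameter I would use the dispersive decay (Theorem \ref{Dispersive estimate}). If $t_n^1\to-\infty$, the forward linear evolution $e^{it\Delta_{\mathcal{G}}^{\gamma}}\bm{u}(t_n)$ has $L^a((0,\infty);L^r)$-norm tending to $0$, so small-data theory (Corollary \ref{Small data scattering}) makes $\bm{v}_n$ scatter forward, contradicting $\|\bm{v}_n\|_{L^a((0,\infty);L^r)}=\infty$.

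The genuinely delicate case, which I expect to be the main obstacle, is $t_n^1\to+\infty$: here the surviving profile carries the full critical mass-energy, so no scattering information is available for it, and only the backward linear norm is small, which by itself yields merely backward scattering of $\bm{v}_n$ --- not a contradiction. The way around this is to propagate all the way back to the fixed reference time $0$. Small-data control on $(-\infty,0]$ gives
\[
\bm{u}(0)=\bm{v}_n(-t_n)=e^{-i(t_n+t_n^1)\Delta_{\mathcal{G}}^{\gamma}}\,\mathcal{T}_{0,0}(\psi_1^1,\psi_2^1,\psi_3^1)+o_n(1)\quad\text{in }H^1_c(\mathcal{G}),
\]
and since $t_n+t_n^1\to+\infty$ while $\Delta_{\mathcal{G}}^{\gamma}$ has purely continuous spectrum, the leading term converges weakly to $0$; as the left-hand side is the fixed vector $\bm{u}(0)$, the strong $H^1$ limit forces $\bm{u}(0)=0$, contradicting $\bm{u}\not\equiv 0$. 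Hence $t_n^1=0$, and together with $y_n^1=0$ this yields $\bm{u}(t_n)=\mathcal{T}_{0,0}(\psi_1^1,\psi_2^1,\psi_3^1)+\bm{w}_n^1\to\mathcal{T}_{0,0}(\psi_1^1,\psi_2^1,\psi_3^1)$ strongly in $H^1_c(\mathcal{G})$, the desired convergent subsequence.
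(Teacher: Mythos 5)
The paper does not actually prove this lemma --- it defers to \cite{HoRo08,DHR08} --- and your argument is a correct implementation of precisely the standard Kenig--Merle precompactness scheme those references carry out, reusing the profile bookkeeping from the existence lemma (including the decoupling/coercivity step that upgrades vanishing of the remainder's mass and energy to vanishing in $H^1$). Your handling of the delicate case $t_n^1\to+\infty$ via weak convergence of $e^{-i(t_n+t_n^1)\Delta_{\mathcal{G}}^{\gamma}}\bm{\psi}^1$ to $0$ is sound; note that an even shorter contradiction is available there, since backward small-data control gives $\|\bm{u}\|_{L^a((-\infty,t_n);L^r)}=\|\bm{v}_n\|_{L^a((-\infty,0);L^r)}\lesssim\varepsilon$ uniformly in $n$, whereas $\|\bm{u}\|_{L^a((0,t_n);L^r)}\to\|\bm{u}\|_{L^a((0,\infty);L^r)}=\infty$.
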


The compactness can be rewritten as follows. 

\begin{lemma}
\label{lem6.3}
Let $\bm{u}$ be a solution to \eqref{NLS} satisfying that $\{\bm{u}(t):t\in [0,\infty) \}$ is precompact in $H_{c}^1(\mathcal{G})$. Then, for any $\varepsilon>0$, there exists $R=R(\varepsilon)>0$ such that
\begin{align}
\label{eq:cpt}
	\sum_{k=1}^{3} \int_{x>R}  |\partial_{x} u_{k}(t,x)|^2  + |u_{k}(t,x)|^2 + |u_{k}(t,x)|^{p+1} dx <\varepsilon
	\text{ for any } t\in [0,\infty).
\end{align}
\end{lemma}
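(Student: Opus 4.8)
The plan is to deduce the uniform tail estimate from precompactness of the orbit by a soft compactness argument, reducing everything to the elementary fact that the tail of a single $H_c^1(\mathcal{G})$ function is small at large radii. Introduce the tail functional
$$
\rho_R(\bm{f}) := \sum_{k=1}^{3}\int_{x>R}\bigl(|\partial_x f_k|^2 + |f_k|^2 + |f_k|^{p+1}\bigr)\,dx, \qquad \bm{f}\in H_c^1(\mathcal{G}),\ R>0,
$$
so that \eqref{eq:cpt} amounts to making $\sup_{t\ge 0}\rho_R(\bm{u}(t))$ smaller than $\varepsilon$ for $R$ large. Two structural features of $\rho_R$ will be used repeatedly. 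First, since the integrand is nonnegative, $\rho_R(\bm f)$ is nonincreasing in $R$ and $\rho_R(\bm f)\to 0$ as $R\to\infty$ for each fixed $\bm f\in H_c^1(\mathcal{G})$. Second, $\rho_R$ is continuous on $H_c^1(\mathcal{G})$, uniformly in $R$: the continuity of the quadratic part follows from the triangle inequality for the $H^1$-norm restricted to $\{x>R\}$, while for the $L^{p+1}$ part I would use the one-dimensional Sobolev embedding $H^1(0,\infty)\hookrightarrow L^{p+1}(0,\infty)$, which gives $\|\bm f-\bm g\|_{L^{p+1}(\mathcal{G})}\lesssim \|\bm f-\bm g\|_{H^1(\mathcal{G})}$ and hence $\bigl|\,\|\bm f\|_{L^{p+1}(x>R)}-\|\bm g\|_{L^{p+1}(x>R)}\,\bigr|\lesssim \|\bm f-\bm g\|_{H^1(\mathcal{G})}$.

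With these in hand I would argue by contradiction. Suppose the conclusion fails: there is $\varepsilon_0>0$ such that for every $R>0$ one can find $t_R\ge 0$ with $\rho_R(\bm u(t_R))\ge \varepsilon_0$. Taking $R=n\in\N$ produces times $t_n$ with $\rho_n(\bm u(t_n))\ge \varepsilon_0$. By the assumed precompactness of $\{\bm u(t):t\ge 0\}$ in $H_c^1(\mathcal{G})$, after passing to a subsequence we have $\bm u(t_n)\to \bm v$ in $H_c^1(\mathcal{G})$ for some $\bm v$. Now fix $m\in\N$. For all $n\ge m$ the monotonicity of $\rho$ in the radius gives $\rho_m(\bm u(t_n))\ge \rho_n(\bm u(t_n))\ge \varepsilon_0$, and letting $n\to\infty$ together with the uniform continuity of $\rho_m$ yields $\rho_m(\bm v)\ge \varepsilon_0$. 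Since $m$ was arbitrary this contradicts $\rho_m(\bm v)\to 0$ as $m\to\infty$, which holds because $\bm v\in H_c^1(\mathcal{G})$. This contradiction proves the uniform tail bound, and since $\varepsilon>0$ is arbitrary the lemma follows.

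The argument is essentially soft, so I do not expect a genuine obstacle; the only point requiring care is the treatment of the $L^{p+1}$ tail, which is not a Hilbert-space quantity and must be controlled through the Sobolev embedding rather than by orthogonality or the $H^1$ inner product. Everything else — the monotonicity in $R$, the extraction of a convergent subsequence from the precompact orbit, and the passage to the limit — is routine. If one prefers a constructive statement producing an explicit $R=R(\varepsilon)$, the same ingredients can be repackaged as a finite $\delta$-net argument: cover the precompact set $\overline{\{\bm u(t):t\ge0\}}$ by finitely many $H^1$-balls centered at $\bm f_1,\dots,\bm f_m$, choose $R_i$ with $\rho_{R_i}(\bm f_i)$ small, set $R=\max_i R_i$, and use the uniform continuity of $\rho_R$ to transfer the smallness from each $\bm f_i$ to every $\bm u(t)$ lying within $\delta$ of it.
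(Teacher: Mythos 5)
Your argument is correct: the monotone tail functional $\rho_R$, the extraction of a convergent subsequence from the precompact orbit, and the passage to the limit for each fixed $m$ constitute the standard proof of uniform spatial localization for precompact orbits (the only point needing a word is that continuity of the $L^{p+1}$ part of $\rho_R$ in the $(p+1)$-st power, not just the norm, uses boundedness of the orbit, which precompactness supplies). The paper itself gives no proof of this lemma, stating only that it is straightforward and citing \cite{HoRo08,DHR08}, and your argument is precisely the one found there.
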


These lemmas can be obtained  straightforwardly. See e.g. \cite{HoRo08,DHR08}.

\subsection{Extinction of the critical element}

We derive a contradiction by compactness of the critical element and the localized virial identity. 

\begin{lemma}[Rigidity]
\label{rigidity}
If $\bm{u}_{0} \in PW_{\gamma}^{+}$ and the corresponding solution $\bm{u}$ satisfies \eqref{eq:cpt}, then $\bm{u} = 0$.
\end{lemma}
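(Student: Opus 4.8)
The plan is to argue by contradiction. Suppose $\bm{u}_0 \neq 0$; I will use the localized virial identity of Lemma \ref{Generalized virial identities} together with the coercivity of $K_\gamma$ on $PW_\gamma^{+}$ to force the virial derivative $V'(t)$ to grow linearly, contradicting its boundedness. Two ingredients are available from the hypotheses: precompactness of $\{\bm{u}(t):t\geq 0\}$ in $H_c^1(\mathcal{G})$ gives a uniform bound $\sup_{t\geq 0}\|\bm{u}(t)\|_{H^1(\mathcal{G})}=:B<\infty$; and since $\bm{u}_0\in PW_\gamma^{+}$ with $\bm{u}_0\neq 0$, Corollary \ref{Global well-posedness} furnishes $\delta>0$ with $K_\gamma(\bm{u}(t))\geq\delta$ for all $t\geq 0$.

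Fix $R>0$ and set $V(t):=\int_\mathcal{G}\mathscr{X}_R(x)|\bm{u}(t,x)|^2\,dx$. First I note that $V'$ is bounded uniformly in $t$: since $\mathscr{X}_R'(x)=R\,\mathscr{X}'(x/R)$ obeys $\|\mathscr{X}_R'\|_{L^\infty}\leq CR$, Cauchy--Schwarz gives $|V'(t)|\leq CR\,\|\bm{u}(t)\|_{L^2(\mathcal{G})}\|\partial_x\bm{u}(t)\|_{L^2(\mathcal{G})}\leq CRB^2$ for all $t$. Next I compare $V''(t)$ with $4K_\gamma(\bm{u}(t))$. Because $\mathscr{X}_R''(x)=\mathscr{X}''(x/R)$ equals $2$ on $[0,R]$ with $\mathscr{X}_R''(0)=2$, while $\mathscr{X}_R^{(4)}(x)=R^{-2}\mathscr{X}^{(4)}(x/R)$ is supported on $[R,3R]$, the vertex term $6\gamma\mathscr{X}_R''(0)|u_1(0)|^2=12\gamma|u_1(0)|^2$ matches exactly the $4\cdot 3\gamma|u_1(0)|^2$ appearing in $4K_\gamma$ (recall $N=3$) and cancels, leaving
\begin{align*}
V''(t)-4K_\gamma(\bm{u}(t))
&= 4\int_\mathcal{G}\bigl(\mathscr{X}_R''(x)-2\bigr)|\partial_x\bm{u}|^2\,dx - \int_\mathcal{G}\mathscr{X}_R^{(4)}(x)|\bm{u}|^2\,dx \\
&\quad - \frac{2(p-1)}{p+1}\int_\mathcal{G}\bigl(\mathscr{X}_R''(x)-2\bigr)|\bm{u}|^{p+1}\,dx.
\end{align*}
Every integrand is supported in $\{x>R\}$, and using $|\mathscr{X}_R''-2|\leq C$ together with $|\mathscr{X}_R^{(4)}|\leq C$ I bound the right-hand side by $C\sum_{k=1}^3\int_{x>R}\bigl(|\partial_x u_k|^2+|u_k|^2+|u_k|^{p+1}\bigr)\,dx$.

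Now the compactness property \eqref{eq:cpt} renders this tail uniformly small in $t$: given $\varepsilon>0$ there is $R=R(\varepsilon)$ with $|V''(t)-4K_\gamma(\bm{u}(t))|\leq C\varepsilon$ for all $t\geq 0$. Choosing $\varepsilon$ so that $C\varepsilon\leq 2\delta$ and invoking $K_\gamma(\bm{u}(t))\geq\delta$ gives $V''(t)\geq 4\delta-2\delta=2\delta>0$ for all $t\geq 0$. Integrating over $[0,t]$ yields $V'(t)\geq V'(0)+2\delta t\to\infty$ as $t\to\infty$, which contradicts the uniform bound $|V'(t)|\leq CRB^2$. Hence $\bm{u}_0=0$ and $\bm{u}\equiv 0$. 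The only genuinely delicate step is the uniform-in-time control of the error terms in the virial identity; this is precisely what the compactness \eqref{eq:cpt} provides, so once it is in hand the argument reduces to the standard Kenig--Merle rigidity, with the vertex boundary term being harmless because it reproduces exactly the $\gamma$-contribution of $K_\gamma$.
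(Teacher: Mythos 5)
Your proof is correct and follows essentially the same route as the paper: the localized virial identity, the tail estimate from \eqref{eq:cpt}, and the uniform lower bound $K_\gamma(\bm{u}(t))\geq\delta$ from Corollary \ref{Global well-posedness} combine to give $V''(t)\gtrsim\delta$, which is then contradicted by a uniform bound on the virial quantity. The only (immaterial) difference is that you derive the contradiction from the boundedness of $V'(t)$ via Cauchy--Schwarz, whereas the paper uses the boundedness of $V(t)$ itself via mass conservation and $\mathscr{X}_R\leq CR^2$.
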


The critical element must be zero by the above lemma. However, it contradicts that the mass-energy of the critical element is not zero. Therefore, the assumption $\mathcal{ME}^{c} < M^{\mathrm{line}}(Q)^{(1-s_{c})/s_{c}}E_{0}^{\mathrm{line}}(Q)$ is false and thus we get the statement.

\begin{proof}[Proof of Lemma \ref{rigidity}]
Using Proposition \ref{Generalized virial identities}, we have
\begin{align*}
	V''(t)= 4K_\gamma(\bm{u}(t)) + \mathcal{R}_1 + \mathcal{R}_2 + \mathcal{R}_3,
\end{align*}
where $\mathcal{R}_j=\mathcal{R}_j(t)$ $(j=1,2,3)$ are defined as
\begin{align*}
	\mathcal{R}_1
		&:= 4\int_\mathcal{G} \left\{\mathscr{X}''\left(\frac{x}{R}\right)-2\right\}|\partial_x \bm{u}(t,x)|^2dx, \\
	\mathcal{R}_2
		&:= -\frac{2(p-1)}{p+1}\int_\mathcal{G} \left\{\mathscr{X}''\left(\frac{x}{R}\right)-2 \right\}|\bm{u}(t,x)|^{p+1}dx,   \\
	\mathcal{R}_3
		&:= -\int_\mathcal{G} \left\{\frac{1}{R^2}\mathscr{X}^{(4)}\left(\frac{x}{R}\right)\right\}|\bm{u}(t,x)|^2dx.
\end{align*}
Let $\varepsilon>0$. By Lemma \ref{lem6.3}, we have
\begin{align*}
	|\mathcal{R}_1|+|\mathcal{R}_2|+|\mathcal{R}_3| 
	\lesssim \sum_{k=1}^{3} \int_{x>R}  |\partial_{x} u_{k}(t,x)|^2  + |u_{k}(t,x)|^2 + |u_{k}(t,x)|^{p+1} dx <\varepsilon
\end{align*}
for large $R>0$. 
Since the critical element belongs to $PW_{\gamma}^{+}$, there exists $\delta>0$ independent of $t$ such that $K_{\gamma}(\bm{u}(t))>\delta$ by Corollary \ref{Global well-posedness}. Therefore, by taking $\varepsilon>0$ sufficiently small, i.e., $R>0$ sufficiently large, then we get $V''(t) \gtrsim \delta$ for any $t \in (0,\infty)$. On the other hand, the mass conservation law shows us $V(t) \leq R^2 \|\bm{u}(t)\|_{L^2}^2<C$, where $C$ is independent of $t$, for any $t\in (0,\infty)$. Therefore, we reach a contradiction. 
\end{proof}

\appendix

\section{Symmetries on star graphs}
\label{appA}

Let us consider finite group $G$ acting on $H^1_c(\boG)$. 
(More specifically $G$ is a finite subgroup of the set $\boB (H^1_c(\boG))$ of bounded linear operators on $H^1_c(\boG)$.) 
We assume that $G$ satisfies the following conditions:
$$
\si\Dgg\bou = \Dgg \si\bou\qquad (\forall \bou\in \boD(\Dgg),\ \si\in G),
$$
$$
\si\boN_{\mu}(\bou) = \boN_{\mu} (\si\bou)\qquad (\forall 
\bou\in H^1_c(\boG)\ \ \si\in G).
$$
We recall $\boN_{\mu} (\bou) = \mu |\bou|^{p-1}\bou$. 
Then the following holds:
\begin{lem}
If $\bou$ is a solution to NLS:
\begin{equation}\label{e1}
i\rd_t \bou + \Dgg \bou = \boN_{\mu}(\bou),
\end{equation}
then so is $\si \bou$ for all $\si\in G$.
\end{lem}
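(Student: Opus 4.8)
The plan is to pass to the integral (Duhamel) formulation of \eqref{e1} and apply the operator $\si$ to it. Recall that a solution $\bou\in C((T_{\min},T_{\max});H^1_c(\boG))$ to \eqref{e1} is characterized as a mild solution, i.e. it satisfies
\begin{equation*}
\bou(t) = e^{it\Dgg}\bou(0) - i\int_0^t e^{i(t-s)\Dgg}\boN_{\mu}(\bou(s))\,ds.
\end{equation*}
Since $\si$ is a bounded linear operator that does not depend on $t$, the strategy is: if I can move $\si$ through the propagator $e^{it\Dgg}$ and inside the Duhamel integral, then the two standing hypotheses on $G$ will let me recognize $\si\bou$ as the mild solution with initial datum $\si\bou(0)$.

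First I would establish the commutation
\begin{equation*}
\si\, e^{it\Dgg} = e^{it\Dgg}\,\si \qquad (t\in\R),
\end{equation*}
following the Yosida-approximation argument of Proposition \ref{Prop:Comm:Sch}. Indeed, the hypothesis $\si\Dgg\bou = \Dgg\si\bou$ on $\boD(\Dgg)$ yields $\si (I-i\la\Dgg)^{-1} = (I-i\la\Dgg)^{-1}\si$ for $\la>0$; writing $J_{\boG,\la}:=(I-i\la\Dgg)^{-1}$, this shows that $\si$ commutes with each Yosida approximant $e^{t(J_{\boG,\la}-I)/\la}$, and letting $\la\to 0$ together with the boundedness of $\si$ gives the claimed commutation on all of $L^2(\boG)$, in particular on $H^1_c(\boG)$.

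With this in hand I would apply $\si$ to the Duhamel formula: linearity and continuity of $\si$ allow it to be moved inside the time integral, and the commutation with the propagator gives
\begin{equation*}
\si\bou(t) = e^{it\Dgg}\si\bou(0) - i\int_0^t e^{i(t-s)\Dgg}\,\si\boN_{\mu}(\bou(s))\,ds.
\end{equation*}
The equivariance hypothesis $\si\boN_{\mu}(\bou) = \boN_{\mu}(\si\bou)$ turns the integrand into $\boN_{\mu}(\si\bou(s))$, so that $\si\bou$ satisfies exactly the Duhamel equation with data $\si\bou(0)$. Since $\si$ is bounded on $H^1_c(\boG)$, $\si\bou$ also lies in $C((T_{\min},T_{\max});H^1_c(\boG))$, and by the uniqueness part of the local well-posedness theory it is the solution to \eqref{e1} with initial datum $\si\bou(0)$, which is the assertion.

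The only genuinely technical point is the propagator commutation $\si e^{it\Dgg} = e^{it\Dgg}\si$; everything else is a formal manipulation of the Duhamel formula. Since this same commutation is proved for the maps $\mathcal{Q}_k$ in Proposition \ref{Prop:Comm:Sch} by exactly this Yosida scheme, the main obstacle is already dispatched there and I would simply mirror that argument. One should additionally note that $\si$ maps $H^1_c(\boG)$ into itself and preserves $\boD(\Dgg)$ where needed, but this is built into the standing assumption that $G$ acts by bounded operators commuting with $\Dgg$.
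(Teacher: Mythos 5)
Your argument is correct: the paper states this lemma without any proof (treating it as immediate from the two standing hypotheses on $G$), and the Duhamel-plus-commutation argument you give is exactly the natural justification, with the propagator commutation $\si e^{it\Dgg}=e^{it\Dgg}\si$ obtained by the same Yosida scheme the paper uses for $\mathcal{Q}_k$ in Proposition \ref{Prop:Comm:Sch}. The only point worth making explicit is that passing to the limit $\la\to 0$ through $\si$ requires $\si$ to be continuous in the topology in which the Yosida approximants converge (the paper only assumes $\si\in\boB(H^1_c(\boG))$, while the convergence is stated in $L^2(\boG)$); this is harmless here since every $\si$ in the paper's examples is a permutation composed with a unimodular constant and hence bounded on $L^2(\boG)$, but it deserves a remark.
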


We then define 
$$
\frak{G} := \{ \bou\in H^1_c(\boG)\ :\ \si \bou = \bou \quad (\forall \si\in G) \}.
$$
Note that $\frak{G}$ is a $G$-invariant closed subspace of $H^1_c(\boG)$. 

\begin{lem}
\eqref{e1} preserves $\frak{G}$; namely if $\bou(0)\in \frak{G}$, then $\bou(t)\in \frak{G}$ for all $t$.
\end{lem}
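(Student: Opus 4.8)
The plan is to combine the preceding symmetry lemma with the uniqueness statement in the local well-posedness theorem. Fix an arbitrary $\sigma \in G$ and let $\bou$ be the unique maximal solution to \eqref{e1} with $\bou(0) \in \mathfrak{G}$, defined on its maximal lifespan $(T_{\min}, T_{\max})$.

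First I would note that $\sigma \bou$ is again a solution to \eqref{e1}: this is exactly the content of the previous lemma, whose proof uses the two commutation hypotheses $\sigma \Dgg \bou = \Dgg \sigma \bou$ and $\sigma \boN_{\mu}(\bou) = \boN_{\mu}(\sigma \bou)$. Since each $\sigma \in G$ is a time-independent bounded linear operator on $H^1_c(\boG)$, the curve $t \mapsto \sigma \bou(t)$ belongs to $C((T_{\min},T_{\max}); H^1_c(\boG))$, so it is an admissible solution in the class of the local well-posedness theorem.

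Next I would compare the two solutions $\bou$ and $\sigma \bou$ at the initial time. Because $\sigma$ is independent of $t$, the initial datum of $\sigma \bou$ is $\sigma(\bou(0))$, and the assumption $\bou(0) \in \mathfrak{G}$ gives $\sigma(\bou(0)) = \bou(0)$. Hence $\bou$ and $\sigma \bou$ are solutions sharing the same initial datum. Invoking uniqueness in the local well-posedness theorem then forces $\sigma \bou(t) = \bou(t)$ for all $t \in (T_{\min}, T_{\max})$; here I would remark that the maximal lifespans coincide, either because the lifespan is determined by the initial datum alone, or because $\sigma$ is invertible (being an element of the finite group $G$) and therefore respects the $H^1$-blow-up alternative.

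Since $\sigma \in G$ was arbitrary, we conclude $\sigma \bou(t) = \bou(t)$ for every $\sigma \in G$ and every $t$, that is $\bou(t) \in \mathfrak{G}$ for all $t$, as claimed. I do not expect a genuine obstacle in this argument; the only step meriting care is the verification that passing from $\bou$ to $\sigma \bou$ stays within the solution class and preserves the time interval, which is immediate once $\sigma$ is recognized as a bounded invertible operator commuting with both the linear and nonlinear parts of the equation.
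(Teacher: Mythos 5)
Your argument is correct and is exactly the paper's proof: apply the preceding lemma to see that $\sigma\bou$ solves \eqref{e1}, note that $\bou(0)\in\mathfrak{G}$ makes the initial data coincide, and conclude $\sigma\bou=\bou$ by uniqueness. The extra care you take about the solution class and the matching of maximal lifespans is a reasonable elaboration of what the paper leaves implicit.
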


\begin{proof}
$\bou$ and $\si\bou$ satisfy \eqref{e1} with the same data, we have $\bou (t)= \si \bou(t)$ for all $t$ by uniqueness.
\end{proof}

\begin{exam}
Define $g_{23} \in \boB (H^1_c (\boG))$ by
$$
g_{23} 
\begin{pmatrix}
u_1\\ u_2 \\u_3
\end{pmatrix}
:=
\begin{pmatrix}
u_1\\ u_3 \\u_2
\end{pmatrix}
.
$$
Then $G = \jb{g_{23}}$ satisfies the conditions ($\# G=2$). In this case
$$
\frak{G} = \{ \bou\in H^1_c(\boG)\ :\ u_2=u_3\}.
$$
Note that the ground states constructed in Adami et al. are included in $\frak{G}$.
\end{exam}

\begin{exam}
Define $\tilde{g}_{23} \in \boB (H^1_c (\boG))$ by
$$
\tilde{g}_{23} 
\begin{pmatrix}
u_1\\ u_2 \\u_3
\end{pmatrix}
:=
-
\begin{pmatrix}
u_1\\ u_3 \\ u_2
\end{pmatrix}
.
$$
Then $G = \jb{ \tilde{g_{23}} }$ satisfies the conditions ($\# G=2$). In this case
$$
\frak{G} = \{ \bou\in H^1_c(\boG)\ :\ u_1=0,\ u_2=-u_3\}.
$$
This space is essentially $H^1_{\odd} (\R)$.
\end{exam}

\begin{exam}
Define $\si \in \boB (H^1_c (\boG))$ by
$$
\si 
\begin{pmatrix}
u_1\\ u_2 \\u_3
\end{pmatrix}
:=
\begin{pmatrix}
u_2\\ u_3 \\ u_1
\end{pmatrix}
.
$$
Then $G = \jb{ \si }$ satisfies the conditions ($\# G=3$). In this case
$$
\frak{G} = H_\mathrm{rad}^1(\mathcal{G}).
$$
\end{exam}

\begin{exam}
Let $\om$ denote the cubic root of $1$ (either of $\om= \frac{-1 \pm \sqrt{3}i}{2}$).
Define $\tilde{\si} \in \boB (H^1_c (\boG))$ by
$$
\tilde{\si} 
\begin{pmatrix}
u_1\\ u_2 \\u_3
\end{pmatrix}
:=
\om
\begin{pmatrix}
u_2\\ u_3 \\ u_1
\end{pmatrix}
.
$$
Then $G = \jb{ \tilde\si }$ satisfies the conditions ($\# G=3$). In this case
$$
\frak{G} = \{ \bou\in H^1_c(\boG)\ :\ u_1=\om u_2 =\om^2 u_3\}.
$$
It is fair to say that this is a natural generalization of odd function on $\boG_2$ ($=\R$).
\end{exam}



\begin{acknowledgements}
This work was supported by JSPS KAKENHI No. JP22J00787 (for M.H.), No. JP19K14581 (for M.I.), No. JP18K13444 and Overseas Research Fellowship (for T.I.), and No. JP19H05599 (for I.S.). 
\end{acknowledgements}



\noindent(M. Hamano): m.hamano3@kurenai.waseda.jp
\\
Faculty of Science and Engineering, Waseda University, Tokyo 169-8555, Japan.
\\
\vspace{10pt}
\\
\noindent(M. Ikeda): masahiro.ikeda@riken.jp
\\ Center for Advanced Intelligence Project, RIKEN, Tokyo 103-0027, Japan.
\\
Faculty of Science and Technology, Keio University, Yokohama 223-8522, Japan.
\\
\vspace{10pt}
\\
\noindent(T. Inui): inui@math.sci.osaka-u.ac.jp
\\Department of Mathematics, Graduate School of Science, Osaka University, Toyonaka, Osaka 560-0043, Japan.
\\ 
University of British Columbia, 1984 Mathematics Rd., Vancouver, British Columbia V6T1Z2, Canada.
\\
\vspace{10pt}
\\
\noindent(I. Shimizu): i.shimizu@sigmath.es.osaka-u.ac.jp
\\Mathematical Science, Graduate School of Engineering Science, Osaka University, Toyonaka, Osaka 560-0043, Japan.


\end{document}